\newtheorem{theorem}{Theorem}[section]	
\newtheorem{lemma}[theorem]{Lemma}
\newtheorem{prop}[theorem]{Proposition}
\newtheorem{proposition}[theorem]{Proposition}
\theoremstyle{definition}
\newtheorem{defn}[theorem]{Definition}
\theoremstyle{remark}
\newtheorem{remark}[theorem]{Remark}
\numberwithin{equation}{section}
\newcommand{\vertiii}[1]{{\left\vert\kern-0.25ex\left\vert\kern-0.25ex\left\vert #1 
		\right\vert\kern-0.25ex\right\vert\kern-0.25ex\right\vert}}
\newcommand{\bE}{\mathbf{E}}
\newcommand{\bP}{\mathbf{P}}
\newcommand{\PB}{\mathbf{P}^{\rm B}}
\newcommand{\PBr}{\mathbf{P}^{\rm br}}
\newcommand{\ind}{\mathds{1}}
\newcommand{\cK}{\mathcal{K}}
\newcommand{\cF}{\mathcal{F}}
\newcommand{\cG}{\mathcal{G}}
\newcommand{\cE}{\mathcal{E}}
\newcommand{\cC}{\mathcal{C}}
\newcommand{\scC}{\mathscr{C}}
\DeclareMathOperator{\Var}{Var}
\definecolor{Red}{rgb}{1,0,0}
\definecolor{Blue}{rgb}{0,0,1}
\definecolor{Yarok}{rgb}{0,0.5,0}
\begin{document}
\title[Critical level-set percolation]{Critical level-set percolation on finite %metric 
graphs and spectral gap}
  \author[S.~Goswami]{Subhajit Goswami}
\address{School of Mathematics, Tata Institute of Fundamental Research, 1, Homi Bhabha Road, Colaba, Mumbai 400005, India.}
\curraddr{}
\email{goswami@math.tifr.res.in}
\thanks{}

\author[D.~Pal]{Dipranjan Pal} 
\address{Statistics and Mathematics Unit (SMU), Indian Statistical Institute, Kolkata, 203 B.T. Road, Kolkata 700108, India.}
\curraddr{}
\email{dipranjan\_r@isical.ac.in}
\thanks{}

%\subjclass[2010]{Primary XXXX}

\subjclass[2020]{Primary 60K35, 82B43, 60G60, 05C80, 05C48, 60B11, 28C20}

\keywords{Critical phenomena, percolation, random graphs, Gaussian free field, long-range dependence, isoperimetry}

\date{}

\dedicatory{}

\begin{abstract}
We study the bond percolation on finite graphs induced by the level-sets of zero-
average Gaussian free field on the associated metric graph above a given height 
(level) parameter $h \in \R$. We characterize the near- and off-critical phases of this model for 
any expanders family $\cG_n = (V_n, E_n)$ with uniformly bounded degrees. In particular, we show that the volume of the largest open cluster at level 
$h_n$ is of the order $|V_n|^{\frac23}$ when $h_n$ lies in the corresponding {\em critical window} which we identify as $|h_n| = O(|V_n|^{-\frac13})$. Outside this window, the volume starts to deviate from $\Theta(|V_n|^{\frac23})$ culminating into a 
linear order in the supercritical phase $h_n = h < 0$ (the giant component) and a 
logarithmic order in the subcritical phase $h_n = h > 0$.  We deduce these from effective %bounds 
estimates on tail probabilities for the maximum volume of an open cluster at any level $h$ for a 
generic base graph $\cG$. The estimates depend on $\cG$ only through %an upper bound on its degrees and a lower bound on its spectral gap. 
its size and upper and lower bounds on its degrees and spectral gap respectively. To the best of our knowledge, this is the first instance where a mean-field critical 
behavior is derived under such general setup for finite graphs. The generality of these %bounds 
estimates preclude any local approximation of %the underlying graphs 
$\cG$ by regular infinite trees --- a standard approach in the area. %even in {\em quenched} settings. 
Instead, our methods rely on exploiting the connection between %isoperimetric profiles 
spectral gap of the graph %s
 $\cG$ and %their 
its connection to the level-sets of zero-average Gaussian free field mediated via a set function 
we call the {\em zero-average capacity}.
\end{abstract}

\maketitle

\section{Introduction}\label{sec:intro}
\subsection{Background and motivation}\label{subsec:motivation}
The study of percolation on finite graphs has a long and rich history which has 
spanned several areas in mathematics. See the book \cite{MR1864966} for a classical 
introduction to the topic and the survey article \cite{van2025percolation} along with 
the references therein for more recent developments in the context of {\em random 
graph models}. More general sequences of finite graphs were considered in \cite{MR2073175, MR2155704, MR2165583}; see \cite{MR4101348, MR4801596, MR4940667, blanc2024scaling, blanc2025critical} for 
some recent works in this area. Beyond Bernoulli percolation, a broad class of models on infinite transient graphs with {\em long range correlations} has attracted 
extensive attention over the past two decades. Notable examples include the random interlacements and its vacant set %on $\Z^d$, $d \ge 3$ or general transient graphs 
(see, e.g., \cite{MR2680403, MR2891880, zbMATH06257634, zbMATH07227743, zbMATH07226362, RI-I, RI-II, RI-III, GRS24.1, GRS24.2}) and the level-set percolation 
of the Gaussian free field (see, for instance, \cite{RodriguezSznitman13, MR3417515, DrePreRod, MR4112719, gosrodsev2021radius, DCGRS20, MR4557402, ganguly2024ant, MR4898098, werner2025switching}) among 
others. It is therefore natural to study analogous models on finite graphs. Two such 
%models 
instances are the vacant sets of random walk \cite{MR2680403, MR2884219, MR3019395, MR3094422} and level-sets of the {\em zero-average} Gaussian free field (GFF) 
\cite{abacherli2018local, MR4169171, MR4642820, MR4552959}. In the current article, we 
are interested in the latter.

\vspace{0.08cm}

Introduced in the context of studying percolation of GFF level-sets on discrete tori in \cite{abacherli2018local}, the zero-average Gaussian free field is a ``suitable'' version of the free field that can be defined 
on any finite graph. See the recent work \cite{hartung2025extremes} regarding the 
extremal behavior of this process. %on random regular graphs. 
In \cite{MR4169171} (see also \cite{MR4115734}), A.~Ab\"acherli and J.~\v{C}ern\'y looked into the level-set percolation of zero-average Gaussian free field on a certain 
class of finite {\em locally tree-like} regular graphs which includes the {\em large 
girth regular} expanders and typical realizations of {\em random regular} graphs. 
Subsequently, the supercritical phase of the model was analyzed for these families of 
graphs in \cite{MR4642820} by J.~\v{C}ern\'y and in \cite{MR4552959} by G.~Conchon-Kerjan. In the current work, we initiate the study of %{\em critical} percolation for the level-sets of zero-average Gaussian free field 
the critical phase of (a variant of) this model on {\em any} sequence of finite 
graphs with growing sizes %under the {\em only} assumptions 
provided they have uniformly bounded degrees and uniformly positive spectral gaps.

\vspace{0.08cm}

Bernoulli percolation on such families of graphs, i.e., the uniform expanders, were 
studied by N.~Alon, I.~Benjamini and A.~Stacey in \cite{MR2073175} where they were 
primarily interested in the existence and uniqueness of giant clusters. Under the {\em 
additional} assumptions that these graphs are $d$-regular with diverging girth, they 
identified the critical probability for the emergence of a giant cluster as $\frac1{d-1}$ which is same as the critical probability for Bernoulli percolation on (infinite) 
$d$-regular trees. The {\em large girth} assumption is crucial as it allows to 
approximate neighborhoods of the graph of large radii with those of the regular 
tree. This was generalized in the very recent work \cite{MR4597319}. Results similar 
to \cite{MR2073175} were proved for the vacant sets of random walk in \cite{MR2884219} 
(see also \cite{MR3019395}) and the level-sets of the zero-average Gaussian free field 
in \cite{MR4642820, MR4552959}.

\vspace{0.08cm}

The {\em mean-field} (near-)critical behavior for the Bernoulli percolation on 
sequences of {\em random regular} graphs was established by A.~Nachmias and Y.~Peres 
in \cite{MR2583058} and by B.~Pittel in \cite{MR2435852}. J.~\v{C}ern\'y and 
A.~Teixeira obtained similar results in \cite{MR3094422} for the vacant set of random 
walk on random regular graphs.  In \cite{MR3094422}, the authors crucially use an observation 
made by C.~Cooper and A.~Frieze from \cite{MR3019395} whereby the vacant set of random 
walk on random regular graphs can be seen as an average (distributionally) over random 
graphs with prescribed (random) degree sequences whose critical regime is very 
well-understood (see, e.g.~\cite{MR2943428, MR2490288}). See also \cite{MR4084187} for 
more refined results on the geometry of critical clusters of the vacant set. The setting of 
random regular graphs thus plays an extremely important role in all these results.

One of the main novel features of the current article is that we derive %the 
mean-field bounds 
%(near-)critical behavior 
for a version of the level-set percolation of zero-average Gaussian free field on any 
%uniform expanders sequence with uniformly bounded degrees. 
finite graph $\cG = (V, E)$ that only involve the size $|V|$ and upper and lower bounds on the degrees and the spectral gap of $\cG$. %only in terms of upper and lower bounds on degrees and the spectral gap respectively. 
To the best of our knowledge, this is the {\em first} instance where a 
mean-field characterization of the critical regime is established in such generality and 
precision for a percolation model (conjecturally) in the Erd\H{o}s-R\'enyi universality class.

The version of the level-set percolation alluded to in the previous paragraph refers 
to the bond percolation model on a (finite) graph $\cG = (V, E)$ defined as follows. 
Given a realization of the zero-average Gaussian free field $(\varphi(x): x \in V)$ 
(see \S\ref{subsec:results} below for definition) on the vertices of $\cG$ and $h \in \R$, we {\em open} each edge $\{x, y\} \in E$ independently (at level $h$) with 
conditional probability 
\begin{equation}\label{eq:bridge_prob}
1 - \exp\big(- 2 (\varphi(x) - h)_+(\varphi(y) - h)_+\big) \mbox{ where } a_+ \stackrel{{\rm def.}}{=} \max(a, 0) \mbox{ for } a \in \R
\end{equation}
(cf.~display~(1.6) in \cite{MR4557402} and also \cite{BorodinBrown}, Chap.IV, \S26, p.67). This ``rule'' corresponds to including the edge $(x, y)$ in the resulting 
random graph iff the value of the field $\varphi$, extended suitably to the metric 
graph $\widetilde \cG$ associated to $\cG$ where each edge is replaced by a unit 
interval connecting its two endpoints (see Section~\ref{sec:prelim} for precise 
definitions), stays above level $h$. 

\vspace{0.08cm}

The study of level-set percolation on metric graphs originated in the work \cite{MR3502602} by T.~Lupu for {\em transient 
graphs}. In recent years, there has been spectacular progress in this area for a class 
of transient graphs with {\em polynomial growth} which includes $\Z^d$, $d \ge 3$; 
see, e.g.~\cite{DrePreRod, MR4112719, MR4421175, MR4557402, drewitz2023arm, ganguly2024ant, cai2024onelow, drewitz2024cluster, cai2024quasi, cai2024incipient, MR4898098, werner2025switching, carpenter2025loops, cai2025heterochromatic, cai2025gapclusterdimensionsloop, cai2025separation, drewitz2025critical}. See also the 
very recent articles \cite{rodriguez2025anomalous, rodriguez2025cable} where the 
authors consider level-set percolation on some finite subgraphs of $\Z^2$ and $\Z^3$ 
for the (metric graph) free field associated to the {\em transient} random walk 
killed at a rate depending on the size and geometry of the subgraphs. However, the zero-average Gaussian free field --- which can be defined in a rather canonical manner 
on finite graphs --- exhibit new aspects for the level-set percolation, not least 
due to the global zero-average constraint (see \eqref{eq:Gff_sum}) imposed by the 
{\em recurrence} of the underlying random walk. On the other hand, the existing works on the 
level-set percolation of zero-average Gaussian free field (or, for that matter, the Bernoulli 
percolation, the vacant set of random walk etc.) on finite graphs require different degrees of 
control on the local geometry of the graph (see, e.g., \cite{abacherli2018local, MR4169171, MR4642820, MR2583058, MR2884219, MR3094422}). Unfortunately the setup under which 
we work in this paper does not allow any such control. Instead, we develop an approach that is 
based on ``global'' properties of the graph accessible only through its maximum degree and 
the spectral gap. We refer to \S\ref{subsec:overview} for an overview of our techniques.

\subsection{Main results}\label{subsec:results}
For any finite (simple, connected) graph $\cG = (V, E)$, the 
corresponding (discrete) {\em zero-average Gaussian free field} is a centered Gaussian 
process $\varphi_{\cG} = (\varphi(x) : x \in V)$ with covariances given by the {\em 
zero-average Green function} $g_{\cG}(\cdot, \cdot)$ on $\cG$ (see, e.g., displays~(2.3)--(2.5) in 
\cite[Section~2]{MR4642820}). Now for any $h \in \R$, let $E^{\ge h} \subset E$ denote the subset of open edges at level $h$ obtained as 
per the rule described around \eqref{eq:bridge_prob} above and $\cG^{\ge h} = (V, E^{\ge h})$ denote the (random) subgraph induced by these 
edges. Also let $\scC^{\ge h}_x = \scC^{\ge h}_x(\cG)$ denote the {\em cluster} (i.e., the connected component) of $x \in V$ in the graph $\cG^{\ge h}$ and 
%$\scC^{\ge h}_{\max} \stackrel{\rm def.}{=} \argmax_{x \in V} |\scC^{\ge h}_x |$ 
$\scC^{\ge h}_{\max} = \scC^{\ge h}_{\max}(\cG)$ denote ``a'' cluster with {\em 
maximum} volume (chosen arbitrarily). Our first main result in this paper identifies 
the ``critical window''
%mean-field behavior 
around $h = 0$ in terms of the order of $|\scC^{\ge h}_{\max}|$. In the sequel, we let 
$d^\ast_{\cG}$ and $\lambda^\ast_{\cG}$ denote the maximum degree and the spectral gap 
of $\cG$ (see~\eqref{def:spectral_gap} in Section~\ref{sec:prelim} for definition) respectively 
and $\P_{\cG}$ denote probability measure underlying everything, i.e., the field $\varphi_{\cG}$ along with the variables $\{1_{\{e \in E^{\ge h}\}} : e \in E, h \in \R\}$.
\begin{theorem}[Mean-field behavior inside the critical window]\label{thm:main1}
Let $d > 1$, $\lambda > 0$ and $\cG = (V, E)$ be any finite %simple 
graph with $d^\ast_\cG \le d$ and $\lambda^\ast_\cG \ge \lambda$. Then there exists 
$C = C(d, \lambda) \in (0, \infty)$ such that for any $h = A|V|^{-1/3}$ with $A \in \R$ and $\delta \in (0, 1)$ satisfying $|V| \ge \frac{(1 
+ |A|)^8}{\delta^3}$, we have
\begin{equation}\label{eq:main_critbnd}
\P_{\cG}\big[ \delta|V|^{2/3} \le |\scC^{\ge h}_{\max}| \le \tfrac 1\delta |V|^{2/3}\big] 
\ge 1 - C (1 + |A|) \, \delta^{1/5}.
\end{equation}
\end{theorem}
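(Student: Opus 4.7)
The plan is to derive Theorem \ref{thm:main1} from effective one- and two-point tail estimates on cluster volumes which constitute the technical core of the paper (the ``effective estimates on tail probabilities'' referred to in the abstract) and which I take as given for the purposes of this proof sketch. For $h = A|V|^{-1/3}$, the expected form of these estimates is a two-sided control
\begin{equation*}
c(1+|A|)^{-\kappa}\min\big(k^{-1/2},|V|^{-1/3}\big) \;\le\; \P_{\cG}\big[|\scC^{\ge h}_x|\ge k\big] \;\le\; C(1+|A|)\min\big(k^{-1/2},|V|^{-1/3}\big)
\end{equation*}
for $x \in V$ and $k$ up to a large multiple of $|V|^{2/3}$, with constants $C,c,\kappa$ depending only on $d,\lambda$, together with a matching two-point estimate on $\P_{\cG}[x\leftrightarrow y \text{ in } \cG^{\ge h}]$.

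For the upper tail, since $|\scC^{\ge h}_{\max}|\ge K$ forces $\sum_{x\in V}\ind_{|\scC^{\ge h}_x|\ge K}\ge K$, Markov's inequality gives
\begin{equation*}
\P_{\cG}\big[|\scC^{\ge h}_{\max}|\ge K\big] \;\le\; \frac{1}{K}\sum_{x\in V}\P_{\cG}\big[|\scC^{\ge h}_x|\ge K\big] \;\le\; \frac{C(1+|A|)\,|V|}{K^{3/2}}.
\end{equation*}
Taking $K = \delta^{-1}|V|^{2/3}$ yields $\P_{\cG}[|\scC^{\ge h}_{\max}|\ge \delta^{-1}|V|^{2/3}] \le C(1+|A|)\delta^{3/2}$, which is comfortably inside the $\delta^{1/5}$ budget.

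For the lower tail, I would run a Paley--Zygmund argument on the count
\begin{equation*}
N_{K_0} \;=\; \sum_{x\in V}\ind_{|\scC^{\ge h}_x|\ge K_0}, \qquad K_0 \;=\; \eta|V|^{2/3},
\end{equation*}
for a cutoff $\eta=\eta(\delta)$ to be optimized. The one-point lower bound gives $\bE_{\cG}[N_{K_0}] \ge c(1+|A|)^{-\kappa}\eta^{-1/2}|V|^{2/3}$, while the two-point bound, combined with the identity $\sum_{x}|\scC^{\ge h}_x| = \sum_{x,y}\ind_{x\leftrightarrow y}$, controls $\bE_{\cG}[N_{K_0}^2]$ in terms of $\bE_{\cG}[N_{K_0}]$ and $K_0$. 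Paley--Zygmund then produces, with controlled probability, the inequality $N_{K_0}\ge 1$, i.e.\ a cluster of volume at least $K_0$. A second step, comparing the one-point upper and lower bounds on the truncated first moment of $|\scC^{\ge h}_x|$ over vertices in clusters of intermediate size, upgrades such a cluster into one of volume at least $\delta|V|^{2/3}$; balancing the two resulting error contributions by an appropriate choice of $\eta$ as a small power of $\delta$ is where the exponent $1/5$ in \eqref{eq:main_critbnd} arises. The condition $|V|\ge (1+|A|)^{8}\delta^{-3}$ enters exactly to ensure that finite-size corrections in the tail inputs remain negligible compared with the main terms.

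The Markov and second-moment arithmetic sketched above is essentially routine once the one- and two-point tail estimates are in place. The anticipated main obstacle, and what I expect to be the technical heart of the paper, is the derivation of those tail estimates themselves with constants depending only on $d$ and $\lambda$. Without any local tree approximation, the natural route runs through the metric-graph representation: the edge rule \eqref{eq:bridge_prob} identifies $\cG^{\ge h}$ with the trace on $V$ of the superlevel set $\{\widetilde\varphi\ge h\}$ of the extended field on the metric graph $\widetilde\cG$, whence the two-point function $\P_{\cG}[x\leftrightarrow y]$ reduces, via Lupu's isomorphism, to quantities expressible in terms of the zero-average Green function $g_{\cG}$ and ultimately, through the \emph{zero-average capacity} flagged in the introduction, to the spectral gap $\lambda$. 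Turning the global bound $\lambda^{\ast}_{\cG}\ge \lambda$ into uniform control on the off-diagonal behavior of $g_{\cG}$ while respecting the recurrence constraint $\sum_{x}g_{\cG}(x,y)=0$ is the delicate point that this framework is designed to handle.
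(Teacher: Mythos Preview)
Your upper-tail argument matches the paper exactly: the paper's Theorem~\ref{prop:ubd_cri_vol} is proved precisely by the Markov inequality on $N^{\ge h}_T=\sum_x\ind_{|\scC^{\ge h}_x|\ge T}$ combined with a one-point upper bound (Lemma~\ref{lem:max_cluster_size}), and the latter is obtained from the exploration martingale via Dubins--Schwarz.

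Your lower-tail route, however, diverges from the paper and rests on inputs the paper neither proves nor needs. The paper does \emph{not} establish a one-point lower bound on $\P[|\scC^{\ge h}_x|\ge k]$, nor any two-point connectivity estimate, and does not invoke Lupu's isomorphism. The obstacle you anticipate is real: the isomorphism and the resulting clean two-point formula are tied to the transient Green function, whereas here the field carries the global constraint $\sum_x d_x\varphi(x)=0$, which the paper explicitly flags as breaking the usual picture. Rather than work around this, the paper bypasses moment methods entirely. It runs a single exploration $(K_t)_{t\ge 0}$ that visits \emph{all} clusters in succession, attaches the martingale $\tilde M_t=\widetilde\cE(\varphi,f_{K_t})$ whose quadratic variation is the zero-average capacity increment $2|E|(\nu_{K_t}-\nu_{K_0})$, and shows (Lemma~\ref{lem:inclusion_lbd}) that if the time-changed Brownian motion makes a long enough excursion above a level of order $\delta'|V|^{1/3}$ while the explored set is still small, then during some component-exploration phase the capacity must grow by $\gtrsim\delta|V|^{2/3}$, which by the coarse Lipschitz property (Lemma~\ref{lem:lip_denergy}) forces a cluster of that volume. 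The exponent $1/5$ does not arise from balancing two moment errors: it comes from a direct Brownian computation (Lemma~\ref{lem:brown_prob_cri}) with the scaling choice $\delta'\sim(1+A)^{-2}\delta^{4/5}$, after which hitting the level $\delta_1|V|^{1/3}$ costs $\delta_1/\sqrt{\delta'_1}\sim\delta^{1/5}$.

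In short: your Paley--Zygmund scheme is a reasonable template for Bernoulli-type models, but here the second-moment inputs are not available and the paper's exploration-martingale argument is what replaces them.
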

Our second main result concerns the order of $|\scC^{\ge h}_{\max}|$ when $h$ is 
farther away from 0.
\begin{theorem}[Mean-field behavior outside the critical window]\label{thm:main2}
Under the same assumptions as in Theorem~\ref{thm:main1}, there exist $C = C(d, \lambda)$ and $c = c(d, \lambda)$ in $(0, \infty)$ such that for any $h = A |V|^{-1/3}$ with $A \in (1, c |V|^{1/3})$, we have
\begin{equation}\label{eq:main_subcritbnd}
\P_{\cG}\big[ |\scC^{\ge h}_{\max}| \le  C\,\tfrac{\log (|V|h^3)}{h^2}
 \big] \ge 1 - %C e^{-c A}.
 \tfrac C{A}.
\end{equation}
On the other hand, if $h = -A |V|^{-\frac13}$ with $A \in (1, |V|^{1/3})$, we have
\begin{equation}\label{eq:main_supcritbnd}
\P_{\cG}\big[ |\scC^{\ge h}_{\max}| \ge %c |V|\big] 
c \, |h| \, |V| \, \big] \ge 1 - \tfrac C {{A}^{1/5}}.
\end{equation}
\end{theorem}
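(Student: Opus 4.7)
The plan is to derive both bounds from sharper cluster-volume tail estimates built on the zero-average capacity formalism that already underpins Theorem~\ref{thm:main1}.

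\textbf{Subcritical bound \eqref{eq:main_subcritbnd}.} For $h = A|V|^{-1/3} > 0$, $A > 1$, the key is to establish a single-vertex exponential tail of the form
\begin{equation*}
\P_\cG\big[|\scC^{\ge h}_x| \ge t\big] \le C_1 \exp(-c_1 h^2 t) + C_2 (A|V|)^{-1}
\end{equation*}
valid for $t$ in a suitable range. The exponential rate $h^2$ matches the Gaussian cost of forcing $\varphi - h$ to remain nonnegative on a set whose zero-average capacity is $\ge c t$, the capacity lower bound in turn coming from $\lambda^\ast_\cG \ge \lambda$ via isoperimetry. A union bound over $x \in V$ together with the choice $t = C \log(|V|h^3)/h^2$ then yields \eqref{eq:main_subcritbnd}, with the additive $C/A$ probability error absorbing atypical events produced by the global zero-average constraint.

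\textbf{Supercritical bound \eqref{eq:main_supcritbnd}.} For $h = -A|V|^{-1/3}$, $A > 1$, I would proceed in two steps:
\begin{enumerate}[(i)]
\item \emph{Many vertices in macroscopic clusters.} Fix an intermediate level $h' = -|V|^{-1/3}$ at the lower edge of the critical window, and refine the proof of Theorem~\ref{thm:main1} to show that with probability $\ge 1 - CA^{-1/5}$, the cardinality of $\{x \in V : |\scC^{\ge h'}_x| \ge \delta|V|^{2/3}\}$ is at least $c|h||V|$.
\item \emph{Sprinkling and merging via spectral gap.} Use the extra connectivity gained by sliding from $h'$ down to $h$. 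By Cheeger's inequality, any two disjoint subsets of $V$ of size $\ge c|h||V|$ share a proportional number of edges; each such edge is open at level $h$ with positive conditional probability by the bridge rule \eqref{eq:bridge_prob}, and a sprinkling argument collapses the macroscopic clusters from step~(i) into a single component of volume $\ge c|h||V|$.
\end{enumerate}

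\textbf{Main obstacle.} Step~(i) is the most delicate: boosting from ``one large cluster of size $|V|^{2/3}$'' to ``density $|h|$ of vertices in macroscopic clusters'' requires a second-moment argument in which the correlations between the indicators $\{|\scC^{\ge h'}_x| \ge \delta|V|^{2/3}\}$ are controlled purely through the zero-average capacity and $\lambda^\ast_\cG$---no local tree approximation is available under the hypotheses of Theorem~\ref{thm:main2}. The fractional exponent $A^{-1/5}$ in \eqref{eq:main_supcritbnd} should emerge from optimizing parameters in this combined estimate, mirroring the $\delta^{1/5}$ error already present in Theorem~\ref{thm:main1}.
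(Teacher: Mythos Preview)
Your subcritical strategy is essentially the paper's: a single-vertex tail $\P[|\scC^{\ge h}_x|\ge T]\le CT^{-1/2}e^{-ch^2T}$ (Lemma~\ref{lem:max_cluster_size}, obtained by translating the cluster event into a Brownian-motion-with-drift event via the exploration martingale and Dubins--Schwarz), followed by a union bound (Theorem~\ref{prop:ubd_cri_vol}). Plugging $T=C\log(|V|h^3)/h^2$ gives \eqref{eq:main_subcritbnd}. Your ``Gaussian cost on a set of capacity $\ge ct$'' heuristic is exactly what the Brownian computation in Lemma~\ref{lem:cls_to_brw} makes precise.

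Your supercritical route, however, is genuinely different from the paper's and carries the obstacle you flagged. The paper does \emph{not} use an intermediate level $h'$, a second-moment density estimate, or any sprinkling/merging. Instead it reuses the \emph{same} exploration martingale $(\tilde M_t)_{t\ge0}$ directly at level $h<0$: on a component of $\widetilde\cG^{\ge h}$ one has $\tilde M_t\ge 2h|E|\nu_{K_t}$, and after time-change this becomes a Brownian motion with \emph{positive} drift $|h|$. A long excursion above the relevant threshold---hence a component with $|K_t|_V\ge cA|V|^{2/3}=c|h||V|$---is then shown to occur with probability $\ge 1-C(\tilde\delta^{1/5}+e^{-cA^2\tilde\delta})$ (Theorem~\ref{prop:lbd_cri_vol}, display~\eqref{eq:lbd_cri_vol2}), and choosing $\tilde\delta\sim A^{-1}$ yields \eqref{eq:main_supcritbnd}. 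The large cluster is produced \emph{in one piece} by the exploration; no merging is needed, and the zero-average constraint is handled once, through the bulk/boundary decomposition of $M_K$ (Lemmas~\ref{lem:bulk_cri}--\ref{lem:smallM}).

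Your two-step plan might be salvageable, but step~(i) as stated is a real gap: under the hypotheses here (only degree and spectral-gap control) there is no known two-point bound of the form $\P[|\scC^{\ge h'}_x|\ge T,\,|\scC^{\ge h'}_y|\ge T]\le C\,\P[\cdot]\P[\cdot]$ that would drive a second-moment argument, and the global constraint $\sum d_x\varphi(x)=0$ makes naive decorrelation fail. Step~(ii) is also not free---the bridge probabilities in \eqref{eq:bridge_prob} depend on the field values at the endpoints, which are correlated with the clusters you are trying to merge. The paper's unified martingale approach sidesteps both issues entirely.
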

The typical order of $|\scC^{\ge h}_{\max}|$ implied by these two theorems match those 
for the Erd\H{o}s-R\'enyi random graphs and Bernoulli percolation on random regular 
graphs; cf., for instance, \cite{MR2583058, MR2435852} and see also the references 
therein.

As an immediate corollary of these results, we can identify the {\em critical 
parameter} for the emergence of a giant cluster in this percolation model on 
any uniform expanders sequence with uniformly bounded degrees as 0 and also 
characterize the nature of its phase transition as follows.
\begin{theorem}[Nature of phase transition on expanders]\label{thm:phase_transition}
Let $d > 1$ and $\lambda > 0$. Also let $\cG_n = (V_n, E_n)$ be a sequence of finite 
graphs such that $d^\ast_{\cG_n} \le d$ and $\lambda^\ast_{\cG_n} \ge \lambda$ for all $n \ge 1$, and $|V_n| \to \infty$ as $n \to \infty$. Then for any $h < 0$, there exists $c(h) = c(h, d, \lambda) \in (0, \infty)$ such that
\begin{equation}\label{eq:pt_giant}
 \lim_{n \to \infty} \P_{\cG_n}[ |\scC^{\ge h}_{\max}| \ge c(h) |V_n| \, ] = 1 \quad\quad\quad\quad\quad\quad\quad\quad\quad\:\: \mathrm{(supercritical~phase)}.
\end{equation}
When $h > 0$, there exists $C(h) = C(h, d, \lambda) \in (0, \infty)$ such that 
\begin{equation}\label{eq:pt_subcrit}
 \lim_{n \to \infty} \P_{\cG_n}[ |\scC^{\ge h}_{\max}| \le C(h) \log |V_n| \, ] = 1 \quad\quad\quad\quad\quad\quad\quad \mathrm{(subcritical~phase)}.
\end{equation}
Finally for $h = 0$, we have
\begin{equation}\label{eq:pt_crit}
\lim_{n \to \infty} \P_{\cG_n}\big[ \delta|V|^{2/3} \le |\scC^{\ge h}_{\max}| \le \tfrac 1\delta |V|^{2/3}\big]  = 1 \quad\quad\quad\:\: \mathrm{(critical~phase)}.
\end{equation}
\end{theorem}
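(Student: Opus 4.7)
The plan is to deduce Theorem~\ref{thm:phase_transition} as a direct corollary of Theorems~\ref{thm:main1} and \ref{thm:main2}, obtained by reparametrizing each fixed $h$ as $\pm A_n |V_n|^{-1/3}$ with $A_n := |h| \cdot |V_n|^{1/3} \to \infty$, and then applying the appropriate tail bound to each $\cG_n$ in the sequence. Since $h$ is fixed and $|V_n| \to \infty$, the range constraints on $A$ in both theorems, as well as the hypothesis $|V_n| \ge (1+|A|)^8/\delta^3$ of Theorem~\ref{thm:main1}, are automatically satisfied for $n$ large.

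For the supercritical regime ($h<0$), \eqref{eq:main_supcritbnd} gives
\[
\P_{\cG_n}\big[|\scC^{\ge h}_{\max}| \ge c |h| \cdot |V_n|\big] \ge 1 - \tfrac{C}{A_n^{1/5}} \longrightarrow 1,
\]
so that $c(h) := c|h|$ delivers \eqref{eq:pt_giant}. Analogously, for $h>0$ one invokes \eqref{eq:main_subcritbnd}; the upper bound $(C/h^2)\log(|V_n|h^3)$ is at most $(2C/h^2)\log|V_n|$ for $n$ large (since $h$ is fixed), and the error $C/A_n$ tends to $0$. This proves \eqref{eq:pt_subcrit} with, say, $C(h) := 2C/h^2$.

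For the critical regime ($h=0$), Theorem~\ref{thm:main1} applied with $A = 0$ yields
\[
\P_{\cG_n}\big[\delta|V_n|^{2/3} \le |\scC^{\ge 0}_{\max}| \le \tfrac{1}{\delta}|V_n|^{2/3}\big] \ge 1 - C\delta^{1/5}
\]
as soon as $|V_n| \ge \delta^{-3}$. To obtain the literal form \eqref{eq:pt_crit}, one allows $\delta = \delta_n \to 0$ slowly enough that both $\delta_n^3|V_n| \to \infty$ and $\delta_n^{1/5} \to 0$, for instance $\delta_n = |V_n|^{-1/10}$; alternatively, the statement can be read as ``for every $\varepsilon > 0$ there is a $\delta > 0$ such that the probability eventually exceeds $1-\varepsilon$,'' which is immediate from the displayed inequality.

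There is essentially no obstacle in the present step beyond verifying these mild range conditions: Theorem~\ref{thm:phase_transition} is a qualitative repackaging of the quantitative estimates of Theorems~\ref{thm:main1}--\ref{thm:main2}, and all the substantive work (identifying the critical window, controlling the giant and the subcritical tail) has already been carried out by the time those two theorems are proved.
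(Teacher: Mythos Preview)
Your approach is correct and matches the paper's, which explicitly presents Theorem~\ref{thm:phase_transition} as an immediate corollary of Theorems~\ref{thm:main1} and \ref{thm:main2} without giving a separate proof. Your parametrization $A_n = |h|\,|V_n|^{1/3}$ and the case split are exactly what is needed.

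There is one small oversight: the claim that ``the range constraints on $A$ in both theorems \ldots\ are automatically satisfied for $n$ large'' is not quite right. In \eqref{eq:main_subcritbnd} the condition $A \in (1, c|V|^{1/3})$ forces $h < c$, and in \eqref{eq:main_supcritbnd} the condition $A \in (1, |V|^{1/3})$ forces $|h| < 1$; neither upper bound is automatic for arbitrary fixed $h$. The fix is trivial via monotonicity in $h$ of the level-sets: since $E^{\ge h}$ is decreasing in $h$, one has $|\scC^{\ge h}_{\max}| \ge |\scC^{\ge h'}_{\max}|$ whenever $h \le h'$. Hence for $h \le -1$ apply \eqref{eq:main_supcritbnd} at some fixed $h' \in (-1,0)$, and for $h \ge c$ apply \eqref{eq:main_subcritbnd} at some fixed $h' \in (0,c)$, adjusting $c(h)$ and $C(h)$ accordingly. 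Your handling of the ambiguity in \eqref{eq:pt_crit} (letting $\delta = \delta_n \to 0$ slowly, or reading the statement as a double limit) is appropriate.
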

Theorems~\ref{thm:main1} and \ref{thm:main2} follow from more general bounds on the 
{\em tail probabilities} for $|\scC^{\ge h}_{\max}|$, namely 
Theorems~\ref{prop:ubd_cri_vol} and \ref{prop:lbd_cri_vol} which are stated and proved 
in Section~\ref{sec:critical_regime}.
\subsection{Proof outline}\label{subsec:overview}
We now present a brief account of the key ideas that go into the proof of our main results. %As already mentioned in \S\ref{subsec:motivation}, 

\vspace{0.1cm}

\begin{itemize}[leftmargin=*]
\item {\bf Dirichlet martingale for exploration of clusters.} It is often useful in the study of percolation clusters to ``attach'' a 
suitably defined martingale to a cluster exploration process and analyze its properties. See, e.g., \cite{MR2583058, MR2653185} for Bernoulli percolation on random graphs and \cite{MR4112719} for the percolation of GFF level-sets on the metric graphs associated with $\Z^d$. In our situation, we identify the right martingale to be given by the functional $$K \mapsto \widetilde \cE(\varphi, f_K)$$
where $K$ ranges over ``nice'' compact subsets of $\widetilde \cG$ (the metric graph 
associated to $\cG$), $\widetilde \cE(\cdot, \cdot)$ is the Dirichlet form on $\widetilde \cG$ (see~\eqref{def:dirichlet}), $\varphi$ is the corresponding zero-average GFF and $f_K$ 
minimizes the form $\widetilde \cE(f_K, f_K)$ subject to the conditions $$f_{|K} \equiv 1 \mbox{ and } \sum_{x \in V} d_x f(x) = 0$$
where $d_x$ is the degree of the vertex $x$ in $\cG$. The constraint $\sum_{x \in V} d_x 
f_K(x) = 0$ manifests the zero-average condition. Indeed, as we explain in 
\S\ref{subsec:zeroavg}, $\varphi$ is the Gaussian free field on $\widetilde \cG$ with free 
boundary condition {\em normalized} so that $\sum_{x \in V} d_x\varphi(x) = 0$. From an exploration of clusters at a level $h$, we obtain a continuous family %$(K_t)_{t \ge 0} = (K_t(h))_{t \ge 0}$ 
$(K_t)_{t \ge 0}$ of random (nice) compact subsets of $\widetilde \cG$ such that the process $$(\tilde M_t)_{t \ge 0} \stackrel{{\rm def.}}{=} (\cE(\varphi, f_{K_t}))_{t \ge 0} \mbox{ is a {\em continuous} martingale w.r.t. the filtration of $(K_t)_{t \ge 0}$}.$$ 

\vspace{0.15 cm}

\item {\bf Linking $\tilde M_t$ to cluster size via zero-average capacity.} 
The process $(\tilde M_t)_{t \ge 0}$ carries important information on the cluster volumes. When $K_t$ lies inside a cluster (at level $h$), 
we have $$\tilde M_t \ge \widetilde \cE(f_{K_t}, f_{K_t})\,h.$$ On the other hand, when $K_t$ is ``close'' to a cluster, we have $$\tilde M_t = \widetilde \cE(f_{K_t}, f_{K_t})\,h + o_{\P}(1)$$
where the $o_{\P}(1)$ term converges to 0 in $\P$ {\em uniformly} over all compact 
sets that are not too large. We call the quantity $\widetilde \cE(f_{K}, f_{K})$ as the {\em zero-average capacity} of the set $K$ (see below Remark~\ref{rmk:fKexp} in Section~\ref{sec:zeravgcap} for the reason behind such naming). %The previous display
The bound on the spectral gap gives yields the following linear isoperimetric condition for the zero-average capacity:
$$\mbox{$\widetilde\cE(f_{K}, f_{K})$ grows linearly in the volume of $K$ when $K$ is not large.}$$

\vspace{0.15 cm}

\item {\bf Long excursion events for $\tilde M_t$ and computation of probabilities.} 
Carefully using the observations made in the previous item, we can translate 
events involving the existence of large percolation clusters into events involving the 
existence of long excursions for $\tilde M_t$ away from some suitably chosen value 
after being reparametrized by the zero-average capacity of $K_t$ --- which also turns 
out to be the {\em quadratic variation} of $M_t$. At this point, we can compute the 
corresponding probabilities as probabilities of similar events for Brownian motion using 
Dubin-Schwarz (cf.~\cite{MR4112719}).
\end{itemize}

\vspace{0.3cm}

We now describe the organization of this article. In Section~\ref{sec:prelim}, we 
gather some necessary definitions and notions including the construction of the metric graph $\widetilde \cG$ and the corresponding zero-average Gaussian free field as well 
as some elements of potential theory on $\cG$ and $\widetilde \cG$. We formally 
introduce the zero-average capacity in Section~\ref{sec:zeravgcap} and prove some of its important properties. In 
Section~\ref{sec:dirchlet}, we establish the martingale property of the 
functional $K \mapsto \widetilde \cE(\varphi, f_K)$ (see the discussion above) w.r.t. 
set inclusion and record some of its useful features. Section~\ref{sec:exploration} is 
concerned with the exploration process $(K_t)_{t \ge 0}$ (see above) which we formally 
describe in \S\ref{subsec:exp_scheme} and the exploration martingale $(\tilde M_t)_{t \ge 0}$ which we define in \S\ref{subsec:martingale} using the construction in 
Section~\ref{sec:exploration}. Finally in Section~\ref{sec:critical_regime}, we state 
Theorems~\ref{prop:ubd_cri_vol} and \ref{prop:lbd_cri_vol} regarding the tail 
probabilities of cluster size from which we derive our main results, i.e., 
Theorems~\ref{thm:main1} and \ref{thm:main2} in a fairly straightforward manner. The 
remainder the of the section is devoted to the proof of 
Theorems~\ref{prop:ubd_cri_vol} and \ref{prop:lbd_cri_vol}.

\vspace{0.2cm}

\noindent{\em Notations and conventions.} In the upcoming sections, we will 
assume the graph $\cG = (V, E)$ to be {\em fixed} and drop it from the notations 
$\P_{\cG}$, $d^\ast_{\cG}$ etc. unless they need to be distinguished in some context. %except for intervals in the metric graph (see Section~\ref{sec:prelim} below). 
In view of the formulation of our main results, we may assume $|V| > 1$ without 
any loss of generality. We also assume that $d^\ast_\cG \le d$ and $\lambda^\ast_\cG \ge 
\lambda$ for some $d > 1$ and $\lambda > 0$. %Throughout this article 
We denote by $c, c', C, C', \ldots$ etc. %denote 
finite, positive constants which are allowed to vary from place to 
place. All constants may implicitly depend on $d$ and $\lambda$. Their dependence on 
other parameters will be made explicit. Numbered constants $c_1,c_2,\dots$ and $C_1,C_2,\dots$ stay fixed after their first appearance within the text.

\section{Preliminaries}\label{sec:prelim}
In this section we collect several definitions, preliminary facts and results that 
will be used throughout the paper. 
\subsection{The metric graph $\widetilde \cG$.} Let us start with the {\em metric graph} (or the {\em cable system}) $\widetilde{\cG}$ associated to $\cG$ which is topologically the 1-complex 
obtained by gluing a copy of the open unit interval $I_{\{x, y\}}$ at its endpoints $x, y \in V$ for each $\{x, y\} \in E$. We denote the (topological) closure and boundary of 
any $S \subset \widetilde \cG$ as $\overline S$ and $\partial S$ respectively. For any 
$x, y \in \overline{I_e}$ where $e \in E$, we can define the {\em intervals (of $\widetilde \cG$)} with endpoints at $x$ and $y$ that are open, closed, semi-closed at 
$x$ and semi-closed at $y$ in the obvious manner and denote them as (the ordering of $x, y$ is irrelevant)
\begin{equation}\label{def:intG}
\mbox{$(x,y)_{\cG}$, $[x,y]_{\cG}$, $[x,y)_{\cG}$ and $(x,y]_{\cG}$ respectively.}
\end{equation}

Equipping each $I_e, e \in E$ with the Lebesgue measure (of total mass 1), we get the 
push-forward measure $\rho$ on $\widetilde \cG$ which assigns a unit length to 
each interval $I_e$. The space $\widetilde \cG$ is metrizable. In fact, the graph distance $d(\cdot, \cdot)$ on $\cG$ extends naturally to a distance on 
$\widetilde{\cG}$ which we also denote as $d(\cdot, \cdot)$. More precisely, for any $x, y \in \widetilde{\cG}$, we let
\begin{equation}\label{def:distance}
\begin{split}
 d(x, y)  = \min_{x', y'} \,\, \big(\rho([x, x']_{\cG}) + d(x', y') + \rho([y', y]_{\cG})\big),
\end{split}
\end{equation}
where the minimum is over all pairs of endpoints of (closed) intervals containing $x$ 
and $y$ respectively. In plain words, $d(x, y)$ is the minimum length of a continuous 
path between $x, y$. For any $x \in \widetilde \cG$ and $r \ge 0$, we denote by $B(x, r)$ the ball of radius $r$ centered at $x$.

The metric space $\widetilde \cG = (\widetilde{\cG}, d)$ is compact and locally 
path-connected. Further, any connected subset of $\widetilde \cG$
\begin{equation}\label{eq:conn_set_G}
\mbox{is path-connected with a finite boundary set and is a union of finitely many intervals.}
\end{equation}
For any $x \in \widetilde \cG$ and $S \subset \widetilde \cG$, we let $\cC_x(S)$ 
denote the {\em (connected) component} of $x$ inside $S$ (as subspaces of $\widetilde 
\cG$). Notice that $\cC_x(S) = \emptyset$ if $x \notin S$. In order to avoid 
confusion, we use the term {\em cluster} only for the connected components of the 
random subgraph $\cG^{\ge h}; h \in \R$ which we denote by $\scC_x^{\ge h}$ (see \S\ref{subsec:results}) and reserve the term {\em component} for subsets of 
$\widetilde \cG$. More generally, for any $S' \subset \widetilde \cG$, we let $\cC_{S'}(S) \stackrel{{\rm def.}}{=} \cup_{x \in S'} \cC_x(S)$.

The following two special collections of subsets of $\widetilde \cG$ will appear 
frequently in this article. We let 
\begin{equation}\label{def:cK}
\mbox{$\cK$ denote the collection of all non-empty compact subsets of $\widetilde\cG$.}
\end{equation}
We further let
\begin{equation}\label{def:cKinfty}
\text{\begin{minipage}{0.6\textwidth}$\cK_{<\infty}$ denote the sub-collection of $\cK$ containing only sets $K$ having finitely many components 
and satisfying $V \setminus K \ne \emptyset$.
\end{minipage}}
\end{equation}

\subsection{Excerpts from the potential theory on $\widetilde \cG$.} Next we gather 
some useful facts from the potential theory on $\cG$ and $\widetilde \cG$. For $f,g \in \R^V$, the {\em Dirichlet form} on $\cG$ is given by
\begin{equation}\label{def:denergy}
\cE(f,g) \stackrel{\rm def.}{=} \sum_{\{x, y\} \in E} (f(x) - f(y))(g(x) - g(y)). 
\end{equation}
The {\em spectral gap} $\lambda^\ast$ of $\cG$ can be defined in terms of the 
Dirichlet form as follows:
\begin{equation}\label{def:spectral_gap}
\lambda^\ast = \inf\Big\{\cE(f,f): \sum_{x\in V} d_x f^2(x) =1, \sum_{x \in V} d_x f(x) = 0 \Big\}.
\end{equation}
For $S \subset V$, let $\partial_e S \stackrel{\rm def.}{=} \{ e \in E: e \mbox{ has 
an endpoint in both $S$ and $V \setminus S$}\}$ denote the {\em edge boundary} of $S$ 
in the graph $\cG$. The following is a consequence of the Cheeger’s inequality 
(see,~e.g.,~\cite[Theorem~6.15]{MR3616205}) and our standing assumption that  $\lambda^\ast \ge \lambda$,
\begin{equation}\label{eq:isoperimetry}
    |\partial_{e}S| \ge c |S| \text{ for all } S \subset V \text{ with } |S| \le |V|/2.
\end{equation}
% We write $\bP_x$ for the canonical law of the simple random walk on $\cG$ starting from $x \in V$ as well as $\bE_x$ for
% the corresponding expectation. The canonical process for the
% discrete-time random walk on $\cG$ is denoted by $(X_k)_{k \ge 0}$.
% The zero-average Green function associated with the
% simple random walk $(X_k)_{k \ge 0}$ on $\cG$ is given by
% \begin{equation}\label{def:zero_green}
%     g_{\cG}(x,y) \stackrel{\rm def.}{=} \sum_{k = 0}^{\infty} (\bP_x[X_k =y] - \frac{1}{|V|}).
% \end{equation}
In particular, this implies that the graph $\cG$ is connected. Letting $\tilde{H}^1(\cG)$ denote the subspace of $\R^V$ consisting of all $f \in \R^V$ satisfying 
$\sum_{x \in V} d_xf(x) = 0$, we see that $\tilde{H}^1(\cG)$ equipped with the inner 
product $\cE(\cdot, \cdot)$ is a Hilbert space.

The corresponding Dirichlet form on $\widetilde \cG$ is defined as
\begin{equation}\label{def:dirichlet}
    \widetilde \cE(f,g) %\stackrel{\rm def.}{=} %\frac{1}{2} 
    = %\sum_{e \in E} \int_{I_e} 
    \int_{\widetilde \cG} f'g' \,d\rho \text{ for all } f,g \in H^1(\widetilde \cG)
\end{equation}
where $H^1(\widetilde \cG)$ denotes the space of all real-valued, {\em absolutely 
continuous} functions $f$ on $\widetilde \cG$ (i.e., $f_{|I_e}$ is absolutely continuous on $I_e$ w.r.t. the measure $\rho_{|I_e}$ for each $e \in E$) such that $f' \in L^2(\widetilde \cG, \rho)$. Like $\tilde{H}^1(\cG)$, we let $\tilde{H}^1(\widetilde \cG)$ denote the subspace of $H^1(\widetilde \cG)$ consisting of all $f \in H^1(\widetilde \cG)$ satisfying $\sum_{x \in V} d_xf(x) = 0$ so that it is a 
Hilbert space when equipped with the form $\widetilde \cE(\cdot, \cdot)$. Owing 
to the simple structure of the connected subsets of $\widetilde \cG$ as elucidated in 
\eqref{eq:conn_set_G}, we can similarly define a Dirichlet form $\widetilde \cE_K(\cdot, \cdot)$ and the corresponding Hilbert spaces $H^1(K)$ and $\tilde H^1(K)$ 
associated to any $K \subset \widetilde \cG$ with finitely many components (the 
constraint for $\tilde H^1(K)$ being $\sum_{x \in V \cap K} d_x f(x) = 0$). To simplify notations, we let 
\begin{equation}\label{def:tildeE-K}
	\widetilde \cE_{K^c} (f,g) \stackrel{\rm def.}{=} 
	\widetilde \cE_{\overline{K^c}} (f_{|\overline{K^c}},g_{|\overline{K^c}})
	\text{ for all } f, g:\widetilde \cG \to \R \mbox{ s.t. } f_{|\overline{K^c}}, g_{|\overline{K^c}} \in H^1(\overline{K^c}).
\end{equation}

\vspace{0.1cm}

Of special interests to us are those functions which are ``linear'' on intervals of 
$\widetilde \cG$. More precisely, for any $K \subset \widetilde \cG$, interval $[x, y]_{\cG} \subset K$ and function $f: K \to \R$, we say that $f$ is {\em linear} on $[x, y]_{\cG}$ if 
\begin{equation}\label{eq:linear_deriv}
f(z) = f(x) \tfrac{\rho([z, y]_{\cG})}{\rho([x, y]_{\cG})} + f(y) \tfrac{\rho([x, z]_{\cG})}{\rho([x, y]_{\cG})}    
\end{equation}
for all $z \in [x, y]_{\cG}$. Clearly if $f$ is linear on $[x, y]_{\cG}$, then $f$ is 
absolutely continuous with a constant derivative on the same interval. %for any $K \in \cK$ (recall \eqref{def:cK}) with finite 
More generally, given any collection $\mathcal I$ of (closed) intervals of $K \, (\subset \widetilde \cG)$, we say that
\begin{equation}\label{def:linear_cI}
\mbox{$f$ is {\em linear on} $\mathcal I$ provided $f$ is linear on each $I \subset \mathcal I$.}
\end{equation}
We call $f$ as {\em piecewise linear} (on its domain $K$) if $f$ is linear on 
$\mathcal I$ for some {\em finite} collection $\mathcal I$ satisfying $\cup_{I \in 
\mathcal I} I = K$. Piecewise linear functions on $\widetilde\cG$ are elements of $H^1(\widetilde \cG)$.

\vspace{0.1cm}

The vocabulary of piecewise linear functions allow us to obtain the 
appropriate Dirichlet forms on {\em discrete subgraphs} of $\widetilde \cG$ as {\em 
traces} of $\widetilde \cE(\cdot, \cdot)$ (see, e.g.~\cite[Chapter~1.3]{MR3156983}). 
To this end, we recall the framework of {\em enhancements} from \cite[Section~2.2]{MR4421175}. Let $K \subset \widetilde \cG$ be compact with finitely many components 
so that the boundary $\partial K$ is a finite set by \eqref{eq:conn_set_G}. The {\em enhancement set} induced by $K$ is given by 
\begin{equation}\label{def:VK}
        V^{K} \stackrel{\rm def.}{=} V \cup \partial K.
\end{equation}
% For each $e = \{x, y\} \in E$, let $I_e \cap O^K = \{ o_1(e), \ldots,o_{k - 1}(e) \}$, where $k = k(e) \ge 1$ is such that $|I_e \cap O^K| = k - 1$ and the $o_i$'s are collected by order of appearance as one traverses the interval $I_e$ from say, $x$ to $y$. We set $o_0 = x$ and $o_k = y$. 
$\widetilde \cG$ can be expressed as a union of closed intervals with disjoint 
interior whose endpoints are points in $V^K$. In the sequel, 
\begin{equation}\label{def:int_GK}
\text{\begin{minipage}{0.7\textwidth}
we refer to this collection of intervals as $\mathcal I_K$ and denote by $\mathcal I_{-K}$ the sub-collection comprising only intervals that are {\em not} subsets of $K$
\end{minipage}}
\end{equation}
(recall the structure of connected subsets of $\widetilde \cG$ from \eqref{eq:conn_set_G}). Now let $E^K$ denote the set of all (unordered) pairs $\{x, y\}$ of points in $V^K$ that are the endpoints of some interval in $\mathcal I_K$. 
Then for any $f, g \in H^1(\widetilde \cG)$ that are linear on $\mathcal I_K$ (recall \eqref{def:linear_cI}), we have in view of \eqref{def:dirichlet} and \eqref{eq:linear_deriv}:
\begin{equation}\label{eq:tildeEdescent}
    %\cE^K(f,f) \stackrel{\rm def.}{=} \sum_{(x,y) \in E^K} (f(x) - f(y))^2 C^K_{(x,y)},
    \widetilde \cE(f, g) = \sum_{\{x,y\} \in E^K} (f(x) - f(y)) (g(x) - g(y)) \, \tfrac{1}{\rho([x, y]_{\cG})}
\end{equation}
More so, it is not difficult to see that for any such $f$, 
\begin{equation}\label{state:min_energy}
\widetilde \cE(f, f) = \min_{g} \widetilde \cE(g, g),
\end{equation}
where the minimum is over all $g \in H^1(\widetilde \cG)$ satisfying $g_{|V^K} = f_{|V^K}$. Thus we get the following {\em trace form} on the {\em enhancement graph} $\cG^K \stackrel{{\rm def.}}{=} (V^K, E^K)$ (induced by $K$):
\begin{equation}\label{def:dirichletK} 
\cE^K(f, g) \stackrel{{\rm def.}}{=} \sum_{\{x,y\} \in E^K} (f(x) - f(y)) (g(x) - g(y)) \, C^K_{\{x, y\}}
\end{equation}
for $f, g \in \R^{V^K}$ where
\begin{equation}\label{def:C^K}
 C^K_{\{x, y\}} \stackrel{\rm def.}{=} \tfrac{1}{\rho([x, y]_{\cG})} \mbox{ for $\{x, y\} \in E^K$ and $0$ otherwise.}
\end{equation}
Henceforth, we will assume the graph $\cG^K = (V^K, E^K)$ to be equipped with the 
conductances  $C^K_e, e \in E^K$. The corresponding {\em degree} of $x \in V^K$ is 
given by
  \begin{equation}\label{def:total_conduc}
      C^K_x \stackrel{\rm def.}{=} \sum_{\{x, y\} \in E^K} C^K_{\{x,y\}}.
      \end{equation}
The display~\eqref{eq:tildeEdescent} also gives us a way to extend the domain of the form $\widetilde \cE(\cdot, \cdot)$ as follows. For {\em any} function $f$ and a piecewise linear function $g$ on $\widetilde \cG$, let us define 
\begin{equation}\label{eq:tildeE_to_EK}
\widetilde \cE (f, g) = \cE^K(f^K, g)
\end{equation}
where $K$ is the set of endpoints of the intervals in $\mathcal I$ on which $g$ is 
linear (see below \eqref{def:linear_cI}) and $f^K$ is the piecewise linear extension 
of $f_{|V^K}$ to $\widetilde \cG$ (cf.~\eqref{eq:linear_deriv}).

\subsection{The canonical diffusions on $\widetilde \cG$ and $\cG$.} 
Now one can define, for each $x \in \widetilde \cG$, a %canonical 
$\rho$-symmetric diffusion process $(\widetilde X_t)_{t \ge 0}$ starting from $x$ with 
state space $\widetilde \cG$ via its associated Dirichlet form $\widetilde\cE(\cdot, 
\cdot)$ which we can view as the $\widetilde \cG$-valued Brownian motion. See \cite[\S2.1]{MR4421175} and \cite[Section~2]{MR3502602} for a detailed account of this 
construction. %For more discussion see  and \cite{MR2778606}. 
We denote its law (and the corresponding expectation) by $\bP_x$ (and $\bE_x$ respectively). For $K \subset \widetilde \cG$, we let
\begin{equation}\label{def:hitting_time}
H_K \stackrel{\rm def.}{=} \inf\{t\ge 0: \widetilde X_t \in K\} \mbox{ and } T_K \stackrel{\rm def.}{=} H_{K^c}
\end{equation}
denote the hitting and exit time of the set $K$ respectively. One can show (see \cite[\S2.1]{MR4421175} and \cite[Section~2]{MR3502602}) that the process 
$\widetilde X = (\widetilde X_t)_{t \ge 0}$ admits of a continuous family of local times $(\ell_x(t))_{x \in \widetilde \cG,\, t \ge 0}$ under $\bP_x$. Then for any open $U \subset \widetilde \cG$, we can define the corresponding {\em Green function} %killed upon exiting $K^c$ (or entering) given by
as
\begin{equation}\label{def:green-K}
    g_{U}(x,y) %\stackrel{\rm def.}{=} 
    \, (= g_{U}(y, x)) = \bE_x[\ell_y(T_{U})] \text{ for all } x,y \in \widetilde \cG.
\end{equation}
Let us make two useful observations on the Green functions which are standard (see, e.g., \cite[display~(2.1)]{MR3502602}). %As stated in and \cite[display (3), p. 1412]{MR4112719} the Green function $g_{\cG}$ can beextended to $\widetilde \cG$ as follows: let $e_1 =\{x_1, y_1\}, e_2 =\{x_2, y_2 \} \in E$and let $u_1 \in I_{e_1}$ and $u_2 \in I_{e_2}$. Also let $r_1 = \rho((x_1, u_1)_{\cG})$ and $r_2 = \rho((x_2, u_2)_{\cG})$, then $g_{\cG}(u_1, u_2)$ equals
For any $K \in \cK$ (recall \eqref{def:cK}) with finitely many components and %$x \in V^{K}$,
$x \in \widetilde \cG$,
\begin{equation}\label{def:harmgreen}
    % \begin{split}
    %    & \frac{1}{\rho((x_1,y_1)_{\cG}) \rho((x_2,y_2)_{\cG})} ( \rho((x_1,y_1)_{\cG}) - r_1) ( \rho((x_2,y_2)_{\cG}) - r_2) g_{\cG}(x_1, y_1)+ r_1 r_2  g_{\cG}(x_2, y_2) \\
    %    & + r_1 ( \rho((x_2,y_2)_{\cG}) - r_2) g_{\cG}(y_1, x_2) + r_2 ( \rho((x_1,y_1)_{\cG}) - r_1) g_{\cG}(x_1, y_2) \\
    %    & + 2 \Big( r_1 \wedge r_2 - \frac{r_1 r_2}{\rho((x_1,y_1)_{\cG})} \Big) \ind_{\{x_1,y_1\} = \{x_2,y_2\}}.
    % \end{split}
    \mbox{$g_{K^c}(x,y)$ is linear on $\mathcal I_{-(K \cup \{x\})}$ (recall \eqref{def:int_GK}) as a function in $y \in \widetilde \cG$.}
\end{equation}
Further, with the supremum ranging over all $K \in \cK$ with finitely many components,
\begin{equation}\label{eq:sup_greensfunc}
            \mathrm{G} \stackrel{\rm def.}{=} \sup_{K} %, S \subset V^{K}} 
            %\, \sup_{x, y \in V^{K}}\frac{g_{S}^{K}( x , y)}{C_y^{K}} 
            g_{K^c}(x, y) < \infty
    \end{equation}
%where the supremum ranges over all $K \in \cK$ with finitely many components.

Now by taking {\em traces} of the process $\widetilde X$ on $V^K$ (see, e.g., \S2.1 and \S2.2 in \cite{MR4421175} for a formal treatment) we obtain, for each compact $K$ with finitely many connected components, the canonical jump processes $(\widetilde X^K_t)_{t \ge 0 }$ on $\cG^K$ with jump rates $C^K_{\{x,y\}}, \{x,y\} \in E^K$ (see \eqref{def:C^K}) % Let $J_t \stackrel{\rm def.}{=} \inf\{ s > 0 : \sum_{x \in V} \ell^{\widetilde X}_x(s) >t \}$, for $t >0$. Now define the trace 
% process associated to $(X^K_t)_{ t \ge 0 }$ as 
% \begin{equation}\label{def:trace}
%     X^*_t \stackrel{\rm def.}{=} X^K_{J_t} \text{ for all } t \ge 0.
% \end{equation}
% Then $(X^*_t)_{t \ge 0}$ and $(\widetilde X_t)_{t \ge 0}$ has the same law (see \cite[Lemma 2.1]{MR4421175}). 
under the probability measures $\bP_x, x \in V^K$. For any $K' \subset \widetilde \cG$, with a slight abuse of notation, we use $H_{K'}$ and $T_{K'}$ as in 
\eqref{def:hitting_time} to denote the hitting and exit time of the set $K' \cap V^K$ 
for $\widetilde X^K$. As explained below display~(2.4) in \cite[\S2.1]{MR4421175}, we 
have for any $K' \in \cK$ with finitely many components such that $\partial K' \subset 
V^K$, \begin{equation}\label{eq:invar_greensfunc}
\begin{split}
&g_{(K')^c}(x, y) = \bE_x\Big [\int_{0}^{\infty} 1_{\{\widetilde X_t^K = y,\, t < H_{K'}\}} dt \Big] \mbox{ and } \\ 
&\bP_x[\widetilde X^K_{H_{K'}} = y] =  \bP_x[\widetilde X_{H_{K'}} = y]\: \mbox{for } x, y \in V^K.    
\end{split}
\end{equation}
% \begin{equation}\label{eq:invar_greensfunc}
% % \frac{g^{K}_{V^K \setminus K}(x, y)}{C^{K}_y} =  \frac{g^{K, K'}_{S^K(K, K')}(x, y)}{C^{K, K'}_y} \, \mbox{ and } \, \bP_x\big[X^{K, K'}_{H_K} = y\big] = \bP_x[X^{K}_{H_K} = y].
% \frac{g^{K}_{K^c}(x, y)}{C^{K}_y} =  \frac{g^{K, K'}_{S^K(K, K')}(x, y)}{C^{K, K'}_y} \, \mbox{ and } \, \bP_x\big[X^{K, K'}_{H_K} = y\big] = \bP_x[X^{K}_{H_K} = y].
% \end{equation}
Denoting the discrete skeleton of the continuous-time chain $\widetilde X^K$ by $X^K =(X^K_n)_{n \in \N}$, we have the following identities in view of 
\eqref{eq:invar_greensfunc} under the same setup:
\begin{equation}\label{def:green_enhance}
\begin{split}
&g_{(K')^c}(x,y) =  \frac{g^K_{(K')^c}(x,y)}{C_y^K} \mbox{ where }g^K_{(K')^c}(x,y)\stackrel{\rm def.}{=} \sum_{n \ge 0} \bP_x[X^K_n = y, n < H_{K'}] \mbox{ and}\\
& \bP_x[\widetilde X^K_{H_{K'}} = y] =  \bP_x[X_{H_{K'}}^K = y]\: \mbox{for } x, y \in V^K.
\end{split}
\end{equation}
Here, similarly as before, we use $H_{K'}$ to denote the hitting time of the set $K' 
\cap V^K$ by the discrete-time walk $X^K$ to avoid clutter. The (discrete) {\em Laplacian} operator on $\cG^K$ corresponding to the walk $X^K$ is given by
\begin{equation}\label{def:laplacianK}
    \Delta^Kf(x) \stackrel{\rm def.}{=} \sum_{\{x,y\} \in E^K} (f(y) - f(x))\, C^K_{\{x,y\}} \: \mbox{for all } f \in \R^{V^K} \mbox{ and } x \in V^K.
\end{equation}
On the other hand, the Laplacian {\em killed} outside $K$ is defined as 
\begin{equation}\label{def:laplaciankilled}
    %(\Delta^K_{K^c})^{-1}(x,y) \stackrel{\rm def.}{=} - \frac{g^K_{K^c}(x,y)}{C^K_y} \text{ for } x,y \in V^K.
    \Delta^K{K^c}f(x) = \sum_{\substack{\{x,y\} \in E^K,\\ y \in V^K \setminus K}} f(y) \, C^K_{\{x,y\}} - f(x)\Big(\sum_{\{x, y\} \in E^K} C^K_{\{x,y\}}\Big)
\end{equation}
for all $f \in \R^{V^K \setminus K}$ and $x \in V^K \setminus K$. Often we need to 
work with a {\em common refinement} of enhancement sets induced by two sets $K$ and 
$K'$ in $\cK$ with finitely many connected components. To this end, we let
\begin{equation}\label{def:KK'}
\text{\begin{minipage}{0.9\textwidth}
$V^{K, K'}$, $E^{K, K'}$, $C_{\{\cdot, \cdot\}}^{K, K'}$, $C_{\{\cdot\}}^{K, K'}$, $X^{K, K'}$ and $g_{\cdot}^{K, K'}(\cdot, \cdot)$ denote the objects defined above corresponding to the compact set $K \cup \partial K'$ (which has finitely many 
components).\end{minipage}}
\end{equation}
\subsection{The zero-average Gaussian free fields on $\cG$ and $\widetilde \cG$.}\label{subsec:zeroavg} Finally we come to the zero-average Gaussian free field on 
$\cG$ (rather the vertex set $V$) and its extension to the metric graph $\widetilde \cG$. Let us 
recall from the introduction that the {\em zero-average Gaussian free field} on the vertices of 
$\cG$ is a centered Gaussian process $(\varphi(x) : x \in V)$ with covariances given by the 
{\em zero-average Green function} $g_{\cG}(\cdot, \cdot)$ on $\cG$ (see, for instance, 
displays~(2.3)--(2.5) in \cite[Section~2]{MR4642820}). The field values $(\varphi(x) : x \in V)$ 
satisfy the following ``zero-average'' property (cf.~\cite[(1.2)]{MR4642820}):
\begin{equation}\label{eq:Gff_sum}
\sum_{x \in V} d_x \varphi(x) = 0, \quad \mbox{$\P$-a.s.}
\end{equation}
where $d_x$ is the degree of the vertex $x \in V$. Enlarging the underlying 
probability space if necessary (with the probability measure $\P$), we extend $\varphi$ to the metric graph $\widetilde \cG$ (also denoted as $\varphi$) as follows. Conditionally on $(\varphi(x): x \in V)$, 
the processes $(\varphi(x): x \in \overline{I_{\{y, z\}}}), \{y, z\} \in E$ joining 
$\varphi(y)$ and $\varphi(z)$. See \cite[Section~2]{MR3502602} for further details. As 
already noted in the introduction, $e \in E^{\ge h}$ (see the beginning of \S\ref{subsec:results}) if and only if $\varphi(x) \ge h$ for all $x \in \overline{I_e}$. %Observe that $V \subset \widetilde\cG$.
Accordingly, for any $h \in \R$, we define the {\em level-set above} height $h$ as
\begin{equation}\label{def:Ggeh}
\widetilde \cG^{\ge h} {=} \{ x \in \widetilde \cG : \varphi(x) \ge h \}.    
\end{equation}
The sets $\widetilde \cG^{\le h}, \widetilde \cG^{< h}$ etc. are defined in a 
similar manner. Notice that $\varphi$ is continuous as a function on $\widetilde \cG$ 
and hence $\widetilde \cG^{\ge h}$ is a compact subspace of $\widetilde \cG$. We 
denote the components in $\cG^{\ge h}$, i.e., $\cC_x(S \cap \widetilde \cG^{\ge h})$ 
and $\cC_{S'}(S \cap \widetilde \cG^{\ge h})$ (see below \eqref{eq:conn_set_G}) as 
$\cC_x^{\ge h}(S)$ and $\cC_{S'}^{\ge h}(S)$ respectively.

\smallskip

The field $\varphi$ is, in fact, the {\em canonical Gaussian free field} on the 
Hilbert space $\tilde{H}^1(\widetilde \cG)$ equipped with the Dirichlet form $\widetilde \cE(\cdot, \cdot)$ (see \eqref{def:dirichlet} and the discussion around 
that). See, for instance,~Section~2 in \cite{MR2322706}  and Section~1.7 of Chapter~1 
in \cite{berestycki2015introduction}. In particular,
\begin{equation}\label{eq:gffvar_formulae}
\text{\begin{minipage}{0.75\textwidth}
The process $\big(\widetilde \cE(\varphi, f) : f \in \tilde{H}^1(\widetilde \cG)\big)$ (cf.~\eqref{eq:tildeE_to_EK}) is a {\em Gaussian Hilbert space} (see, e.g.,~\cite[Definition~2.11]{MR2322706}) of centered Gaussian variables with covariances given by $\cov[\widetilde \cE(\varphi, f), \widetilde \cE(\varphi, g)] = \widetilde \cE(f, g)$ for all $f, g \in \tilde{H}^1(\widetilde \cG)$. 
\end{minipage}}
\end{equation}

Owing to the zero-average constraint \eqref{eq:Gff_sum}, the field $\varphi$ does {\em 
not} possess a {\em domain Markov} or {\em strong Markov} property in the traditional sense (see Section~1 in \cite{MR4169171} or \cite{MR4642820}). However, we can still characterize 
the conditional distributions of $\varphi$ utilizing the Gaussian Hilbert space structure 
(see, e.g., \cite[Theorem 1.52]{berestycki2015introduction}). To this end, for any $U \subset \widetilde \cG$, let us consider the sigma-algebra $\cF_U \stackrel{\rm def.}{=} \sigma( \{ \varphi(x) : x \in U \})$. A {\em random} set $K$ taking values in $\cK_{<\infty}$ (see \eqref{def:cKinfty}) satisfies
\begin{equation}\label{def:rndset}  
\mbox{$\{K \subset U\}$ is measurable for each $U \subset \widetilde\cG$}
\end{equation}
and we define $\cF_K$ as the (sub-)$\sigma$-algebra generated by all events of the form $\{K \subset U\}$ with $U \subset \widetilde \cG$ open. Now for any $K \in \cK_{<\infty}$ and function $\phi : K \to \R$, let
\begin{equation}\label{def:fphia}
\text{\begin{minipage}{0.72\textwidth}$f_\phi: \cG \to \R$ denote the function 
minimizing $\widetilde \cE_{K^c}(f, f)$ (cf.~\eqref{def:tildeE-K}) over all $f: \widetilde \cG \to \R$ satisfying $f_{| \overline{K^c}} \in H^1(\overline{K^c})$, $f_{|K} = \phi$ and $\sum_{x \in V} d_x f(x) = 0$.\end{minipage}}
\end{equation}
When $\phi \in H^1(K)$, we can simplify the above as follows.
\begin{equation}\label{def:fphib}
\mbox{$f_\phi$ minimizes $\widetilde \cE(f,f)$ over all $f \in \tilde{H}^1(\widetilde \cG)$ subject to the constraint $f_{|K} = \phi$.}
\end{equation}
We will give an explicit expression for $f_\phi$ in Lemma~\ref{lem:fKexp} in the next 
section. As in \eqref{state:min_energy}, we have that
\begin{equation}\label{def:harmfphi}
    \mbox{$f_\phi$ is linear on $\mathcal I_{-K}$ (see \eqref{def:int_GK}).}
\end{equation}
We are now ready to state the promised conditional law for $\varphi$; cf.~\cite[Proposition~2.1]{MR4169171} for the discrete version. For a formal proof, see 
\cite[Theorem~1.52]{berestycki2015introduction} or \cite[Section~2.6]{MR2322706} 
(cf.~Lemma~\ref{lem:orthogonality} below) and see Theorem~4 in Chapter 2, Section 2.4 of \cite{MR676644} for the version with random sets. Suppose that the random set $K$ 
taking values in $\cK_{<\infty}$ satisfies the property
\begin{equation}\label{eq:smp_condition}
\mbox{$\{K \subset U\} \subset \cF_U$ for every (deterministic) open subsets $U$ of $\widetilde \cG$.}
\end{equation}
Then, with $\tilde{H}^1_0(\overline{K^c})$ denoting the subspace of $\tilde{H}^1(\overline{K^c})$ (see below \eqref{def:dirichlet}) consisting of $f \in \tilde{H}^1_0(\overline{K^c})$ satisfying $f_{|\partial K} = 0$, we have:
\begin{equation}\label{state:smp}
\text{\begin{minipage}{0.65\textwidth}
conditionally on $\cF_K$, $\varphi_{|\overline{K^c}}$ has the same law as $f_{\varphi_{|K}} + \varphi^{\overline{K^c}}$ where $\varphi^{\overline{K^c}}$ is a Gaussian free field on the Hilbert space $\tilde{H}^1_0(\overline{K^c})$.\end{minipage}}
\end{equation}
We end this section with a linear variance estimate for the sum of $\varphi$ over 
subsets of $V$ which will be useful later to control the ``bulk" component of the 
exploration martingale introduced in Section~\ref{sec:exploration}. The bound follows 
from the expression of the zero-average green function on the graph $\cG$ (see \cite[display~(2.17)]{MR4169171}) and the lower bound on the spectral gap $\lambda_\ast$ (cf.~\ref{def:spectral_gap}) as per our standing assumption. We omit the proof.
 \begin{lemma}\label{lem:var_ubd}
For any $K \subset V$, we have
    \begin{equation}\label{eq:var_ubd}
        \var\big[\sum_{x \in K \setminus \{v\}} d_x \varphi(x)\big] \le C |K|.
    \end{equation}
\end{lemma}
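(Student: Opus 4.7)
The plan is to combine the Poincar\'e inequality encoded by the spectral gap with the zero-average constraint \eqref{eq:Gff_sum}. Introduce the degree-weighted measure $\mu$ on $V$ with $\mu(\{x\}) = d_x$, and write $\langle f, g\rangle_\mu \stackrel{\rm def.}{=} \sum_x d_x f(x) g(x)$. By \eqref{def:spectral_gap}, one has $\cE(f,f) \ge \lambda^\ast \langle f, f\rangle_\mu$ for every $f \in \tilde H^1(\cG)$, and diagonalizing accordingly yields a $\mu$-orthonormal basis $\{\psi_i\}_{i \ge 1}$ of $\tilde H^1(\cG)$ with $\cE(\psi_i, \psi_j) = \lambda_i \delta_{ij}$ and $\lambda \le \lambda^\ast = \lambda_1 \le \lambda_2 \le \cdots$. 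The zero-average Green function then admits the standard spectral representation
\[
g_\cG(x,y) = \sum_{i \ge 1} \frac{\psi_i(x)\,\psi_i(y)}{\lambda_i}
\]
(as in \cite[(2.17)]{MR4169171}), and this is essentially all one needs.

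Next, set $K' \stackrel{\rm def.}{=} K \setminus \{v\}$, $Y \stackrel{\rm def.}{=} \sum_{x \in K'} d_x \varphi(x)$, $\bar c \stackrel{\rm def.}{=} \mu(K')/\mu(V)$, and $\phi \stackrel{\rm def.}{=} \mathbf{1}_{K'} - \bar c$, so that $\phi \in \tilde H^1(\cG)$ by construction. Using $\sum_x d_x \psi_i(x) = 0$ for $i \ge 1$, the coefficients $a_i \stackrel{\rm def.}{=} \sum_{x \in K'} d_x \psi_i(x)$ coincide with $\langle \phi, \psi_i\rangle_\mu$, so that the spectral formula for $g_\cG$ and Parseval yield
\[
\var(Y) = \sum_{x, y \in K'} d_x d_y \, g_\cG(x,y) = \sum_{i \ge 1} \frac{a_i^2}{\lambda_i} \le \frac{1}{\lambda^\ast} \sum_{i \ge 1} a_i^2 = \frac{\|\phi\|_\mu^2}{\lambda^\ast}.
\]
A direct computation gives $\|\phi\|_\mu^2 = \mu(K')\, \mu(V \setminus K')/\mu(V) \le \mu(K') \le d|K|$, and the conclusion follows with $C = d/\lambda$.

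There is no genuine obstacle here; the entire estimate is a one-line application of the Poincar\'e inequality afforded by the spectral gap hypothesis, with the degree bound $d^\ast_\cG \le d$ converting $\mu$-mass into cardinality. The only conceptual subtlety is observing that the zero-average property \eqref{eq:Gff_sum} lets one replace the uncentered weight $\mathbf{1}_{K'}$ by its $\mu$-mean-zero projection $\phi$ without altering the value of $Y$, which is precisely what lands the test function inside $\tilde H^1(\cG)$ and makes the spectral gap applicable.
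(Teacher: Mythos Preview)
Your proof is correct and follows precisely the approach the paper indicates (it omits the proof, citing only the spectral representation of the zero-average Green function from \cite[(2.17)]{MR4169171} together with the spectral gap lower bound). You have simply supplied the details: expand the covariance via the spectral decomposition, bound each $1/\lambda_i$ by $1/\lambda^\ast$, and use Parseval on the centered indicator $\phi$ to get $\mu(K')\mu(V\setminus K')/\mu(V) \le d|K|$.
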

\section{zero-average capacity}\label{sec:zeravgcap}
In this section we will formally introduce the {\em zero-average capacity} and discuss 
some of its important properties. The {\em linear isoperimetry} inherent in the lower 
bound on the spectral gap of $\cG$ (see \eqref{eq:isoperimetry}), which is crucial for 
the probability estimates in Theorems~\ref{thm:main1} and \ref{thm:main2}, enter our 
analysis through this object.

There are two distinct %(but related) 
ways to arrive at the zero-average capacity both of which will be important for us. The first of these is linked to a special expression 
for the function $f_{1_K}$ from \eqref{def:fphia} where $K \in \cK_{<\infty}$ (see \eqref{def:cKinfty}). We state it for general $f_\phi$ for applications in future 
sections.
\begin{lemma}[An expression for $f_\phi$]\label{lem:fKexp}
Let $K \in \cK_{< \infty}$ and $\phi : K \to \R$. %be bounded and measurable. 
Then, for %$x \in V^K$ 
$x \in \widetilde \cG$, %the function $f_\phi$ is linear (harmonic) on each (open) interval %$]x, y[$ of $\widetilde \cG \setminus K$ %with $x, y \in V^K$ whereas
\begin{equation}\label{eq:var_phi}
 f_{\phi}(x) = %- \nu_\phi \sum_{y \in V} d_y\, \frac{g_{V^K \setminus K}^K(y, x)}{C_x^K} + \bE_x[\phi(X^K_{H_K})] %\stackrel{\eqref{eq:invar_greensfunc}}{=} 
 - \nu_\phi \sum_{y \in V} d_y\, g_{K^c}(y, x) + \bE_x[\phi(\widetilde X_{H_K})],
\end{equation}
(see \eqref{def:hitting_time}--\eqref{def:green-K} for notations and compare with \cite[Proposition~2.1]{MR4169171}) where
\begin{equation}\label{def:lambdaphi}
\nu_\phi \stackrel{{\rm def.}}{=} \frac{\sum_{x \in V} d_x \bE_x[\phi(\widetilde X_{H_K})]}{\sum_{x, y \in V} d_y d_x\,g_{K^c}(y, x)} \, (\in \R).
\end{equation}
\end{lemma}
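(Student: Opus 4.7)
The plan is to verify directly that the right-hand side of \eqref{eq:var_phi}, which I denote by $F(x) = -\nu_\phi \sum_{y \in V} d_y g_{K^c}(y, x) + \bE_x[\phi(\widetilde X_{H_K})]$, satisfies the defining properties of $f_\phi$ from \eqref{def:fphia}, namely: (i) $F_{|K} = \phi$, (ii) $\sum_{x \in V} d_x F(x) = 0$, and (iii) $F$ minimizes $\widetilde \cE_{K^c}(\cdot,\cdot)$ among all admissible competitors. Checking (i) is immediate since $g_{K^c}(y,x) = 0$ whenever $x \in K$ (the local time of $\widetilde X$ at $x$ before hitting $K$ is a.s.\ zero when $x \in K$) and $\bE_x[\phi(\widetilde X_{H_K})] = \phi(x)$ on $K$ because $H_K = 0$ there. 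Property (ii) is just a rearrangement of the definition of $\nu_\phi$ in \eqref{def:lambdaphi}. So the content lies in (iii).

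To verify (iii), I will use the standard first-order condition: $F$ is the minimizer if and only if $\widetilde \cE_{K^c}(F, h) = 0$ for every admissible perturbation $h$, i.e., every $h : \widetilde \cG \to \R$ with $h_{|\overline{K^c}} \in H^1(\overline{K^c})$, $h_{|K} \equiv 0$, and $\sum_{x \in V} d_x h(x) = 0$. Writing $F = F_1 + F_2$ with $F_1(x) = -\nu_\phi \sum_y d_y g_{K^c}(y,x)$ and $F_2(x) = \bE_x[\phi(\widetilde X_{H_K})]$, I would analyze each piece separately. For $F_2$, the key observation is that $x \mapsto \bE_x[\phi(\widetilde X_{H_K})]$ is harmonic on $\overline{K^c}$ in the metric-graph sense (linear on each interval of $\mathcal I_{-K}$ and Kirchhoff-balanced at every vertex of $V \cap \overline{K^c}$), so Green's identity (or equivalently the orthogonal decomposition of $\tilde H^1$ into harmonic and zero-boundary parts) gives $\widetilde \cE_{K^c}(F_2, h) = 0$ since $h$ vanishes on $\partial K$.

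For $F_1$, I invoke the reproducing-kernel property of the Green function: for any $y \in V$ and any $h \in H^1(\overline{K^c})$ with $h_{|\partial K} = 0$, one has $\widetilde \cE_{K^c}(g_{K^c}(y,\cdot), h) = h(y)$. This is standard but warrants a line of justification on the metric graph; it follows from integration by parts together with the fact that $g_{K^c}(y,\cdot)$ is, in the distributional sense on $\widetilde \cG$, a fundamental solution of $-\Delta$ with Dirichlet condition on $K$, supported at the vertex $y$. Summing against $d_y$ yields $\widetilde \cE_{K^c}(F_1, h) = -\nu_\phi \sum_{y \in V} d_y h(y) = 0$, where the final equality uses both $h_{|K}=0$ and the zero-average constraint on $h$. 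Combining the two pieces gives $\widetilde \cE_{K^c}(F, h) = 0$, which establishes the required minimality and identifies $F = f_\phi$.

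The main technical point is the careful statement and verification of the reproducing kernel identity $\widetilde \cE_{K^c}(g_{K^c}(y,\cdot), h) = h(y)$ together with the harmonicity of the hitting-measure extension $F_2$, both on the metric graph $\widetilde \cG$ with its mixture of one-dimensional intervals and branching vertices. Once these are in hand, the orthogonal decomposition argument is routine and the formula falls out. I would expect to check them by applying \eqref{eq:tildeEdescent} on a common enhancement $\cG^{K,K'}$ as in \eqref{def:KK'} (to reduce both identities to purely discrete electrical-network statements) combined with \eqref{def:harmgreen} and the probabilistic representation \eqref{def:green-K}.
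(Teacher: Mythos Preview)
Your proposal is correct and takes a genuinely different route from the paper's own proof. The paper proceeds \emph{forward}: it writes down the variational problem on the enhancement graph $\cG^K$, applies Lagrange multipliers to obtain the Euler--Lagrange system $-\Delta^K f_\phi + \nu\bar d = 0$ on $V^K\setminus K$, inverts this via the killed Green function $g^K_{K^c}$, simplifies using the last-exit decomposition \eqref{eq:last_exit}, and finally fixes the multiplier $\nu = \nu_\phi$ from the zero-average constraint. You instead proceed \emph{backward}: you take the candidate $F = F_1 + F_2$ as given and verify the first-order optimality condition $\widetilde\cE_{K^c}(F,h)=0$ for all admissible perturbations, splitting into the harmonicity of $F_2$ (which kills $\widetilde\cE_{K^c}(F_2,h)$ against any $h$ vanishing on $\partial K$) and the reproducing-kernel identity for $F_1$ (which turns $\widetilde\cE_{K^c}(F_1,h)$ into $-\nu_\phi\sum_y d_y h(y)$, annihilated by the zero-average constraint). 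Your approach is cleaner conceptually and sidesteps the last-exit decomposition entirely; the paper's is more constructive in that it \emph{derives} the formula rather than verifies it. Two minor points you leave implicit but should state: the denominator in \eqref{def:lambdaphi} is nonzero because $V\setminus K\neq\emptyset$ (this is the content of $\nu_\phi\in\R$), and uniqueness of the minimizer (needed to conclude $F=f_\phi$ from criticality) follows from strict convexity of $\widetilde\cE_{K^c}$ on the admissible affine subspace, since any $h$ with $h_{|K}=0$ and $\widetilde\cE_{K^c}(h,h)=0$ must vanish identically on the connected set $\widetilde\cG$.
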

\begin{proof}
%Since $\bE_x[\phi(\widetilde X_{H_K})]$ is harmonic on $\widetilde\cG \setminus K$ by \eqref{state:harmexten}
Since $K \in \cK_{<\infty}$, i.e., in particular, $V \setminus K \ne \emptyset$ (recall \eqref{def:cKinfty}), the denominator in 
\eqref{def:lambdaphi} is non-zero and hence $\nu_\phi \in \R$. Clearly both $f_\phi$ and the function defined on the right-hand side of 
\eqref{eq:var_phi} equal $\phi$ on $K$. In view of %\eqref{def:piecewise_lin}, 
\eqref{def:harmgreen} and \eqref{def:harmfphi}, both of these 
functions are also %linear (harmonic) on each interval of 
linear on %intervals of $\widetilde \cG^K \setminus K$ 
$\mathcal I_{-K}$ (recall \eqref{def:linear_cI} and \eqref{def:int_GK}). %having at least one endpoint in $V^K \setminus K$. 
Therefore it suffices to prove \eqref{eq:var_phi} {\em only} on $V^K \setminus K$ (see \eqref{def:VK}). However the %equation 
expression \eqref{eq:var_phi}, when restricted to $V^K$, admits of the following 
equivalent formulation %in terms of the random walk $X^K$ on the network $\cG^K$ by 
due to %\eqref{eq:invar_greensfunc} and 
\eqref{def:green_enhance}:%Lemma~\ref{lem:trace_form_green}:
\begin{equation}\label{eq:var_phibis}
 f_{\phi}(x) = \nu_\phi \sum_{y \in V} d_y\, %\frac{g_{V^K \setminus K}^K(y, x)}{C_x^K} 
 \frac{g_{K^c}^K(y, x)}{C_x^K} + \bE_x[\phi(X^K_{H_K})] \mbox{ for $x \in V^K$}
\end{equation}
(see \eqref{def:total_conduc} and \eqref{def:green_enhance} for notations).

In order to prove \eqref{eq:var_phibis}, %let us again recall from \eqref{state:harmfphi} that $f_\phi$ is piecewise linear on %each interval of $\cG^K$ with at least one endpoint in $V^K \setminus K$. 
%$\cG^K \setminus K$. 
%Hence %by the definitions of $\cE(\cdot, \cdot)$ and $\cE^K(\cdot, \cdot)$ from %\eqref{def:dirichlet} and \eqref{def:dirichletK} respectively as well as \eqref{def:rhoConduct} and \eqref{def:harmfphi}, 
%from , 
first note that we obtain from the variational formulation in \eqref{def:fphia}, the 
linearity of $f_\phi$ on %intervals of $\widetilde \cG^K \setminus K$ 
$\mathcal I_{-K}$ %(see~\eqref{def:piecewise_linK}) 
and the %equivalence between the forms $\widetilde \cE(\cdot, \cdot)$ and $\cE^K(\cdot, \cdot)$ in 
identity \eqref{def:dirichletK} that the vector $f_{\phi|V^K} = (f_{\phi}(x) : x \in V^K) \in \R^{V^K}$ minimizes the quadratic form 
\begin{equation}\label{def:fK}
\text{\begin{minipage}{0.58\textwidth}
$\cE^K(f, f) = \sum_{\{x, y\} \in E^K} (f(x) - f(y))^2 C_{\{x, y\}}^K$ over all $f \in \R^{V^K}$ satisfying $f_{| V^K \cap K} = \phi_{|V^K \cap K}$ and $\sum_{x \in V} 
d_x f(x) = 0$.\end{minipage}}
\end{equation}
%where $f \in \R^{V^K}$. 
The space of {\em feasible} solutions is non-empty since $V \setminus K \ne 
\emptyset$. Using the method of Lagrange multipliers, we obtain that the optimal 
vector $f_{\phi| V^K \setminus K}$ is a solution to the system of equations
\begin{equation}\label{eq:var_prob_1}
%\sum_{y \in V^K} \big( f_K(x) - f_K(y)\big)C_{(x, y)}^K 
-\Delta^K f_{\phi|V^K}(x) + \nu \bar d_x = 0; \, x \in V^K \setminus K
\end{equation}
(recall \eqref{def:laplacianK}) for some $\nu \in \R$ where $\bar d_x = d_x$ if $x \in V$ and $0$ otherwise. Since 
$f_{\phi|V^K \cap K} = \phi_{|V^K \cap K}$, we can rewrite this system as
\begin{equation}\label{eq:poisson}
-%\Delta_{V^K \setminus K}^K 
\Delta_{K^c}^K f_{\phi|V^K\setminus K} = -\nu \bar d_{V^K \setminus K} + C_{V^K \setminus K, V^K \cap K}^K \, \phi_{|V^K \cap K},
\end{equation}
(recall \eqref{def:laplaciankilled}) where $\bar d_{V^K \setminus K} = (\bar d_x : x 
\in V^K \setminus K) \in \R^{V \setminus K}$ and $C_{V^K \setminus K, V^K \cap K}^K$ 
is the linear map from $\R^{V^K \cap K}$ to $\R^{V^K \setminus K}$ given by
$C_{V^K \setminus K, V^K \cap K}^K {h}(x) = \sum_{y \in V^K \cap K} C^K_{\{x, y\}} {h}(y)$ (${h} \in \R^{V^K \cap K}, x \in V^K \setminus K$). Now %using \eqref{def:green_enhance}, 
we obtain the following expression for $f_{\phi}(x)$ in view of \eqref{eq:poisson} 
when $x \in V^K \setminus K$:
\begin{equation*}%\label{eq:var_prob_5}
f_\phi(x)  = - \nu \sum_{y \in V \setminus K} d_y \, %\frac{g_{V^K \setminus K}^K(y, x)}{C_x^K} 
\frac{g_{K^c}^K(y, x)}{C_x^K} + \sum_{y \in V^K \setminus K, z \in V^K \cap K}%%g_{V^K\setminus K}^K 
g_{K^c}^K(x, y)\frac{C_{\{y, z\}}^K}{C_y^K}\phi(z)
\end{equation*}
(cf.~\eqref{def:green_enhance} and \eqref{def:green-K}). Note that, by the {\em last-exit decomposition} for $X^K$ applied to the set $V^K \setminus K$ (see, e.g., \cite[Proposition~4.6.4]{LawLim10}), we have
\begin{equation}\label{eq:last_exit}
\sum_{y \in V^K \setminus K} %g_{V^K\setminus K}^K 
g_{K^c}^K(x, y)\frac{C_{\{y, z\}}^K}{C_y^K} =
\bP_x[X^K_{H_K} = z],
\end{equation}
for any $x \in V^K \setminus K$ and $z \in V^K \cap K$. Plugging this into the expression 
for $f_\phi$ above, we get
\begin{equation*}%\label{eq:var_prob_4}
\begin{split}
 f_{\phi}(x) = - \nu \sum_{y \in V \setminus K} d_y\,%\frac{g_{V^K \setminus K}^K(y, x)}
 \frac{g_{K^c}^K(y, x)}{C_x^K} + \bE_x[\phi(X^K_{H_K})]
 = - \nu \sum_{y \in V} d_y\, %\frac{g_{V^K \setminus K}^K(y, x)}
 \frac{g_{K^c}^K(y, x)}{C_x^K} + \bE_x[\phi(X^K_{H_K})],
\end{split}
\end{equation*}
for $x \in V^K \setminus K$ where in the final step we used that %$g_{V^K \setminus K}^K(y, x) = 0$ 
$g_{K^c}^K(y, x) = 0$ for $y \in V^K \cap K (\supset V \cap K)$. %Notice that, since $f_{\phi| V^K \cap K} \equiv \phi_{|V^K \cap K}$ and $g_{V^K \setminus K}^K(y, x) = 0$ for $x \in V^K \cap K$, the last expression is in fact valid for {\em all} $x \in V^K$. % where $\nu_S = (\nu_u : u \in S) \in \R^{S}$. Since $\nu_u = 0$ for $u \in (V \cup O) \setminus (V \cup U)$ and $\nu$ for 
% $u \in V \setminus U$, this gives us
% for all $u \in S$. 
This verifies \eqref{eq:var_phibis} (both expressions equal $\phi$ on $V^K \cap K$) 
except for the equality between $\nu$ and $\nu_\phi$. To this end we will use 
the constraint $\sum_{x \in V} f_K(x)d_x = 0$ (recall \eqref{def:fK}). In view of 
\eqref{eq:var_phi}, with $\nu$ in place of $\nu_\phi$, this constraint leads 
to the equation
\begin{equation*}
\sum_{x \in V} d_x \bE_x[\phi(X^K_{H_K})] - \nu \sum_{y \in V} d_y d_x \, %\frac{g_{V^K \setminus K}^K(y, x)}
\frac{g_{K^c}^K(y, x)}{C_x^K} = 0,
\end{equation*}
whence we get $\nu = \nu_\phi$ as in \eqref{def:lambdaphi} by %\eqref{eq:invar_greensfunc}
\eqref{def:green_enhance}.%Lemma~\ref{lem:trace_form_green}.
% The above argument works, mutatis mutandis, for any refinement of $\cG^K$. In 
% particular, for any compact $K' \subset \widetilde \cG$ with finitely many connected 
% components, we can write
% \begin{equation}\label{eq:var_prob_4bis}
% f_{\phi}(x) = - \nu_\phi \sum_{y \in V} d_y\, \frac{g_{V^{K, K'} \setminus K}^{K, K'}(y, x)}{C_x^{K, K'}} + \bE_x[\phi(X^{K, K'}_{H_K})]
% \end{equation}
% with 
% \begin{equation*}
% \nu_\phi^{K'} = \frac{\sum_{x \in V} d_x \bE_x[\phi(X^{K, K'}_{H_K})]}{\sum_{x, y \in V} d_y d_x\,\frac{g_{V^{K, K'} \setminus K}^{K, K'}(y, x)}{C_x^{K, K'}}} 
% \stackrel{\eqref{eq:invar_greensfunc}}{=} \nu_\phi.
% \end{equation*}
\end{proof}
We are finally ready to define the so-called ``zero-average capacity''.
\begin{defn}[zero-average capacity]\label{def:nuK}
Let $K \in \cK_{<\infty}$ and $1_K: K \to \R$ denote the constant function 1 on $K$. 
We define the {\em zero-average capacity} of $K$ as the quantity $2|E|\nu_K$ where
\begin{equation}\label{def:lambdaK}
\nu_K \stackrel{{\rm def.}}{=} \nu_{1_K} \stackrel{\eqref{def:lambdaphi}}{=}\frac{2|E|}{\sum_{x, y \in V} d_y d_x\,g_{K^c}(y, x)}\,.
\end{equation}
\end{defn}
\begin{remark}\label{rmk:fKexp}
Notice that, in view of \eqref{eq:var_phi}, we can rewrite $f_K \stackrel{{\rm def.}}{=} f_{1_K}$ as 
\begin{equation}\label{eq:var_prob_4}
f_K(x) = - \nu_K \sum_{y \in V} d_y\, g_{K^c}(y, x) + 1.
\end{equation}
\end{remark}
The rationale behind calling $2|E|\nu_K$ the {\em zero-average capacity} is given by 
the following identity. The reader can compare this with a corresponding variational 
formulation of the {\em capacity} of a set in transient graphs, see, e.g., 
\cite[(2.10), p. 18]{MR1743100} %or \cite[(1.8)]{MR3417515}
and also the discussion %around \eqref{eq:var_capacity} 
in the second item in \S\ref{subsec:overview}.
\begin{lemma}[Variational formula for $\nu_K$]\label{lem:characterisation_lambda}
For every $K \in \cK_{<\infty}$, one has (cf.~\eqref{def:fphib})
\begin{equation}\label{eq:characterisation_lambda}
\widetilde \cE(f_K, f_K) \stackrel{\eqref{def:dirichletK}, \eqref{def:harmfphi}}{=} \cE^K(f_{K|V^K}, f_{K|V^K}) = 2|E| \nu_K \, (\in (0, \infty)).
\end{equation}
\end{lemma}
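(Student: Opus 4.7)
The first equality in \eqref{eq:characterisation_lambda} is immediate: by \eqref{def:harmfphi}, $f_K$ is linear on the intervals $\mathcal I_{-K}$, and on $K$ it is the constant 1 (so trivially linear on any interval inside $K$), so $f_K$ is linear on $\mathcal I_K$. The identity \eqref{eq:tildeEdescent} then yields $\widetilde \cE(f_K, f_K) = \cE^K(f_{K|V^K}, f_{K|V^K})$. So the content is the second equality.

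For that, the plan is to write the trace-form energy in ``integrated by parts'' form
\begin{equation*}
\cE^K(f_K, f_K) = -\sum_{x \in V^K} f_K(x)\, \Delta^K f_K(x),
\end{equation*}
which follows from \eqref{def:dirichletK} and \eqref{def:laplacianK} by the standard summation by parts on the finite weighted graph $\cG^K$. I then split this sum according to $V^K \cap K$ and $V^K \setminus K$. On $V^K \cap K$, the definition of $f_K$ gives $f_K \equiv 1$. On $V^K \setminus K$, the Euler--Lagrange/Lagrange-multiplier computation carried out in the proof of Lemma~\ref{lem:fKexp} (see \eqref{eq:var_prob_1}, specialized to $\phi = 1_K$ so that $\nu = \nu_K$) yields $\Delta^K f_K(x) = \nu_K \bar d_x$, where $\bar d_x = d_x$ for $x \in V$ and $0$ otherwise. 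Hence
\begin{equation*}
\cE^K(f_K, f_K) = -\sum_{x \in V^K \cap K} \Delta^K f_K(x) \;-\; \nu_K \sum_{x \in V\setminus K} d_x\, f_K(x).
\end{equation*}

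To handle the first sum, I use the elementary identity $\sum_{x \in V^K} \Delta^K f_K(x) = 0$ (the double sum on the edges of $\cG^K$ cancels by symmetry of $C^K_{\{x,y\}}$), together with the Poisson equation on $V^K \setminus K$ again, to rewrite
\begin{equation*}
-\sum_{x \in V^K \cap K} \Delta^K f_K(x) = \sum_{x \in V^K\setminus K} \Delta^K f_K(x) = \nu_K \sum_{x \in V\setminus K} d_x.
\end{equation*}
Substituting back and factoring $\nu_K$ gives $\cE^K(f_K, f_K) = \nu_K\sum_{x \in V\setminus K} d_x (1 - f_K(x))$. At this point the zero-average constraint $\sum_{x \in V} d_x f_K(x) = 0$ combined with $f_K \equiv 1$ on $V\cap K$ yields $\sum_{x \in V\setminus K} d_x f_K(x) = -\sum_{x \in V\cap K} d_x$, so the right-hand side collapses to $\nu_K \sum_{x \in V} d_x = 2|E|\nu_K$, as desired.

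Finally, to see $\nu_K \in (0, \infty)$: the denominator in \eqref{def:lambdaK} is strictly positive since $g_{K^c}(y, y) > 0$ for at least one $y \in V\setminus K$ (which is non-empty because $K \in \cK_{<\infty}$), and it is finite by \eqref{eq:sup_greensfunc}; the numerator $2|E|$ is a positive constant. Since there is no serious analytic obstacle here, the only delicate step is keeping track of the boundary/interior split and invoking the zero-average constraint at the right moment to turn the expression into the clean multiple of $|E|$; everything else is bookkeeping on the finite enhancement graph $\cG^K$.
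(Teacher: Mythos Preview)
Your proof is correct and follows essentially the same line as the paper: Gauss--Green on $\cG^K$, then the Euler--Lagrange equation \eqref{eq:var_prob_1} on $V^K\setminus K$, then the zero-average constraint. The one substantive difference is how you handle $\sum_{x\in V^K\cap K}\Delta^K f_K(x)$. The paper computes $\Delta^K f_K(x)$ explicitly for $x\in V^K\cap K$ via the last-exit decomposition (display \eqref{eq:LfK}), obtaining $-\nu_K\sum_{y\in V\setminus K}d_y\bP_y[X^K_{H_K}=x]$, and then sums over $x$. You instead invoke the telescoping identity $\sum_{x\in V^K}\Delta^K f_K(x)=0$ to transfer the sum to $V^K\setminus K$, where \eqref{eq:var_prob_1} applies directly. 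Your route is slightly more elementary in that it avoids the last-exit decomposition entirely; the paper's route, on the other hand, gives as a by-product the pointwise formula \eqref{eq:LfK}, which it reuses later in Lemma~\ref{lem:mgexp}.
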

\begin{proof}
Applying the (discrete) Gauss--Green formula, we can write
\begin{equation}\label{eq:gaussgreen}
\cE^K(f_{K|V^K}, f_{K|V^K}) = \sum_{x \in V^K} -\Delta^K f_{K|V^K}(x) f_K(x).
\end{equation}
Now by \eqref{eq:var_prob_1}, we have $\Delta^K f_K(x) = \nu_K \bar d_x$ for all $x \in V^K \setminus K$ whereas by \eqref{eq:var_prob_4},
\begin{equation}\label{eq:LfK}
\Delta^K f_{K|V^K}(x) = -\nu_K \sum_{y \in V \setminus K, z \in V^K} d_y \, %g_{V^K \setminus K}^K(y, z) 
g_{K^c}^K(y, z)\frac{C_{\{z, x\}}^K}{C_z^K} = -\nu_K \sum_{y \in V \setminus K} d_y 
\bP_y[ X^K_{H_K} = x],
\end{equation}
for $x \in V^K \cap K$. Note that we used the last-exit decomposition 
(cf.~\eqref{eq:last_exit}) in the final step. Plugging these into the right-hand side 
of \eqref{eq:gaussgreen} we obtain
\begin{equation}\label{eq:Eff}
\begin{split}
& \cE^K(f_{K|V_K}, f_{K|V_K}) = -\nu_K \sum_{x \in V \setminus K}  d_x f_K(x) + 
\nu_K \sum_{x \in V^K \cap K} f_K(x)\sum_{y \in V \setminus K} d_y  \bP_y[ X^K_{H_K} = x]\\
= & -\nu_K \sum_{x \in V \setminus K}  d_x f_K(x) + 
\nu_K \sum_{y \in V \setminus K} d_y \bP_y[ H_K < \infty] = \big(\sum_{x \in V}d_x\big) \nu_K,
\end{split}
\end{equation}
where we used $f_{K|V^K \cap K} = 1$ in the second step and then in the final 
step along with the fact that $\sum_{x \in V}d_x f_K(x) = 0$.
\end{proof}
We now explore some useful properties of $\nu_K$ starting with monotonicity and 
volume order bounds in the following proposition. The lower bound on $\nu_K$ crucially 
relies on the lower bound on the {\em spectral gap} of the graph $\cG$. In the sequel 
for any $K \subset \widetilde \cG$, we let $b_0(K)$ denote the number of components of 
$K$ and $|K|_V \stackrel{{\rm def.}}{=} |K \cap V|$.
\begin{proposition}[Monotonicity and linear isoperimetry]\label{lem:bnd_lambda}
The mapping $K \mapsto \nu_K: \cK_{<\infty} \to [0, \infty)$ is increasing w.r.t. set inclusion. Further %there exists $c > 0$ such that 
for any $K \in \cK_{\infty}$, we have
\begin{equation}\label{eq:bnd_lambda}
c |K|_V \le 2|E|\nu_K \mbox{ whereas, if $|K|_V + b_0(K) \le c |V|$, then } 2|E|\nu_K \le C (|K|_V + b_0(K)).
\end{equation}
\end{proposition}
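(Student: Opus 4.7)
Lemma~\ref{lem:characterisation_lambda} identifies $2|E|\nu_K$ with $\widetilde\cE(f_K,f_K)$, and by \eqref{def:fphib}, $f_K$ is the $\widetilde\cE$-minimizer over $\{f\in\tilde H^1(\widetilde\cG):f|_K\equiv 1\}$. Monotonicity is then immediate: if $K\subset K'$, the feasible set for $K'$ is contained in that for $K$, so the minimum cannot decrease, giving $\nu_K\le\nu_{K'}$.

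\textbf{Lower bound.} I would descend from the metric-graph Dirichlet form to the discrete one via Cauchy--Schwarz along each interval: for $e=\{x,y\}\in E$, $\int_{I_e}(f_K')^2\,d\rho\ge (f_K(x)-f_K(y))^2$, hence $\widetilde\cE(f_K,f_K)\ge\cE(f_K|_V,f_K|_V)$. Since $f_K|_V\in\tilde H^1(\cG)$ has zero average, the spectral gap bound~\eqref{def:spectral_gap} and our standing assumption yield $\cE(f_K|_V,f_K|_V)\ge\lambda\sum_{x\in V}d_x f_K(x)^2\ge\lambda\sum_{x\in V\cap K}d_x\ge\lambda|K|_V$, using $f_K\equiv 1$ on $V\cap K$ and $d_x\ge 1$.

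\textbf{Upper bound via a test function.} The plan is to exhibit an $f\in\tilde H^1(\widetilde\cG)$ with $f|_K\equiv 1$ and $\widetilde\cE(f,f)\le C(|K|_V+b_0(K))$. Let $N_V(K):=\{v\in V\setminus K:I_e\cap K\ne\emptyset\text{ for some }e\ni v\}$ be the ``adjacent'' vertex set. I set $f\equiv 1$ on $K\cup N_V(K)$, $f\equiv -\beta$ on $V\setminus(K\cup N_V(K))$, and linearly interpolate on each interval of $\mathcal I_K$. Splitting the components of $K$ into those containing at least one vertex (at most $|K|_V$ of them) and those lying strictly inside a single edge of $\cG$ (at most $b_0(K)$ of them) gives $|N_V(K)|\le d|K|_V+2b_0(K)$. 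By construction, the only intervals in $\mathcal I_K$ on which $f$ is non-constant are \emph{full} edges of $\cG$ joining $N_V(K)$ to its exterior in $V$; each contributes exactly $(1+\beta)^2$ to $\widetilde\cE(f,f)$, and their number is at most $d|N_V(K)|=O(|K|_V+b_0(K))$. Fixing $\beta$ to enforce $\sum_x d_x f(x)=0$ and invoking the assumption $|K|_V+b_0(K)\le c|V|$ (together with the trivial $2|E|\ge|V|$) forces $\beta\le 1$, whence the stated upper bound.

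\textbf{Main obstacle.} The delicate step is the last one: a component of $K$ may sit in the interior of an edge of $\cG$ arbitrarily close to one of its endpoints $x$, so naively interpolating from $f=1$ at the boundary point $p\in\partial K$ directly to $f=-\beta$ at $x$ would give an energy contribution $(1+\beta)^2/\rho([p,x]_\cG)$ that is not controlled by $|K|_V+b_0(K)$. The thickening by $N_V(K)$ is tailored precisely to push every transition onto \emph{full} edges of $\cG$ (where $\rho\equiv 1$), at the cost of adding at most two extra vertices per edge-component to the $\{f=1\}$ layer—which is exactly how $b_0(K)$ enters the bound.
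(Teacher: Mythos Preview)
Your proof is correct and follows essentially the same route as the paper: monotonicity from the variational formula, the lower bound via the spectral gap (the paper routes through $\nu_K\ge\nu_{K\cap V}$ rather than your Cauchy--Schwarz step, but the substance is identical), and the upper bound via precisely your thickening-plus-test-function construction (the paper writes $\lceil K\rceil$ for what amounts to $K\cup N_V(K)$ restricted to $V$). One small imprecision: non-constant intervals can also be full edges from $V\cap K$ directly to $V\setminus(K\cup N_V(K))$---take $v\in V\cap K$ with a neighbor $w$ such that $(v,w)_\cG\cap K=\emptyset$, so $w\notin N_V(K)$---hence the count should be $d\bigl(|K|_V+|N_V(K)|\bigr)$ rather than $d|N_V(K)|$; this is still $O(|K|_V+b_0(K))$ and changes nothing.
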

\begin{proof}
The monotonicity of $\nu_K$ is clear in view of \eqref{def:fphia} and 
Lemma~\ref{lem:characterisation_lambda}. For the lower bound in \eqref{eq:bnd_lambda}, 
consider the set $\floor{K} \stackrel{{\rm def.}}{=} K \cap V$. If $\floor{K} \ne 
\emptyset$, i.e., $|K|_V > 0$ which we can assume without any loss of generality, then $\floor{K} \in \cK_{\infty}$. Hence using the monotonicity 
of $\nu_K$, we get $\nu_K \ge \nu_{\floor{K}}$. On the other hand, from Lemma~\ref{lem:characterisation_lambda} (see also \eqref{def:dirichletK}) 
and \eqref{def:spectral_gap} we have $2|E|\nu_{\floor{K}} \ge c |K|_V$. Combining these two we obtain $c|K|_V \le 2|E|\nu_K$.

\smallskip

For the upper bound in \eqref{eq:bnd_lambda}, we will first construct a set $\ceil{K} \supset K$ in $\cK_{<\infty}$ such that $\partial \ceil{K} \subset V$ and then derive the required upper bound on $\cE(f^\ast, f^\ast)$ where $f^\ast \in \R^V$ satisfies $f^\ast_{|V \cap \ceil{K}} = 1_{V \cap \ceil{K}}$ as well as $\sum_{x \in V} d_x f^\ast(x) = 0$. This would yield the bound on $2|E|\nu_K$ in view of the 
monotonicity of $\nu_K$, Lemma~\ref{lem:characterisation_lambda} (see \eqref{def:dirichletK}) and \eqref{def:fK}.

The set $\ceil{K}$ is formed by taking the union of $K$ with any $\overline{I_e}$ 
such that $K \cap I_e \ne \emptyset$. Clearly, $b_0(\ceil{K}) \le b_0(K)$ and 
$\partial \ceil{K} \subset V$. Before we can define the vector $f^\ast$, we need to 
obtain a suitable upper bound on $|\ceil{K}|_V$. To this end, let us first suppose 
that $K$ is connected, i.e., $b_0(K) = 1$. It follows that in this case, $|\ceil{K}|_V 
\le 2|K|_V + 1$. More generally, this implies that 
\begin{equation}\label{eq:bnd_ceilK}
|\ceil{K}|_V \le 2|K|_V + b_0(K) \,\, \mbox{ for any } K \in \cK_{<\infty}.
\end{equation} 
Now consider $f^\ast \in \R^V$ defined as $f^\ast(x) = 
1$ for $x \in V \cap \ceil{K}$ and
\begin{equation}\label{def:f*V-K}
f^\ast(x) = -\frac{\sum_{x \in V \cap \ceil{K}} d_x}{\sum_{x \in V \setminus \ceil{K}} d_x} \,\, \mbox{ for } x \in V \setminus \ceil{K}.
\end{equation}
$f^\ast$ is well-defined if $\ceil{K} \in \cK_{<\infty}$, i.e., $V \setminus \ceil{K} 
\ne \emptyset$ which happens as soon as $2|K|_V + b_0(K) < |V|$ due to 
\eqref{eq:bnd_ceilK}. Also in this case, $\sum_{x \in V} d_xf(x) = 0$. Further if 
$|K|_V + b_0(K) \le c|V|$, we have $(0 \ge)f^\ast(x) \ge - C$ for $x \in \ceil{K} \cap 
V$. Thus
\begin{equation}\label{eq:bnd_Eff}
\cE(f^\ast, f^\ast) \le  (1 + C)^2 |\ceil{K}|_V \le C' (|K|_V + b_0(K))
\end{equation}
whenever $|K|_V + b_0(K) \le c|V|$ as desired.
\end{proof}%Consistent with the definition of $\nu_K$ in Definition~\ref{def:nuK} or its variational formulation in Lemma~\ref{lem:characterisation_lambda}, we can extend the range of  $\nu_{\emptyset} = 0$ and the monotonicity $\nu_K$ (in $K$) as given by Proposition~\ref{lem:bnd_lambda}
Although the zero-average capacity is monotone like the usual capacity, it lacks an 
important property possessed by the latter, namely the {\em subadditivity} 
(see, e.g., \cite[Proposition~2.2.1(b)]{Law91}). This is evident from the simple 
observation that while $2|E|\nu_{\{x\}} \le C$ for all $x \in V$ in view of 
Proposition~\ref{lem:bnd_lambda}, $\nu_V = \infty$ by Definition~\ref{def:nuK} (or the 
variational formula in Lemma~\ref{lem:characterisation_lambda}). Notwithstanding, 
there is a weaker version which is sufficient for the purpose of this work.
\begin{lemma}[Coarse Lipschitz property]\label{lem:lip_denergy}
Let $K \in \cK_{< \infty}$ and $\cC \subset \widetilde \cG$ be such that $K \cup \cC 
\in \cK_{<\infty}$. Then 
\begin{equation}\label{eq:lip_denergy}
(0 \le ) \, 2|E|(\nu_{K\cup\cC} - \nu_K) \le C (|\cC|_V + b_0(\cC))
\end{equation}
for $|K|_V + |\cC|_V + b_0(K) + b_0(\cC) \le c|V|$.
\end{lemma}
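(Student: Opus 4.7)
\medskip
\noindent\emph{Proof plan for Lemma~\ref{lem:lip_denergy}.} The non-negativity is immediate from the monotonicity of $\nu$ established in Proposition~\ref{lem:bnd_lambda}. For the upper bound, my first step is a reduction: I replace $\cC$ by its ``edge-enlargement'' $\ceil{\cC} := (V\cap\cC) \cup \bigcup_{e \in E : I_e \cap \cC \ne \emptyset} \overline{I_e}$, which satisfies $\cC \subset \ceil{\cC}$, $\partial \ceil{\cC} \subset V$, $|\ceil{\cC}|_V \le 2|\cC|_V + b_0(\cC)$ and $b_0(\ceil{\cC}) \le b_0(\cC)$ (cf.~\eqref{eq:bnd_ceilK} in the proof of Proposition~\ref{lem:bnd_lambda}). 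By monotonicity, it suffices to bound $\nu_{K \cup \ceil{\cC}} - \nu_K$, so I may assume $\partial \cC \subset V$ throughout.

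Next I would exploit an orthogonal decomposition. Since $f_K$ minimizes $\widetilde \cE(\cdot, \cdot)$ over the affine subspace $\{f \in \tilde H^1(\widetilde \cG) : f_{|K} \equiv 1\}$, the deviation $h := f_{K \cup \cC} - f_K$ vanishes on $K$, lies in $\tilde H^1(\widetilde \cG)$, and satisfies $\widetilde \cE(f_K, h) = 0$ (by the standard Euler--Lagrange argument at $f_K$). Combined with Lemma~\ref{lem:characterisation_lambda}, this yields
\[
2|E|\big(\nu_{K\cup\cC} - \nu_K\big) \;=\; \widetilde \cE(f_{K\cup\cC}, f_{K\cup\cC}) - \widetilde \cE(f_K, f_K) \;=\; \widetilde \cE(h, h),
\]
and by the variational principle $\widetilde \cE(h, h) \le \widetilde \cE(\tilde h, \tilde h)$ for every $\tilde h \in \tilde H^1(\widetilde \cG)$ with $\tilde h_{|K} = 0$ and $\tilde h_{|\cC} = 1 - f_K$. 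I would then take the test function $\tilde h$ defined on $V$ by $\tilde h(x) = 1 - f_K(x)$ for $x \in V \cap \cC$, $\tilde h(x) = 0$ for $x \in V \cap K \setminus \cC$, and $\tilde h(x) = \beta$ for $x \in V \setminus (K \cup \cC)$, with $\beta \in \R$ chosen to enforce $\sum_x d_x \tilde h(x) = 0$, and extended by linear interpolation over intervals of $\mathcal I_{K \cup \cC}$. Using $\partial \cC \subset V$ and the linearity of $f_K$ on $\mathcal I_{-K}$ (see \eqref{def:harmfphi}), one checks that $\tilde h$ indeed agrees with $1 - f_K$ on $\cC$ and vanishes on $K$.

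Granted the key a priori estimate
\begin{equation*}
(\star) \qquad 0 \;\le\; 1 - f_K(x) \;\le\; C \qquad \text{for all } x \in V,
\end{equation*}
the energy bound $\widetilde \cE(\tilde h, \tilde h) \le C(|\cC|_V + b_0(\cC))$ follows by a straightforward edge count: the denominator $\sum_{V \setminus (K \cup \cC)} d_x$ in $\beta$ is of order $|V|$ (since $|K|_V + |\cC|_V + b_0(K) + b_0(\cC) \le c|V|$), so $|\beta| \le C(|\cC|_V + b_0(\cC))/|V|$; each interval with an endpoint in $V \cap \ceil{\cC}$ then contributes at most $O(1)$ by $(\star)$, and there are at most $d\,|\ceil{\cC}|_V \le C(|\cC|_V + b_0(\cC))$ such intervals, while the remaining ``$\beta$-edges'' contribute $O(\beta^2 \, d|K|_V)$, which is $o(|\cC|_V + b_0(\cC))$ under the size hypothesis.

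The main obstacle is therefore $(\star)$. Starting from the representation $1 - f_K(x) = \nu_K \sum_{y \in V} d_y\, g_{K^c}(y, x)$ obtained in Lemma~\ref{lem:fKexp} (see \eqref{eq:var_prob_4}) and the upper bound $\nu_K \le C(|K|_V + b_0(K))/|V|$ from Proposition~\ref{lem:bnd_lambda}, the problem reduces to a uniform hitting-time estimate
\[
\sum_{y \in V} d_y\, g_{K^c}(y, x) \;\le\; C\, \frac{|V|}{|K|_V + b_0(K)},
\]
which I expect to derive from the linear isoperimetric inequality \eqref{eq:isoperimetry} via the standard mixing/hitting-time argument for reversible chains on expanders (spectral-gap control of $\bP_x[H_K > t]$), working on the enhancement graph $\cG^K$ to accommodate the case $|K|_V = 0$ (where $K$ consists only of ``point'' components lying in interiors of edges). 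This spectral-gap-driven input is the sole place where the expander assumption enters the argument.
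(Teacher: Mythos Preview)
Your framework is sound: the identity $2|E|(\nu_{K\cup\cC}-\nu_K)=\widetilde\cE(h,h)$ for $h=f_{K\cup\cC}-f_K$ follows cleanly from Lemma~\ref{lem:orthogonality}, and the variational characterisation of $h$ (minimising $\widetilde\cE(\tilde h,\tilde h)$ over $\tilde h_{|K}=0$, $\tilde h_{|\cC\setminus K}=1-f_K$, zero average) is correct. The paper takes a shorter route: it builds a piecewise-linear $g$ with $g_{|K}=0$, $g_{|\cC\setminus K}=1$ (not $1-f_K$) and values only in $\{0,1,\text{const}\}$, invokes orthogonality to get $\widetilde\cE(f_K+g,f_K+g)=2|E|\nu_K+\widetilde\cE(g,g)$, and bounds $\widetilde\cE(g,g)$ by a bare edge count --- thereby bypassing $(\star)$ entirely. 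Your route is more explicit about the constraint coming out of the variational principle, at the price of requiring the nontrivial pointwise estimate $(\star)$.

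There are, however, two genuine gaps in your proposal. First, $(\star)$ is only sketched: the needed bound $\max_x\sum_y d_y\,g_{K^c}(y,x)\le C|V|/(|K|_V+b_0(K))$ amounts to $\max_x\bE_x[H_K]\le C\,\bE_\pi[H_K]$ for the trace chain on $\cG^K$, which is plausible on expanders but is a lemma in its own right (and the case $|K|_V=0$ needs separate care). Second, and more concretely, your energy bound for $\tilde h$ breaks down near $\partial K\setminus V$. You define $\tilde h$ only on $V$ and then ``interpolate over $\mathcal I_{K\cup\cC}$'', but those intervals have endpoints in $V\cup\partial(K\cup\cC)\supset V$; at a point $z\in\partial K\setminus V$ whose adjacent vertex $v$ lies in $V\setminus(K\cup\cC)$, the constraint $\tilde h_{|K}=0$ forces $\tilde h(z)=0$ while $\tilde h(v)=\beta$, so this interval contributes $\beta^{2}/\rho([z,v])$ with no lower bound on $\rho([z,v])$. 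Your asserted ``$O(\beta^{2}\,d|K|_V)$'' for the $\beta$-edges tacitly assumes unit-length intervals and therefore fails here. Since you enlarged $\cC$ to $\ceil{\cC}$ but did not --- and cannot, without changing $\nu_K$ --- enlarge $K$, these short intervals are genuinely present and their total contribution is not controlled by your argument. The paper's test function sidesteps this by being identically $0$ on all of $\ceil{K}$, which is incompatible with your exact constraint $\tilde h_{|\cC\setminus K}=1-f_K$ on $\cC\cap(\ceil{K}\setminus K)$; resolving this tension is where the real work in your approach would lie.
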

Later in Section~\ref{sec:regcond_mg}, we will prove the (non-quantitative) continuity of the function $K \mapsto \nu_K$ w.r.t. the Hausdorff 
distance on $\cK_{<\infty}$.

\eqref{eq:lip_denergy} follows from a refined version of the argument used for the proof of upper 
bound in Proposition~\ref{lem:bnd_lambda} combined with a special case of the 
following result. We will use the full strength of this result in 
Section~\ref{sec:dirchlet} below. 
\begin{lemma}[Orthogonality]\label{lem:orthogonality}
Let $K \in \cK_{<\infty}$ and $\phi : K \to \R$. Then for any piecewise linear $g : 
\widetilde \cG \to \R$ (see below \eqref{def:linear_cI}) satisfying $g_{|K} = 0$ and $\sum_{x \in V} d_x g(x) = 0$, we have $\widetilde \cE(f_{\phi}, g) = 0$. On the other hand, if $\phi$ is also piecewise 
linear on $K$, then we have $\widetilde \cE(\psi, f_{\phi}) = 0$ for any $\psi : \widetilde{\cG} \to \R$ such that $\psi_{|K} = 0$ and $\sum_{x \in V} d_x \psi(x) = 0$.
\end{lemma}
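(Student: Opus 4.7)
My plan is to use the variational characterization of $f_\phi$ in \eqref{def:fphia} for both parts --- the first as a direct first-order optimality condition, and the second by passing to the trace form via \eqref{eq:tildeE_to_EK} and invoking the Lagrange equation \eqref{eq:var_prob_1} for $f_\phi$. For the first statement, since $g$ is piecewise linear we have $g \in H^1(\widetilde \cG)$ and in particular $g_{|\overline{K^c}} \in H^1(\overline{K^c})$. Combined with $g_{|K} = 0$ and $\sum_{x \in V} d_x g(x) = 0$, this makes $f_\phi + tg$ admissible in \eqref{def:fphia} for every $t \in \R$. Minimality of $f_\phi$ then forces the first-order term in $\widetilde \cE_{K^c}(f_\phi + tg, f_\phi + tg)$ to vanish, giving $\widetilde \cE_{K^c}(f_\phi, g) = 0$. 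Refining the linearity decomposition of $g$ to subdivide at $\partial K$, I may assume $K$ is a union of linearity intervals of $g$; on each such interval $g$ is linear and vanishes at both endpoints, hence $g \equiv 0$ and $g' = 0$ on $K$. Therefore $\widetilde \cE(f_\phi, g) = \widetilde \cE_{K^c}(f_\phi, g) = 0$, where the left-hand side is interpreted via \eqref{eq:tildeE_to_EK} in case $f_\phi \notin H^1(\widetilde \cG)$.

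For the second statement, piecewise linearity of $\phi$ on $K$ together with linearity of $f_\phi$ on $\mathcal I_{-K}$ (see \eqref{def:harmfphi}) makes $f_\phi$ piecewise linear on all of $\widetilde \cG$; let $K^\sharp$ denote the finite endpoint set of its linearity intervals, chosen to contain $\partial K$. Applying \eqref{eq:tildeE_to_EK} and the discrete Gauss--Green identity as in \eqref{eq:gaussgreen},
\begin{equation*}
\widetilde \cE(\psi, f_\phi) = \cE^{K^\sharp}\bigl(\psi_{|V^{K^\sharp}}, f_{\phi|V^{K^\sharp}}\bigr) = \sum_{x \in V^{K^\sharp}} \psi(x) \bigl(-\Delta^{K^\sharp} f_\phi\bigr)(x).
\end{equation*}
Contributions from $x \in V^{K^\sharp} \cap K$ drop out since $\psi_{|K} = 0$. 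For $x \in V^{K^\sharp} \setminus K$ I will use the geometric fact that $K^\sharp \setminus V^K$ consists only of subdivision points strictly inside $K$, so $V^{K^\sharp} \setminus K = V^K \setminus K$ and, walking from any such $x$ along an edge of $\cG$, the first vertex of $V^{K^\sharp}$ encountered together with its conductance coincide with their $V^K$-counterparts; hence $\Delta^{K^\sharp} f_\phi(x) = \Delta^K f_\phi(x) = \nu_\phi \bar d_x$ by \eqref{eq:var_prob_1}, with $\bar d_x = d_x$ if $x \in V$ and $0$ otherwise. Plugging this in and using $\sum_{x \in V} d_x \psi(x) = 0$ together with $\psi_{|K} = 0$ yields
\begin{equation*}
\widetilde \cE(\psi, f_\phi) = -\nu_\phi \sum_{x \in V \setminus K} d_x \psi(x) = \nu_\phi \sum_{x \in V \cap K} d_x \psi(x) = 0.
\end{equation*}

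The main technical point --- expected to be routine but not automatic --- is the geometric claim above, namely that enlarging the enhancement from $V^K$ to $V^{K^\sharp}$ by points strictly inside $K$ leaves the Laplacian values at exterior vertices unchanged. This is what permits the Lagrange equation \eqref{eq:var_prob_1}, originally derived on $\cG^K$, to be invoked on the finer enhancement $\cG^{K^\sharp}$ required to see the piecewise linearity of $f_\phi$.
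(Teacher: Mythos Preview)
Your proof is correct. Part~1 is essentially the paper's argument: first-order optimality at the minimizer $f_\phi$ forces $\widetilde\cE_{K^c}(f_\phi,g)=0$, and since $g_{|K}=0$ this equals $\widetilde\cE(f_\phi,g)$.

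For Part~2 you take a genuinely different route from the paper. The paper replaces $\psi$ by a piecewise linear $\tilde\psi$ agreeing with $\psi$ on $K\cup V$ (which suffices because, by \eqref{eq:tildeE_to_EK}, $\widetilde\cE(\psi,f_\phi)$ depends on $\psi$ only through its values on the endpoint set of $f_\phi$'s linearity intervals, and these all lie in $K\cup V$); since $f_\phi,\tilde\psi\in H^1(\widetilde\cG)$ the form is symmetric and Part~1 applies directly. Your approach instead passes to the enhancement $\cG^{K^\sharp}$, applies Gauss--Green, and invokes the Lagrange equation \eqref{eq:var_prob_1}. The geometric claim you flag --- that the extra subdivision points $K^\sharp\setminus V^K$ sit strictly inside $K$, so that from any $x\in V\setminus K$ one must cross $\partial K\subset V^K$ before reaching them, whence $\Delta^{K^\sharp}f_\phi(x)=\Delta^K f_\phi(x)$ --- is correct and is the only nontrivial step. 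The paper's reduction is shorter; your computation is more explicit in showing how the constraint $\sum_x d_x\psi(x)=0$ enters (via the multiplier $\nu_\phi$) and avoids relying on symmetry of the extended form.
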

We will prove Lemma~\ref{lem:orthogonality} shortly and finish the
\begin{proof}[Proof of Lemma~\ref{lem:lip_denergy}]
It suffices to find a piecewise linear function $g$ satisfying (i) $g_{|K} = 0$ and 
$g_{(K \cup \cC) \setminus K} = 1$, (ii) $\sum_{x \in V} d_xg(x) = 0$ and (iii) 
$\widetilde \cE(g, g) \le C(|\cC|_V + b_0(\cC))$. Indeed, it then follows similarly as 
in the proof of the upper bound on $2|E|\nu_K$ in Proposition~\ref{lem:bnd_lambda} that
\begin{equation*}
\nu_{K \cup \cC} \stackrel{\eqref{eq:characterisation_lambda}}{\le} \widetilde \cE(f_K + g, f_K + g) \stackrel{{\rm Lem.}~\ref{lem:orthogonality}}{=} \widetilde \cE(f_K, f_K) + \widetilde \cE(g, g) \stackrel{\eqref{eq:characterisation_lambda}, (iii)}{\le} 2|E|\nu_K + C(|\cC|_V + b_0(\cC)).
\end{equation*}
In order to construct the function $g$, we will use the sets $\ceil{K}$ and 
$\ceil{\cC}$ introduced in the proof of Proposition~\ref{lem:bnd_lambda}. To this end %note that we can assume, without any loss of generality, that $b_0(\cC) < \infty$. 
consider the vector $g^\ast \in \R^V$ defined as $g^\ast_{|V \cap \ceil{K}} = 0$, $g^\ast_{|V \cap (\ceil{C} \setminus \ceil{K})} = 1$ and 
\begin{equation*}
g^\ast(x) = -\frac{\sum_{x \in V \cap (\ceil{\cC} \setminus \ceil{K})} d_x}{\sum_{x \in V \setminus (\ceil{K} \cup \ceil{\cC})} d_x} \,\, \mbox{ for } x \in V\setminus(\ceil{K} \cup \ceil{\cC})
\end{equation*}
(cf.~\eqref{def:f*V-K}). It follows from \eqref{eq:bnd_ceilK} that $g^\ast$ is well-
defined for $|K|_V + |\cC|_V + b_0(K) + b_0(\cC) \le c|V|$. Now let $g$ be the 
piecewise linear function obtained by interpolating $g^\ast$ linearly on each interval 
$\overline{I_e}$, $e \in E$. It is straightforward to check that $g$ satisfies the 
properties~(i) and (ii) whereas property~(iii) follows from an analysis similar to the 
one leading to \eqref{eq:bnd_Eff}.
\end{proof}
It remains to give the
\begin{proof}[Proof of Lemma~\ref{lem:orthogonality}]
Since $g$ is piecewise linear, we can write
\begin{equation*}
\widetilde \cE_{K^c}(f_\phi + \lambda g, f_\phi + \lambda g) \stackrel{\eqref{def:tildeE-K}}{=} \widetilde \cE_{K^c}(f_\phi, f_\phi) + 2\lambda\, \widetilde \cE_{K^c}(f_\phi, g) + \lambda^2 \widetilde \cE_{K^c}(f_\phi, f_\phi).
\end{equation*}
Therefore, if $\widetilde \cE_{K^c}(f_\phi, g) = \widetilde \cE(f_\phi, g) \ne 0$ 
(recall \eqref{def:tildeE-K} and that $g_{|K} = 0$), we can choose $\lambda$ 
with small enough absolute value so that $\widetilde \cE_{K^c}(f_\phi + 
\lambda g, f_\phi + \lambda g) < \widetilde \cE_{K^c}(f_\phi, f_\phi)$. On the other hand, from the properties of $g$ we know that $f_\phi + \lambda g$ satisfies the 
constraints in \eqref{def:fphia} and hence $\widetilde \cE_{K^c}(f_\phi + 
\lambda g, f_\phi + \lambda g) \ge \widetilde \cE_{K^c}(f_\phi, f_\phi)$ which leads a 
contradiction. Thus $\widetilde\cE(f_\phi, g) = 0$.

For the second part, notice that in view of \eqref{eq:tildeE_to_EK} and 
\eqref{def:linear_cI} as well as the condition $\psi_{|K} = 0$, we can define 
a piecewise linear function $\tilde \psi$ on $\widetilde \cG$ satisfying 
$\tilde \psi_{|K \cup V} = \psi_{|K \cup V}$ and $\tilde \cE(\psi, 
f_\phi) = \widetilde \cE(\tilde \psi, f_\phi)$. Now we can follow a similar 
argument as above %(with $\widetilde \cE$ instead of $\widetilde \cE^{-K}$) 
to deduce $\widetilde\cE(\tilde \psi, f_\phi) = 0$.
\end{proof}

\section{Martingale property of Dirichlet forms}\label{sec:dirchlet}
In this rather short section, we will introduce a process indexed by the sets in 
$\cK_{<\infty}$ (recall \eqref{def:cKinfty}). This process turns out to be a martingale, even when sampled at 
suitable random sets (see Proposition~\ref{prop:MK_martingale} below). Later in 
Section~\ref{sec:exploration}, we will use this to define a (continuous) $[0, \infty)$ 
indexed martingale ``tied to'' the exploration of components in $\widetilde \cG^{\ge h}$ (cf.~\eqref{def:Ggeh}). 
\begin{defn}\label{def:defn_MK}
For any $K \in \cK_{<\infty}$, we define the random variable $M_K$ as (see \eqref{def:dirichlet} and \eqref{def:dirichletK} for notations)
\begin{equation}\label{eq:defn_MK}
M_{K} = \widetilde \cE(\varphi, f_K) \stackrel{\eqref{eq:tildeE_to_EK}}{=} \cE^K(\varphi_{|V^K}, f_{K|V^K}).
\end{equation}
\end{defn}
\begin{lemma}\label{lem:mgexp}
For any $K \in \cK_{<\infty}$, we have
\begin{equation}\label{eq:mgexp}
 M_K = \nu_K \sum_{x \in V^K \cap K} \varphi(x)\sum_{y \in V} d_y  \bP_y[ %X^K_{H_K} 
 \widetilde X_{H_K} = x], %s_x \varphi(x) \, \mbox{ with } s_x \stackrel{{\rm def.}}{=} \sum_{y \in V} d_y  \bP_y[ X^K_{H_K} = x].
\end{equation}
where $\nu_K$ is as in \eqref{def:lambdaK} (see also \eqref{def:VK} and \eqref{def:hitting_time}). Further, we can write $M_K = M_K^{{\rm blk}} + M_K^{{\rm bdr}}$ with
\begin{equation}\label{eq:M_K_decomp}
%\begin{gathered} %\, \mbox{ with} \\
M_K^{{\rm blk}} \stackrel{{\rm def.}}{=} \nu_K \sum_{x \in V^K \cap K} d_x \varphi(x) \,\,\, \mbox{and} \,\,\, M_K^{{\rm bdr}} \stackrel{{\rm def.}}{=} \nu_K \sum_{x \in \partial K} \varphi(x) \big(\sum_{y \in V \setminus K} d_y  \bP_y[ %X^K_{H_K} = x]
\widetilde X_{H_K} = x]\big).
%\end{gathered}
\end{equation}
We refer to $M_K^{{\rm blk}}$ and $M_K^{{\rm bdr}}$ as the ``bulk'' and ``boundary'' part 
of $M_K$ respectively. 
\end{lemma}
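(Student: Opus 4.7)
The plan is to compute $M_K = \cE^K(\varphi_{|V^K}, f_{K|V^K})$ via the discrete Gauss--Green identity on the finite enhancement graph $\cG^K$, substitute the already-computed form of $\Delta^K f_{K|V^K}$ on $V^K \setminus K$ and $V^K \cap K$ separately, and then exploit the zero-average constraint \eqref{eq:Gff_sum} on $\varphi$ to reshuffle the two resulting sums into the single sum in \eqref{eq:mgexp}.

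Concretely, I would first write (exactly as in \eqref{eq:gaussgreen})
\begin{equation*}
 M_K \;=\; -\sum_{x \in V^K \setminus K} \Delta^K f_{K|V^K}(x)\,\varphi(x) \;-\; \sum_{x \in V^K \cap K} \Delta^K f_{K|V^K}(x)\,\varphi(x),
\end{equation*}
and then for the first sum substitute $\Delta^K f_{K|V^K}(x) = \nu_K \bar d_x$ from \eqref{eq:var_prob_1} (applied with $\phi = 1_K$, so that $\nu_\phi = \nu_K$), using the convention $\bar d_x = d_x$ for $x \in V$ and $\bar d_x = 0$ on $\partial K \setminus V$. This collapses the first sum to $-\nu_K \sum_{x \in V \setminus K} d_x \varphi(x)$, which the zero-average identity \eqref{eq:Gff_sum} turns into $\nu_K \sum_{x \in V \cap K} d_x \varphi(x)$. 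For the second sum I would use the expression for $\Delta^K f_{K|V^K}$ on $V^K \cap K$ derived in \eqref{eq:LfK}, yielding
\begin{equation*}
 M_K \;=\; \nu_K \sum_{x \in V \cap K} d_x \varphi(x) \;+\; \nu_K \sum_{x \in V^K \cap K} \varphi(x) \sum_{y \in V \setminus K} d_y\, \bP_y[X^K_{H_K} = x].
\end{equation*}

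To reach \eqref{eq:mgexp}, I would absorb the bulk term into the double sum by noting that for $y \in V \cap K$ one has $H_K = 0$ $\bP_y$-a.s., hence $\bP_y[\widetilde X_{H_K} = x] = \mathds{1}_{\{y = x\}}$, while for $y \in V \setminus K$ the identity in \eqref{def:green_enhance} (equivalently \eqref{eq:invar_greensfunc}) gives $\bP_y[X^K_{H_K} = x] = \bP_y[\widetilde X_{H_K} = x]$. Combining these two inputs rewrites the right-hand side above as $\nu_K \sum_{x \in V^K \cap K} \varphi(x) \sum_{y \in V} d_y\, \bP_y[\widetilde X_{H_K} = x]$, which is \eqref{eq:mgexp}. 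The decomposition $M_K = M_K^{\rm blk} + M_K^{\rm bdr}$ is then immediate by splitting the inner sum over $y \in V$ into the contributions from $y \in V \cap K$ (which recovers the bulk term $\nu_K \sum_{x \in V^K \cap K} d_x \varphi(x)$, with the $\bar d_x = 0$ convention on $\partial K \setminus V$) and $y \in V \setminus K$, together with the observation that for $y \in V \setminus K$ the harmonic measure $\bP_y[\widetilde X_{H_K} = \cdot]$ is supported on $\partial K$, so the $x$-sum automatically restricts to $\partial K$.

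I do not expect any substantive obstacle; the only point requiring care is the bookkeeping between $V$ and $V^K = V \cup \partial K$, and the two passages between the metric-graph diffusion $\widetilde X$ and the enhancement-graph jump process $X^K$, both of which are supplied by \eqref{def:green_enhance}.
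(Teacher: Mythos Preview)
Your proposal is correct and follows essentially the same approach as the paper: Gauss--Green on $\cG^K$, substitution of \eqref{eq:var_prob_1} and \eqref{eq:LfK}, then the zero-average constraint \eqref{eq:Gff_sum} to flip the sum over $V\setminus K$ into one over $V\cap K$, and finally absorption via $\bP_y[\widetilde X_{H_K}=x]=\mathds{1}_{\{y=x\}}$ for $y\in V\cap K$. The only cosmetic difference is that the paper reads off the bulk/boundary decomposition directly from the intermediate display \eqref{eq:expMK} (noting the second sum is supported on $\partial K$), whereas you recover it by splitting the inner $y$-sum in \eqref{eq:mgexp}; these are equivalent.
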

\begin{proof}
From the (discrete) Gauss-Green formula we have (cf.~\eqref{eq:gaussgreen})
\begin{equation*}
M_K = \sum_{x \in V^K} -\Delta^K f_{K|V^K}(x) \varphi(x).
\end{equation*}
Plugging the expressions \eqref{eq:var_prob_1} and \eqref{eq:LfK} into right-hand 
side of the above display, we obtain
\begin{equation}\label{eq:expMK}
\begin{split}
M_K &= -\nu_K \sum_{x \in V \setminus K} d_x \varphi(x) + \nu_K \sum_{x \in V^K \cap K} \varphi(x)\sum_{y \in V \setminus K} d_y  \bP_y[ %X^K_{H_K} = x]
\widetilde X_{H_K} = x]\\
& = \nu_K \sum_{x \in V \cap K} d_x \varphi(x) + \nu_K \sum_{x \in V^K \cap K} \varphi(x)\sum_{y \in V \setminus K} d_y  \bP_y[ %X^K_{H_K} = x]
\widetilde X_{H_K} = x],
\end{split}
\end{equation}
where we wrote $\widetilde X_{H_K}$ instead of $X^K_{H_K}$ inside the probabilities 
owing to %\eqref{eq:invar_greensfunc} 
\eqref{def:green_enhance} and in the last step we used $\sum_{x \in 
V}d_x\varphi(x) = 0$ (cf.~\eqref{eq:Gff_sum}). Since %$\bP_y[ X^K_{H_K} = x] = 1$ 
$\bP_y[ %X^K_{H_K} = x] = 1$ 
\widetilde X_{H_K} = x] = 1$ for some $y \in V \cap K$ and $x \in V^K \cap K$ if $x = y$ and $0$ otherwise, we obtain \eqref{eq:mgexp} from \eqref{eq:expMK}.

Alternatively, observe that in the second term on the right-hand side of 
\eqref{eq:expMK}, the only non-zero contributions come from the points 
$x \in \partial K$ whence we obtain \eqref{eq:M_K_decomp}.
\end{proof}%\cite[Theorem~8]{MR4112719}
As a direct consequence of %these two (equivalent) expressions 
the expression \eqref{eq:mgexp}, we record below the ``first'' important property of 
$M_K$ which will be useful later to link large component sizes of $\widetilde 
\cG^{\ge h}$ (recall \eqref{def:Ggeh}) to large values of $M_K$ for subsets $K$ of those components. %Recall the definition of the $\sigma$-algebra %$\overline\cF_K = \sigma(K, (\varphi(x))_{x \in K})$
 %`first' properties 
%$\mathcal F_K$ from \eqref{def:FK} and \eqref{def:FKrnd}.
\begin{lemma}\label{cor:value_bdr}
Let $K \in \cK_{<\infty}$ and $h \in \R$. %Then $M_K$ is measurable relative to the $\sigma$-algebra $\cF_K$. %(see \eqref{def:FKbar}). 
If $K \subset \widetilde \cG^{\ge h}$, then we have $M_K \ge 2h|E|\nu_K$.
\end{lemma}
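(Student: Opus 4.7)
The plan is to read off the bound directly from the explicit expression for $M_K$ provided by Lemma~\ref{lem:mgexp}, combined with the simple observation that the hitting distribution of $K$ is a probability measure supported on $V^K\cap K$.

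First I would use \eqref{eq:mgexp} to write
\[
M_K = \nu_K \sum_{x \in V^K \cap K} \varphi(x)\, w(x), \qquad w(x) \stackrel{{\rm def.}}{=} \sum_{y \in V} d_y\, \bP_y[\widetilde X_{H_K} = x].
\]
The weights $w(x)$ are manifestly nonnegative, and $\nu_K\ge 0$ by Proposition~\ref{lem:bnd_lambda}. Since $K\subset \widetilde \cG^{\ge h}$ means $\varphi(x)\ge h$ for every $x\in K$, and in particular for every $x\in V^K\cap K$, the pointwise inequality $\varphi(x)w(x)\ge h\, w(x)$ yields
\[
M_K \ge \nu_K\, h \sum_{x \in V^K \cap K} w(x).
\]

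Next I would compute the total mass $\sum_{x\in V^K\cap K} w(x)$ by swapping the order of summation:
\[
\sum_{x \in V^K \cap K} w(x) = \sum_{y\in V} d_y \sum_{x \in V^K\cap K} \bP_y[\widetilde X_{H_K}=x] = \sum_{y\in V} d_y\, \bP_y[\widetilde X_{H_K}\in V^K\cap K].
\]
The point is that for every $y\in V$ the diffusion $\widetilde X$ hits $K$ almost surely (the state space $\widetilde \cG$ is compact and $K$ is nonempty, so the canonical Brownian motion is recurrent on $\widetilde \cG$) and the first hitting location necessarily lies in $V^K\cap K$: if $y\in V\cap K$ the hit happens at time $0$ at $y$, and otherwise the walk must pass through $\partial K \subset V^K\cap K$ (cf.\ the construction of the enhancement set in \eqref{def:VK}). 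Hence each inner probability equals $1$, and the double sum collapses to $\sum_{y\in V} d_y = 2|E|$.

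Combining the two displays gives $M_K \ge 2h|E|\nu_K$, as desired. No step is really an obstacle; the only subtlety worth flagging is that the inequality $\varphi(x)w(x)\ge h\,w(x)$ is safe for either sign of $h$ precisely because $w(x)\ge 0$ and $\nu_K\ge 0$, so the result holds for all $h\in\R$.
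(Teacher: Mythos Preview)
Your proof is correct and follows essentially the same approach as the paper: apply the explicit formula \eqref{eq:mgexp}, use $\varphi(x)\ge h$ on $V^K\cap K$ together with the nonnegativity of the weights, swap the order of summation, and observe that $\sum_{x\in V^K\cap K}\bP_y[\widetilde X_{H_K}=x]=1$ so that the total mass is $\sum_{y\in V}d_y=2|E|$. Your added remark that the argument is valid for either sign of $h$ because $\nu_K\ge 0$ and $w(x)\ge 0$ is a nice touch.
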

\begin{proof}
%It is clear from the definitions \eqref{def:FKrnd} and that $K$ is measurable relative The measurability is clear from \eqref{eq:mgexp}. \cite{MR676644} For the inequality, 
Note that when $K \subset \widetilde \cG^{\ge h}$,
\begin{equation*}
\begin{split}
M_K &\stackrel{\eqref{eq:mgexp}}{=} \nu_K \sum_{x \in V^K \cap K} \, \sum_{y \in V} d_y  \bP_y[ X^K_{H_K} = x] \varphi(x) \ge h \nu_K \sum_{x \in V^K \cap K} \, \sum_{y \in V} d_y  \bP_y[ X^K_{H_K} = x]\\
&= h \nu_K \sum_{y \in V} d_y \sum_{x \in V^K \cap K}  \bP_y[ X^K_{H_K} = x] = h \nu_K \sum_{y \in V} d_y = 2h|E| \nu_K. \qedhere
\end{split}
\end{equation*}
\end{proof}
We now come to the most important result of this section.
\begin{proposition}[Martingale property]\label{prop:MK_martingale}
Let $K$ be a random set (recall \eqref{def:rndset}) taking values in $\cK_{<\infty}$ 
that satisfies %the %strong (domain) Markov 
property \eqref{state:smp}. Also let $K' 
\supset K$ be a (deterministic) set in $\cK_{<\infty}$. %$\emptyset \ne K \subset K'$ be random compact subsets of $\widetilde \cG$ such that $V \setminus K' \ne \emptyset$.  
% The process $\{ M_U : U \subset \subset V^{\widetilde{O}} \}$ is a martingale with respect to the filtration $\{ \cF_U : U \subset V^{\widetilde{O}} \}$, i.e we have for $U_1 \subset U_2 \subset \subset V^{\widetilde{O}} $,
Then, %we have
\begin{equation}\label{eq:MK_martingale}
\E[M_{K'} \mid \cF_{K} ] = M_K, \: \mbox{$\P$-almost surely.}
\end{equation}
\end{proposition}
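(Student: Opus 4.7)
The plan is to reduce the conditional-expectation identity \eqref{eq:MK_martingale} to the orthogonality relations of Lemma~\ref{lem:orthogonality} by means of the ``strong Markov''-type decomposition \eqref{state:smp} for the zero-average GFF. Rather than analyzing $M_{K'}$ and $M_K$ separately, I work with the increment
\begin{equation*}
    M_{K'} - M_K \;=\; \widetilde\cE(\varphi,\, f_{K'} - f_K),
\end{equation*}
which is algebraically nicer because $g \stackrel{\rm def.}{=} f_{K'} - f_K$ vanishes identically on $K$ (both $f_K$ and $f_{K'}$ equal $1$ there, since $K \subset K'$) and satisfies $\sum_{x \in V} d_x g(x) = 0$ (each of $f_K,f_{K'}$ does, by definition). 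Moreover, by \eqref{def:harmfphi}, $f_K$ and $f_{K'}$ are piecewise linear on $\widetilde\cG$ (on a common refinement of $\mathcal I_K$ and $\mathcal I_{K'}$), so $g$ is piecewise linear; in particular $g|_{\overline{K^c}} \in \tilde H^1_0(\overline{K^c})$, since $\partial K \subset K$ forces $g_{|\partial K}=0$, while the weighted sum of $g$ over $V \cap \overline{K^c}$ equals its weighted sum over all of $V$, hence is $0$.

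The main step is to condition on $\cF_K$ and invoke \eqref{state:smp} to decompose $\varphi = f_{\varphi_{|K}} + \psi$, where, under $\P(\,\cdot\mid \cF_K)$, the ``fluctuation'' $\psi$ is distributed as the free field $\varphi^{\overline{K^c}}$ on $\tilde H^1_0(\overline{K^c})$ and is independent of $\cF_K$. By bilinearity of $\widetilde\cE$ (well-defined via \eqref{eq:tildeE_to_EK} because $g$ is piecewise linear),
\begin{equation*}
    M_{K'} - M_K \;=\; \widetilde\cE\bigl(f_{\varphi_{|K}},\, g\bigr) \;+\; \widetilde\cE(\psi,\, g).
\end{equation*}
The first summand vanishes \emph{pathwise} by the first part of Lemma~\ref{lem:orthogonality} applied with $\phi = \varphi_{|K}$: the test function $g$ is piecewise linear, vanishes on $K$, and has zero weighted sum over $V$, which are exactly the hypotheses required on $g$ in that statement. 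For the second summand, since $g|_{\overline{K^c}} \in \tilde H^1_0(\overline{K^c})$, the Gaussian Hilbert space property of $\varphi^{\overline{K^c}}$ (the analogue of \eqref{eq:gffvar_formulae} on $\tilde H^1_0(\overline{K^c})$) shows that $\widetilde\cE(\psi, g)$ is, conditionally on $\cF_K$, a centered Gaussian with variance $\widetilde\cE(g,g)$. Hence $\E[\widetilde\cE(\psi, g)\mid \cF_K] = 0$, and combining the two pieces yields \eqref{eq:MK_martingale}.

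Two modest points will require care. First, the set $K$ is random, so one must justify that the pathwise orthogonality and the conditional decomposition survive randomization. This is automatic: the identity $\widetilde\cE(f_{\varphi_{|K}}, g) = 0$ is pathwise, and \eqref{state:smp} is already formulated for random $K$ satisfying \eqref{eq:smp_condition}. Second, the reduction $\widetilde\cE(\psi, g) = \widetilde\cE_{K^c}(\psi|_{\overline{K^c}}, g|_{\overline{K^c}})$ needs to be justified from \eqref{def:dirichlet}; this is immediate from $\psi|_K = 0$, which forces $\psi'$ to vanish $\rho$-a.e.\ on $K$. The subtlest bookkeeping point is the descent of the zero-average constraint from $V$ to $V\cap\overline{K^c}$ ensuring $g|_{\overline{K^c}} \in \tilde H^1_0(\overline{K^c})$, but this is already handled by the short computation noted in the first paragraph.
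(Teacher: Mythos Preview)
Your proof is correct and follows essentially the same route as the paper's, combining the Markov-type decomposition \eqref{state:smp} with Lemma~\ref{lem:orthogonality}. The only organizational difference is that the paper first pulls the conditional expectation inside $\widetilde\cE(\cdot, f_{K'})$ (using that $K'$ is deterministic) and then applies \emph{both} parts of Lemma~\ref{lem:orthogonality} to reduce $\widetilde\cE(f_{\varphi_{|K}}, f_{K'})$ to $M_K$, whereas you work with the increment $\widetilde\cE(\varphi, g)$, kill $\widetilde\cE(f_{\varphi_{|K}}, g)$ pathwise via the first part of the lemma, and replace the second orthogonality by a direct conditional-mean-zero appeal to \eqref{state:smp}; the two arguments are equivalent.
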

\begin{proof}
As $K'$ (hence $f_{K'}$) is deterministic, we can write in view of 
\eqref{eq:tildeE_to_EK} and \eqref{eq:defn_MK} that%$\widetilde \cE(\varphi, f_{K'})$is linear in $\varphi$ (see \eqref{state:dirich_bilin_gen}) and  we have 
\begin{equation*}
\E[M_{K'} \mid \cF_{K}] = \widetilde \cE(\E[\varphi \mid \cF_K], f_{K'}), \: \mbox{$\P$-a.s.}
\end{equation*}
Since $K$ satisfies property~\eqref{state:smp}, we further have $\E[\varphi \mid \cF_K] = f_{\varphi_{|K}}$ $\P$-a.s. % \eqref{state:smp},
Combining these two, we get
\begin{equation*}
\E[M_{K'} \mid \cF_{K}] = \widetilde \cE(f_{\varphi|K}, f_{K'}).
\end{equation*}
Now noting that $f_{K'} = f_K = 1$ on $K$ (recall that $K 
\subset K'$) and $\sum_{x 
\in V} d_x f_K(x) = \sum_{x \in V} d_x f_{K'}(x) = 0$ (recall \eqref{def:fphia}), we 
can deduce from Lemma~\ref{lem:orthogonality} (the first part) and \eqref{eq:defn_MK} 
that
\begin{equation*}
\widetilde \cE(f_{\varphi|K}, f_{K'}) = \widetilde \cE(f_{\varphi|K}, f_{K'} - f_K) + \widetilde \cE(f_{\varphi|K}, f_K) = \widetilde \cE(f_{\varphi|K}, f_K).
\end{equation*}
Similarly since $\varphi = f_{\varphi|K}$ on $K$ and $\sum_{x \in V} d_x f_{\varphi|K}(x) = \sum_{x \in V} d_x \varphi(x) = 0$ \eqref{eq:Gff_sum}, we get 
\begin{equation*}
\widetilde \cE(f_{\varphi|K}, f_K) = \widetilde \cE(f_{\varphi|K} - \varphi, f_K) + \widetilde \cE(\varphi, f_K) = \widetilde \cE(\varphi, f_K) \stackrel{\eqref{eq:defn_MK}}{=} M_K,
\end{equation*}
using the second part of Lemma~\ref{lem:orthogonality}. Together, the last three 
displays imply %the equality 
\eqref{eq:MK_martingale}.
\end{proof}
%Our next result will be useful in \S\ref{subsec:martingale}
If we look at the conditional variance instead, we get:
\begin{lemma}\label{lem:cond_variance}
In the same setup as in Proposition~\ref{prop:MK_martingale}, we have
\begin{equation}\label{eq:qua_var}
           \Var[ M_{K'} \mid \cF_K] = 2|E|(\nu_{K'} - \nu_K) \,\mbox{ $\P$-almost surely.}
       \end{equation}
 \end{lemma}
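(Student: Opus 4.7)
The plan is to use the martingale property from Proposition~\ref{prop:MK_martingale} to reduce the conditional variance to a second moment, then exploit the conditional law of $\varphi$ given $\cF_K$ from \eqref{state:smp} and the Gaussian Hilbert space structure \eqref{eq:gffvar_formulae}, and finally identify the resulting Dirichlet energy via the orthogonality Lemma~\ref{lem:orthogonality}.

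First, since $\E[M_{K'} \mid \cF_K] = M_K$ almost surely, one has
\[
\Var[M_{K'} \mid \cF_K] = \E\big[(M_{K'} - M_K)^2 \mid \cF_K \big] = \E\big[\widetilde \cE(\varphi, f_{K'} - f_K)^2 \mid \cF_K \big],
\]
by definition of $M_K, M_{K'}$. Because $f_K = f_{K'} \equiv 1$ on $K$ (and in particular on $\partial K$), the function $g := f_{K'} - f_K$ vanishes on $K$, so that $\widetilde\cE(\varphi, g) = \widetilde\cE_{K^c}(\varphi_{|\overline{K^c}}, g_{|\overline{K^c}})$. A quick check also gives $g_{|\overline{K^c}} \in \tilde H^1_0(\overline{K^c})$: it vanishes on $\partial K$ (since $\partial K \subset K$), and the zero-average constraint on $V \cap \overline{K^c}$ follows by combining $\sum_{x \in V} d_x f_K(x) = \sum_{x \in V} d_x f_{K'}(x) = 0$ with $g \equiv 0$ on $K$.

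Next, since $K$ satisfies \eqref{eq:smp_condition}, \eqref{state:smp} allows us to write $\varphi_{|\overline{K^c}} = f_{\varphi|K} + \varphi^{\overline{K^c}}$ conditionally on $\cF_K$. Linearity of $\widetilde\cE_{K^c}(\cdot, g)$ gives
\[
\widetilde\cE(\varphi, g) = \widetilde \cE_{K^c}(f_{\varphi|K}, g) + \widetilde \cE_{K^c}(\varphi^{\overline{K^c}}, g),
\]
where the first summand is $\cF_K$-measurable and so drops out of the conditional variance. Applying the Gaussian Hilbert space property \eqref{eq:gffvar_formulae} to the free field $\varphi^{\overline{K^c}}$ on $\tilde H^1_0(\overline{K^c})$, together with $g_{|\overline{K^c}} \in \tilde H^1_0(\overline{K^c})$, yields
\[
\Var\big[\widetilde \cE_{K^c}(\varphi^{\overline{K^c}}, g) \,\big|\, \cF_K\big] = \widetilde \cE_{K^c}(g, g) = \widetilde\cE(g, g),
\]
where the last equality uses once more that $g$ vanishes on $K$.

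It remains to identify $\widetilde \cE(f_{K'} - f_K, f_{K'} - f_K)$. Expanding and using Lemma~\ref{lem:characterisation_lambda},
\[
\widetilde\cE(f_{K'} - f_K, f_{K'} - f_K) = 2|E|\nu_{K'} + 2|E|\nu_K - 2 \widetilde\cE(f_{K'}, f_K).
\]
To handle the cross term, apply the second part of Lemma~\ref{lem:orthogonality} with $\phi = 1$ on $K$ (constant, hence piecewise linear, so that $f_\phi = f_K$) and $\psi = f_{K'} - f_K$: we have already verified $\psi_{|K} = 0$ and $\sum_{x \in V} d_x \psi(x) = 0$, so that $\widetilde\cE(f_{K'} - f_K, f_K) = 0$. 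This gives $\widetilde \cE(f_{K'}, f_K) = \widetilde\cE(f_K, f_K) = 2|E|\nu_K$, and substituting back yields \eqref{eq:qua_var}.

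The main subtlety, and the only step that requires care, is the membership $g_{|\overline{K^c}} \in \tilde H^1_0(\overline{K^c})$ used to invoke \eqref{eq:gffvar_formulae}; once this (and the orthogonality of the cross term) is in place, the computation collapses to a one-line expansion of $\widetilde\cE(g,g)$.
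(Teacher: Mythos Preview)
Your proof is correct but follows a different route from the paper's. The paper argues via the law of total variance: writing $\var[M_{K'}] = \var[\E[M_{K'}\mid\cF_K]] + \E[\var[M_{K'}\mid\cF_K]]$, the martingale property reduces the first term to $\var[M_K]$, and for \emph{deterministic} $K$ the unconditional variances are read off directly as $\var[M_S] = \widetilde\cE(f_S,f_S) = 2|E|\nu_S$ from \eqref{eq:gffvar_formulae} and Lemma~\ref{lem:characterisation_lambda}; since \eqref{state:smp} forces the conditional variance to be almost surely constant, one obtains \eqref{eq:qua_var} by subtraction, and then extends to random $K$. This avoids both the orthogonality lemma and the verification that $g_{|\overline{K^c}} \in \tilde H^1_0(\overline{K^c})$.

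Your approach instead works directly with the increment $M_{K'}-M_K = \widetilde\cE(\varphi,g)$ and the conditional decomposition \eqref{state:smp}, identifying the conditional variance as $\widetilde\cE(g,g)$ and then evaluating it via Lemma~\ref{lem:orthogonality}. This is slightly longer but has two advantages: it makes transparent \emph{why} the formula holds (the increment depends only on the ``fresh'' field $\varphi^{\overline{K^c}}$ paired with $g$), and it handles random $K$ in one pass without a separate extension argument. One small point: when you invoke \eqref{eq:gffvar_formulae} for $\varphi^{\overline{K^c}}$, you are really using the analogue of that display for the canonical field on $\tilde H^1_0(\overline{K^c})$, which is implicit in \eqref{state:smp} rather than literally stated in \eqref{eq:gffvar_formulae}; this is harmless but worth saying explicitly.
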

 \begin{proof}
First of all, let us note that $M_{K'}$ is square integrable owing to its definition 
in \eqref{eq:defn_MK}, the variance formula \eqref{eq:gffvar_formulae} and 
also \eqref{eq:characterisation_lambda}. So we can use the following standard 
decomposition:
\begin{equation}\label{eq:varMK'decomp}
\var[M_{K'}] = \var[\E[M_{K'} | \cF_K] ] + \E[ \var[M_{K'} | \cF_K] ] \stackrel{\eqref{eq:MK_martingale}}{=} \var[M_K] + \E[\var[M_{K'} | \cF_K] ].
\end{equation}
Now suppose that $K$ is deterministic. Then from \eqref{eq:defn_MK}, 
\eqref{eq:gffvar_formulae} and \eqref{eq:characterisation_lambda}, we get
\begin{equation}\label{eq:varMK}
 \var[M_S] = 2|E|\nu_S %\mbox{ and } \var[M_{K'}] = 2|E|\nu_{K'}.
 \mbox{ for any $S \in \cK_{<\infty}$.}
\end{equation} 
On the other hand, in view of \eqref{state:smp}, %or in fact \eqref{state:mp}, 
we obtain that the conditional variance $\var[M_{K'} | \cF_K]$ 
is constant almost surely. Plugging these two observations into 
\eqref{eq:varMK'decomp} gives us
\begin{equation*}
\var[M_{K'} | \cF_K] = 2|E|(\nu_{K'} - \nu_K)\, \mbox{a.s.}
\end{equation*}
for any deterministic $K$ (cf.~\eqref{eq:qua_var}). Hence the same expression also holds when $K \subset K'$ is random but satisfies %the %strong Markov 
property~\eqref{state:smp}, thus yielding the lemma.
\end{proof}

\section{Regularity of the process $(M_K)_{K \in \cK_{<\infty}}$}\label{sec:regcond_mg}
In this section, we will establish the continuity of the functions $K \mapsto M_K$ and 
$K \mapsto \nu_K$ as $K$ ranges over the family $\cK_{<\infty}$ (see Definitions~\ref{def:defn_MK} and \ref{def:nuK} and also \eqref{def:cKinfty}). These will be used to ensure the continuity of a $[0, \infty)$-indexed ``exploration 
martingale'' and its quadratic variation process which we introduce in \S\ref{subsec:martingale}.

The relevant topology on $\cK_{<\infty}$ is given by the {\em Hausdorff distance} which we briefly recall 
below. For any $S, S' \subset \widetilde \cG$, the Hausdorff distance (corresponding 
to the metric $d(\cdot, \cdot)$ on $\widetilde\cG$, recall \eqref{def:distance}) between them is defined as 
\begin{equation}\label{eq:haus_dist}
        d_{\mathrm{H}} (S, S') {=} \inf \{ \varepsilon \ge 0 : S \subset %(S')^{\varepsilon}
        B(S', \varepsilon) \text{ and } S' \subset %S^\varepsilon 
        B(S, \varepsilon)\},
    \end{equation}
where %$S^\varepsilon \stackrel{{\rm def.}}{=} \cup_{x \in S} \{ y \in \widetilde{\cG} : d( y, x ) < \varepsilon\}$
$B(S, \varepsilon) \stackrel{{\rm def.}}{=} \cup_{x \in S} \{ y \in \widetilde{\cG} : d( y, x ) < \varepsilon\}$ denotes the $\varepsilon$-neighborhood of $S$. $d_{{\rm H}}$ is a metric on $\cK$ (see, e.g., \cite[pp.~280-281]{MR3728284}). In the 
sequel, we use $\cK$ (or $\cK_{<\infty}$) to refer to both the collection and the 
associated metric space depending on the context. To keep the exposition simple, and 
because it is sufficient for our purpose, we only prove %the continuity for any {\em ordered} pair of sets.
a restricted version of continuity as follows (see also Proposition~\ref{lem:lambda_cont} below).
\begin{prop}[On continuity of $K \mapsto M_K$]\label{lem:mg_cont}
Let $K, K' \in \cK_{<\infty}$ be such that either $K \subset K'$ or $K' \subset K$. 
Then for every $\varepsilon > 0$, there exists $\delta = \delta(\varepsilon, K,\cG, 
\varphi) > 0$ satisfying
    \begin{equation*}
            | M_K - M_{K'} | < \varepsilon \mbox{ whenever } d_{\mathrm{H}}( K,K') < 
            \delta.
    \end{equation*}
\end{prop}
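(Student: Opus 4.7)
The plan is to exploit the explicit formula from Lemma~\ref{lem:mgexp}, writing
\[
M_K = \nu_K \cdot \Psi_K, \quad \Psi_K \stackrel{\rm def.}{=} \sum_{y \in V} d_y\, \bE_y[\varphi(\widetilde X_{H_K})],
\]
and decomposing
\[
M_K - M_{K'} \;=\; (\nu_K - \nu_{K'})\,\Psi_K \;+\; \nu_{K'}\,(\Psi_K - \Psi_{K'}).
\]
For the first term, I would use (a) the continuity of $K \mapsto \nu_K$ along monotone Hausdorff neighborhoods promised in Proposition~\ref{lem:lambda_cont} (which should follow from Lemma~\ref{lem:lip_denergy} applied to the symmetric difference $K' \triangle K$ shrinking to the empty set), and (b) the bound $|\Psi_K| \le 2|E|\,\|\varphi\|_\infty$, where $\|\varphi\|_\infty < \infty$ $\P$-a.s.\ because $\varphi$ is continuous on the compact space $\widetilde{\cG}$. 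Moreover, Proposition~\ref{lem:bnd_lambda} keeps $\nu_{K'}$ bounded uniformly in a small Hausdorff neighborhood of $K$, so the prefactor in the second term remains under control.

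For the second piece, assume without loss of generality $K \subset K'$ (the opposite case is analogous, with the roles swapped). Since $H_{K'} \le H_K$ under $\bP_y$ for every $y \in V$, the strong Markov property of $\widetilde X$ at $H_{K'}$ yields
\[
\bE_y[\varphi(\widetilde X_{H_K})] - \bE_y[\varphi(\widetilde X_{H_{K'}})] \;=\; \bE_y\!\left[\bE_{\widetilde X_{H_{K'}}}[\varphi(\widetilde X_{H_K})] - \varphi(\widetilde X_{H_{K'}})\right].
\]
As $\widetilde X_{H_{K'}} \in K' \subset B(K,\delta)$, it suffices to prove the deterministic bound
\[
\sup_{z \in K'} \bigl|\bE_z[\varphi(\widetilde X_{H_K})] - \varphi(z)\bigr| \;\xrightarrow[\delta \to 0]{}\; 0.
\]
Using the uniform continuity of $\varphi$ on the compact $\widetilde{\cG}$, this further reduces to showing that for every $\eta > 0$,
\[
\sup_{z \in B(K,\delta)} \bP_z[\,d(\widetilde X_{H_K}, z) \ge \eta\,] \;\longrightarrow\; 0 \quad \text{as } \delta \to 0;
\]
namely, Brownian motion started within $\delta$ of $K$ is unlikely to travel distance $\eta$ before hitting $K$.

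The main obstacle is this one-dimensional estimate, in particular when $z$ lies close to a vertex of $\cG$ from which several edges branch. Away from vertices, the claim reduces to standard exit-distribution calculations for one-dimensional Brownian motion on a short interval, which give displacement of order $O(\delta)$. Near a vertex, one uses an excursion decomposition at the vertex: the probability of completing an excursion that exits $B(z,\eta)$ before hitting $K$ can be bounded via the Green-function estimate \eqref{eq:sup_greensfunc} together with the fact that each excursion has a positive chance of hitting the nearby part of $K$. Since $\partial K$ is a finite set ($K \in \cK_{<\infty}$), uniformity over $z$ is obtained by taking the worst-case bound over finitely many such vertex neighborhoods. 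Combining this with Proposition~\ref{lem:lambda_cont} for the first term completes the proof.
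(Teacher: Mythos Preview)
Your approach is correct and essentially mirrors the paper's: both factor $M_K = \nu_K \cdot \Psi_K$, invoke Proposition~\ref{lem:lambda_cont} for the continuity of $\nu_K$, and control $\Psi_K - \Psi_{K'}$ via the strong Markov property together with a gambler's-ruin type hitting estimate. The paper carries this out on the discrete trace walk $X^{K,K'}$ by isolating the ``visible'' boundary points of $K'$ and showing they lie within $O(\delta)$ of $V^K \cap K$, whereas you work directly with the diffusion $\widetilde X$ and the uniform continuity of $\varphi$; this is a presentational rather than a substantive difference.

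Two small corrections are in order. First, your parenthetical that Proposition~\ref{lem:lambda_cont} follows from Lemma~\ref{lem:lip_denergy} is not right: the bound there is $C(|\cC|_V + b_0(\cC))$ with $\cC = K' \triangle K$, and while $|\cC|_V \to 0$ as $d_{\mathrm H}(K,K') \to 0$, the component count $b_0(\cC)$ need not (e.g.\ $K'$ may sprinkle many isolated points near $K$). The paper instead deduces Proposition~\ref{lem:lambda_cont} from a Green-function smallness estimate (Lemma~\ref{lem:small_greens_func}). Second, for the same reason Proposition~\ref{lem:bnd_lambda} does \emph{not} give a uniform bound on $\nu_{K'}$ over all $K' \supset K$ in a Hausdorff neighborhood of $K$, since $b_0(K')$ is uncontrolled; but this is harmless, as Proposition~\ref{lem:lambda_cont} already yields $\nu_{K'} \to \nu_K$ and hence boundedness, or alternatively you can swap the roles in your decomposition to get $\nu_K(\Psi_K - \Psi_{K'}) + (\nu_K - \nu_{K'})\Psi_{K'}$, where only the fixed $\nu_K$ appears as a prefactor.
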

The proof needs a similar result for $K \mapsto \nu_K$ as an ingredient which is 
important on its own.
\begin{prop}[On continuity of $K \mapsto \nu_K$]\label{lem:lambda_cont}
    Let $K, K' \in \cK_{<\infty}$ be as in Proposition~\ref{lem:mg_cont}. %non-empty compact sets with finitely many connected components such that $V \setminus (K \cup K') \ne \emptyset$ and one of them is contained in the other. 
    Then for every $\varepsilon > 0$, there exists $\delta = \delta( \varepsilon, K,\cG) > 0$ %such that the following holds: If $K' \subset K$ or $K \subset K'$, where $K' \subset \widetilde{\cG}$ is also a non-empty compact set with finitely many connected components with $V \setminus K' \ne \emptyset$, then 
    satisfying
    \begin{equation*}
            |\nu_K - \nu_{K'}| < \varepsilon \mbox{ whenever } d_{\mathrm{H}}( K,K') < \delta.
    \end{equation*} 
\end{prop}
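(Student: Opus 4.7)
The plan is to leverage the variational characterization $2|E|\nu_K = \widetilde \cE(f_K, f_K)$ from Lemma~\ref{lem:characterisation_lambda} together with the fact that $f_K$ is Lipschitz on the compact space $\widetilde \cG$. For each direction of inclusion between $K$ and $K'$, I would construct a feasible competitor for the variational problem defining the \emph{larger} of $\nu_K, \nu_{K'}$ whose Dirichlet energy is close to the one defining the smaller. Monotonicity (Proposition~\ref{lem:bnd_lambda}) supplies the opposite direction, so only the matching bound on the gap needs to be produced.

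First I treat the case $K \subset K'$. Since $\partial K$ is finite (by \eqref{eq:conn_set_G}), there exists $\delta_0 = \delta_0(K, \cG) > 0$ such that, for every $\delta < \delta_0$ and every $K'$ with $K \subset K' \subset B(K, \delta)$, (i) $V \cap K' = V \cap K$ and (ii) $K' \setminus K$ lies in the union of at most $|\partial K|$ sub-intervals of edges, each of length $\le \delta$ and adjacent to $\partial K$. By \eqref{def:harmfphi}, $f_K$ is piecewise linear on $\mathcal{I}_{-K}$ and equals $1$ on $K$, hence Lipschitz on $\widetilde \cG$ with some constant $L_K$ depending only on $K, \cG$, and $|f_K - 1| \le L_K\delta$ throughout $B(K, \delta)$. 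Next, define $\tilde g \in \tilde{H}^1(\widetilde \cG)$ by $\tilde g \equiv 1$ on $K'$, $\tilde g \equiv f_K$ outside $B(K, 2\delta)$, and linear interpolation in between (which takes place in disjoint sub-intervals of edges containing no vertices of $V$, thanks to (ii) applied with $2\delta$ in place of $\delta$). Then $\tilde g$ agrees with $f_K$ on all of $V$, so the zero-average condition $\sum_x d_x \tilde g(x) = 0$ is inherited from $f_K$, and $\tilde g$ is feasible for the $f_{K'}$-problem. A direct calculation on the finitely many transition intervals (each of length $\Theta(\delta)$ and carrying a linear piece of slope $O(L_K)$) gives
\[
0 \le 2|E|(\nu_{K'} - \nu_K) \le \widetilde \cE(\tilde g, \tilde g) - \widetilde \cE(f_K, f_K) \le C(K, \cG)\,\delta,
\]
which tends to $0$ as $\delta \to 0$, yielding the desired continuity.

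The case $K' \subset K$ is handled analogously by building a competitor for $f_K$'s variational problem out of $f_{K'}$, taking it $\equiv 1$ on $K$, $\equiv f_{K'}$ outside $B(K', 2\delta)$, and linearly interpolated in between. The main obstacle here is that the Lipschitz constant of $f_{K'}$ a priori depends on $K'$ and could blow up if $\partial K'$ comes very close to a vertex. I intend to rule this out by observing that $f_{K'}$ is uniformly bounded and uniformly Lipschitz across all $K' \subset K$ with $d_{\rm H}(K, K') < \delta_0$: the explicit formula in Lemma~\ref{lem:fKexp}, combined with the global Green-function bound \eqref{eq:sup_greensfunc} and the monotonicity $\nu_{K'} \le \nu_K$, yields $\|f_{K'}\|_\infty \le C(K, \cG)$. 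On any interval of $\mathcal{I}_{K'}$ whose two endpoints both lie in $K'$, $f_{K'}$ is identically $1$ (being linear with boundary values $1$); the remaining intervals of $\mathcal I_{K'}$ join $\partial K'$ to a vertex in $V \setminus K$ at distance $\ge 2\delta_0$, or join two such vertices across a unit edge, so each has length bounded below by a positive constant depending only on $K, \cG$. Combining these two facts with the $\infty$-bound on $f_{K'}$ yields a uniform Lipschitz constant $L = L(K, \cG)$ for $f_{K'}$. The competitor construction of the previous paragraph then applies verbatim and gives $0 \le \nu_K - \nu_{K'} \le C(K, \cG)\,\delta$, completing the proof.
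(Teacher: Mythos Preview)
Your variational/competitor approach is genuinely different from the paper's proof, which instead plugs the explicit formula $\nu_K = 2|E|\big/\sum_{x,y\in V} d_xd_y\,g_{K^c}(x,y)$ and reduces everything to a killed Green-function estimate (Lemma~\ref{lem:small_greens_func}). Your treatment of the case $K\subset K'$ is correct and clean: for small $\delta$ one does have $V\cap K'=V\cap K$, so the competitor inherits the zero-average constraint from $f_K$, and all interpolation takes place on $O_K(1)$ intervals of length $\Theta(\delta)$ with slopes bounded by the (fixed) Lipschitz constant of $f_K$.

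The case $K'\subset K$, however, has a real gap. Both of your key steps tacitly assume $V\cap K' = V\cap K$, but this can fail for arbitrarily small $d_{\rm H}(K,K')$: take for instance $K=\overline{I_{\{u,w\}}}$ and $K'=[u',w]_\cG$ with $\rho([u,u']_\cG)=\delta/2$, so that $u\in (V\cap K)\setminus K'$. Then (i) your competitor $\tilde g$ no longer has zero average, since $\tilde g(u)=1$ while $f_{K'}(u)\ne 1$, giving $\sum_x d_x\tilde g(x)=\sum_{x\in V\cap(K\setminus K')}d_x\,(1-f_{K'}(x))\ne 0$; and (ii) your uniform-Lipschitz justification breaks, because the interval $[u',u]_\cG\in\mathcal I_{K'}$ joins $\partial K'$ to a vertex of $V\cap(K\setminus K')$ (not $V\setminus K$) and has length $\delta/2$, so your claimed lower bound on interval lengths is false. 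Repairing either defect requires knowing that $|f_{K'}(x)-1|=O(\delta)$ for $x$ at distance $O(\delta)$ from $K'$; via \eqref{eq:var_prob_4} this amounts to $g_{(K')^c}(y,x)=O(\delta)$ for such $x$, which is exactly the content of the paper's Lemma~\ref{lem:small_greens_func}. (The crude bound $\widetilde\cE(f_{K'},f_{K'})\le 2|E|\nu_K$ alone only yields $|f_{K'}(x)-1|=O(\sqrt\delta)$ on such short intervals, leaving the interpolation energy $O(1)$ rather than $o(1)$.) So once patched, your argument does not actually avoid the paper's main technical ingredient in this direction.
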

Assuming Proposition~\ref{lem:lambda_cont}, let us finish the
\begin{proof}[Proof of Proposition~\ref{lem:mg_cont}]
 % For any enhancement $\cG^{K}$ induced by a compact set $K$, note that $\partial_{\cG^{K}} D^K \subset \partial K$ ($\subset V^{K}$). In fact, the same is true for any refinement $\cG^O$ of $\cG^{K}$. So instead of $\partial_{{\cG}^{K,K'}} \widetilde{D}_K$ and $\partial_{{\cG}^{K,K'}} \widetilde{D}_{K'}$ (see the proof of Lemma~\ref{lem:small_greens_func} for notations), 
 % we will be working with $\partial K$ and $\partial K'$ respectively. Now 
%Let $\widetilde{D}_K$ and $\widetilde{D}_{K'}$ be as in the proof of Lemma~\ref{eq:small_greens_func}. 
%With the notational convention used for the proof of Lemma~\ref{lem:sup_greensfunc} in force, 
Let us recall the expression of the martingale from \eqref{eq:mgexp} in 
Lemma~\ref{lem:mgexp}: %, we can write 
 \begin{equation*}%\label{eq:MKexp}
         M_K = \nu_K \sum_{z \in V} \sum_{x \in V^K \cap K}  \varphi( x ) \bP_z[ X_{H_K} = x], \text{ and } M_{K'} = \nu_{K'} \sum_{z \in V} \sum_{x \in V^{K'}\cap K' } \varphi( x ) \bP_z[ X_{H_{K'}} = x],
 \end{equation*}
 where $X$ is the (discrete-time) walk $X^{K, K'}$ (cf.~\eqref{def:KK'} and \eqref{def:VK} for relevant notations) and we invoked %\eqref{eq:invar_greensfunc} 
 \eqref{def:green_enhance} in replacing $\widetilde X$ with $X$ inside the probabilities. In view of 
 Proposition~\ref{lem:lambda_cont} and since $\varphi$ is a continuous 
function on $\widetilde{\cG}$, %almost surely, 
it therefore suffices to prove the following statement for any continuous function $f$ 
on $\widetilde{\cG}$, $z \in V$ and $\varepsilon \in (0,1)$. %(cf.~the proof of Lemma~9 in \cite{MR4112719}). 
Letting
% Let $K \subset \widetilde{\cG}$ be a non-empty compact set with finitely many connected components such that $V \setminus K \ne \emptyset$ and consider the function
% Let us introduce for any $z \in V$ and continuous function $f$ on $\widetilde{\cG}$ 
\begin{equation}\label{def:FK}
        F(z,K)  \stackrel{\rm def.}{=} \sum_{x \in V^K \cap K} f(x) \bP_z[ X_{H_K} = x],  \text{ and } F(z,K')  \stackrel{\rm def.}{=} \sum_{x \in V^{K'} \cap K'} f(x) \bP_z[ X_{H_{K'}} = x],
\end{equation}
there exists $\delta = \delta(\varepsilon, K, \cG, f) > 0$ such that 
\begin{equation}\label{eq:FKK'}
|F(z,K) - F(z,K')| < \varepsilon  
\end{equation} 
whenever $d_\mathrm{H}(K,K') < \delta$. 
% \begin{equation}
%     \begin{split}
%         |F(K) - F(K')| \le \varepsilon,
%     \end{split}
% \end{equation}
% whenever $d_{\mathrm{H}}( K,K') < \delta$  and $d_{\mathrm{H}}( \partial K, \partial K') < \delta$ . 
To this end, let $\mathrm{M}_{f} \stackrel{{\rm def.}}{=} \sup_{x \in \widetilde{\cG}} 
|f(x)| \vee 1$. Also let 
\begin{equation*}
\delta_{f, \varepsilon} \stackrel{{\rm def.}}{=} \sup\{\delta \in (0, 1) : |f(x) - f(y)| < \tfrac{\varepsilon}8 \text{ whenever } d(x,y) < \delta, x, y \in \widetilde{\cG} \}.
\end{equation*}
Note that $\mathrm{M}_f < \infty$ and $\delta_{f, \varepsilon} > 0$ since $f$ is continuous on 
$\widetilde \cG$ which is compact. Now let
\begin{equation}\label{def:delta_mg}
        \delta \stackrel{\rm def.}{=} \frac{\inf_{x \ne y \in V^{K} } d( x, y) \wedge \delta_{f, \varepsilon}}{100} \cdot \frac{\varepsilon}{\mathrm{M}_{f} |V^K \cap K| }.
\end{equation}
We call $x \in V^{K'} \cap K'$ as {\em visible} from $V$ if 
either $x \in V$ or $x \in (y_1, y_2)_{\cG}$ for some $y_1, y_2 \in V$ such that 
$[y_1, x)_{\cG} \cap K' = \emptyset$ (cf.~\eqref{def:intG} for notation). Let $D_{{\rm v}}^{K'}$ denote the set of all visible points in $V^{K'} \cap K'$. Then we can write
\begin{equation}\label{eq:FK'mod}
F(z,K') = \sum_{x \in D_{{\rm v}}^{K'}} f(x) \bP_z[ X_{H_{K'}} = x],
\end{equation}
as $\bP_z[X_{H_{K'}} = x] = 0$ when $x \notin D_{{\rm v}}^{K'}$ (see \eqref{def:FK}).
We claim that 
\begin{equation}\label{eq:dhDbar}
D_{{\rm v}}^{K'} \subset %(D^K)_{\delta'} 
B(V^K \cap K, 2\delta).
\end{equation}
%(see \eqref{eq:epsilon_flat} for notation) for the some $\delta' < 2\delta$. 
\eqref{eq:dhDbar} follows in fairly straightforward manner from our definition of 
$\delta$ in \eqref{def:delta_mg}. So we omit further details 
%We defer the proof this claim until the end 
and proceed with the proof of \eqref{eq:FKK'} assuming this. 

\smallskip

From the choice of $\delta$ in \eqref{def:delta_mg}, it is clear that $d(x, y) > 4\delta$ for any two distinct $x, y \in V^K \cap K$ and as a result, $B(x, 2\delta) \cap B(y, 2\delta) = 
\emptyset$ for such $x$ and $y$. On the other hand, from \eqref{eq:dhDbar} we have 
$D_{{\rm v}}^{K'} \subset \cup_{x \in V^K \cap K} B(x, 2\delta)$. In view of \eqref{def:FK} and 
\eqref{eq:FK'mod}, these two observations give us the expression:
\begin{equation*}%\label{eq:mg_cont_1}
|F(z,K ) - F(z,K')|  = \Big |\sum_{ x \in V^K \cap K}  f ( x )  \bP_z[ X_{H_K} = x] - \sum_{x \in V^K \cap K} \sum_{y \in  B(x, 2\delta) \cap D^{K'}_{\rm v}} f( y ) \bP_z[ X_{H_{K'}} = y] \Big|.
\end{equation*}
Further since $2\delta < \delta_{f, \varepsilon}$ (see \eqref{def:delta_mg} and 
the display above it) and $\sup_{x \in \widetilde{\cG}} |f(x)| \le M_f$, we can bound the 
above as 
\begin{equation}\label{eq:mg_cont_2}
         | F(z, K ) - F(z,K') | \le \mathrm{M}_{f} \sum_{ x \in V^K \cap K}  \big|\bP_z[ X_{H_K} = x] - \bP_z[ X_{H_{K'}} \in B(x, 2\delta) \cap D^{K'}_{\rm v}]\big|  + \frac{\varepsilon}{8}. 
\end{equation}
At this point, we will split our analysis into two separate cases, namely $K \subset K'$ 
and $K' \subset K$ starting with the former.
% $d_{\mathrm{H}}( \partial K, \partial K') < \delta$. 

% and $\{ B( x, 2\delta) : x \in  \partial K \} $ form an open cover of $ \partial K$. For $x \in \partial K $ let $\widetilde{B}_{K'}(x) = \partial \widetilde{D}_{K'} \cap B( x, 2\delta)$ and since $\widetilde{B}_{K'}(x)$'s are disjoint, it is straightforward that $ \cup_{x \in  \partial K} \widetilde{B}_{K'}(x)  \subset  \partial \widetilde{D}_{K'}$. In contrast, assume that $y_0 \in \partial \widetilde{D}_{K'}$ but $y_0$ does not belong to any $\widetilde{B}_{K'}(x)$. Then for any $x \in  \partial K$ we have $d( x, y_0) \ge 2\delta$. Now let $y_K = \operatorname{\arg\,inf_{y \in K}} d( y_0, y)$ and observe that $y_K \in \partial K$, i.e $y_K \in \widetilde{D}_K$. Also $d( y_0 , y_K) \le \delta$ as $d_{\mathrm{H}}( K, K') \le \delta$, this contradicts our supposition. Hence   and $ \partial \widetilde{D}_{K'} =  \cup_{x \in  \partial K} \widetilde{B}_{K'}(x) $.  Then 

\smallskip

\noindent{\em The case $K \subset K'$.} Using the strong Markov property of $X$ in the 
second line below, we obtain for any $x \in V^K \cap K$:
\begin{equation}\label{eq:mg_cont_3}
    \begin{split}
        &  \bP_z[ X_{H_K} = x] - \bP_z[ X_{H_{K'}} \in B(x, 2\delta) \cap D^{K'}_{\text{v}}] \\
        &=  \sum_{y \in D^{K'}_{{\rm v}}} \bE_z\big[ 1{ \{ X_{H_{K'}} =y \} } \bP_y [ X_{H_K} = x ] \big]  - \bP_z[ X_{H_{K'}} \in B(x, 2\delta) \cap D^{K'}_{\rm v}] \\
    &= P^+_x + P^-_x- \bP_z[ X_{H_{K'}} \in B(x, 2\delta) \cap D^{K'}_{{\rm v}}], 
    \end{split}
\end{equation}
where
\begin{equation}\label{def:Px}
\begin{split}
&P^+_x = \sum_{y \in B( x, 2\delta ) \cap D^{K'}_{\rm v}} \bE_z\big[ 1_{ \{ X_{H_{K'}} =y \} } \bP_y [ X_{H_K} = x ] \big], \text{ and} \\   
&P^-_x = \sum_{y \in D^{K'}_{{\rm v}} \setminus  B( x, 2\delta )} \bE_z\big[ 1_{ \{ X_{H_{K'}} = y \}}\bP_y [ X_{H_K} = x ]\big].
\end{split}
\end{equation} 
Now for any $y \in B(x, 2\delta) \cap D^{K'}_{\rm v}$, we want to obtain a lower bound on 
$\bP_y [X_{H_K} = x]$ where we may assume $y \ne x$ without any loss of generality. Since $2\delta < \inf_{x \ne y \in V^{K}} d(x, y)$ by \eqref{def:delta_mg} and $d(x, 
y) < 2\delta$ with $x \in V^K \cap K$, there exist $y_1, y_2 \in V$ such that $x, y \in [y_1, y_2]_{\cG}$ and $y \in (y_1, x)_{\cG}$. Also from \eqref{def:delta_mg} we know that $d(x, x') \ge \frac{100 M_f|V^K \cap K|}{\varepsilon}\delta$ for any $x' (\ne x) \in D^K$. Hence there exists $y_3 \in [y_1, x]_{\cG} \cap V^{K, K'}$ such that $\rho([y_1, 
x]_{\cG}) \ge \frac{100 M_f|V^K \cap K|^2}{\varepsilon}\delta$ and 
\begin{equation*}
    \{ X_{H_K} = x, X_0 =y \} \supset \{ X_{T_{(y_3,x)_{\cG}}} = x, X_0 =y \},
\end{equation*}
where $T_{(y_3, x)_{\cG}}$ is the exit time of $X$ from the interval $(y_3, x)_{\cG}$ 
(cf.~\eqref{def:hitting_time}).
% Then the 
% event that starting from $y$ the random walk hits $x$ is same as the event the random 
% walk exits the interval $]y_1, x[$. 
Then using similar arguments as used in the proof of Lemma~\ref{lem:small_greens_func} 
below (see around \eqref{eq:lowebd_prob_3} in particular), we get
\begin{equation}\label{eq:mg_cont_4}
        \bP_y [ X_{H_K} = x ] \ge 1 - \frac{\varepsilon}{100 \mathrm{M}_{f} | V^K \cap K| }. 
\end{equation}
Now from the definition of $P^+_x$ in \eqref{def:Px}, we have 
\begin{equation*}
    P^+_x -  \bP_z[ X_{H_{K'}} \in B(x, 2\delta) \cap D^{K'}_{\rm v}] \le \sum_{y \in B( x, 2\delta ) \cap D^{K'}_{\rm v}} \bE_z[ 1_{ \{ X_{H_{K'}} =y \} } ] - \bP_z[ X_{H_{K'}} \in B(x, \delta) \cap D^{K'}_{\rm v}] = 0.
\end{equation*}
On the other hand, from \eqref{eq:mg_cont_4} we get,
\begin{equation*}
%\begin{split}
    P^+_x -  \bP_z[ X_{H_{K'}} \in B(x, 2\delta) \cap D^{K'}_{\rm v}] %&
    \ge  - \frac{\varepsilon}{100 \mathrm{M}_{f} | V^K \cap K| }. %\, \bP_z[ X_{H_{K'}} \in B(x, \delta) \cap D^{K'}_{\rm v}] \\&\ge  - \frac{\varepsilon}{100 \mathrm{M}_{f} | V^K \cap K| }.
   %\end{split} 
\end{equation*}
Together, these two bounds imply
\begin{equation}\label{eq:mg_cont_6}
\big| P^+_x -  \bP_z[ X_{H_{K'}} \in B(x, 2\delta) \cap D^{K'}_{\rm v}]\big| \le  \frac{\varepsilon}{100 \mathrm{M}_{f} | V^K \cap K| }.
\end{equation}
Next we want to bound the term $P_x^-$ from above. Since $D^{K'}_{\rm v} \subset \cup_{x \in V^K \cap K} B(x, 2\delta)$, any $y \in D^{K'}_{\rm v} \setminus B(x, 2\delta)$ belongs to 
$B(x', 2\delta)$ for some $x' (\ne x) \in V^K \cap K$. Then \eqref{eq:mg_cont_4} applied to the 
point $x'$ gives us that 
\begin{equation*}
\bP_y [X_{H_K} = x'] \le \frac{\varepsilon}{100 \mathrm{M}_{f} | V^K \cap K|}.
\end{equation*}
Plugging this into the expression of $P_x^-$ in \eqref{def:Px}, we obtain
\begin{equation}\label{eq:mg_cont_7}
P^-_x \le \frac{\varepsilon}{100 \mathrm{M}_f | V^K \cap K|}. 
\end{equation}
Combining \eqref{eq:mg_cont_6} and \eqref{eq:mg_cont_7}, we get from \eqref{eq:mg_cont_3}:
\begin{equation}\label{eq:mg_cont_8}
         \big| \bP_z[ X_{H_{K}} = x] - \bP_z[ X_{H_{K'}} \in B ( x, 2\delta) \cap D^{K'}_{\rm v}] \big| \le \frac{\varepsilon}{100 \mathrm{M}_f |V^K \cap K|}.
\end{equation}
Plugging \eqref{eq:mg_cont_8} into \eqref{eq:mg_cont_2}, we obtain
\begin{equation}\label{eq:mg_cont_9}
    \begin{split}
        | F(z, K ) - F(z, K') | &\le \mathrm{M}_f \frac{\varepsilon}{100 \mathrm{M}_f  | V^K \cap K|} | V^K \cap K | + \frac{\varepsilon}{8} < \varepsilon,
    \end{split}
\end{equation}
which is precisely the bound \eqref{eq:FKK'} (in this case).
% Observe that for any $ z \in V, \sup_{K \subset \widetilde{\cG} } F(z, K ) \le \mathrm{M}_{\cG}$. Combining \eqref{eq:mg_cont_9}, Lemma~\ref{lem:lambda_cont} and the upper bound on $\lambda_K$ \eqref{eq:ubd_lambda} we have 
% \begin{equation}\label{eq:mg_cont_10}
%     \begin{split}
%         | M_K - M_{K'}| & = |\lambda_K \sum_{z \in V} F(z, K) - \lambda_{K'} \sum_{z \in V} F(z, K') | \\
%         & \le  \lambda_K \sum_{z \in V}   | F(z, K ) - F(z, K') | + |\sum_{z \in V} F(z, K')| | \lambda_K - \lambda_{K'}| \\
%         &\le \frac{|K \cap V|}{n} \sum_{z \in V} \frac{\varepsilon}{4}  + n \mathrm{M}_{\cG} \frac{\varepsilon}{4\mathrm{M}_{\cG} } \lambda_K \le  |K \cap V| \frac{\varepsilon}{4}+ |K \cap V| \frac{\varepsilon}{4} \le  |K \cap V|  \frac{\varepsilon} {2} .
%     \end{split}
% \end{equation}

\smallskip

\noindent{\em The case $K' \subset K$.} This case can be dealt with in a similar 
fashion by applying the strong Markov property %(cf.~\eqref{eq: mg_cont_3}) 
at time $H_{K'}$ instead of $H_K$. % \smallskip
% It remains to verify the claim \eqref{eq:dhDbar}.
% We will prove this claim in two cases, namely $ K \subset K'$ and $ K' \subset K$.
% First assume that $K \subset K'$ and let $x \in D^{K'}_{\rm v}$. Again, there are two possibilities; either $x \in V \cap D^{K'}_{\rm v}$ or $x \in (y_1,y_2)_{\cG}$ 
% such that $[y_1, x)_{\cG} \cap K' = \emptyset$ for
% some $y_1, y_2 \in V$. If $x \in V \cap K$ we are done. So, let $x \in V \setminus K $ and that 
% implies $x \in ( K' \setminus K) \cap V$. As $d_{\mathrm{H}}( K, K') < \delta$, for $x \in ( K' \setminus K) \cap V$ there exists $y \in \partial K$ such that $d(x,y) < \delta$. Since $y \in D^K$, it 
% is obvious that $x \in B(D^K, 2\delta)$. When $x \in (y_1, y_2)_{\cG}$ with $[y_1, x)_{\cG} \cap K' = \emptyset$, we have $[y_1, x]_{\cG} \subset K'$. Since $x \in K'$ and $d_{\mathrm{H}}( K, K') < \delta$, there exists $y \in K$ such that $d(x,y) < \delta$ and 
% the claim follows immediately.
% Now assume that $K' \subset K$ and let $x \in D^{K'}_{\rm v}$. As $K' \subset K$, we also 
% have $K' \cap V \subset K \cap V$. So, it is clear that if $x \in V$ then $x \in D^{K}$. For 
% the other case observe that $[y_1, x)_{\cG} \cap K' = \emptyset$ and $x \in \partial K'$.
% Since $d_{\mathrm{H}}( K, K') < \delta$ as argued in $K \subset K'$ case $x \in B(D^K, 2\delta)$.
\end{proof}
We now prepare for the proof of Proposition~\ref{lem:lambda_cont} which requires the 
following lemma.
\begin{lemma}\label{lem:small_greens_func}%Let $K, K' \subset \widetilde \cG$ be non-empty compact sets with finitely many %connected 
%components such that one of them is contained in the other. %and $V \setminus (K \cup K') \ne \emptyset$. 
Let $K, K' \in \cK_{<\infty}$ be as in Proposition~\ref{lem:mg_cont}. Then for every 
$\varepsilon > 0$, there exists $\delta = \delta(\varepsilon, K, \cG) > 0$ such that 
(see \eqref{def:green-K} for notation)
\begin{equation*}%\label{eq:small_greens_func}
\max_{x \in K, y \in V}g_{(K')^c}(x, y) \vee \max_{x \in K', y \in V}g_{K^c}(x, y) < \varepsilon \mbox{ whenever } d_{\mathrm{H}}( K,K') < \delta.
\end{equation*}% following hold whenever $d_{\mathrm{H}}( K, K' ) < \delta$.
% \begin{itemize}
% \item[i)] If $K' \subset K$, %is a non-empty compact set also with finitely many connected components and $V \setminus K' \ne \emptyset$, 
% then
% \begin{equation*}\label{eq:small_greens_func}
%             \sup_{x \in V^{K}, y \in V} \frac{g_{S^{K'}(K,K')}^{K,K'} (x, y)}{C_y^{K,K'}} < \varepsilon.
%     \end{equation*} 
    
% \item[ii)]On the other hand if $K \subset K'$, then 
%     \begin{equation*}
%         \begin{split}
%             \sup_{x \in V^{K'}, y \in V} \frac{g_{S^{K}(K,K')}^{K,K'}( x, y)}{C_y^{K,K'}} < \varepsilon.
%         \end{split}
%     \end{equation*}
%      %(cf.~\eqref{eq:sup_greensfunc}).
%     \end{itemize}
\end{lemma}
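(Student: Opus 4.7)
My plan exploits the symmetry of the Green's function together with the observation that $g_{L^c}(x, x)$ is small when $x$ is close to the killing set $L$. Without loss of generality we assume $K' \subset K$; the complementary case $K \subset K'$ is handled symmetrically by exchanging the roles of $K$ and $K'$. Under $K' \subset K$, the second maximum in the statement vanishes identically since $g_{K^c}(x, \cdot) \equiv 0$ for all $x \in K$; and the first maximum reduces to $x \in K \setminus K'$ since $g_{(K')^c}(x, \cdot) \equiv 0$ for $x \in K'$. For such $x$, the Hausdorff bound furnishes a point $x_0 \in K'$ with $d(x, x_0) < \delta$.

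The key reduction is via symmetry of the Green's function and the strong Markov property of $\widetilde X$ applied at the first hit of $\{x\} \cup K'$ for the walk started from $y$, which yields
\begin{equation*}
g_{(K')^c}(x, y) \;=\; g_{(K')^c}(y, x) \;=\; \bP_y[H_x < H_{K'}] \cdot g_{(K')^c}(x, x) \;\le\; g_{(K')^c}(x, x).
\end{equation*}
It therefore suffices to show $g_{(K')^c}(x, x) = \bE_x[\ell_x(H_{K'})] \to 0$ uniformly in $x \in K$ as $d_{\mathrm{H}}(K, K') \to 0$ (with $K' \subset K$). To do this we set $\delta_K = \tfrac{1}{2} \inf_{\{u, v\} \subset V^K,\, u \ne v} d(u, v) > 0$, a quantity depending only on $K$, and take $\delta < \delta_K$. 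Since $\widetilde X$ is one-dimensional Brownian motion on each interval of the $V^K$-partition (all of length $\ge 2\delta_K$), a direct one-dimensional Green's function / gambler's-ruin computation --- analogous to the argument leading to \eqref{eq:lowebd_prob_3} in the proof of Proposition~\ref{lem:mg_cont} --- yields $g_{(K')^c}(x, x) \le C\,d(x, x_0) \le C\delta$ in the favourable case where $x_0$ lies in the $V^K$-interval containing $x$.

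The main obstacle is the unfavourable configuration where $x_0$ lies beyond an endpoint $v \in V^K$ of the interval containing $x$, so that the geodesic from $x$ to $x_0$ passes through $v$. In that case the walk $\widetilde X$ from $x$ might leave the local interval through $v$, wander into the rest of $\widetilde \cG$, and return to $x$ several times before eventually hitting $K'$; each such round trip potentially contributes additional local time at $x$. I would handle this by iterating the strong Markov argument at successive visits of $\widetilde X$ to $x$: the number of returns to $x$ before $H_{K'}$ is stochastically dominated by a geometric random variable whose success probability is bounded below uniformly in $K'$ by a constant depending only on $K$ and $\cG$ (through the finiteness of $\cG$ and the isoperimetric bound \eqref{eq:isoperimetry}), while each round trip contributes at most $O(\delta)$ to $\bE_x[\ell_x(\cdot)]$ via the one-dimensional estimate applied to the $V^K$-interval containing $x$. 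Assembling these two ingredients produces the bound $g_{(K')^c}(x, x) \le C\delta$ uniformly in $K'$, which closes the proof.
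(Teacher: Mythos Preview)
Your reduction $g_{(K')^c}(x, y) = \bP_y[H_x < H_{K'}]\, g_{(K')^c}(x, x) \le g_{(K')^c}(x, x)$ is correct and gives a route genuinely different from the paper's. The paper never passes to the diagonal; instead it works on the enhancement $\cG^{K, K'}$ and applies a one-step Markov decomposition
\[
g_{(K')^c}(x,y) \;\le\; \frac{1_{x=y}}{C_y^{K,K'}} + \mathrm{G}\,\bP_x\big[X_1^{K,K'} \notin K'\big],
\]
where $\mathrm{G}$ is the uniform Green's-function bound from \eqref{eq:sup_greensfunc}; both terms are then small because the conductance from $x$ to a nearby point of $K'$ is at least $1/\delta$. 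Your approach, once completed, is arguably cleaner in that it does not need $\mathrm{G}$.

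However, your bound on $g_{(K')^c}(x, x)$ has genuine gaps as written. For the continuous diffusion $\widetilde X$ the phrases ``successive visits to $x$'' and ``number of returns before $H_{K'}$'' are not well-defined (the visit set is perfect, with no isolated returns), so the geometric dominance cannot be formulated as you do. The claimed uniform lower bound on the ``success probability'' is asserted but not argued, and the isoperimetric inequality \eqref{eq:isoperimetry} is irrelevant here. Moreover the favourable/unfavourable split already leaks in the favourable case: even when $x_0$ lies in the same $V^K$-interval as $x$, the walk can exit through the endpoint \emph{opposite} $x_0$ and return, so a purely one-dimensional computation on that interval does not by itself control $g_{(K')^c}(x,x)$.

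A clean fix removes the dichotomy. Since $x_0 \in K'$, monotonicity gives $g_{(K')^c}(x, x) \le g_{\{x_0\}^c}(x, x)$, and the right-hand side is exactly the effective resistance between $x$ and $x_0$ on $\widetilde\cG$ (unit resistance per unit length); by Rayleigh monotonicity this is at most the length of the shortest path, i.e.\ $d(x, x_0) < \delta$. Taking $\delta = \varepsilon$ then yields the lemma uniformly in $K'$, and the same bound handles the case $K \subset K'$ with no asymmetry in the dependence of $\delta$.
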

We prove Lemma~\ref{lem:small_greens_func} at the end of this section and proceed with 
the
\begin{proof}[Proof of Proposition~\ref{lem:lambda_cont}]
%We will use the notational convention adopted at the beginning of the proof of Lemma~\ref{lem:small_greens_func}. 
In view of the expression of $\nu_K$ from \eqref{def:lambdaK}, %and \eqref{eq:invar_greensfunc}, 
it is enough to prove the statement with
$$g_{K^c}(x, y) \stackrel{\eqref{eq:invar_greensfunc}, \eqref{def:green_enhance}}{=} \tfrac{g_{K^c}^{K, K'}(x, y)}{C_y^{K, K'}},$$
in place of $\nu_K$ for all $x, y \in V$ (see~\eqref{def:green_enhance} and \eqref{def:KK'} for notations). To this end, given $\varepsilon \in (0, 1)$, let 
% \begin{equation}\label{def:gdelta}
%         \delta' \stackrel{\rm def.}{=} \Big(\frac{\inf_{x \neq y \in V^{K}} d(x, y)}%{\mathrm{G}d^2 |V|^2} \cdot \frac{\varepsilon}{50}.
%         {\mathrm{G} \vee 1} %|V|^2} 
%         \cdot \frac{\varepsilon}{10}\Big) \wedge \delta(\varepsilon, K, \cG)
% \end{equation} where 
$\delta = \delta(\varepsilon, K, \cG) > 0$ %is {\em from} 
be as in Lemma~\ref{lem:small_greens_func}. %Like Lemma~\ref{lem:small_greens_func}, 
We only give the proof when $K' \subset K$ and the proof for %$K \subset K'$
the other case
%The proofs take slightly different form depending on whether $K' \subset K$ or $K \subset K'$.
is similar.

\smallskip

%\noindent{\em The case $K' \subset K$.} 
So let $K' \, (\in \cK_{<\infty}) \subset K$ be such that $d_{\mathrm{H}}(K,K') < 
\delta$. By the strong Markov property of the walk $X = X^{K, K'}$, we can write
\begin{equation*}%\label{eq:cont_greens_func_1}
    %\begin{split}
        %\sum_{x,y \in \widetilde{S}^{K'} \cap V } 
       \frac{g_{(K')^c}^{K, K'}(x, y)}{C_y^{K, K'}} -  %\sum_{x,y \in \widetilde{S}^K \cap V } 
       \frac{g_{K^c}^{K, K'}(x, y)}{C_y^{K, K'}} = \sum_{z \in V^K} \bP_x [X_{H_{K}} = z] \, \frac{g_{(K')^c}^{K, K'}(z, y)}{C_y^{K, K'}} \stackrel{{\rm Lem.}~\ref{lem:small_greens_func}, \, \eqref{def:green_enhance}}{\le} \varepsilon \, \bP_x [H_{K} < \infty] = \varepsilon, %\\
       %& = \sum_{x,y \in \widetilde{S}^K \cap V } \left( \frac{g_{\widetilde{S}^{K'}}(x, y)}{C_y} - \frac{g_{\widetilde{S}^K}(x, y)}{C_y} \right) + \sum_{x \in ( \widetilde{S}^{K'} \setminus \widetilde{S}^K) \cap V, y \in \widetilde{S}^{K'} \cap V } \frac{g_{ \widetilde{S}^{K'}}(x, y)}{C_y}  \\
       % & = \sum_{x,y \in \widetilde{S}^K \cap V } \frac{\bE_x[g_{\widetilde{S}^{K'}}( X_{H_{ \widetilde{D}_K}}, y)]}{C_y}  +   \sum_{x \in ( \widetilde{S}^{K'} \setminus \widetilde{S}^K) \cap V, y \in \widetilde{S}^{K'} \cap V } \frac{g_{ \widetilde{S}^{K'}}(x,y)}{C_y}   \\
       %&=  \sum_{x,y \in \widetilde{S}^K \cap V } \sum_{z \in \widetilde{D}_K} \frac{g_{\widetilde{S}^{K'}}( z, y)}{C_y} \bP_x [ X_{H_{\widetilde{D}_K }} = z]  + \sum_{x \in ( \widetilde{S}^{K'} \setminus \widetilde{S}^K) \cap V, y \in \widetilde{S}^{K'} \cap V } \frac{g_{ \widetilde{S}^{K'}}(x,y)}{C_y} . 
    %\end{split}
\end{equation*}
% where in invoking Lemma~\ref{lem:small_greens_func} in the second step above, we used 
% Using the upper bound on the supremum of Green's function from Lemma~\ref{lem:small_greens_func} part $i)$ in  \eqref{eq:cont_greens_func_1} we get 
% \begin{equation}\label{eq:contg}
%     \begin{split}
%        & \sum_{x,y \in \widetilde{S}^{K'} \cap V } \frac{g_{ \widetilde{S}^{K'} }(x, y)}{C_y} -  \sum_{x,y \in \widetilde{S}^K \cap V } \frac{g_{ \widetilde{S}^K }(x, y)}{C_y} \\
%        & \le \frac{\varepsilon}{4|V|^2} | \widetilde{S}^K \cap V | \sum_{x \in \widetilde S^K \cap V} \sum_{z \in \widetilde{D}_K} \ \bP_x [ X_{H_{\widetilde{D}_K }} = z] + \frac{\varepsilon}{4|V|^2} | ( \widetilde{S}^{K'} \setminus \widetilde{S}^K) \cap V|  | \widetilde{S}^{K'} \cap V| \\
%        & \le \tfrac{\varepsilon}{4|V|^2} | \widetilde{S}^K \cap V |^2 + \frac{\varepsilon}{4} \le \frac{\varepsilon}{2} < \varepsilon.
%     \end{split}
% \end{equation}
% This proves continuity of $\lambda_K$ for $K' \subset K$.
which is precisely the required bound.
\end{proof}
It remains to give the
\begin{proof}[Proof of Lemma~\ref{lem:small_greens_func}]
The proof is split into two parts depending on whether $K' \subset K$ or $K \subset K'$.
% For the sake of simplicity we denote $D^K(K, K')$ and $D^{K'}(K, K')$ by $\widetilde{D}^K$ and $\widetilde{D}^{K'}$ respectively. Similarly we use $\widetilde{S}^K$ and $\widetilde{S}^{K'}$ for $S^K( K, K')$ and $S^{K'}(K,K')$ respectively. We will also omit the superscript ``$K,K'$'' from the notations $g_{\cdot}^{K,K'}(\cdot, \cdot)$, $C_{\cdot}^{K,K'}$ and $X_{\cdot}^{K,K'}$.

\smallskip
    
\noindent{\em The case $K' \subset K$.} Notice that we only need to check the bound on 
$g_{(K')^c}(x, y)$ in this case with $x \in K \setminus K'$. Also due to \eqref{def:harmgreen}, it suffices to consider points $x \in V^K \setminus K'$. Now given $\varepsilon \in 
(0, 1)$, let
    \begin{equation}\label{def:delta}
          \delta \stackrel{\rm def.}{=} \frac{\inf_{x \neq y \in V^{K}} d(x, y)}{(\mathrm{G} \vee 1) %d^2
          d} \cdot \frac{\varepsilon}{10},%{6}
    \end{equation}
    with $\mathrm{G}$ from %\eqref{def:VK} and 
    \eqref{eq:sup_greensfunc} %respectively %In particular $\delta \in (0, \frac{\varepsilon}{6}]$ since $\mathrm{G} < \infty$ by Lemma~\ref{lem:sup_greensfunc}. 
    and $K' \, (\in \cK_{<\infty}) \subset K$ such that $d_{\mathrm{H}}( K, K') < \delta$. %Let $\{ X_k : k \ge 0 \}$ denote the random walk on $\cG^{K,K'} = ( V^{K,K'}, E^{K, K'}) $ and
    % \begin{equation}\label{def:green-Ks_func}
    %     \begin{split}
    %         g_{\widetilde{S}^{K'}}( x, y) \stackrel{\rm def.}{=} \bE_x\big[ \sum_{k = 0}^{T_{\widetilde{S}^{K'}}} \ind_{ \{X_k = y\} } \big], \ \ x, y \in V^{K,K'}
    %     \end{split}
    % \end{equation}
    % is the Green's function of this random walk killed upon exiting $\widetilde{S}^{K'}$,
    % where $T_{\widetilde{S}^{K'}} \stackrel{\rm def.}{=} \inf \{ k \ge 0 : X_k \notin \widetilde{S}^{K'} \}$ is the exit time of the set $\widetilde{S}^{K'}$.
    In view of \eqref{def:green_enhance}, we have $g_{(K')^c}(x, y) = \frac{g_{(K')^c}^{K, K'}(x, y)}{C_y^{K, K'}}$. In the sequel, we substitute $\tilde g_{(K')^c}(\cdot, \cdot)$, $\tilde g_{K^c}(\cdot, \cdot)$, $X$ and $C_{(\cdot, \cdot)}$ for $g_{(K')^c}^{K, K'}(\cdot, \cdot), g_{K^c}^{K, K'}(\cdot, \cdot), X^{K, K'}$ and $C_{\{\cdot, \cdot\}}^{K, K'}$ respectively to ease notation. Using the Markov property of the (discrete-time) walk $X$, 
    %and Lemma~\ref{lem:sup_greensfunc} (together with \eqref{eq:invar_greensfunc}), 
    we can write 
    \begin{equation}\label{eq:decomp_gfunc}
        %\begin{split}
            %\frac{g_{\widetilde{S}^{K'}}
            \frac{\tilde g_{(K')^c}(x, y)}{C_y} %&= \frac{\ind_{x =y} + \bE_x[ g_{\widetilde{S}^{K'}}( X_1 ,y) ]}{C_y} 
            = \frac{1_{x = y} +  \bE_x[ %g_{\widetilde{S}^{K'}}(X_1, y) 
            \tilde g_{(K')^c}(X_1, y) 1_{ \{X_1 \notin %\widetilde{D}_{K'}
            K'\}}]}{C_y} \stackrel{\eqref{eq:sup_greensfunc}, \eqref{def:green_enhance}}{\le} \frac{\ind_{x = y}}{C_y} + \mathrm{G} \bP_x[ X_1 \notin %\widetilde{D}_{K'}
            K'],
        %\end{split}
    \end{equation}
    where $x \in V^K$ and $y \in V$. We will derive suitable bounds on each of these 
    terms. %Recall that $x \in \widetilde{S}^{K'}$, so from \eqref{def:SKK'} $x \in K \setminus K'$. 
    %Note that $x$ has two types of neighbors in the graph $\cG^{K,K'}$. %Some neighbors $z$ of $x$ ( will be denoted by $z \sim x$) belong to $\widetilde{D}_{K'}$. If $z \sim x$ and $z \notin \widetilde{D}_{K'}$, from \eqref{eq:V=DUS} $z \in V^{K}$. 
    %Then consider
    Let us start by noting the following expression:
    \begin{equation}\label{eq:lowerbd_prob}
            \bP_x[ X_1 \in %\widetilde{D}_{K'}
            K'] = \frac{\sum_{\{x, z\} \in E^{K, K'},\, %z \in \widetilde{D}_{K'}
            z \in K'} C_{\{x, z\}}}{\sum_{\{x, z\} \in E^{K, K'}}C_{\{x, z\}}} = \frac{1}{ 1 +  \frac{\sum_{\{x, z\} \in E^{K, K'},\, z \in K \setminus K'} C_{(x, z)}}{\sum_{\{x, z\} \in E^{K, K'},\, z \in K'} C_{\{x, z\}}}}.
    \end{equation}
    For $z \in K$ such that $\{x, z\} \in E^{K, K'}$, we have 
    $$C_{\{x, z\}} \stackrel{\eqref{def:C^K},\, \eqref{def:distance}}{=} \tfrac1{d(x,z)} \stackrel{\eqref{def:delta}}{\le} \tfrac{\varepsilon}{6d^2 \mathrm{G} \delta}.$$
    On the other hand, since $d_{\mathrm{H}}(K,K') < \delta$ and $x \in V^K \setminus K'$, there exists $z \in K'$ such that $d(x, z) < \delta$ (recall 
    \eqref{eq:haus_dist}). In fact, as $\delta < \inf_{x \ne y \in V^K} d(x, y)$ by our choice in \eqref{def:delta}, we have $(x, z) \in E^{K, K'}$ and thus $C_{\{x, z\}} = \frac{1}{d(x,z)} \ge \frac1{\delta}$. Plugging these two bounds into the right-hand side of 
%    , we get
 %    \begin{equation}\label{eq:lowerbd_prob_1}
 %        \begin{split}
 %            \frac{\sum_{z \sim x, z \in V^{K}} C_{xz}}{\sum_{z \sim x, z \in \widetilde{D}_{K'}} C_{xz}} \le \frac{\delta}{\frac{6d \mathrm{G}\delta}{\varepsilon}} = \frac{\varepsilon}{\mathrm{G}6d}.
 %        \end{split}
 %    \end{equation}
 % Hence applying \eqref{eq:lowerbd_prob_1} we can write 
 \eqref{eq:lowerbd_prob}, we get
 \begin{equation*}%\label{eq:lowerbd_prob_2}
         \bP_x[ X_1 \in K']  \ge 1 -\frac{\varepsilon}%{6\mathrm{G}d}.
         {10\mathrm{G}}.
 \end{equation*}
 A similar reasoning also tells us that $C_x \ge \frac1{\delta}$. Combined with 
 \eqref{eq:decomp_gfunc}, these estimates give us %It is important to note that when $x = y$, we have $C_x = \sum_{z \sim x} C_{xz} \ge \frac{1}{\delta}$. Due to the fact that $d_{\mathrm{H}}(K,K') \le \delta$ ,there will be at least one neighbor $x_0 \in \widetilde{D}_{K'}$ of $x$ such that $d(x,x_0) \le \delta$. Collecting this fact and \eqref{eq:lowerbd_prob_2} we get 
 \begin{equation*}
         \frac{\tilde g_{(K')^c}(x, y)}{C_y} \le \delta +  \mathrm{G} \frac{\varepsilon}{10\mathrm{G}} %6d} 
         \stackrel{\eqref{def:delta}}{\le} \frac{\varepsilon}{10} + \frac{\varepsilon}{10} < \varepsilon.
 \end{equation*}

\medskip

\noindent{\em The case $K \subset K'$.}  In this case we only need to check the bound 
on $g_{K^c}(x, y)$ for $x \in V^{K'} \setminus K$. %In this case let us choose $\delta$ as
 % \begin{equation}\label{def:delta_1}
 %     \begin{split}
 %         \delta \stackrel{\rm def.}{=} \frac{\inf_{x \neq y \in V^{K}} d(x,y) }{2} \cdot \frac{\varepsilon}{6\mathrm{G}}.
 %     \end{split}
 % \end{equation}
 % Obtain $\widetilde{\cG} \supset K' \supset K$  such that $d_{\mathrm{H}}(K, K') \le \delta$. For $x \in \widetilde{S}_{K}$ and $y \in V$, as in part (I) let $ g_{\widetilde{S}^K} ( x, y)$ denote the Green's function of the random walk killed upon exiting $\widetilde{S}^K$. 
 Let $K' \, (\in \cK_{<\infty}) \supset K$ be such that $d_{\mathrm{H}}(K, K') < 
 \delta$ where $\delta$ is still as in \eqref{def:delta}.

 Now let $\hat{x}$ be a point in $V^{K, K'} \cap K$ achieving the minimum distance from $x$. Using similar arguments as in the previous case, we obtain that $\{x, \hat{x}\} \in 
 \overline{I_{\{z_0, z_1\}}}$ for a unique $\{z_0, z_1\} \in E$. %  For $x \in \widetilde{S}^K$ we can find an interval $I_e \subset \widetilde{\cG}$ such that $x \in \overline{I_e}$. Let $\hat{y}$ be any point 
% in $\widetilde{D}_K$ achieving the minimum distance from $x$. Note that $d(x, \hat y) \le d_{{\rm H}}(K, K')$ and $\hat{y} \in \partial K$. We 
% claim that $\hat{y} \in \overline{I_e}$. Suppose this claim is not true. Let $\overline{I_e} = [y_0, y_1]_{\cG}$ where $y_0, y_1 \in V$. Since $\hat y \notin \overline{I_e}$, we must have  $\min_{y \in \{ y_0, y_1 \} } d( x, y) = d(x, \{y_0, y_1 \} ) < d(x, \hat{y})$ which follows from the definition of distance $d(\cdot, \cdot)$ in 
% \eqref{def:distance}, it would lead to the conclusion $\overline{I_e} \cap 
% \widetilde{D}^K \ne \emptyset$. As $ \{ y_0, y_1 \} \subset V \subset V^{K}$ and $\hat{y} 
% \in \partial K \subset V^{K}$ it follows from \eqref{def:delta_1} that $d( \{y_0, y_1 \}, 
% \hat{y}) > 2 \delta$. As a consequence we get
%  \begin{equation}
%      \begin{split}
%          d(x, \hat{y}) \ge  d( \{y_0, y_1 \}, \hat{y}) - d(x, \{y_0, y_1 \} ) > 2 \delta - \delta = \delta,
%      \end{split}
%  \end{equation}
%  which contradicts the fact that $d(x, \hat{y}) \le \delta$. Hence our claim is true. 
Without loss of generality, suppose that $x \in [z_0, \hat x]_{\cG}$. %(cf.~\eqref{def:intG}). 
Then by the strong Markov property of $X$ as well as \eqref{eq:sup_greensfunc} and 
\eqref{eq:invar_greensfunc}, we get the following analogue of \eqref{eq:decomp_gfunc}:
 \begin{equation}\label{eq:decom_gfunc_1}
         g_{K^c}(x, y) = \frac{\tilde g_{K^c}(x, y)}{C_y} %= \frac{1_{x = y}}{C_y} + \frac{\bE_x [ g_{\widetilde{S}^K}( X_{ T_I}, y)]}{C_y} &= \frac{\ind_{x = y}}{C_y}   + \frac{\bE_x [   g_{\widetilde{S}^K}( X_{ T_I}, y) \ind_{ \{ X_{ T_I} \notin \widetilde{D}_K \} }]}{C_y} \\
         \le  \frac{\ind_{x = y}}{C_y} + \mathrm{G} \bP_x[ X_{T_{(z_0, \hat x)_{\cG}}} \notin K ],
 \end{equation}
 where, as defined in \eqref{def:hitting_time}, $T_{(z_0, \hat x)_{\cG}}$ is the exit 
 time of the walk from the set $[z_0, \hat x]_{\cG}$. %Now we show that the probability $\bP_x [ X_{T_I} \notin \widetilde{D}_K ]$ is very small. This is equivalent to show that the random walk exits $I$ through the end-point $\hat{y}$ with high probability. 
 From standard one-dimensional gambler's ruin type computation (cf.~\eqref{def:C^K}), %and \eqref{def:enhancemnt}), 
 it follows that 
 \begin{equation}\label{eq:lowebd_prob_3}
         \bP_x[ X_{T_{(z_0, \hat x)_{\cG}}} \in K ] \ge %\frac{\frac{1}{d(x, \hat{x})}}{\frac{1}{d(x, \hat{x})} + \frac{1}{d(x, z_0)}} = 
         \frac{1}{1 + \frac{d(x, \hat{x})}{d(x , x_0)}}.
 \end{equation}
 Remainder of the proof now follows similarly as in the previous case with \eqref{eq:lowebd_prob_3} and \eqref{eq:decom_gfunc_1} replacing 
 \eqref{eq:lowerbd_prob} and \eqref{eq:decomp_gfunc} respectively. %  Notice that from \eqref{def:delta_1}, $d(x, y_0) \ge d(y_0, \hat{y}) - d(x, \hat{y}) \ge \frac{12\mathrm{G} \delta}{\varepsilon}- \delta \ge \frac{ 10\mathrm{G} \delta}{\varepsilon}  $. Then 
 %  \begin{equation}\label{eq:lowebd_prob_4}
 %      \begin{split}
 %          \frac{d(x, \hat{y})}{d( x , y_0)} \le \frac{\delta}{ \frac{10\mathrm{G} \delta}{\varepsilon} } = \frac{\varepsilon}{10\mathrm{G} }.
 %      \end{split}
 %  \end{equation}
 %  Applying \eqref{eq:lowebd_prob_4} to \eqref{eq:lowebd_prob_3} we have 
 %  \begin{equation}\label{eq:lowebd_prob_5}
 %          \bP_x[ X_{T_I} \in \widetilde{D}_K ] \ge 1 - \frac{d(x, \hat{y})}{d( x , y_0)} \ge 1 -  \frac{\varepsilon}{10\mathrm{G}}
 %  \end{equation}
 % Hence using \eqref{eq:lowebd_prob_5} and $C_y \ge \frac{1}{\delta}$ for $x = y$ case, we have 
 % \begin{equation}
 %     \begin{split}
 %          \frac{g_{\widetilde{S}_{K}}( x, y)}{C_y} &\le \frac{1}{C_y} +  \mathrm{G} \frac{\varepsilon}{10\mathrm{G}} \le \frac{\varepsilon}{6} + \frac{\varepsilon}{10} < \varepsilon.
 %     \end{split}
 % \end{equation}
 % This completes the proof of part $ii)$. 
\end{proof}

\section{Exploration of level-set components}\label{sec:exploration}
So far, we did not talk so much about percolation. In this section, we relate the 
percolation of $\widetilde \cG^{\ge h}$ (recall \eqref{def:Ggeh}) to the process 
$(M_K)_{K \in \cK_{<\infty}}$ that we introduced and analyzed in the previous two 
sections. In order to do this, we first construct a continuous family of random sets 
$(K_t)_{t \ge 0}$ in \S\ref{subsec:exp_scheme} which arise from a certain exploration 
scheme revealing the components of $V$ in $\cG^{\ge h}$ successively. Then in 
\S\ref{subsec:martingale}, we define the exploration martingale $(\tilde M_t)_{t \ge 0}$ as $(M_{K_t})_{t \ge 0}$ and discuss a few of its useful properties.
\subsection{Exploration process}\label{subsec:exp_scheme}
Let $v \in V$ and $N > 1$ be an integer. By exploring the components of $(V \setminus \{v\}) \cup \partial B(v, \frac 1N)$ inside $\widetilde{\cG}^{\ge h} \setminus B(v, \frac 1N)$ (recall that $|V| > 1$) in a certain order, starting with those of $\partial B(v.\frac 1N)$, and ``interpolating'' between them, one 
can obain a {\em continuous} family (w.r.t. Hausdorff distance) of non-decreasing random compact sets $(K_t)_{t \ge 0}$. We restrict our exploration {\em away} from the 
vertex $v$ in order to prevent $K_t$'s from consuming the entire vertex set $V$. This 
would enable us to define a martingale $(\tilde M_t)_{t \ge 0}$ attached 
to $(K_t)_{t \ge 0}$ in the next subsection using the results from 
Section~\ref{sec:dirchlet}. $N$ is essentially a spurious parameter (see 
Lemma~\ref{eq:rel_component} below) and will be sent to $\infty$ at the end in all 
applications. We also want to design the exploration in a way so that the sets $K_t$ 
satisfy some favorable properties, mainly in relation to the process $(\tilde M_t)_{t \ge 0}$. This is the subject of Proposition~\ref{prop:Kt}. See below \eqref{def:Ggeh} 
in Section~\ref{sec:prelim} to recall our notations regarding the components in 
$\widetilde \cG^{\ge h}$.
\begin{proposition}[Exploration process]\label{prop:Kt}
 There exists a family of (random) compact subsets $(K_t)_{t \ge 0} = (K_t(v, h, N))_{t \ge 0}$ of $\widetilde{\cG} \setminus B(v, \frac 1N)$, with $K_0 = K_0(v, N) \stackrel{{\rm def.}}{=} \partial B(v, \frac 1N)$, satisfying the following 
 properties: %\begin{enumerate}[label = ({\alph*})]%[label = {\bf C\arabic*}]\end{enumerate}
\begin{enumerate}[label = {\arabic*}., ref = {\arabic*}]%[itemsep= 1.5ex, label = {\Roman*}]{\bf C\arabic*}
\item \label{Kt1} 
$(K_t)_{t \ge 0}$ is increasing in $t$ with respect to set inclusion and the map $t \mapsto K_t: [0, \infty) \to \cK$ is continuous (see below \eqref{eq:haus_dist}).
\item \label{Kt2} 
For all $t \ge 0$, each component of $K_t$ intersects $K_0$ and $\{K_t \subset U\} \in \mathcal F_U$ for any open $U \subset \widetilde \cG$ (cf.~\eqref{state:smp}).
\item \label{Kt3} There exist $n \in \N$ and (random) timepoints $0 \le \tau_{1, 1} 
\le \tau_{1, 2} \le \ldots \le \tau_{n, 1} \le \tau_{n, 2} < \infty$ such that the following hold $\P$-a.s.
\begin{enumerate}[label=\roman*., ref=\arabic{enumi}-\roman*]
\item \label{Kt3-1} $K_{\tau_{1, 1}} = \cC_{K_0}^{\ge h}(B^c(v, \frac 1N)) \cup K_0$ 
whereas $V \setminus \{v\} \subset K_{\tau_{n, 2}}$ and $K_t = K_{\tau_{n, 2}}$ for all $t \ge \tau_{n, 2}$.
\item  \label{Kt3-2}
If $t \in [\tau_{k, 1}, \tau_{k, 2}]$, then $K_t = K_{\tau_{k, 1}} \cup I_t$ where 
$I_t$ is an interval and $K_{\tau_{k, 1}}$ further satisfies $\partial K_{\tau_{k, 1}} \setminus \widetilde \cG^{\le h} \subset K_0$ %\partial B(v, \frac1N)$ 
and $\bP_x[\widetilde X_{H_{K_{\tau_{k, 1}}}} \in %\partial B(v, \frac 1N) 
K_0 \setminus \widetilde \cG^{\le h}] = 0$ for all $x \in V \setminus \{v\}$. 
\item \label{Kt3-3}On the other hand if $t \in \, (\tau_{k, 2}, \tau_{k+1, 1}]$, 
then $K_t = K_{\tau_{k, 2}} \cup \cC_t$ where $\cC_t \in \cK$ 
is a connected subset of $\cC^{\ge h}_{x_k}(B^c(v, \tfrac 1N))$ with $x_k \in V$.
\end{enumerate}
\end{enumerate}
\end{proposition}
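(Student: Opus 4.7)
The plan is to construct $(K_t)$ by hand by concatenating two kinds of phases, each parametrized by $\rho$-length of material added so that $t \mapsto K_t$ is automatically continuous and non-decreasing in the Hausdorff metric, giving \ref{Kt1}. The construction begins with an \emph{initial component phase} on $[0, \tau_{1,1}]$: by \eqref{eq:conn_set_G}, the set $\tilde K_1 := K_0 \cup \cC_{K_0}^{\ge h}(B^c(v, \tfrac 1N))$ is a finite union of closed intervals of $\widetilde\cG$; I enumerate these intervals via a deterministic $\cF_{\tilde K_1}$-measurable rule (using the fixed vertex labeling of $\cG$ together with field values at endpoints) and traverse them one by one at unit $\rho$-speed, so that $K_{\tau_{1,1}} = \tilde K_1$ (the first identity in \ref{Kt3-1}) and every point of $K_0 \cap \widetilde\cG^{> h}$ is ``trapped'' inside the already-revealed region modulo $B(v, \tfrac 1N)$.

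Next, given $K_{\tau_{k,1}}$ with $V \setminus (K_{\tau_{k,1}} \cup \{v\}) \ne \emptyset$, connectedness of $\cG$ (from \eqref{eq:isoperimetry}) supplies an edge $\{y, z\} \in E$ with $z \in V \setminus (K_{\tau_{k,1}} \cup \{v\})$ and $y \in V \cap K_{\tau_{k,1}}$ (the only obstruction being when $v$ is a cut vertex, where one restricts to the component of $\cG \setminus \{v\}$ containing $z$). I fix such an edge by a deterministic $\cF_{K_{\tau_{k,1}}}$-measurable rule, let $y^\ast$ be the farthest point of $\partial K_{\tau_{k,1}} \cap \overline{I_{\{y, z\}}}$ from $z$, and set the \emph{bridge phase} on $[\tau_{k,1}, \tau_{k,2}]$ to be $K_t = K_{\tau_{k,1}} \cup [y^\ast, y^\ast_t]_{\cG}$, where $y^\ast_t$ moves from $y^\ast$ to $z$ at unit $\rho$-speed, $\tau_{k,2} := \tau_{k,1} + \rho([y^\ast, z]_{\cG})$, $x_k := z$. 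The subsequent \emph{component phase} $(\tau_{k,2}, \tau_{k+1,1}]$ then continuously reveals $\cC_{x_k}^{\ge h}(B^c(v, \tfrac 1N))$ by a deterministic ordered traversal of its constituent intervals (trivially if $\varphi(x_k) < h$), automatically giving \ref{Kt3-3}. Since each round adds at least the new vertex $x_k$ of $V$, the process terminates after some $n \le |V| - 1$ rounds at time $\tau_{n,2}$ with $V \setminus \{v\} \subset K_{\tau_{n,2}}$; putting $K_t = K_{\tau_{n,2}}$ thereafter completes \ref{Kt3-1}.

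The main obstacle is the inductive verification of the two delicate conditions in \ref{Kt3-2}. The enclosing condition $\partial K_{\tau_{k,1}} \setminus \widetilde\cG^{\le h} \subset K_0$ is preserved because each component phase exhausts a full level-set component, whose topological boundary lies on $\{\varphi = h\} \subset \widetilde\cG^{\le h}$, and because each bridge ends at a vertex of $V$ which is immediately absorbed into the interior by the next component phase whenever $\varphi(x_k) > h$. The harmonic-measure condition $\bP_x[\widetilde X_{H_{K_{\tau_{k,1}}}} \in K_0 \setminus \widetilde\cG^{\le h}] = 0$ for $x \in V \setminus \{v\}$ then follows from the initial phase: every point of $K_0 \cap \widetilde\cG^{> h}$ is trapped by the exhausted component $\cC_{K_0}^{\ge h}(B^c(v, \tfrac 1N)) \subset K_{\tau_{1,1}} \subset K_{\tau_{k,1}}$, so any continuous path in $\widetilde\cG$ from $x \in V \setminus \{v\}$ to such a point must first cross the trapping component, forcing $\widetilde X$ to be absorbed there almost surely. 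Finally, property \ref{Kt2} (adaptedness and each component of $K_t$ intersecting $K_0$) is immediate: all decisions depend only on $\varphi_{|K_t}$ and the fixed labeling, and each added piece is inductively attached to a component of $K_t$ reachable from $K_0$, either through a boundary point of $K_{\tau_{k,1}}$ (during a bridge phase) or through the vertex $x_k$ (during a component phase).
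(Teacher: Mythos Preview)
Your overall strategy---alternating between exhausting a level-set component and bridging to a fresh vertex of $V$---matches the paper's, and your verification of the two conditions in property~\ref{Kt3-2} is essentially correct. However, your bridge construction has a genuine gap: you require a vertex $y \in V \cap K_{\tau_{k,1}}$ adjacent in $\cG$ to some $z \in V \setminus (K_{\tau_{k,1}} \cup \{v\})$, but $V \cap K_{\tau_{k,1}}$ can be empty. Already at $k=1$, if $\varphi < h$ on all of $K_0 = \partial B(v,\tfrac1N)$ then $K_{\tau_{1,1}} = K_0$, which contains no vertex of $V$. Even when $V \cap K_{\tau_{k,1}} \ne \emptyset$, if $v$ is a cut vertex and $V \cap K_{\tau_{k,1}}$ happens to fill an entire component of $\cG \setminus \{v\}$, no such edge $\{y,z\}$ exists; your parenthetical does not resolve this. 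The paper sidesteps the issue by bridging not from a vertex but from the point of $K_{\tau_{k,1}}$ nearest the target on any edge $\overline{I_{\{x,x'\}}}$ that merely \emph{intersects} $K_{\tau_{k,1}}$---since $K_0$ meets every edge incident to $v$, such an edge always exists.

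There is a second, subtler problem in your component phases. You describe the traversal as using a ``$\cF_{\tilde K_1}$-measurable'' enumeration of the intervals of $\tilde K_1$, but then assert that ``all decisions depend only on $\varphi_{|K_t}$''. These are in tension: if the ordering is fixed only once the whole component is known, then for intermediate $t$ the event $\{K_t \subset U\}$ need not lie in $\cF_U$, and property~\ref{Kt2} fails. The paper handles this with an explicit breadth-first rule---at each round, grow one interval at unit speed from a boundary vertex chosen via a fixed ordering on $V$---and verifies adaptedness by running the identical rule inside $U$ and checking the two agree whenever the exploration stays in $U$ (see \eqref{eq:Kt2_ver2}). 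You need either to adopt such a rule or to specify your traversal so that this restriction argument goes through; note also that connectedness of $\cC_t$ in property~\ref{Kt3-3} requires each newly added interval to be attached to the already-revealed set, which an arbitrary enumeration does not guarantee.
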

\begin{proof}
We will do a certain type of {\em breadth-first exploration}. However, since we are 
exploring the components in the metric graph $\widetilde \cG$ and we need to exercise 
delicate control on the nature of the sets $K_t$'s as well as the value of $\varphi$ 
on these sets (see properties~\ref{Kt2} and \ref{Kt3}), we need to take additional 
care.

Let us start by describing in detail the timepoints $\tau_{1, 1}$ and $\tau_{1, 2}$ 
along with the sets $(K_t)_{0 \le t \le \tau_{1, 2}}$. The timepoints $\tau_{k, 1}, \tau_{k, 2}$ and the sets $(K_t)_{\tau_{k-1, 2} \le t \le \tau_{k, 2}}$ for $k \ge 2$ 
will then be constructed inductively. We will verify properties~\ref{Kt1}--\ref{Kt3} 
alongside the construction.

As already hinted, we obtain the sets $(K_t)_{t \ge 0}$ as well as the timepoints 
$\tau_{k, i}$'s from a certain {\em breadth-first algorithm} exploring the components. 
To this end, we introduce yet another sequence of non-decreasing (random) time points $\{\tau_1^j: j \in \N\}$ which correspond to ``discrete rounds'' of the underlying 
algorithm. Let us start from $\tau_1^0 = 0$ by inspecting the set $\partial B(v, \frac 1N) \cap \widetilde \cG^{\ge h} = \{x_1', \ldots, x_{d'}'\}$ where $x_1', \ldots, x_{d'}'$ are arranged according to some arbitrary but deterministic ordering on $V$ 
(which we assume to be fixed from now on). Letting $x_1, \ldots, x_{d'}$ denote the 
(distinct) neighbors of $v$ such that $x_i' \in [v, x_i]_{\cG}$ (recall 
\eqref{def:intG}), consider the points
\begin{equation}\label{def:xi''}
x_i'' \stackrel{\rm def.}{=} \sup\{x \in [x_i', x_i]_{\cG} : 
[x_i', x]_{\cG} \subset \widetilde \cG^{\ge h} \setminus B(v, \tfrac1N)\},
\end{equation}
for $1 \le i \le d' (\le d)$ where we take the %infimum to be $x_i$ 
supremum to be $x_i'$ if this set is empty. Next we set the time point $\tau_1^1$ as $\sum_{i = 1}^{d'}\rho([x_i', x_i'']_{\cG}) = \tau_1^0 + \sum_{i = 1}^{d'}\rho([x_i', x_i'']_{\cG})$ (an empty summation is 0 by convention) %Note that each of the summands is positive almost surely which we assume in the sequel. 
%Next we 
and define the sets $K_t$ for $t \in (\tau_1^0, \tau_1^1]$ as follows. Let $d_t \in \{0, \ldots, d'-1\}$ denote the unique index such that $$\sum_{i=1}^{d_t - 1}\rho([x_{i}', x_{i}'']_{\cG}) < t \, (= t - \tau_1^0) \le \sum_{i=1}^{d_t}\rho([x_{i}', 
x_{i}'']_{\cG}).$$ %where we interpret an empty summation as 0. 
Now let
\begin{equation}\label{def:explored_set}
K_t = K_0 \cup \bigcup_{1 \le i < d_t} \, [x_i', x_i'']_{\cG} \cup [x_{d_t}', x_{d_t}''']_{\cG},
\end{equation}
where $x_{d_t}''' \in [x_{d_t}', x_{d_t}'']_{\cG}$ is such that 
$$\rho([x_{d_t}', x]_{\cG}) =  t - \rho(\cup_{i=1}^{d_t - 1} [x_{i}', x_i'']_{\cG}) = t - \tau_1^0 - \rho(\cup_{i=1}^{d_t - 1} [x_{i}', x_i'']_{\cG}).$$ %This concludes the first round of our algorithm. Observe that $\rho(K_t) = t$ for all $t \in [\tau_0, \tau_1] \equiv [0, \tau_1]$. 
This concludes the first round in the construction of $(K_t)_{0 \le t \le \tau_{1, 2}}$. The following properties %follow directly from this definition.are clear in view of this simple definition.
are direct consequences of this definition and the continuity of $\varphi$. %(the latter is needed only for properties~(b) and (d)).
\begin{equation}\label{eq:Kt1_ver1}
\text{\begin{minipage}{0.8\textwidth}(a) $(K_t)_{0 \le t \le \tau_1^1}$ is increasing in $t$ and the map $t \mapsto K_t: [0, \tau_1^1) \to \cK$ 
is continuous (cf.~property~\ref{Kt1}); (b) $\tau_1^1 = 0$ if $\partial B(v, \tfrac 1N) \cap \widetilde \cG^{\ge h} = \emptyset$ and for all $0 \le t \le \tau_1^1$, $K_t = \cup_{x \in K_0} \, \cC_{x, t}$ where each $(x \in ) \, \cC_{x, t} 
\in \cK$ is a connected subset of $\cC_{x}^{\ge h}(B^c(v, \tfrac 1N))$ 
if $x \in \widetilde \cG^{\ge h}$ and $\{x\}$ otherwise (cf.~properties~\ref{Kt2} and \ref{Kt3-3});  (c) $\partial K_{\tau_1^1} \setminus \partial (\cC_{K_0}^{\ge h}(B^c(v, \tfrac 1N)) \cup K_0) \subset V$; and, (d) $\partial K_{\tau_1^1} \setminus \widetilde \cG^{\le h} \subset V \cup K_0$ %\partial B(v, \tfrac 1N)$ 
and $\bP_x[\widetilde X_{H_{K_{\tau_1^1}}} \in %\partial B(v, \tfrac 1N) 
K_0 \setminus \widetilde \cG^{\le h}] = 0$ for all $x \in V \setminus \{v\}$ (cf.~property~\ref{Kt3-2}).\end{minipage}}
\end{equation}
Also for any $t \in [0, \infty)$, we have (cf.~property~\ref{Kt2})
\begin{equation}\label{eq:Kt2_ver1}
\text{$\{K_{t \wedge \tau_1^1} \subset U\} \in \cF_U$ for any open $U \subset \widetilde\cG$.}
\end{equation}
To see this, let us consider the timepoints $\tau_1^{1, U}$ and the sets $(K_t^U)_{0 
\le t \le \tau_1^{1, U}}$ which are defined analogously to $\tau_1^1$ and $(K_t)_{0 
\le t \le \tau_1^1}$ respectively but with %$\widetilde \cG^{\ge h}$ 
$\widetilde \cG$ replaced by %$\widetilde \cG^{\ge h} \cap U$ 
$U$ (see \eqref{def:xi''} above). Clearly, $\tau_1^{1, 
U}$ and $(K_{t \wedge \tau_1^{1, U}}^U)_{t \ge 0}$ are measurable relative to 
$\cF_U$. Furthermore since $U$ is open, for any $t \ge 0$,
\begin{equation}\label{eq:Kt2_ver2}
\text{$K_{t \wedge \tau_1^1} \subset U$ if and only if $K_{t\wedge \tau_1^{1, U}}^U \subset U$ in which case $K_{t \wedge \tau_1^1} = K_{t\wedge \tau_1^{1, U}}^U$.}
\end{equation}
Together these observations imply \eqref{eq:Kt2_ver1}.

Now consider the set $S_1^1 \stackrel{{\rm def.}}{=} \{x \in \partial K_{\tau_1^1} \cap V : \varphi(x) > h\}$. %If $S_1^1 = \emptyset$, %we stop the exploration, let $K_t = K_{\tau_1}$ for all $t \ge \tau_1$ and $\tau_k = \tau_1$ for all $k \ge 1$. 
%we let $\tau_1^j = \tau_1^1$ for $j \ge 2$ and $\tau_{1, 1} = \tau_1^1$. %Note that we have $K_{\tau_1} = \mathscr C_{K_0}^{\ge h}(B^c(v, \frac1N)) \cup K_0$ in this case. 
%Otherwise if 
If $S_1^1 \ne \emptyset$, we continue almost in a similar way as before. %similarly with $K_0$ replaced by $K_{\tau_1^1}$. %the exploration. 
More precisely, suppose that %the exploration 
%this process continues until 
we have completed some round $j \ge 1$ with %$(K_t)_{0 \le \tau_1^j}$ satisfying \eqref{eq:Kt1_ver1}--\eqref{eq:Kt2_ver2} ($\tau_1^j$ in place of $\tau_1^1$) and
\begin{equation}\label{def:S1j}
S_1^j \stackrel{{\rm def.}}{=} \{x \in \partial K_{\tau_1^j} \cap V : \varphi(x) > h\} \ne \emptyset.
\end{equation}
%we carry out the exploration much in the same way as before. More precisely, 
Let $x_{S_1^j} \in V$ denote the minimum vertex in $S_1^j$. We construct the timepoint 
$\tau_1^{j+1}$ and the sets \sloppy$(K_t)_{\tau_1^j < t \le \tau_1^{j+1}}$ similarly 
as $\tau_1^1$ and $(K_t)_{\tau_1^0 < t \le \tau_1^1}$ above with $\{x_{S_1^j}\}$ and 
$\tau_1^j$ and $K_{\tau_1^j}$ in place of $\partial B(v, \frac 1N)$, $\tau_1^0 (= 0)$ 
and $K_0$ (in \eqref{def:explored_set}) respectively. If, on the other hand, $S_1^j = \emptyset$, we simply let $\tau_1^{j + 1} = \tau_1^j$. One can verify the %properties
properties~(a)--(d) in \eqref{eq:Kt1_ver1} as well as \eqref{eq:Kt2_ver1} (and 
\eqref{eq:Kt2_ver2}) for $\tau_1^{j+1}$ instead of $\tau_1^1$ using the same arguments 
inductively.

\smallskip

Since $\varphi$ is continuous on $\widetilde{\cG}$, it follows from this 
construction and \eqref{def:S1j} that if $S_1^j \ne \emptyset$ for some $j \ge 1$, 
then the points $x_{S_1^{\ell}} \in V; 1 \le \ell \le j$ are distinct. Consequently 
$S_1^J = \emptyset$ for some $J \ge 1$. In the rest of this paragraph we shall argue 
that 
\begin{equation}\label{eq:comp_reveal}
K_{\tau_1^J} = \cC_{K_0}^{\ge h}(B^c(v, \tfrac 1N)) \cup K_0 \mbox{ and }\partial K_{\tau_1^J} \setminus \widetilde \cG^{\le h} \subset  K_0 \mbox{ with probability 1 (under $\P$)}
\end{equation}
(cf.~property~\ref{Kt3-1}). The inclusion above is an immediate consequence of property~(d) in \eqref{eq:Kt1_ver1} and  \eqref{def:S1j}. For the equality of sets, let $\cC_x$ denote the component of $K_{\tau_1^J}$ containing $x \in K_0$ ($\subset K_{\tau_1^J}$). By property~(b) of $K_{\tau_1^J}$ in \eqref{eq:Kt1_ver1}, we have $K_{\tau_1^J} = \cup_{x \in K_0} \, \cC_x$. Therefore it suffices to show that, $\P$-a.s., $\cC_x = \cC_x'$ for all $x \in K_0$ where $\cC_x'$ is the component of 
$\cC_{K_0}^{\ge h}(B^c(v, \tfrac 1N)) \cup K_0$ containing $x$. So let us suppose that $\cC_x \ne \cC_x'$ for some $x \in K_0$ with positive 
probability. In view of property~(b) of $K_{\tau_1^J}$, we have $\cC_x \subset \cC_x'$. As $\cC_x$ and $\cC_x'$ are compact, connected subsets of $\widetilde \cG$ (recall that $\varphi$ is continuous on $\widetilde \cG$) and connected subsets of $\widetilde \cG$ are path-connected (see \eqref{eq:conn_set_G}), it follows from this inclusion (and $\cC_x \ne \cC_x'$) that there exists $y \in \partial \cC_x$ and a continuous path $\gamma_y$ starting at $y$ such that ${\rm image}(\gamma_y) \subset K_0 \cup \cC_x^{\ge h}(B^c(v, \tfrac 1N))$ whereas ${\rm image}(\gamma_y) \cap \cC_x = \{y\}$. Consequently if $y \notin V$, then $y \in \partial \cC_x \setminus \partial (\cC_{K_0}^{\ge h}(B^c(v, \tfrac 1N))$. But the latter set is empty owing to property~
(c) of $K_{\tau_J^1}$ and hence $y \in V$. However, since 
$\varphi(x) \ne h$ for any $x \in V$ almost surely (note that $\var[\varphi(x)] > 0$ in view of \eqref{eq:mgexp}, \eqref{eq:varMK} and \eqref{eq:bnd_lambda}), we obtain from the definition of $S_1^J$ in \eqref{def:S1j} and the inclusion in property~(d) of $K_{\tau_1^J}$ in \eqref{eq:Kt1_ver1} that 
$\partial K_{\tau_1^J} \cap V = \emptyset$ with probability 1. This implies that 
$\partial \cC_x \cap V = \emptyset$, and thus $y \notin V$, almost surely as $\partial \cC_x \subset \partial K_{\tau_1^J}$ which holds because the space $\widetilde \cG$ is locally (path-)connected. Therefore we have arrived at a contradiction which shows 
that our starting assumption $\cC_x \ne \cC_x'$ is false and completes the verification of \eqref{eq:comp_reveal}.

\smallskip

We now have a clear choice for $\tau_{1, 1}$, namely
$$\tau_{1, 1} \stackrel{{\rm def.}}{=} \tau_1^J.$$ If  
$(V \setminus \{v\}) \setminus K_{\tau_{1, 1}} \ne \emptyset$, there exists a minimum vertex $x \in (V \setminus \{v\}) \setminus K_{\tau_{1, 1}}$ (say) such that $K_{\tau_{1, 1}} \cap \overline{I_{\{x, x'\}}} \ne \emptyset$ for some $\{x, x'\} \in E$. Let %$x_{K_{\tau_{1, 1}}}$ 
$x_{\tau_{1, 1}}$ denote the point in $K_{\tau_{1, 1}} \cap \overline{I_{\{x, x'\}}}$ that is nearest to $x$ (so, in particular, $x_{\tau_{1,1}} \in \partial K_{\tau_{1, 1}}$). %if $y \ne v$ and the unique point in $\partial B(v, \frac 1N) \cap \overline{I_{(x, y)}}$ otherwise. 
Also let $$\tau_{1, 2} \stackrel{{\rm def.}}{=} \tau_{1, 1} + 
\rho([x_{\tau_{1, 1}}, x]_{\cG}) \mbox{ and } K_t \stackrel{{\rm def.}}{=} 
K_{\tau_{1, 1}} \cup [x_{\tau_{1, 1}}, x'_t]_{\cG} \mbox{ for }\tau_{1, 1} \le t 
\le \tau_{1, 2},$$ 
where $x'_t \in [x_{\tau_{1, 1}}, x]_{\cG}$ is such that $\rho([x_{\tau_{1, 1}}, 
x'_t]_{\cG}) = t - \tau_{1, 1}$. We call the point $x$ as $x_{\tau_{1, 2}}$ ($\in V \cap K_{\tau_{1, 2}}$) in 
the sequel. If, on the other hand, $(V \setminus \{v\}) \setminus K_{\tau_{1, 1}} = \emptyset$, we simply let $\tau_{1, 2} = \tau_{1, 1}$. Since %$K_{\tau_{1, 1}}$ 
$\tau_{1, 1} = \tau_1^J$ satisfies \eqref{eq:Kt1_ver1} and \eqref{eq:comp_reveal}, the 
following properties follow directly from the above definitions (we interpret $\{x_{\tau_{1, 2}}\}$ as $\emptyset$ when $(V \setminus \{v\}) \setminus K_{\tau_{1, 1}} = \emptyset$). 
\begin{equation}\label{eq:Kt1_ver2}
\text{\begin{minipage}{0.8\textwidth}(a') $(K_t)_{0 \le t \le \tau_{1, 2}}$ is increasing in $t$ and the map $t \mapsto K_t: [0, \tau_{1, 2}) \to \cK$ 
is continuous; (b') for $0 \le t \le \tau_{1, 2}$, each component of $K_t$ intersects 
$K_0$; (c') for $\tau_{0, 2} ( = 0) < t \le \tau_{1, 1}$, $K_t$ has a component contained in $\cC_x^{\ge h}(B^c(v, \tfrac 1N))$ for some $x \in K_0$; 
(d') for $\tau_{1, 1} < t \le \tau_{1, 2}$, $K_t = K_{\tau_{1, 1}} \cup I_t$ for some interval $I_t$; and, (e') $\partial K_{\tau_{1, 2}} \setminus \widetilde \cG^{\le h} \subset \{x_{\tau_{1, 2}}\} \cup %\partial B(v, \tfrac1N)$ 
K_0$ a.s. whereas $\bP_x[\widetilde X_{H_{K_{\tau_{1, 2}}}} \in %\partial B(v, \tfrac 1N) 
K_0 \setminus \widetilde \cG^{\le h}] = 0$ for all $x \in V \setminus \{v\}$.\end{minipage}}
\end{equation}
Restricting this construction so as to only ``look inside'' an open set $U \subset \widetilde \cG$, as in \eqref{eq:Kt2_ver2}, we also obtain property~\ref{eq:Kt2_ver1} 
(and \ref{eq:Kt2_ver2}) for $\tau_{1, 2}$. 

\smallskip

Now suppose that we have constructed timepoints $(\tau_{1, 1} < \tau_{1, 2} \le)\,\tau_{2, 1} \ldots \le \tau_{k, 1} < \tau_{k, 2}$ (notice the strict 
inequalities between $\tau_{\ell, 1}$ and $\tau_{\ell, 2}$'s) and the corresponding 
family of sets $(K_t)_{0 \le t \le \tau_{k, 2}}$ for some $k \ge 1$ such that --- in addition 
to the aforementioned properties of $(K_t)_{0 \le t \le \tau_{1, 2}}, \tau_{1, 1}$ and 
$\tau_{1, 2}$ --- one also has the following properties for {\em each} $1 < \ell \le 
k$. %(here we take an empty condition, which happens when $k=1$, to be always satisfied). 
\eqref{eq:Kt2_ver1} holds with $\tau_{\ell, 2}$ in place of $\tau_1^1$, 
\eqref{eq:comp_reveal} holds with %$$\tau_{\ell - 1, 2} \mbox{ and } \cC_{x_{\tau_{\ell-1, 2}}}^{\ge h}(B^c(v, \tfrac 1N)) \cup K_{\tau_{\ell -1}, 2} \mbox{ in place of } \tau_1^J \mbox{ and } \cC_{K_0}^{\ge h}(B^c(v, \tfrac 1N)) \cup K_0 \mbox{ respectively}$$
%for some $x_{\tau_{\ell - 1, 2}} \in V \cap K_{\tau_{\ell - 1, 2}}$, 
$\tau_{\ell - 1, 2}$ in place of $\tau_1^J$, and properties~(a'), (b') and (c')--(e') in \eqref{eq:Kt1_ver2} hold with $\tau_{\ell - 1, 2}$, $\tau_{\ell, 1}$ 
and $\tau_{\ell, 2}$ instead of $\tau_{0, 2}$, $\tau_{1, 1}$ and $\tau_{1, 2}$ 
respectively for some $x_{\tau_{\ell, 2}} \in V \cap K_{\tau_{\ell, 2}}$. Finally, 
\eqref{eq:Kt1_ver2}-(b') is replaced by: 
$$\mbox{$K_t = K_{\tau_{\ell, 2}} \cup \cC_t$ for some connected $C_t \subset \cC_{x_{\tau_{\ell, 2}}}^{\ge h}(B^c(v, \frac 1N))$ for 
all $\tau_{\ell, 1} < t \le \tau_{\ell, 2}$}.$$ 
If $(V \setminus \{v\}) \setminus 
K_{\tau_{k, 1}} = \emptyset$, we stop and let $K_t = K_{\tau_{k, 2}}$ for all $t \ge 
\tau_{k, 2}$ and $\tau_{m, 1} = \tau_{m, 2} = \tau_{k, 2}$ for all $m > k$. 
Properties~\ref{Kt1}--\ref{Kt3} in Proposition~\ref{prop:Kt} with $n = k$ are 
immediate consequences of %the above properties.
our hypothesis.

\smallskip

Otherwise if $(V \setminus \{v\}) \setminus K_{\tau_{k, 1}} \ne \emptyset$, we can repeat the same sequence of steps with $\{x_{K_{\tau_{k, 2}}}\}, \tau_{k, 2}$ and 
$\{x_{K_{\tau_{k, 2}}}\}$ instead of $\partial B(v, \tfrac1N)$, $\tau_{0, 2} (= 0)$ 
and $K_0$ (in \eqref{def:explored_set}) respectively to obtain timepoints $0 \le \tilde \tau_{k+1, 1} \le \tilde \tau_{k+1, 2} < \infty$ and sets $(\tilde K_t)_{\tau_{k+1, 1} \le t \le \tilde 
\tau_{k+1, 2}}$. %satisfying the corresponding versions of \eqref{eq:Kt2_ver1}, \eqref{eq:comp_reveal} and properties~(a')--(d') in \eqref{eq:Kt1_ver2}. 
Thus letting $\tau_{k+1, i} = \tau_{k, 2} + \tilde \tau_{k+1, i}$ ($i = 1, 2$) and 
$K_t \stackrel{{\rm def.}}{=} K_{\tau_{k, 1}} \cup \tilde K_{t - \tau_{k, 2}}$ for 
$\tau_{k, 1} < t \le \tau_{k+1, 2}$, we see that %$(K_t)_{0 \le t \le \tau_{k+1, 2}}, \tau_{k+1, 1}$ and $\tau_{k+1, 2}$ also satisfy %\eqref{eq:Kt2_ver1}, \eqref{eq:comp_reveal} and \eqref{eq:Kt1_ver2}. 
the hypothesis in the previous paragraph is satisfied with $k+1$ instead of $k$. 
However, since $x_{K_{\tau_{k, 2}}} \in (K_{\tau_{k, 2}} \setminus K_{\tau_{k, 1}}) 
\cap V$ in this case, after some $K \le |V|$ iterations of this sequence we will meet the stopping 
condition $(V \setminus \{v\}) \setminus K_{\tau_{K, 1}} = \emptyset$ %discussed in the previous paragraph 
leading to our desired family of timepoints and the corresponding sets.
\end{proof}

Next we present a result which shows that $\cC^{\ge h}_v$ is not ``too far'' and can be 
easily reconstructed from $\cC^{\ge h}_{K_0}(B^c(v, \tfrac 1N))$ (see 
property~\ref{Kt3-1} above) on an event whose probability tends to 1 as $N \to \infty$. To this end, fix $v \in V, N \ge 1$ and $h \in \R$ and consider the following 
event:
\begin{equation}\label{def:ExhN}
        F_v = F_v(h,N) \stackrel{\rm def.}{=} \big\{\, \overline{B(v, \tfrac1N)} \subset \cG^{\ge h} \mbox{ if } \varphi(v) \ge h \mbox{ and } \overline{B(v, \tfrac1N)} \subset \cG^{< h} \mbox{ otherwise}\big\}.
\end{equation}
\begin{lemma}[Asymptotic irrelevance of $N$]\label{lem:rel_component}
On the event $F_v$ we have, with $K_0 = \partial B(v, \frac 1N)$,
    \begin{equation}\label{eq:rel_component1}
        \begin{split}
            \cC^{\ge h}_v = \cC^{\ge h}_{K_0}(B^c(v, \tfrac 1N)) \cup B(v, \tfrac 1N),
        \end{split}
    \end{equation}
if $\varphi(v) \ge h$ whereas    
    \begin{equation}\label{eq:rel_component}
        \begin{split}
             \cC^{\ge h}_{v} = \cC^{\ge h}_{K_0}(B^c(v, \tfrac 1N)) = \emptyset,
        \end{split}
    \end{equation}
    if $\varphi(v) < h$. Furthermore,
    \begin{equation}\label{eq:rel_component2}
        \begin{split}
            \P[F_v^c] \le \frac{1}{N^c}.
        \end{split}
    \end{equation}
\end{lemma}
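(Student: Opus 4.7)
The plan is to treat the set identities \eqref{eq:rel_component1}--\eqref{eq:rel_component} and the tail bound \eqref{eq:rel_component2} separately: the identities are direct set-theoretic consequences of the event $F_v$ combined with the path-connectedness of connected subsets of $\widetilde{\cG}$ from \eqref{eq:conn_set_G}, while the tail bound rests on a reflection-principle estimate applied to the edge-level Brownian-bridge description of $\varphi$ near $v$ (cf.~\S\ref{subsec:zeroavg}).

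For the identities, first work on $F_v \cap \{\varphi(v) \ge h\}$, so that $\overline{B(v, \tfrac1N)} \subset \widetilde{\cG}^{\ge h}$. This places $B(v, \tfrac1N) \cup K_0$ into $\cC^{\ge h}_v$; moreover, any $y \in \cC^{\ge h}_{K_0}(B^c(v, \tfrac1N))$ is linked to $K_0 \subset \cC^{\ge h}_v$ through $\widetilde{\cG}^{\ge h}$, giving ``$\supseteq$'' in \eqref{eq:rel_component1}. For the opposite inclusion, any $y \in \cC^{\ge h}_v \setminus B(v, \tfrac1N)$ is joined to $v$ by a continuous path in $\widetilde{\cG}^{\ge h}$ by \eqref{eq:conn_set_G}, and this path must cross $K_0 = \partial B(v, \tfrac1N)$ in order to leave the open ball, placing $y$ in $\cC^{\ge h}_{K_0}(B^c(v, \tfrac1N))$. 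On $F_v \cap \{\varphi(v) < h\}$, by contrast, both $v$ and $K_0$ lie outside $\widetilde{\cG}^{\ge h}$, so both clusters in \eqref{eq:rel_component} are empty.

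For \eqref{eq:rel_component2}, continuity of $\varphi$ on $\widetilde{\cG}$ implies that on $F_v^c$ the field attains the value $h$ at some point of $\overline{B(v, \tfrac1N)}$, which is the union of $\{v\}$ with the closed length-$1/N$ sub-intervals of the (at most $d$) edges incident to $v$. Conditionally on $\varphi|_V$, the restriction of $\varphi$ to each such edge is an independent Brownian bridge of length $1$ between the two vertex values (\S\ref{subsec:zeroavg}), so a reflection-principle estimate bounds the probability that this bridge takes the value $h$ within the first $1/N$ of the edge, conditionally on $\varphi(v) = a$, by $C \exp(-c(a - h)^2 N)$. The marginal density of $\varphi(v)$ is bounded above by a constant depending only on $(d, \lambda)$, since $\var[\varphi(v)] = 1/(2|E|\nu_{\{v\}})$ --- which one reads off from \eqref{eq:varMK} using $M_{\{v\}} = 2|E|\nu_{\{v\}}\,\varphi(v)$, a direct consequence of \eqref{eq:mgexp} and the fact that the canonical diffusion $\widetilde X$ hits $v$ with probability $1$ --- and $2|E|\nu_{\{v\}}$ is bounded above (away from $0$) via Proposition~\ref{lem:bnd_lambda}. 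Summing the conditional bound over the edges at $v$ and integrating against this density gives
\[
\P[F_v^c] \;\le\; C d \int_0^\infty \min\bigl(1, e^{-c s^2 N}\bigr)\, ds \;\le\; \frac{C'}{\sqrt{N}},
\]
which is of the required form $N^{-c}$ for a suitable $c = c(d, \lambda) > 0$.

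The main subtlety is securing the uniform-in-$\cG$ variance lower bound $\var[\varphi(v)] \ge c(d, \lambda) > 0$ (and hence the uniform upper bound on the Gaussian density of $\varphi(v)$), for which the spectral-gap input packaged into Proposition~\ref{lem:bnd_lambda} is essential; the remainder is a routine Brownian-bridge tail computation.
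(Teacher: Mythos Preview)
Your argument for the set identities \eqref{eq:rel_component1}--\eqref{eq:rel_component} is correct and essentially the same as the paper's.

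For the tail bound \eqref{eq:rel_component2} there is a gap in the conditional estimate. The claim that, conditionally on $\varphi(v)=a$, the process hits $h$ within the first $1/N$ of a given edge with probability at most $C\exp\bigl(-c(a-h)^2N\bigr)$ is \emph{not} true uniformly in $a$. Writing $\varphi(v+t)=a+(\varphi(y)-a)t+B^{\mathrm{br}}_t$ along the edge $\{v,y\}$, the conditional mean of $\varphi(y)-a$ given $\varphi(v)=a$ equals $(\rho-1)a$ with $\rho=\Cov[\varphi(v),\varphi(y)]/\Var[\varphi(v)]$ generically $\ne 1$, so the linear part contributes a drift of typical size $|a|/N$ on $[0,1/N]$. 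Whenever $(1-\rho)|a|/N$ exceeds $|a-h|$---for instance when $a$ is close to a large $h$---this drift alone carries the process across level $h$ with probability bounded away from zero, contradicting the claimed exponential bound. (Such $a$ have small density under $\varphi(v)$, so the \emph{final} integral can still be controlled, but not via the inequality you wrote.) The paper sidesteps this by introducing a free scale $\varepsilon$: it bounds $\P[|\varphi(v)-h|<\varepsilon]\le C\varepsilon$ via the density argument you describe, and on the complement controls separately $\P[|\varphi(v)-\varphi(y)|>N\varepsilon/4]$ (a Gaussian tail for the \emph{unconditioned} centered variable $\varphi(v)-\varphi(y)$) and the pure Brownian-bridge oscillation on $[0,1/N]$ exceeding $\varepsilon/4$; optimizing $\varepsilon=N^{-1/4}$ gives $\P[F_v^c]\le CN^{-1/4}$. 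Your route can be repaired along similar lines---first restrict to, say, $|\varphi(v)|\le \sqrt{N}$ so that the drift is dominated, and dispose of $|\varphi(v)|>\sqrt{N}$ via the Gaussian tail of $\varphi(v)$---but as written the reflection step does not go through.
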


\begin{proof}
\eqref{eq:rel_component} is immediate since $\overline{B(v, \tfrac 1N)} \subset \widetilde \cG^{< h}$ on the event $F_v \cap \{\varphi(v) < h\}$ 
(cf.~\eqref{def:ExhN}). So we give the argument for \eqref{eq:rel_component1}. In 
this case we have 
\begin{equation}\label{eq:Fvincl}
\overline{B(v, \tfrac 1N)} \subset \widetilde \cG^{\ge h}.    
\end{equation}
Let $y \in \cC^{\ge h}_v$ and we will argue that $y \in \cC^{\ge 
h}_{K_0}(B^c(v, \frac 1N)) \cup B(v, \frac 1N)$. This is clear if $y \in B(v, \frac 1N)$. Otherwise $y \in \cC^{\ge h}_v \setminus B(v, \frac 1N)$ 
and hence must be connected to $\partial B(v, \frac1N) (= K_0)$ in $B^c(x, \frac1N) \cap \cG^{\ge h}$. But in that case $y \in \cC^{\ge h}_{K_0}(B^c(v, \frac 1N))$. Thus we get $$\cC^{\ge h}_{v} \subset \cC^{\ge h}_{K_0}(B^c(v, \tfrac 1N)) \cup B(v, \tfrac 1N).$$ For the 
opposite inclusion, consider $y \in \cC^{\ge h}_{K_0}(B^c(v, \frac 1N)) \cup B(v, \frac 1N)$. If $y \in B(v, \tfrac 1N)$, we see that $y \in \cC_v^{\ge h}$ because of 
\eqref{eq:Fvincl}. Else $y \in \cC^{\ge h}_{K_0}(B^c(v, \tfrac 1N))$ and hence is connected to $K_0$ in $\widetilde \cG^{\ge h}$. Since $K_0 = \partial B(v, \frac 1N)$ 
is itself connected to $v$ in $\cG^{\ge h}$ due to \eqref{eq:Fvincl}, we therefore obtain
$$\cC^{\ge h}_{K_0}(B^c(v, \tfrac 1N)) \cup B(v, \tfrac 1N) \subset \cC^{\ge h}_v.$$
Combined with the previous inclusion \eqref{eq:rel_component1} follows.

\smallskip

    Next we prove \eqref{eq:rel_component2}. Letting $\varepsilon > 0$ (choice will be specified later), we introduce the following events.
    \begin{equation*}
        \begin{split}
            D_{v,1} = \big\{ \varphi(v) \ge h + \varepsilon,\,\, &\overline{B(v, \tfrac 1N)} \cap \widetilde \cG^{ < h} \ne \emptyset \big\},\,\, D_{v,2} = \big\{\varphi(v) \le h - \varepsilon,\,\, \overline{B(v, \tfrac 1N)} \cap \widetilde \cG^{\ge h} \ne \emptyset \big\} \mbox{ and} \\ &D_{v,3} = \big\{h - \varepsilon < \varphi(v) < h + \varepsilon \big\}.
              \end{split}
    \end{equation*}
    Clearly $F^c_v \subset D_{v,1} \cup D_{v,2} \cup D_{v,3}$ (see \eqref{def:ExhN}) and hence
    \begin{equation}\label{eq:rel_component3}
        \begin{split}
            \P[  F^c_v ] \le \P[D_{v,1} \cup D_{v,2}] + \P[D_{v,3}].
        \end{split}
    \end{equation}
    We will bound each of these three probabilities separately starting with 
    $\P[D_{v,3}]$. Since $\varphi(v) \sim N(0, \sigma_v^2)$ with $\sigma_v \ge c$ 
    (follows from \eqref{eq:mgexp}, \eqref{eq:varMK} and \eqref{eq:bnd_lambda}), we have
    \begin{equation}\label{eq:rel_component4}
        \begin{split}
            \P[D_{v,3}] = \P[ h - \varepsilon < \varphi(v) < h + \varepsilon ] \le C \varepsilon.
        \end{split}
    \end{equation}
    Next we bound $\P[D_{v,1}]$ and $\P[D_{v,2}]$. Notice that, in view of the definitions of $D_{v, 1}$ and $D_{v, 2}$, %above, 
    \begin{equation}\label{eq:bndDx2}
    \begin{split}
    &\P[D_{v,1} \cup D_{v, 2}] \le \mbox{$\P\big[\sup_{x \in \overline{B(v, \frac 1N)}} | \varphi(v) -  \varphi(x)| > \varepsilon/2\big]$} \\
    %&= \mbox{$\P \big[\sup_{e \sim v} \sup_{x \in \overline{I_e},\, d(v, x) \le \frac1N} | \varphi(v) -  \varphi(x)| > \varepsilon/2\big]$} \\
    &\le d \sup_{e \sim v}\mbox{$\P\big[\sup_{x \in \overline{I_e},\, d(v, x) \le \frac1N} | \varphi(v) -  \varphi(x)| > \varepsilon/2\big]$}
    \end{split}
    \end{equation}
    where $e \sim v$ if $v$ is an endpoint of $e$. Since the process $(\varphi(x))_{x \in \overline{I_e}}$, where $e = (v, y)$, is distributed as a standard Brownian 
    bridge between $\varphi(v)$ and $\varphi(y)$ conditionally on $(\varphi(v), \varphi(y))$ (see below display \eqref{eq:Gff_sum} in Section~\ref{sec:prelim}), we can write
    \begin{equation*}
    \mbox{$\P\big[\sup_{x \in \overline{I_e},\, d(v, x) \le \frac1N} | \varphi(v) -  \varphi(x)| > \varepsilon/2\big]$} \le \mbox{$\P\big[ |\varphi(v) -  \varphi(y)| > N\varepsilon/4 \big] + \PBr[\sup_{t \in [0, \frac 1N]} |B_t| > \varepsilon/4]$},
    \end{equation*}
    where $B_t$ is a standard Brownian bridge of length $1$ under $\PBr$. The first probability is bounded by $e^{-c N^2\varepsilon^2}$ as $\varphi(v) -  \varphi(y) \sim N(0, \sigma_{vy}^2)$ for some $\sigma_{vy} \le C$ whereas the second probability is 
    bounded by $Ce^{-c N\varepsilon^2}$ which we obtain using standard results on Brownian 
    bridges (see, e.g., \cite[Chapter IV.26]{BorodinBrown}). Plugging these two bounds 
    into the previous display and the resulting bound into \eqref{eq:bndDx2} 
    we get
    \begin{equation*}
        \P[D_{v, 1} \cup D_{v, 2}] \le C e^{-CN\varepsilon^2}.
    \end{equation*}
    Together with \eqref{eq:rel_component4} this yields, in view of 
    \eqref{eq:rel_component3},
    \begin{equation*}%\label{eq:rel_component6}
            \P[F^c_v] \le C( \exp(- c N \varepsilon^2) + \varepsilon ). 
    \end{equation*}
Now setting $\varepsilon = N^{-1/4}$ we obtain \eqref{eq:rel_component}.
\end{proof}

\subsection{Exploration martingale}\label{subsec:martingale}
For any $K \in \cK_{<\infty}$, recall the random variable $M_K = \widetilde\cE(\varphi, f_K)$ from Section~\ref{sec:dirchlet}. Now given any 
$v \in V$, $h \in \R$ and integer $N > 1$, let us consider the family of sets $(K_t)_{t \ge 0} = (K_t(v, h, N))_{t \ge 0}$ given by 
Proposition~\ref{prop:Kt}. Since $K_t \in \cK_{<\infty}$ by property~\ref{Kt2} of $(K_t)_{t \ge 0}$, we may define
\begin{equation}\label{def:tildeMt}
\tilde M_t = \tilde M_t(v, h, N) \stackrel{{\rm def.}}{=} M_{K_t}
\end{equation}
for all $t \ge 0$. In the following lemma, we record some important properties of the 
process $(\tilde M_t)_{t \ge 0}$.
\begin{lemma}[Quadratic variation of $(\tilde M_t)_t$]\label{lem:Mt_mart}    
The process $(\tilde M_t)_{t \ge 0}$ is a continuous $(\cF_{K_t})$-martingale with 
quadratic variation given by 
\begin{equation}\label{eq:qua_var_M_t}
\langle \tilde M \rangle_t = 2|E| (\nu_{K_t} - \nu_{K_0}), \quad t \ge 0.
\end{equation}
\end{lemma}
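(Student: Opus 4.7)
The path $t \mapsto K_t$ is non-decreasing and Hausdorff-continuous by properties~\ref{Kt1}--\ref{Kt2} of Proposition~\ref{prop:Kt}. Composing with the monotone-chain continuity of $K \mapsto M_K$ (Proposition~\ref{lem:mg_cont}) and $K \mapsto \nu_K$ (Proposition~\ref{lem:lambda_cont}) yields continuity of $t \mapsto \tilde M_t$ and of the candidate quadratic variation $t \mapsto 2|E|(\nu_{K_t} - \nu_{K_0})$. Adaptedness $\tilde M_t \in \cF_{K_t}$ is immediate from \eqref{eq:mgexp} and property~\ref{Kt2}.

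\textbf{Martingale property: setup.} For $s < t$, both $K_s$ and $K_t$ satisfy \eqref{eq:smp_condition} by property~\ref{Kt2}, so the strong Markov property \eqref{state:smp} applies. Using it at $K = K_s$, realize (after enlarging the probability space) $\varphi = f_{\varphi|K_s} + \tilde\varphi$ with $\tilde\varphi$ independent of $\cF_{K_s}$ and distributed as a zero-boundary, zero-average GFF on $\tilde{H}^1_0(\overline{K_s^c})$. Then
\[
\tilde M_t \;=\; \widetilde\cE(f_{\varphi|K_s}, f_{K_t}) + \widetilde\cE(\tilde\varphi, f_{K_t}).
\]
For the first summand, apply Lemma~\ref{lem:orthogonality} twice as in the proof of Proposition~\ref{prop:MK_martingale}: the piecewise linear $g = f_{K_t} - f_{K_s}$ vanishes on $K_s$ and has zero sum, giving $\widetilde\cE(f_{\varphi|K_s}, f_{K_t}) = \widetilde\cE(f_{\varphi|K_s}, f_{K_s})$; then with $\phi = 1_{K_s}$ and $\psi = \varphi - f_{\varphi|K_s}$ to obtain $\widetilde\cE(f_{\varphi|K_s}, f_{K_s}) = \widetilde\cE(\varphi, f_{K_s}) = \tilde M_s$.

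\textbf{The $\tilde\varphi$-term (main obstacle).} The crux is to establish $\E[\widetilde\cE(\tilde\varphi, f_{K_t}) \mid \cF_{K_s}] = 0$ despite $K_t$ depending on $\tilde\varphi$. The plan is to exploit the adapted nature of the exploration. Introduce the enlarged filtration $\mathcal H_u = \sigma(\cF_{K_s}, \tilde\varphi|_{K_u})$ for $u \ge s$; by the ``decision-time'' structure of Proposition~\ref{prop:Kt} (property~\ref{Kt3}, where both interval-growth phases and cluster-exploration sub-phases extend only through inspection of already-revealed field values and the stopped portion of $\tilde\varphi$), $K_u$ is $\mathcal H_u$-measurable. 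Iterating the argument of Proposition~\ref{prop:MK_martingale} inside the sub-graph $\overline{K_s^c}$ with the restricted GFF $\tilde\varphi$---applying the SMP for $\tilde\varphi$ at $K_u$ to again get an orthogonal decomposition---shows that $u \mapsto \widetilde\cE(\tilde\varphi, f_{K_u})$ is an $(\mathcal H_u)$-martingale with initial value $\widetilde\cE(\tilde\varphi, f_{K_s}) = 0$ (the vanishing follows from $\tilde\varphi|_{K_s} \equiv 0$ and Lemma~\ref{lem:orthogonality}). Taking $u = t$ and conditioning on $\cF_{K_s} \subset \mathcal H_s$ yields the desired zero mean. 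A parallel route is approximation: cover the Hausdorff-compact range of $K_t$ by finitely many $\epsilon$-cells, replace $K_t$ by an upper-envelope $\bar K_t^\epsilon \supset K_t$ taking finitely many values, apply Proposition~\ref{prop:MK_martingale} cell-by-cell to deterministic representatives, and send $\epsilon \to 0$ using Proposition~\ref{lem:mg_cont}.

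\textbf{Quadratic variation.} Once $\tilde M$ is identified as a continuous martingale, verifying $\langle \tilde M\rangle_t = 2|E|(\nu_{K_t} - \nu_{K_0})$ reduces to showing that $\tilde M_t^2 - 2|E|\nu_{K_t}$ is an $(\cF_{K_t})$-martingale (the additive constant $2|E|\nu_{K_0}$ is harmless). The cross-term in $\E[(\tilde M_s + (\tilde M_t - \tilde M_s))^2 \mid \cF_{K_s}]$ vanishes by the martingale property just established, and the square-increment $\E[(\tilde M_t - \tilde M_s)^2 \mid \cF_{K_s}]$ is the natural extension of Lemma~\ref{lem:cond_variance} to random $K_t$, computed by the same nested-SMP (or $\epsilon$-net) scheme as in the preceding paragraph, yielding $2|E|\,\E[\nu_{K_t} - \nu_{K_s} \mid \cF_{K_s}]$. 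Combined with the continuity from the first paragraph, uniqueness of the quadratic variation of a continuous martingale finishes the proof.
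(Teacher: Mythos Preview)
Your approach works in spirit but is far more laborious than needed, and the ``main obstacle'' paragraph is where it remains incomplete. You try to verify $\E[\tilde M_t \mid \cF_{K_s}] = \tilde M_s$ directly for two \emph{random} sets $K_s \subset K_t$; Proposition~\ref{prop:MK_martingale} as stated only handles a random $K$ inside a \emph{deterministic} $K'$, so you are forced into the nested-SMP or $\epsilon$-net arguments, neither of which you actually carry out. The nested-SMP route in particular is circular as written: showing that $u \mapsto \widetilde\cE(\tilde\varphi, f_{K_u})$ is an $(\mathcal H_u)$-martingale is the same problem you started with, just for $\tilde\varphi$ in place of $\varphi$.

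The paper sidesteps all of this with a single observation you missed: every $K_t$ is contained in the \emph{deterministic} set $K_\infty \stackrel{\rm def.}{=} \widetilde\cG \setminus B(v,\tfrac1N) \in \cK_{<\infty}$. Hence Proposition~\ref{prop:MK_martingale} applied with $K = K_t$ and $K' = K_\infty$ gives $\tilde M_t = \E[M_{K_\infty} \mid \cF_{K_t}]$ for every $t$, and the martingale property is then immediate from the tower property---no need to compare two random times. For the quadratic variation the paper proceeds the same way: Lemma~\ref{lem:cond_variance} gives $\Var[M_{K_\infty}\mid \cF_{K_t}] = 2|E|(\nu_{K_\infty} - \nu_{K_t})$, which is a.s.\ continuous in $t$ by Proposition~\ref{lem:lambda_cont}, and then a general fact for closed continuous martingales (\cite[Corollary~10]{MR4112719}) identifies $\langle \tilde M\rangle_t$ as the drop in conditional variance. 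This closed-martingale trick is the missing idea; once you see it, both the martingale property and \eqref{eq:qua_var_M_t} fall out in a few lines.
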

\begin{proof}
The continuity of $\tilde M_t = M_{K_t}$ as a function of $t$ follows from property~\ref{Kt1} of $K_t$'s and Lemma~\ref{lem:mg_cont}. Owing to property~\ref{Kt2}, $K_t$ satisfies %the %strong (domain) Markov 
property~\eqref{state:smp} in view of \eqref{eq:smp_condition} for 
all $t \ge 0$. Also each $K_t$ is a subset of the compact set $K_\infty \stackrel{{\rm def.}}{=} \widetilde \cG \setminus B(v, \frac 1N) \in \cK_{<\infty}$. Consequently, by 
Proposition~\ref{prop:MK_martingale},
$$\tilde M_t = \E[M_{K_\infty} \mid \cF_{K_t}]$$
and thus $(\tilde M_t)_{t \ge 0}$ is a (uniformly integrable) $(\cF_{K_t})$-martingale. Finally, since $\Var[ M_{K_\infty}\mid\cF_{K_t}] = 2|E|(\nu_{K_\infty} - 
\nu_{K_t})$ by Lemma~\ref{lem:cond_variance} which is $\P$-almost surely continuous as 
a function of $t$ due to Lemma~\ref{lem:lambda_cont}, it follows from 
\cite[Corollary~10]{MR4112719} that the quadratic variation of $(\tilde M_t)_{t \ge 
0}$ is given by the expression
\begin{equation*}
\langle \tilde M \rangle_t = \Var[M_{K_\infty}\mid\cF_{K_0}] - \Var[ M_{K_\infty}\mid\cF_{K_t}] = 2|E|(\nu_{K_t} - \nu_{K_0})
\end{equation*}
for all $t \ge 0$.
\end{proof} 
The following result follows from application of the classical Dubins–Schwarz theorem 
(see, e.g., \cite[Chapter V, Theorem~1.7]{revuz2013continuous} for the 
particular version used here and also \cite[Chapter IV, Proposition~1.13]{revuz2013continuous}) to the continuous martingale $(\tilde M_t)_{t \ge 0}$. In 
Section~\ref{sec:critical_regime}, this will enable us to calculate {\em tail 
probabilities} for the clusters of $\cG^{\ge h}$ after being translated into 
suitable events measurable relative to $\tilde M_t$.
\begin{theorem}\label{thm:time-change}
For any $t \ge 0$, let $T_t \stackrel{{\rm def.}}{=} \inf\{s \ge 0 : \langle \tilde M 
\rangle_s > t\}$. Then the time-changed process $(\tilde M_{T_{t}})_{t \ge 0}$ is 
distributed as $(B_{t_0 + t \wedge \langle \tilde M\rangle_{\infty}})_{t \ge 0}$ where 
$(B_t)_{t \ge 0}$ is a standard Brownian motion and $t_0 = 2|E|\nu_{K_0}$. 
Furthermore, $\tilde M_s$ is constant on each interval $[T_{t-}, T_t]$ where $T_{t-} \stackrel{{\rm def.}}{=} \lim_{u \uparrow t} T_u$ for $t \ge 0$.
\end{theorem}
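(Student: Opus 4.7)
The plan is to apply the classical Dubins--Schwarz time-change theorem to the centered martingale $N_t \stackrel{\rm def.}{=} \tilde M_t - \tilde M_0$ and then re-introduce the (random, Gaussian) initial value $\tilde M_0$ via a Brownian-bridge concatenation. By Lemma~\ref{lem:Mt_mart}, $(N_t)_{t \ge 0}$ is a continuous $(\cF_{K_t})$-martingale with $N_0 = 0$ and quadratic variation $\langle N\rangle_t = \langle \tilde M\rangle_t = 2|E|(\nu_{K_t} - \nu_{K_0})$. Moreover, since $K_0 = \partial B(v, \tfrac 1N)$ is deterministic, $\tilde M_0 = M_{K_0}$ is a linear functional of the Gaussian field $\varphi$, hence a centered Gaussian with variance $2|E|\nu_{K_0} = t_0$ by \eqref{eq:varMK}.

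First I would handle the fact that $\langle N\rangle_\infty$ is almost surely finite: since $K_t \subset \overline{\widetilde \cG \setminus B(v, 1/N)} \in \cK_{<\infty}$ for all $t$ and $\nu_{(\cdot)}$ is monotone with respect to set inclusion (Proposition~\ref{lem:bnd_lambda}), one has $\langle N\rangle_\infty \le 2|E|\nu_{\overline{\widetilde \cG \setminus B(v, 1/N)}} < \infty$. Consequently, the textbook Dubins--Schwarz does not directly produce a Brownian motion on all of $[0, \infty)$; following the standard enlargement procedure, on a suitably extended probability space take an independent standard Brownian motion $\widehat B$ and set
\[
\beta_t \stackrel{\rm def.}{=} N_{T_{t \wedge \langle N\rangle_\infty}} + \widehat B_{(t - \langle N\rangle_\infty)_+}, \qquad t \ge 0.
\]
By \cite[Chapter~V, Theorem~1.7]{revuz2013continuous} and a routine verification, $\beta$ is a standard Brownian motion in the filtration $\cG_t \stackrel{\rm def.}{=} \cF_{T_t} \vee \sigma\big(\widehat B_s : s \le (t - \langle N\rangle_\infty)_+\big)$; in particular, being a BM in a filtration whose time-zero $\sigma$-algebra is $\cF_{K_0}$, it is independent of $\cF_{K_0}$, and hence of $\tilde M_0$. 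By construction, $N_{T_t} = \beta_{t \wedge \langle N\rangle_\infty}$ for every $t \ge 0$.

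To recover the form required by the theorem, enlarge the space once more to include a Brownian bridge $W$ on $[0, t_0]$ from $0$ to $\tilde M_0$, constructed conditionally on $\tilde M_0$ and independently of $\beta$. Set
\[
B_s \stackrel{\rm def.}{=}
\begin{cases} W_s, & 0 \le s \le t_0, \\ \tilde M_0 + \beta_{s - t_0}, & s > t_0. \end{cases}
\]
Standard properties of the Brownian bridge, the independence of $\beta$ from $\tilde M_0 \sim N(0, t_0)$, and continuity of the two pieces at $s = t_0$ imply that $B$ is a standard Brownian motion on $[0, \infty)$ (a direct check of Gaussianity, continuity, mean zero and covariance $\min(s,s')$). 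It then follows that
\[
B_{t_0 + t \wedge \langle \tilde M\rangle_\infty} = \tilde M_0 + \beta_{t \wedge \langle \tilde M\rangle_\infty} = \tilde M_0 + N_{T_t} = \tilde M_{T_t}
\]
for every $t \ge 0$, yielding the distributional identity. The final statement --- that $\tilde M$ is constant on each interval $[T_{t-}, T_t]$ --- is a general fact about continuous martingales being constant on intervals of constancy of the quadratic variation; see \cite[Chapter~IV, Proposition~1.13]{revuz2013continuous}.

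The main point requiring care is the combined handling of (i) the nonzero, Gaussian initial value $\tilde M_0$ and (ii) the almost-sure finiteness of $\langle \tilde M\rangle_\infty$: neither fits the textbook Dubins--Schwarz hypothesis directly, and it is precisely the double enlargement of the probability space --- an auxiliary Brownian motion $\widehat B$ to extend the time-change past the random time $\langle \tilde M\rangle_\infty$, together with a Brownian bridge $W$ to install the correct Gaussian initial value on $[0, t_0]$ --- that bridges the gap. Everything else is routine.
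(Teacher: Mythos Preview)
Your argument is correct and follows the same route the paper indicates---an application of Dubins--Schwarz (\cite[Chapter~V, Theorem~1.7]{revuz2013continuous}) together with \cite[Chapter~IV, Proposition~1.13]{revuz2013continuous}---but the paper leaves all details to these citations, whereas you spell out the two genuine subtleties: the a.s.\ finiteness of $\langle \tilde M\rangle_\infty$ (handled by the auxiliary $\widehat B$) and the nonzero Gaussian initial value $\tilde M_0$ (handled by your Brownian-bridge splice on $[0,t_0]$). One minor imprecision: you write that $\cG_0 = \cF_{K_0}$, but in fact $\cG_0 = \cF_{T_0} \supset \cF_{K_0}$ since $T_0 \ge 0$; this only strengthens your independence claim, so the argument goes through unchanged.
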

\section{Tail probabilities for cluster volume}\label{sec:critical_regime}
This section is devoted to the proof our main results, namely, 
Theorems~\ref{thm:main1} and \ref{thm:main2}. The corresponding upper tail 
probabilities derive from the following result.
\begin{theorem}[Upper bounds on cluster volume]\label{prop:ubd_cri_vol}
For any $\delta \in (0,1)$ and $h \stackrel{}{=} -A |V|^{-1/3}$ with $
A \ge 0$, we have
    \begin{equation}\label{eq:ubd_cri_vol}
        %\begin{split}
            \P \big[ |\scC^{\ge h}_{\max}| \ge \tfrac{1}{\delta} |V|^{2/3} \big] \le  C (\delta^{3/2} +   \delta A). 
            % C (\delta^{3/2} + \delta^{3/2} \lambda |V|^{-1/3} +  \delta \lambda + \delta \lambda^2 |V|^{-1/3}).   
        %\end{split}
    \end{equation}
On the other hand, for $h = A |V|^{-1/3}$ and $\delta |V|^{2/3}\ge C$ we have
\begin{equation}\label{eq:ubd_cri_vol2}
            \P\big[ |\scC^{\ge h}_{\max}  | \ge \delta |V|^{2/3} \big] \le  \frac{C}{\delta^{3/2}}e^{-c A^2\delta}. 
    \end{equation}
\end{theorem}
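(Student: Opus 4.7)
My plan is to translate the event $\{|\scC^{\ge h}_{\max}| \ge k\}$ into a one-sided confinement event for the exploration martingale $(\tilde M_t)_{t \ge 0}$ from Section~\ref{sec:exploration} and then evaluate the latter via Dubins--Schwarz (Theorem~\ref{thm:time-change}). As a first reduction, the pointwise bound $\mathbf{1}\{|\scC^{\ge h}_{\max}| \ge k\} \le \tfrac{1}{k}\sum_{v \in V}\mathbf{1}\{|\scC^{\ge h}_v| \ge k\}$ together with Markov's inequality gives $\P[|\scC^{\ge h}_{\max}| \ge k] \le \tfrac{|V|}{k}\,\max_{v \in V}\P[|\scC^{\ge h}_v| \ge k]$, so it is enough to prove a per-vertex bound $\P[|\scC^{\ge h}_v| \ge k] \le Ck^{-1/2}e^{-ch^2 k}$ for $h > 0$ and $Ck^{-1/2} + C|h|$ for $h \le 0$. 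Specializing to $k = \delta|V|^{2/3}$ (resp.~$k = |V|^{2/3}/\delta$) and computing $|V|/k^{3/2} = \delta^{-3/2}$ (resp.~$\delta^{3/2}$) and $|V|/k \cdot |h| = \delta A$ produces \eqref{eq:ubd_cri_vol2} and the two terms in \eqref{eq:ubd_cri_vol}.

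To bound $\P[|\scC^{\ge h}_v| \ge k]$, I fix $v$ and run the exploration $(K_t)_{t \ge 0} = (K_t(v, h, N))_{t \ge 0}$ of Proposition~\ref{prop:Kt}, letting $N \to \infty$ via Lemma~\ref{lem:rel_component}, and set $\tau := \inf\{t \ge 0 : |K_t|_V \ge k\} \wedge \tau_{1, 1}$. On the event $\{|\scC^{\ge h}_v| \ge k+1\}$, modulo a $\P$-null event as $N \to \infty$, property~\ref{Kt3-2} of the exploration ensures $\tau < \tau_{1,1}$, $K_t \subset \widetilde{\cG}^{\ge h}$ for \emph{all} $t \in [0, \tau]$, $|K_\tau|_V \ge k$, and $b_0(K_\tau) \le d$. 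Applying Lemma~\ref{cor:value_bdr} pointwise in $t$ and invoking the linear isoperimetry $2|E|\nu_{K_\tau} \in [ck, Ck]$ from Proposition~\ref{lem:bnd_lambda} together with $\langle \tilde M\rangle_t = 2|E|(\nu_{K_t} - \nu_{K_0})$, one obtains $\langle \tilde M\rangle_\tau \in [ck, Ck]$ and $\tilde M_t - h\langle \tilde M\rangle_t \ge ht_0$ throughout $[0, \tau]$, where $t_0 := 2|E|\nu_{K_0} = O(1)$.

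Using Theorem~\ref{thm:time-change} and defining $\tilde B_s := B_s - hs$ (a Brownian motion with drift $-h$), the preceding inequality is equivalent to $\tilde B_s \ge 0$ for every $s \in [t_0, t_0 + \langle \tilde M\rangle_\tau]$, an interval of length $\ge ck$ on which $\tilde B$ starts from the Gaussian value $\tilde B_{t_0} \sim N(-ht_0, t_0)$. A standard one-dimensional computation --- combining the reflection-principle formula for drifted Brownian motion with integration against this starting Gaussian density --- gives $Ck^{-1/2}e^{-ch^2 k}$ for $h > 0$ (the exponential reflecting the downward drift), and $Ck^{-1/2} + C|h|$ for $h \le 0$ (where the $Ck^{-1/2}$ is the usual ballot/meander contribution and the $C|h|$ absorbs the Gaussian tail induced by the shift of the starting distribution, translating to $\delta A$ after aggregation). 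Plugging these into the Markov reduction produces the claimed bounds.

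The main obstacle is extracting the sharp $k^{-1/2}$ prefactor --- rather than the much weaker $(hk)^{-1}$ that a naive first-passage/hitting-time estimate would yield, which would leave a spurious $|V|^{1/3}$ in the final answer. The resolution hinges on the fact that $\tilde M_t \ge 2h|E|\nu_{K_t}$ holds on the \emph{entire} interval $[0, \tau]$ (not merely at the endpoint), which turns the relevant event into a Brownian \emph{confinement} rather than a one-time hitting; the factor $k^{-1/2}$ then arises naturally from the reflection-principle formula $2x/\sqrt T$ (times an exponential in the drifted case) integrated against the bounded-variance Gaussian starting distribution of $\tilde B_{t_0}$. The same delicate analysis produces the additive $C|h|$ correction in the $h \le 0$ case.
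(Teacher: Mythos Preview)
Your proposal is correct and follows essentially the same route as the paper: the Markov/first-moment reduction to a per-vertex bound (the paper's Lemma~\ref{lem:max_cluster_size}), the translation of $\{|\scC^{\ge h}_v|\ge k\}$ into a Brownian confinement event via the exploration martingale and Dubins--Schwarz (the paper's Lemma~\ref{lem:cls_to_brw}), and the same reflection-principle computation integrated against the Gaussian law of the starting point. One minor correction: the fact that $K_t\subset\widetilde\cG^{\ge h}$ on $[0,\tau_{1,1}]$ (on the event $F_v$) comes from property~\ref{Kt3-1} together with property~\ref{Kt1}, not from property~\ref{Kt3-2}, and the upper bound $2|E|\nu_{K_\tau}\le Ck$ you invoke is not actually needed---only the lower bound from Proposition~\ref{lem:bnd_lambda} enters.
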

For the lower tail probabilities we need:
\begin{theorem}[Lpper bounds on cluster volume]\label{prop:lbd_cri_vol}
For $h \stackrel{}{=} A |V|^{-1/3}$ with $A \ge 0$ and any $\delta \in (0, 1)$ 
such that $|V| \ge \frac{(1 + A)^8}{\delta^{3}}$, we have
     \begin{equation}\label{eq:lbd_cri_vol}
             \P[|\scC^{\ge h}_{\max}| \le \delta |V|^{2/3}] \le %\frac{1}{|V|^c} + 2(1 + 4\Cr{C:bdr} A)^{1/2} \delta^{1/6}.
             C(1 + A)^{1/2}\delta^{1/5}.
     \end{equation}
On the other hand, there exists $c \in (0, \infty)$ such that for $h = -A|V|^{-1/3}$ and %$0 < \delta \le c \, (1 + A)^{-1}$ 
any $\delta \in (0, 1)$ satisfying $\tfrac1{\delta} \le c \min(|V|^{1/3}, A)$, we have
\begin{equation}\label{eq:lbd_cri_vol2}
             \P[|\scC^{\ge h}_{\max}| \le \tfrac1{\delta} |V|^{2/3}] \le C ({\tilde \delta}^{1/5} + e^{-c A^2 \tilde \delta})
     \end{equation}
   for all $|V|^{-1/3} \le \tilde \delta \le 1$.
 \end{theorem}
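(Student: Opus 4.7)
The plan is to bound the probability that every cluster is small via the exploration martingale from Section~\ref{sec:exploration} and its Dubins--Schwarz representation.

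First, start the exploration from a suitable vertex $v \in V$, yielding sets $(K_t)_{t \ge 0}$ as in Proposition~\ref{prop:Kt} and the associated continuous martingale $\tilde M_t = M_{K_t}$ with quadratic variation $\langle \tilde M\rangle_t = 2|E|(\nu_{K_t} - \nu_{K_0})$ by Lemma~\ref{lem:Mt_mart}. By property~\ref{Kt3} of Proposition~\ref{prop:Kt}, each cluster-exploration phase $[\tau_{k-1,2},\tau_{k,1}]$ adds a connected subset of $\cC^{\ge h}_{x_{k-1}}(B^c(v,\tfrac1N))$, and each interpolation phase $[\tau_{k,1},\tau_{k,2}]$ adds a single edge-interval that reaches a fresh starter vertex $x_{\tau_{k,2}}$. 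On the event $\{|\scC^{\ge h}_{\max}| \le \delta|V|^{2/3}\}$, every cluster visited by the exploration has volume at most $\delta|V|^{2/3}$, so up to any stopping time $\sigma$ the set $K_\sigma$ is a union of many small cluster pieces plus short interpolation intervals and starter vertices.

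The core of the proof is a double estimate at a stopping time $\sigma \stackrel{{\rm def.}}{=} \inf\{t : |K_t|_V \ge s\}$ for a threshold $s$ to be optimized. On the one hand, the linear isoperimetric bound in Proposition~\ref{lem:bnd_lambda} gives
\[
\langle \tilde M\rangle_\sigma \le C\bigl(|K_\sigma|_V + b_0(K_\sigma)\bigr) \le C' s.
\]
On the other hand, combining the decomposition $M_K = M_K^{\rm blk} + M_K^{\rm bdr}$ from Lemma~\ref{lem:mgexp} with Lemma~\ref{cor:value_bdr} applied cluster by cluster, the zero-average property \eqref{eq:Gff_sum}, and the variance bound of Lemma~\ref{lem:var_ubd}, one produces a lower bound of the shape
\[
\tilde M_\sigma \;\gtrsim\; c\,h\,\langle \tilde M\rangle_\sigma \;-\; \mathrm{err}(\sigma),
\]
where the error term reflects the Gaussian fluctuation of $\sum_{x \in V \cap K_\sigma} d_x\varphi(x)$ produced by the starter vertices at which $\varphi$ need not exceed $h$ and the ``unexplored'' bulk coupled to $K_\sigma$ through \eqref{eq:Gff_sum}. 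By the Dubins--Schwarz representation (Theorem~\ref{thm:time-change}), the two estimates convert into a question about a standard Brownian motion $B$ reaching a prescribed level at a controlled time; the Gaussian tail of $B$ supplies the final probability estimate. Optimizing $s$ against the competing scales $s,\,|V|,\,\delta|V|^{2/3}$ produces the exponent $\delta^{1/5}$ in \eqref{eq:lbd_cri_vol}. For \eqref{eq:lbd_cri_vol2}, the parameter $h = -A|V|^{-1/3}$ with $A$ large supplies a much stronger drift; the Gaussian tail of the time-changed Brownian motion then delivers the term $e^{-cA^2\tilde\delta}$, while the companion polynomial term $\tilde\delta^{1/5}$ comes from exactly the same mechanism as in the critical case.

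The main obstacle is obtaining the lower bound on $\tilde M_\sigma$ in the near-critical regime $h = A|V|^{-1/3}$: the naive bulk estimate using only Lemma~\ref{cor:value_bdr} is dominated by Gaussian fluctuations of order $\nu_{K_\sigma}\sqrt{|V|}$. Circumventing this requires carefully combining the signal $\varphi \ge h$ on cluster interiors with the variance bound of Lemma~\ref{lem:var_ubd} applied to the ``unexplored'' portion $V \setminus K_\sigma$ via zero-average, and keeping a sharp account of the contribution from starter vertices $x_{\tau_{k,2}}$ where $\varphi$ may be very negative. Balancing these competing contributions, together with the continuity of $K \mapsto M_K$ and $K \mapsto \nu_K$ established in Section~\ref{sec:regcond_mg}, is what pins down the precise $(1+A)^{1/2}\delta^{1/5}$ rate.
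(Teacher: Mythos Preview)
Your sketch collects the right ingredients (the bulk/boundary split of Lemma~\ref{lem:mgexp}, the variance bound Lemma~\ref{lem:var_ubd}, the Lipschitz property Lemma~\ref{lem:lip_denergy}, and Dubins--Schwarz), but the single-stopping-time scheme does not close. A \emph{lower} bound $\tilde M_\sigma \gtrsim c\,h\,\langle \tilde M\rangle_\sigma - \mathrm{err}(\sigma)$ on the small-cluster event does not make that event unlikely: since $\sigma$ is a stopping time for the exploration filtration, $\tilde M_\sigma$ is Brownian motion evaluated at a random stopped time bounded by $Cs$, and there is no useful Gaussian-tail input for ``$B_T \ge chs$'' when $T \le Cs$ is arbitrary---in fact this holds with probability essentially $1/2$ whenever $h\sqrt{s}$ is of order $1$, independently of cluster sizes. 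What the small-cluster hypothesis actually constrains is the \emph{trajectory} of $\tilde M$, not its value at one time, and the argument must exploit this.

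The paper's mechanism is the one you need. It shows two complementary events are likely. First (the event $\Cr{Flbc:1}$ in \eqref{def:Fdstar}), during every interpolation window $[\tau_{k,1},\tau_{k,2}]$ with $|K_{\tau_{k,1}}|_V$ not too large, the martingale is bounded \emph{above} by a level $(\delta'+\beta A{\delta'}^{1/2})|V|^{1/3}$; this is where property~\ref{Kt3-2} pins $\varphi \le h$ on $\partial K_{\tau_{k,1}}\setminus K_0$, the boundary sum in \eqref{eq:M_K_decomp} is then at most $2|E|\nu_{K_{\tau_{k,1}}}h$, the bulk sum is handled by a union bound over connected vertex sets via Lemma~\ref{lem:var_ubd}, and the increment over $[\tau_{k,1},\tau_{k,2}]$ is $O(\sqrt{\log|V|})$ by Lemma~\ref{lem:lip_denergy} together with Dubins--Schwarz. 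Second (the event $\Cr{Flbc:2}$ in \eqref{def:F2}), the time-changed martingale is likely, by the reflection principle, to make an excursion of quadratic-variation length at least $\beta\delta|V|^{2/3}$ above that same level. On $\Cr{Flbc:1}\cap\Cr{Flbc:2}$ the excursion cannot overlap any interpolation window, hence lies in a single cluster phase $(\tau_{k,2},\tau_{k+1,1}]$; Lemma~\ref{lem:lip_denergy} then converts the capacity increment into a cluster of size $\ge \delta|V|^{2/3}$ (Lemma~\ref{lem:inclusion_lbd}). The exponent $\delta^{1/5}$ comes from optimizing the intermediate scale $\delta'\asymp(1+A)^{-2}\delta^{4/5}$ in the Brownian hitting/escape estimate for $\Cr{Flbc:2}^c$ (Lemma~\ref{lem:brown_prob_cri}), not from a single Gaussian tail; the supercritical bound \eqref{eq:lbd_cri_vol2} follows by the same excursion scheme, with $e^{-cA^2\tilde\delta}$ being the probability that the Brownian motion, once above the threshold, dips below it within time $\tilde\delta|V|^{2/3}$.
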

We can now deduce our main results assuming these bounds.
\begin{proof}[Proofs of Theorems~\ref{thm:main1} and \ref{thm:main2}]
The bound \eqref{eq:main_critbnd} in Theorem~\ref{thm:main1} follows by combining 
\eqref{eq:ubd_cri_vol} in Theorem~\ref{prop:ubd_cri_vol} with \eqref{eq:lbd_cri_vol2} in Theorem~\ref{prop:lbd_cri_vol} above. For the bound \eqref{eq:main_subcritbnd} in Theorem~\ref{thm:main2}, we use \eqref{eq:ubd_cri_vol2} in 
Theorem~\ref{prop:ubd_cri_vol} with $\delta = C \, \frac {\log eA}{A^2}$ for a suitable 
$C \in (0, \infty)$. For \eqref{eq:main_supcritbnd}, we apply 
\eqref{eq:lbd_cri_vol2} in Theorem~\ref{prop:lbd_cri_vol} with the choices $\delta = c 
A$ and $\tilde \delta$ for a suitable $c \in (0, \infty)$.
\end{proof}
In \S\ref{subsec:ubd_critical} and \S\ref{subsec:lbd_critical} below, we give the 
proofs of Theorems~\ref{prop:ubd_cri_vol} and \ref{prop:lbd_cri_vol} respectively. In 
the sequel, for any two events $E_1$ and $E_2$ defined on the probability space 
underlying $\P$, we say that $E_1 \subset E_2$ {\em on an event with} ($\P$-) {\em 
probability 1} if $E_1 \cap E \subset E_2 \cap E$ for some event $E$ such that $\P[E] 
= 1$.
\subsection{Upper tail probabilities for cluster volume}\label{subsec:ubd_critical}
We prove Theorem~\ref{prop:ubd_cri_vol} in this subsection. The following bound on the 
size of a generic cluster is crucial (see,~e.g., \cite{MR2653185} for similar upper 
bounds in the context of Erd\H{o}s-R\'enyi graphs).
\begin{lemma}\label{lem:max_cluster_size}
For $T \ge 1$ and $h = - A|V|^{-1/3}$ with $A \ge 0$, one has
    \begin{equation}\label{eq:max_cluster_size}
        %\begin{split}
             \max_{x \in V} \P [ | \scC^{\ge h}_x | \ge T ] \le  C\big( \tfrac{1}{\sqrt{T}} + A |V|^{-1/3}\big),
            % C( \frac{1}{\sqrt{T}} + \frac{\lambda |V|^{-1/3}}{\sqrt{T}} + \lambda |V|^{-1/3} + \lambda^2 |V|^{-2/3}). 
        %\end{split}
    \end{equation}
whereas for $h = A|V|^{1/3}$ and $T \ge C$, we have
\begin{equation}\label{eq:max_cluster_size2}
             \max_{x \in V} \P [ | \scC^{\ge h}_x | \ge T ] \le %\tfrac C{(h\sqrt{T} \vee 1)} e^{-\frac{ch^2T}2}.
             \tfrac{C}{\sqrt{T}} e^{-c h^2T}.
    \end{equation}
\end{lemma}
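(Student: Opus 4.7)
The plan is to connect the cluster volume $|\scC_v^{\ge h}|$ to a Brownian excursion event via the exploration martingale of Section~\ref{sec:exploration}. Fix $v \in V$ and a large integer $N$, run the exploration $(K_t)_{t \ge 0}$ of Proposition~\ref{prop:Kt} started from $K_0 = \partial B(v, 1/N)$, and set $t_0 := 2|E|\nu_{K_0}$, which is uniformly bounded in $N$ by Proposition~\ref{lem:bnd_lambda}, together with $\sigma := \inf\{t : |K_t|_V \ge T - 1\}$. On $F_v \cap \{|\scC_v^{\ge h}| \ge T\}$, Lemma~\ref{lem:rel_component} forces $\varphi(v) \ge h$ and $|\cC^{\ge h}_{K_0}(B^c(v, 1/N))|_V \ge T - 1$, so property~\ref{Kt3-1} of Proposition~\ref{prop:Kt} gives $\sigma \le \tau_{1,1}$ and $K_t \subset \widetilde{\cG}^{\ge h}$ for every $t \in [0, \tau_{1,1}]$. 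Lemma~\ref{cor:value_bdr} then upgrades this to the pathwise bound $\tilde M_t \ge h(\langle \tilde M\rangle_t + t_0)$ for all such $t$, while the lower bound in \eqref{eq:bnd_lambda} forces $\langle \tilde M\rangle_\sigma \ge cT$ once $T$ exceeds a constant.

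Theorem~\ref{thm:time-change}, combined with the constancy of $\tilde M$ on flat portions of $\langle \tilde M\rangle$ and the intermediate-value property of the continuous non-decreasing map $t \mapsto \langle \tilde M\rangle_t$, converts the above inequality into $B_{t_0 + u} \ge h(t_0 + u)$ for every $u \in [0, cT]$, where $B$ is a standard Brownian motion. Setting $Z_s := B_s - hs$ and absorbing the error $\P[F_v^c] \le N^{-c}$ by letting $N \to \infty$, the problem reduces to estimating
\begin{equation*}
\P\big[\, Z_s \ge 0 \text{ for every } s \in [t_0, t_0 + cT]\,\big].
\end{equation*}

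For $h = A|V|^{-1/3} > 0$, $Z$ has strictly negative drift. Conditioning on $y := Z_{t_0}$ (which is $N(-ht_0, t_0)$-distributed) reduces the event to the first hitting time $\tau_0$ of $0$ by a Brownian motion with drift $-h$ started at $y$ exceeding $cT$. Using the explicit first-passage density $\tfrac{y}{\sqrt{2\pi s^3}} e^{-(y - hs)^2/(2s)}$, integrating from $cT$ to $\infty$, and then averaging over $y$ yields a bound of order $\tfrac{C}{h^2 T^{3/2}} e^{-c h^2 T}$, which in the regime $h^2 T \ge 1$ dominates the claimed $\tfrac{C}{\sqrt T} e^{-c h^2 T}$; in the complementary regime $h^2 T \le 1$ the same quantity is comparable to the monotone bound $\P[|\scC_v^{\ge h}| \ge T] \le \P[|\scC_v^{\ge 0}| \ge T] \le C/\sqrt T$ coming from the $h = 0$ computation, and together these yield \eqref{eq:max_cluster_size2}. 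The case $h = -A|V|^{-1/3} \le 0$ is the main obstacle: now $Z$ has non-negative drift and a naive Gaussian tail bound becomes trivial. I would instead condition on $y := Z_{t_0}$ and decompose the analogous first-passage probability as $(1 - e^{2hy}) + e^{2hy} \cdot \P[\tau_0 > cT \mid \tau_0 < \infty]$; the first summand is at most $2|h|y$, while the classical $h$-transform identifies the law of the positive-drift Brownian motion conditioned to hit $0$ with that of the negative-drift Brownian motion of the same magnitude, allowing the conditional probability to be bounded by $Cy/\sqrt{T}$ through comparison with the driftless case. Averaging the resulting bound $Cy(|h| + T^{-1/2})$ against the Gaussian law of $Z_{t_0}$, whose positive part has $\E[Z_{t_0}^+]$ bounded, delivers \eqref{eq:max_cluster_size}. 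The delicate point is precisely this time-reversal step: without it the $A|V|^{-1/3}$ term --- which dominates deep in the supercritical window --- cannot be extracted from the first-passage density alone.
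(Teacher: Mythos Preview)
Your proposal is correct and follows essentially the same route as the paper. Both arguments reduce $\{|\scC_x^{\ge h}|\ge T\}$ to the Brownian event $\{\inf_{t_0\le s\le t_0+cT}(B_s-hs)\ge 0\}$ via the exploration martingale, Lemma~\ref{cor:value_bdr}, the linear lower bound on $2|E|\nu_K$, and Dubins--Schwarz --- this is exactly Lemma~\ref{lem:cls_to_brw} in the paper, and your derivation of it is the same.

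The only genuine difference lies in how the resulting Brownian probability is estimated. For $h\le 0$, the paper conditions on $X=-(B_{t_0}-ht_0)$ and plugs into the exact first-passage CDF $\Phi(\cdot)-e^{2xh}\overline\Phi(\cdot)$, then uses the algebraic inequality $e^{2xh}\ge 1+2xh$ to split off the $|h|$-term. Your $h$-transform decomposition $(1-e^{2hy})+e^{2hy}\,\P[\tau_0>cT\mid\tau_0<\infty]$ is the probabilistic counterpart of precisely this algebraic step: the ``never hits $0$'' piece $1-e^{2hy}\le 2|h|y$ gives the $A|V|^{-1/3}$ contribution, and the Doob conditioning (positive-drift BM conditioned to hit $0$ is negative-drift BM) followed by comparison with driftless BM gives the $Cy/\sqrt{T}$ contribution. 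For $h>0$, the paper keeps the CDF formula and splits the integral at $x=c_2hT/2$; you instead use the inverse-Gaussian density and a case split on $h^2T\lessgtr 1$, invoking monotonicity in $h$ for the small-$h^2T$ regime. Both are standard manipulations yielding the same bound; your version makes the probabilistic origin of the two terms more transparent, while the paper's is slightly more self-contained (no appeal back to the $h=0$ case).
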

Before we prove Lemma~\ref{lem:max_cluster_size}, let us deduce 
Theorem~\ref{prop:ubd_cri_vol} using it.
\begin{proof}[Proof of Theorem~\ref{prop:ubd_cri_vol}]
    For $T \ge 1$, consider the (random) set $N^{\ge h}_T \stackrel{\rm def.}{=} \{ 
    x \in  V : |\scC_x^{\ge h}| \ge T\}$. By definition of $N^{\ge h}_T$ and the Markov inequality, we have
    \begin{equation}\label{eq:ubd_cls_size_critical_1}
            \P [ |\scC^{\ge h}_{\max}  | \ge T ] = \P [ |N^{\ge h}_T| \ge T ] \le \frac{\E[N^{\ge h}_T]}{T}. 
    \end{equation}
    When $h = -A|V|^{-1/3}$, the expectation on the right-hand side can be bounded as:
    \begin{equation*}%\label{eq:ubd_cls_size_critical_2}
     \frac{\E[N^{\ge h}_T]}{T} = \frac{1}{T} \sum_{x \in V}  \P [ | \scC^{\ge h}_x | \ge T ] \le  \frac{|V|}{T} \max_{x \in V} \P [ | \scC^{\ge h}_x | \ge T ] \stackrel{\eqref{eq:max_cluster_size}}{\le} C\Big(\frac{|V|}{T^{3/2}} + \frac{A |V|^{2/3}}{T}\Big). 
    \end{equation*}
    Now setting $T = \frac{1}{\delta} |V|^{2/3}$, we can deduce \eqref{eq:ubd_cri_vol} from these two bounds.

\smallskip

    On the other hand, when $h = A|V|^{-1/3}$, we can recompute \eqref{eq:ubd_cls_size_critical_1} as follows:
    \begin{equation*}
     \frac{\E[N^{\ge h}_T]}{T} \le \frac{|V|}{T} \max_{x \in V} \P [ | \scC^{\ge h}_x | \ge T ] \stackrel{\eqref{eq:max_cluster_size2}}{\le} C\frac{|V|}{T^{3/2}} e^{-\frac{ch^2T}2}
    \end{equation*}
    (provided $T \ge C$). Now plugging $T = \delta |V|^{2/3}$ we obtain 
    \eqref{eq:ubd_cri_vol2}.
 \end{proof}
 Our next result is an important step in the pending proof of 
 Lemma~\ref{lem:max_cluster_size}.
\begin{lemma}[Large cluster to positivity of Brownian motion]\label{lem:cls_to_brw}
 There exist $\Cl{C:br}, \Cl[c]{c:br} \in (0, \infty)$ such that for any $x \in V$, $h \in \R$ and $T \ge 2$ we have 
    \begin{equation}\label{eq:cls_to_brw0}
        %\begin{split}
             \P[ |\scC^{\ge h}_x | \ge T ] \le \PB_0 \big[\inf_{\Cr{C:br} \le t \le \Cr{c:br}T} (B_t - h t) \ge 0 \big],  
        %\end{split}
    \end{equation}
    where %, as before, 
    $(B_t)_{t \ge 0}$ is a standard Brownian motion under $\PB_0$. %and we interpret the infimum over an empty set as $\infty$.
\end{lemma}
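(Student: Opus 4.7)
I would instantiate the exploration scheme of Proposition~\ref{prop:Kt} at $v = x$ with $N$ large and translate the event $\{|\scC^{\ge h}_x| \ge T\}$ into a lower-envelope constraint on the martingale $\tilde M_t = M_{K_t}$ which, via Dubins--Schwarz (Theorem~\ref{thm:time-change}), becomes a Brownian-motion event.

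First, on $F_x \cap \{|\scC^{\ge h}_x| \ge T\}$ with $T \ge 2$, I would verify two facts: (a) $K_t \subset \widetilde\cG^{\ge h}$ for all $t \in [0, \tau_{1,1}]$, and (b) $|K_{\tau_{1,1}}|_V \ge T - 1$. For (a), property~\ref{Kt3-3} guarantees the freshly explored pieces lie inside level components, while $\overline{B(x, \tfrac1N)} \subset \widetilde\cG^{\ge h}$ comes from $F_x$ once one notes $\{|\scC^{\ge h}_x| \ge T\} \subset \{\varphi(x) \ge h\}$ (for $T \ge 2$ the cluster is $\{x\}$ when $\varphi(x) < h$). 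Claim (b) combines property~\ref{Kt3-1} with Lemma~\ref{lem:rel_component}, using that $K_0 \cap V = \emptyset$ for $N$ large.

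Given (a), Lemma~\ref{cor:value_bdr} gives $\tilde M_t \ge 2h|E|\nu_{K_t}$ on $[0, \tau_{1,1}]$. Substituting $2|E|\nu_{K_t} = \langle \tilde M\rangle_t + t_0$ with $t_0 \stackrel{\text{def.}}{=} 2|E|\nu_{K_0}$ (Lemma~\ref{lem:Mt_mart}) turns this into
\[
\tilde M_t - h(\langle \tilde M\rangle_t + t_0) \ge 0, \qquad t \in [0, \tau_{1,1}].
\]
Theorem~\ref{thm:time-change} realizes $\tilde M_{T_s} = B_{t_0 + s}$ for a standard Brownian motion $B$ under $\PB_0$ (extending by an independent piece on $[0, t_0]$ if needed). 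Setting $u = \langle \tilde M\rangle_t + t_0$, the inequality becomes $B_u - hu \ge 0$ for $u \in [t_0, t_0 + \langle \tilde M\rangle_{\tau_{1,1}}]$. The lower bound in Proposition~\ref{lem:bnd_lambda} combined with (b) yields $\langle \tilde M\rangle_{\tau_{1,1}} \ge c_1 T - t_0$ for some $c_1 = c_1(d, \lambda) > 0$, so this interval contains $[t_0, c_1 T]$; moreover, a short energy estimate, based on the fact that $f_{K_0}$ agrees with $f_{\{x\}}$ outside the cables meeting $x$ and has only bounded extra energy along those cables, shows $t_0 \le C_1$ uniformly for $N$ large.

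Combining these observations yields $F_x \cap \{|\scC^{\ge h}_x| \ge T\} \subset \{\inf_{C_1 \le u \le c_1 T}(B_u - hu) \ge 0\}$ modulo a $\P$-null set, so $\P[|\scC^{\ge h}_x| \ge T] \le \PB_0[\inf_{C_1 \le u \le c_1 T}(B_u - hu) \ge 0] + \P[F_x^c]$, and sending $N \to \infty$ with $\P[F_x^c] \le N^{-c}$ (Lemma~\ref{lem:rel_component}) gives \eqref{eq:cls_to_brw0}. The principal obstacle is the Dubins--Schwarz step: lifting a bound that holds pointwise on the random interval $[0, \tau_{1,1}]$ to an event depending only on the Brownian trajectory on a deterministic window, and checking that the initial segment $[0, t_0]$ introduces only a bounded constant uniform in $N$ (which hinges on $f_{K_0}$ being a good proxy for $f_{\{x\}}$).
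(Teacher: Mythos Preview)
Your proposal is correct and follows essentially the same route as the paper: instantiate the exploration at $v=x$, convert $K_t \subset \widetilde\cG^{\ge h}$ into $\tilde M_t \ge 2h|E|\nu_{K_t}$ via Lemma~\ref{cor:value_bdr}, time-change by Theorem~\ref{thm:time-change}, and bound the quadratic-variation window using the lower bound in Proposition~\ref{lem:bnd_lambda} before sending $N\to\infty$. One minor simplification: the bound $t_0 \le C_1$ follows directly from the upper bound in Proposition~\ref{lem:bnd_lambda} (since $|K_0|_V = 0$ and $b_0(K_0) \le d$), so there is no need to compare $f_{K_0}$ with $f_{\{x\}}$.
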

\begin{proof}
   % Fix $N \ge 1$ and $x \in V$. Let $\scC^{\ge h}_{x,N}$ be the final connected component of the exploration process starting from the set $B^c(x, \tfrac1N)$ (see Subsection~\ref{subsec:exp_scheme} for exact definitions). Then 
   % \begin{equation}\label{eq:cls_to_brw1}
   %     \begin{split}
   %         \scC^{\ge h}_{x} = \bigcup_{N=1}^{\infty} \scC^{\ge h}_{x,N}
   %     \end{split}
   % \end{equation}
   For $x \in V$ and $N \ge 1$ an integer, let $K_0 = K_0(x, N) = \partial B(x, \tfrac 1N)$ (cf.~Proposition~\ref{prop:Kt}). Recall the 
   event $F_x = F_x(h, N)$ from \eqref{def:ExhN}. It follows from \eqref{eq:rel_component} in Lemma~\ref{lem:rel_component} that
   \begin{equation*}%\label{eq:cls_to_brw1}
          \{ |\scC^{\ge h}_x| \ge T \} \subset  \big\{\big|\cC^{\ge h}_{K_0}( B^c(x, \tfrac 1N))\big|_V \ge T - 1 %- \tfrac{d_x}{N}
          \big\} \mbox{ on the event $F_x$}\end{equation*}
          where, as in Section~\ref{sec:zeravgcap}, $|S|_V = |S \cap V|$ (note that $\{|\scC^{\ge h}_x| \ge T\} \subset \{\varphi(x) 
   \ge h\}$ as $T > 0$). %Hence for $N \ge C$ (which we assume in the sequel), 
   Hence for $T \ge 2$, we have
   \begin{equation*}
          \{|\scC^{\ge h}_x| \ge T \} \subset  \big\{\big|\cC^{\ge h}_{K_0}( B^c(x, \tfrac 1N))\big|_V \ge cT \big\} \mbox{ on the event $F_x$}.
   \end{equation*}
   Now consider the family of sets $(K_t)_{t \ge 0}$ given by %satisfying the properties listed in 
   Proposition~\ref{prop:Kt} with $v = x$, $h$ as in the statement of the lemma and $N$ as above. From properties~\ref{Kt1} and \ref{Kt3-1} (the first part) of $(K_t)_{t \ge 0}$, 
   we get the inclusion
   \begin{equation*}%\label{eq:inclusion_K}
   \begin{split}
           & \, \big\{\big|\cC^{\ge h}_{K_0}( B^c(x, \tfrac 1N))\big|_V \ge cT \big\} \subset \big\{ |K_{\tau_{1, 1}}|_V \ge cT \big\} \\
           \subset& \, \big\{\exists \, T'\ge 0 \text{ such that }K_t \subset \cG^{\ge h} \text{ for } t \in [0, T'] \text{ with } |K_{T'}|_V \ge cT \big\}
   \end{split}        
   \end{equation*}
   on an event with probability 1 (this will be henceforth assumed implicitly for all inclusions in the proof). Next consider the continuous $(\mathcal F_{K_t})$-martingale $(M_{K_t})_{t \ge 0} = (\tilde M_t)_{t \ge 0}$ attached 
   to the random sets $(K_t)_{t \ge 0}$ (see \eqref{def:tildeMt} and Lemma~\ref{lem:Mt_mart}). Using Lemma~\ref{cor:value_bdr}, we can write
   \begin{equation*}
   \begin{split}
            &\big\{\exists \, T'\ge 0 \text{ such that }K_t \subset \cG^{\ge h} \text{ for } t \in [0, T'] \text{ with } |K_{T'}|_V \ge cT \big\}\\ \subset \, &\big\{\exists \, T' \ge 0 \text{ such that }\tilde M_t \ge 2h |E|\nu_{K_t}  \text{ for } t \in [0, T'] \text{ with } |K_{T'}|_V \ge cT\big\}
     \end{split}       
   \end{equation*}% Since $\Var[K_0] = |V|\lambda_{K_0}$ (Corollary~\ref{cor:var_M_U}) and $\langle 
   % \tilde M \rangle_t = |V| \lambda_{K_t}$ (Lemma~\ref{lem:qua_var_M_t}),
   Recalling the definition of $T_t$ from Theorem~\ref{thm:time-change}, we further have
   \begin{equation*}
   \begin{split}
       &\big\{\exists \, T'\ge 0 \text{ such that }\tilde M_t \ge 2h |E|\nu_{K_t}  \text{ for } t \in [0, T'] \big\}\\ 
       \subset\, &\big\{\exists \, T'\ge 0 \text{ such that }\tilde M_{T_r} \ge 2h |E|\nu_{K_{T_r}}  \text{ for } r \in [0, %|V|\lambda_{K_{T'}} - t_0]
       \langle \tilde M\rangle_{T'}]\big\}.
       \end{split}
   \end{equation*} 
   Combining the chain of inclusions comprising the last four displays, we obtain
   \begin{equation}\label{eq:cls_to_brw}
          \{|\cC^{\ge h}_x| \ge T \} \subset \big\{\exists \, T'\ge 0 \text{ such that }\tilde M_{T_r} \ge 2h |E|\nu_{K_{T_r}}  \text{ for } r\in [0, %|V|\lambda_{K_{T'}} - t_0
          \langle \tilde M \rangle_{T'}] \text{ and }|K_{T'}|_V \ge cT\big\}
\end{equation}
on the event $F_x$.
   
   Theorem~\ref{thm:time-change} %, combined with Lemma~\ref{lem:qua_var_M_t}, 
   also tells us that the process $(\tilde M_{T_r})_{r \ge 0}$ has the same law as the 
   stopped (standard) Brownian motion $(B_{r_0 + r \wedge \langle \tilde M\rangle_{\infty}})_{r \ge 0}$ with %$T_{|V|\lambda_{K_t} - t_0} \le t$ 
   $r_0 = 2|E|\nu_{K_0}$. %$\langle M \rangle_\infty = \sup_{t \ge 0} (|V|\lambda_{K_t} - t_0)$ and $|V|\lambda_{K_{T_t}} = t_0 + t \wedge \langle \tilde M\rangle_{\infty}$. 
   Since $\langle \tilde M\rangle_{r} = 2|E|\nu_{K_r} - r_0 = r \wedge \langle 
   \tilde M \rangle_{\infty} - r_0$ by Lemma~\ref{lem:Mt_mart} and the continuity of quadratic variation, together with \eqref{eq:cls_to_brw} and the lower bound on 
   $2|E| \nu_K$ from Proposition~\ref{lem:bnd_lambda} this yields:
            \begin{equation*}
             \P\big[ \big|\cC^{\ge h}_{K_0}( B^c(x, \tfrac 1N))\big|_V \ge cT \big] \le \PB_0 \big[\inf_{r_0 \le r \le cT} (B_r - h r) \ge 0 \big] + \P[F_x^c].  
    \end{equation*}
From this and the bound \eqref{eq:rel_component2} in Lemma~\ref{lem:rel_component}, 
we can deduce \eqref{eq:cls_to_brw0} by sending $N \to \infty$.
\end{proof}

\begin{proof}[Proof of Lemma~\ref{lem:max_cluster_size}]
We first give the proof of \eqref{eq:max_cluster_size}. We can assume, without any loss of generality, that $T \ge C$ and $|h| = A |V|^{-1/3} \le 1$. In view of Lemma~\ref{lem:cls_to_brw}, it suffices to prove the upper bound for $\PB_0 
\big[\inf_{\Cr{C:br} \le t \le \Cr{c:br}T} (B_t - h t) \ge 0 \big]$. Notice that
\begin{equation*}
\PB_0\big[\inf_{\Cr{C:br} \le t \le \Cr{c:br}T} (B_t - h t) \ge 0 \big] \le \overline{\mathbf P}_0^{{\rm B}}\big[\inf_{0 \le t \le \Cl[c]{c:br2}T} (B_t - h t) \ge  X\big],
\end{equation*}
for some $\Cr{c:br2} > 0$ where $X \sim N(\Cr{C:br}h, \Cr{C:br})$ is independent of the standard Brownian motion $(B_t)_{t \ge 0}$ under 
$\overline{\mathbf P}_0^{{\rm B}}$. Using %\eqref{eq:ubd_E_0_4} 
existing results (see, e.g., \cite[Equation~2.0.2 in Part~II]{BorodinBrown}), we can 
write,
\begin{equation*}
\overline{\mathbf P}_0^{{\rm B}}\big[\inf_{0 \le t \le \Cr{c:br2}T} (B_t - h t) \ge  X\big] = \int_{{\rm x} > 0} \Phi \Big( \frac{{\rm x} - \Cr{c:br2}hT}{\sqrt{\Cr{c:br2}T}}\Big)\phi_h({\rm x}) \, d{\rm x} - \int_{{\rm x} > 0} e^{2{\rm x}h} \, \overline{\Phi} \Big( \frac{{\rm x} + \Cr{c:br2}hT}{\sqrt{\Cr{c:br2}T}} \Big) \phi_h({\rm x})  \,d{\rm x},\end{equation*}
where $\phi_h({\rm x}) = \frac1{\sqrt{2\pi\Cr{C:br}}}\exp(-\frac{({\rm x} + \Cr{C:br}h)^2}{2\Cr{C:br}})$ is the density function of $N(-\Cr{C:br}h, \Cr{C:br})$. Since 
$\overline{\Phi}(b) = \Phi(-b)$ and $e^{2ah} \ge 1 + 2ah$, we get the bound
    \begin{equation}\label{eq:cls_to_brw_3}
        \begin{split}
            &\overline{\mathbf P}_0^{{\rm B}}\big[\inf_{0 \le t \le \Cr{c:br2}T} (B_t - h t) \ge  X\big] \\ 
            \le& \int_{{\rm x} >0} \Big( \Phi \Big( \frac{{\rm x} - \Cr{c:br2}hT}{\sqrt{\Cr{c:br2}T}}\Big) - \Phi \Big( \frac{-{\rm x} - \Cr{c:br2}hT}{\sqrt{\Cr{c:br2}T}}\Big)\Big) \phi_h({\rm x})  \,d{\rm x}   - \int_{{\rm x} > 0} 2{\rm x}h \phi_h({\rm x}) \,d{\rm x} = I_1 + I_2.
        \end{split}
    \end{equation}
    Using the fact that $|\Phi({\rm x}) - \Phi({\rm y})| \le |{\rm x} - {\rm y}|$ for all ${\rm x}, {\rm y} \in \R$, we obtain (recall that $h < 0$ for the final step)
    \begin{equation}\label{eq:cls_to_brw_4}
        \begin{split}
            I_1 &\le \frac{C}{\sqrt{T}} \int_{{\rm x} > 0} {\rm x} \phi_h({\rm x}) \,d{\rm x} = \frac{C}{\sqrt{T}} \int_{{\rm x} >0} ({\rm x} + \Cr{C:br}h) \phi_h({\rm x}) \,d{\rm x} - \frac{C}{\sqrt{T}} \Cr{C:br}h \int_{{\rm x} >0} \phi_h({\rm x}) \,d{\rm x} \\
            & \le \frac{C}{\sqrt{T}} (e^{-c h^2} - Ch).
        \end{split}
    \end{equation}
    On the other hand, by same computations,
    \begin{equation*}%\label{eq:cls_to_brw_5}
        %\begin{split}
            I_2 %&= - 2h \int_{x > 0} (x + h t_0) \phi(x) \,dx - 2h^2 t_0 \int_{x >0} \phi(x) \,dx \\
            \le %- \sqrt{\frac{2t_0}{\pi}} h \exp( - \frac{h^2 t_0}{2}) + 2t_0 h^2 \exp( - \frac{h^2 t_0}{2}) 
            -Che^{-ch^2} + Ch^2. %\le C \lambda |V|^{-1/3}. 
        %\end{split}
    \end{equation*}
    Since $-h = A |V|^{-1/3} \le 1$ by our assumption, plugging these into \eqref{eq:cls_to_brw_3} we get
    \begin{equation*}
        \overline{\mathbf P}_0^{{\rm B}}\big[\inf_{0 \le t \le \Cr{c:br2}T} (B_t - h t) \ge  X\big] \le C(\tfrac 1{\sqrt{T}} + A |V|^{-1/3})
    \end{equation*}
    which concludes the proof of \eqref{eq:max_cluster_size}.

    We now give the proof of \eqref{eq:max_cluster_size2}. We proceed similarly until \eqref{eq:cls_to_brw_3} (recall that $T \ge C$) and note that $I_2 \le 0$ as $h > 
    0$. So we only need to bound $I_1$ which we need to do more carefully. To this 
    end, let us decompose
    \begin{equation*}
        I_1 = \int_{\tfrac{\Cr{c:br2}}2hT \, \ge \, x > 0} \, + \, \int_{x > \tfrac{\Cr{c:br2}}2hT}
    \end{equation*}
    where %, with hopefully obvious notations, 
    the two terms correspond to the integral in $I_1$ (see \eqref{eq:cls_to_brw_3}) 
    restricted to the respective ranges. To bound the first term, we use the improved 
    estimate $|\Phi({\rm x}) - \Phi({\rm y})| \le e^{-c h^2T}|{\rm x} - {\rm y}|$ for 
    ${\rm x}, {\rm y} \le -\tfrac{\Cr{c:br2}}2hT$. This gives us 
    (cf.~\eqref{eq:cls_to_brw_4})
    \begin{equation*}%\label{eq:cls_to_brw_5}
            \int_{\tfrac{\Cr{c:br2}}2hT \, \ge \, x > 0} \le \frac{Ce^{-c h^2T}}{\sqrt{T}} \int_{{\rm x} > 0} {\rm x} \phi_h({\rm x}) \,d{\rm x}  \le \frac{Ce^{-c h^2T}}{\sqrt{T}}.
    \end{equation*}
    For the other part, we bound as in \eqref{eq:cls_to_brw_4} and obtain a similar 
    bound, i.e.,
    \begin{equation*}
            \int_{x > \tfrac{\Cr{c:br2}}2hT} \le \frac{C}{\sqrt{T}} \int_{{\rm x} > \tfrac{\Cr{c:br2}}2hT} {\rm x} \phi_h({\rm x}) \,d{\rm x}  \le \frac{Ce^{-c h^2T}}{\sqrt{T}}.
    \end{equation*}
    Adding these two bounds we get \eqref{eq:max_cluster_size2}.
    % Going back to \eqref{eq:cls_to_brw0}, we can write
    % \begin{equation*}
    %     \max_{x \in V} \P[ |\scC^{\ge h}_x | \ge T ] \le \PB_0 \big[\inf_{\Cr{C:br} \le t \le \Cr{c:br}T} (B_t - h t) \ge 0 \big] \le \PB_0 \big[ B_{\Cr{c:br}T} \ge h\Cr{c:br}T\big] \le \tfrac C{(h\sqrt{T} \vee 1)} e^{-\frac{ch^2T}2}
    % \end{equation*}
    % where in the final step we used the standard tail bound for Gaussian variables (note that $h \ge 0$).
\end{proof}
% \end{lemma}
\subsection{Lower tail probabilities for cluster volume}\label{subsec:lbd_critical}
In this subsection we give the proof of Theorem~\ref{prop:lbd_cri_vol}. We %shall start with 
will only give the proof of \eqref{eq:lbd_cri_vol} as the proof of \eqref{eq:lbd_cri_vol2} follows by adapting parts of the argument in a 
relatively straightforward manner. Let $v \in V$ be arbitrary (but fixed) and consider the family of sets $(K_t)_{t \ge 0}$ obtained from 
Proposition~\ref{prop:Kt} with $v$, $h$ as above \eqref{eq:lbd_cri_vol} and some integer $N > 1$. In the sequel $v$, $h$, $A$ and 
$(K_t)_{t \ge 0}$ always refer to these particular choices.

Let us recall the sequence of (possibly random) timepoints $0 \le \tau_{1, 1} \le 
\tau_{1, 2} \le \ldots \le \tau_{n, 1} \le \tau_{n, 2}< \infty$ from property~\ref{Kt3} of $(K_t)_{t \ge 0}$ in Proposition~\ref{prop:Kt}. We augment this sequence by letting $\tau_{0, 1} = \tau_{0, 2} = 0$. The following two 
events play crucial roles in the proof of %Proposition~\ref{prop:lbd_cri_vol}. 
\eqref{eq:lbd_cri_vol}. Attached to the family %of sets 
$(K_t)_{t \ge 0}$ is the continuous $(\cF_{K_t})$-martingale $(M_{K_t})_{t \ge 0} = (\tilde M_t)_{t \ge 0}$ (\eqref{def:tildeMt} and 
Lemma~\ref{lem:Mt_mart}). Throughout this section, $\delta', \delta'' \in (0, 1)$ and $\beta, b \in (0, \infty)$ represent generic parameters 
for %all 
our events whereas their dependence on $N$ is kept implicit. Let us start with
\begin{equation}\label{def:Fdstar}
%\begin{split}
    \Cl[tilF]{Flbc:1}(\delta', \beta, b) %= \Cr{Flbc:1}(\delta', \beta, b, N) 
    \stackrel{\rm def.}{=} \left\{
		\begin{array}{c}
			\mbox{$\tilde M_t \le ({\delta'}  + %3\Cr{C:bdr} 
    \beta A {\delta'}^{1/2}) |V|^{1/3}$ for any %$t \ge 0$ s.t. 
    $\tau_{k, 1} \le t \le \tau_{k, 2}$} \\
			\mbox{and $0 \le k \le n$ such that $|K_{\tau_{k, 1}}|_V \le b {\delta'}^{1/2}|V|^{2/3}$}
		\end{array}
		\right\}.
    %  \big\{  \text{ for any } t \le %\Cr{c:beta}  
   %  , \\
   % & \text{ then } \tau_{k,2} < t < \tau_{k+1, 1} \text{ for some } 0\le k \le n \big\}.
    %\end{split}
\end{equation}
Next let us consider
\begin{equation}\label{def:F2}
\Cl[tilF]{Flbc:2}(\delta', \delta'', \beta, b) %= \Cr{Flbc:2}(\delta', \beta, b, N) 
\stackrel{\rm def.}{=} \left\{\begin{array}{c}\mbox{$\exists \, 0 \le t_1 \le t_2$ s.t. $2|E|(\nu_{K_{t_2}} - \nu_{K_{t_1}}) \ge \beta \delta'' |V|^{2/3}$, $|K_{t_2}|_V \le$}\\
\mbox{$b {\delta'}^{1/2}|V|^{2/3}$ and $\tilde M_t > ({\delta'}  + \beta A {\delta'}^{1/2}) |V|^{1/3}$ for all $t_1 \le t \le t_2$}
\end{array}
\right\}.
%\{ \text{ there exist } 0 \le s \le t \le b {\delta'}^{1/2}|V|^{2/3} 2|E|(\nu_{K_t} - \nu_{K_s}) \ge \beta \delta' |V|^{2/3} \}.
\end{equation}
\begin{lemma}\label{lem:inclusion_lbd}
There exists $\Cl{C:beta} \in (0, \infty)$ such that for any $\delta, \delta' \in (0, 1)$, 
$\beta \ge \Cr{C:beta}$, $b > 0$ and $|V| \ge C((1 / \delta)^{3/2} \vee b^3)$, 
we have
  \begin{equation}\label{eq:inclusion_lbd}
          \Cr{Flbc:1}(\delta', \beta, b) \cap \Cr{Flbc:2}(\delta', \delta, \beta, b) \subset \{ |\scC^{\ge h}_{\max}| \ge \delta |V|^{2/3} \}
  \end{equation} 
  on an event with probability 1.
\end{lemma}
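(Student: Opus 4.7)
The plan is to show that on $\Cr{Flbc:1}(\delta',\beta,b)\cap\Cr{Flbc:2}(\delta',\delta,\beta,b)$, the interval $[t_1,t_2]$ produced by $\Cr{Flbc:2}$ sits inside a single component-exploration window $(\tau_{k,2},\tau_{k+1,1})$ of Proposition~\ref{prop:Kt}, and then to convert the prescribed growth $2|E|(\nu_{K_{t_2}}-\nu_{K_{t_1}})\ge\beta\delta|V|^{2/3}$ of the quadratic variation into a lower bound on the volume of the cluster traced in that window, via the Lipschitz estimate of Lemma~\ref{lem:lip_denergy}.

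\emph{Locating the excursion.} Monotonicity of $t\mapsto K_t$ (property~\ref{Kt1}) yields $|K_{\tau_{k,1}}|_V\le|K_{t_2}|_V\le b{\delta'}^{1/2}|V|^{2/3}$ for every $k$ with $\tau_{k,1}\le t_2$. Hence $\Cr{Flbc:1}$ gives $\tilde M_t\le(\delta'+\beta A{\delta'}^{1/2})|V|^{1/3}$ throughout each such linking interval $[\tau_{k,1},\tau_{k,2}]$, whereas $\Cr{Flbc:2}$ demands the strict opposite inequality on $[t_1,t_2]$. Therefore $[t_1,t_2]$ is disjoint from every linking interval, and by connectedness it sits inside a single open window $(\tau_{k,2},\tau_{k+1,1})$ for some $k\ge 0$.

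\emph{Capacity increment to cluster volume.} On this window, property~\ref{Kt3-3} gives $K_t=K_{\tau_{k,2}}\cup\cC_t$ with $\cC_t$ a connected subset of $\cC^{\ge h}_{x_k}(B^c(v,\tfrac1N))$ for some $x_k\in V$; the $k=0$ case is treated analogously using property~\ref{Kt3-1}, identifying $\cC_t$ with the component attached to an endpoint of $K_0$. Combining $K_{\tau_{k,2}}\subset K_{t_1}$, the monotonicity of $\nu$ (Proposition~\ref{lem:bnd_lambda}), and Lemma~\ref{lem:lip_denergy} applied to $K=K_{\tau_{k,2}}$ and $\cC=\cC_{t_2}$ (noting $b_0(\cC_{t_2})=1$), one obtains
\begin{equation*}
\beta\delta|V|^{2/3}\le 2|E|(\nu_{K_{t_2}}-\nu_{K_{t_1}})\le 2|E|(\nu_{K_{\tau_{k,2}}\cup\cC_{t_2}}-\nu_{K_{\tau_{k,2}}})\le C(|\cC_{t_2}|_V+1).
\end{equation*}
The smallness hypothesis of Lemma~\ref{lem:lip_denergy} is met thanks to $|V|\ge Cb^3$ together with $|K_{t_2}|_V\le b{\delta'}^{1/2}|V|^{2/3}$, keeping in mind that the number of connected components of $K_{\tau_{k,2}}$ is controlled by $O(|K_{\tau_{k,2}}|_V+1)$ via the breadth-first structure of $(K_t)$.

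Choosing $\Cr{C:beta}$ large enough that $c\Cr{C:beta}\ge 2$ and using $|V|\ge C\delta^{-3/2}$ to absorb the additive $1$ then gives $|\cC_{t_2}|_V\ge\delta|V|^{2/3}$. Since any $\widetilde\cG$-connection within $B^c(v,\tfrac1N)\cap\widetilde\cG^{\ge h}$ is also a connection within $\widetilde\cG^{\ge h}$, the vertex set $\cC_{t_2}\cap V$ lies in $\scC^{\ge h}_{x_k}$, hence $|\scC^{\ge h}_{\max}|\ge|\scC^{\ge h}_{x_k}|\ge\delta|V|^{2/3}$ on an event of full $\P$-probability (the null exclusion is inherited from the construction in Proposition~\ref{prop:Kt}). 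The main obstacle is the first step---using the qualitative structure of $(K_t)$ to confine a martingale excursion forbidden on linking intervals to a single cluster's exploration window---after which the conclusion is pure book-keeping using the capacity estimates of Section~\ref{sec:zeravgcap}.
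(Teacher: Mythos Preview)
Your argument is correct and matches the paper's proof: both confine the $\Cr{Flbc:2}$-excursion to a single window $(\tau_{k,2},\tau_{k+1,1})$ by invoking $\Cr{Flbc:1}$ on the linking intervals, then apply Lemma~\ref{lem:lip_denergy} to $K_{\tau_{k,2}}$ and $\cC_{t_2}$ to extract the volume lower bound. One minor imprecision: in the $k=0$ window the set $\cC_{t_2}$ need not be connected---it can have up to $d$ components, one per point of $K_0$ (this is how the paper phrases it, using property~\ref{Kt2} rather than property~\ref{Kt3-1})---so you should write $b_0(\cC_{t_2})\le d$ rather than $b_0(\cC_{t_2})=1$; this is harmless since $d$ is absorbed into the constant $C$.
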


\begin{proof}
Since $K_t$'s are increasing by property~\ref{Kt1} and $K_t = K_{\tau_{n, 2}}$ for 
all $t \ge \tau_{n, 2}$ by property~\ref{Kt3-1} (in Proposition~\ref{prop:Kt}), it 
follows from the definitions in \eqref{def:Fdstar} and \eqref{def:F2} that
\begin{equation}\label{eq:F12_inclusion}
\Cr{Flbc:1}(\delta', \beta, b) \cap \Cr{Flbc:2}(\delta', \delta, \beta, b) \subset \left\{\begin{array}{c}\mbox{$\exists \, 0 \le k < n$ and $\tau_{k, 2} < t < \tau_{k+1, 1}$ s.t. $|K_t|_V \le$}\\ \mbox{$b {\delta'}^{1/2}|V|^{2/3}$ and $2|E|(\nu_{K_t} - \nu_{K_{\tau_{k, 2}}}) \ge \beta \delta |V|^{2/3}$}
\end{array}
\right\}.
\end{equation}
%Let $t$ and $k$ be as in the right-hand side \eqref{eq:F12_inclusion}
on an event with $\P$-probability 1 (we will assume this implicitly for all relevant inclusions in the remainder of the section). Now from property~\ref{Kt3-3} and (the first part of) property~\ref{Kt2}, we know that, a.s., $K_t = K_{\tau_{k, 2}} \cup \cC_t$ for any $t \in (\tau_{k, 2}, \tau_{k+1, 1})$ and $0 
\le k < n$ where either $\cC_t \in \cK$ is a connected subset of $\cC^{\ge h}_{x_k}
(B^c(v, \tfrac 1N))$ for some $x_k \in V$ or each component of $\cC_t$ is a connected 
subset of $\{x\} \cup \cC^{\ge h}_{x}$ for some $x \in K_0 = \partial B(v, \frac 1N)$. 
By Lemma~\ref{lem:lip_denergy}, we thus have that
\begin{equation*}
2|E|(\nu_{K_t} - \nu_{K_{\tau_{k, 2}}}) \le %C(|\cC_t|_V + b_0(\cC_t) + b_0(K_{\tau_{k, 2}}))
C(|\cC_t|_V + b_0(K_{\tau_{k, 2}})),
\end{equation*}
provided %$|\cC_t|_V + |K_{\tau_{k, 2}}|_V + b_0(\cC_t) + b_0(K_{\tau_{k, 2}})\le c |V|$. 
$|\cC_t|_V + |K_{\tau_{k, 2}}|_V + b_0(K_{\tau_{k, 2}})\le c |V|$. Since 
$b_0(K_{\tau_{k, 2}}) \le d$ by property~\ref{Kt2}, we can combine this display 
with \eqref{eq:conn_set_G} and \eqref{eq:F12_inclusion} to deduce
\begin{equation*}
\Cr{Flbc:1}(\delta', \beta, b) \cap \Cr{Flbc:2}(\delta', \delta, \beta, b) \subset \{ |\scC^{\ge h}_{\max}| \ge c\beta \delta |V|^{2/3} \} \mbox{ when } |V| \ge %C \vee (1/\beta \delta)^{3/2}  \vee C b^3
C ((1/\beta \delta)^{3/2} \vee b^3).
\end{equation*}
From this \eqref{eq:inclusion_lbd} follows %immediately 
by letting $\Cr{C:beta} = \frac1c \vee 1$.
\end{proof}
In our next result, we give lower bounds on the probabilities of these two events.
\begin{lemma}\label{lem:F1F2bnd}
Let $\delta \in (0, 1)$. There exist $C \in [\Cr{C:beta}, \infty)$, $c, c' \in (0, 
\infty)$ and $\delta' = \delta'(\delta, A) \in (0, 1)$ such that for any %$|V| \ge (1 + A)^{3/2} / \delta^{3/2}$ 
%$|V| \ge (1 + A)^6 / \delta^3$ 
$|V| \ge (1 + A)^8 / \delta^3$ we have
\begin{equation}\label{eq:F1F2bnd}
\P[ (\Cr{Flbc:1}(\delta', C, c) )^c] \le \frac C{|V|} \mbox{ and } \P[ (\Cr{Flbc:2}(\delta', \delta, C, c))^c] \le C(1 + A)^{1/2}\delta^{1/5} + 1_{\delta \ge c'(1 + A)^{-2}}.
\end{equation}
\end{lemma}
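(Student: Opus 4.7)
The plan is to estimate the probabilities of the two events separately, with a judicious choice of $\delta' = \delta'(\delta, A) \in (0,1)$ tuned to balance the two error bounds. Throughout, the two main tools are the bulk--boundary decomposition $\tilde M_t = M^{\rm blk}_{K_t} + M^{\rm bdr}_{K_t}$ from \eqref{eq:M_K_decomp} and the time-change representation of $\tilde M_t$ as a Brownian motion via Theorem~\ref{thm:time-change}. The parameter $N$ in the exploration is kept implicit and sent to $\infty$ at the end so that contributions localised near $v$ (through $K_0 = \partial B(v, \tfrac 1N)$) become negligible.

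For the estimate $\P[(\Cr{Flbc:1}(\delta', C, c))^c] \le C/|V|$, I first observe that during a transit phase $[\tau_{k,1}, \tau_{k,2}]$, $K_t$ differs from $K_{\tau_{k,1}}$ only by an interval (property~\ref{Kt3-2}). By Lemma~\ref{lem:lip_denergy} this makes $\nu_{K_t} - \nu_{K_{\tau_{k,1}}}$ small (of order $|V|^{-1}$), so the quadratic-variation increment on this phase is tiny and $\tilde M_t$ fluctuates during the phase by at most $O_{\P}(1)$. It therefore suffices to bound $\tilde M_{\tau_{k,1}}$ uniformly. Using the bulk--boundary decomposition, the structural property $\partial K_{\tau_{k,1}} \setminus \widetilde \cG^{\le h} \subset K_0$ together with the vanishing of $\bP_x[\widetilde X_{H_{K_{\tau_{k,1}}}} \in K_0 \setminus \widetilde \cG^{\le h}]$ for $x \in V \setminus \{v\}$ yields $M^{\rm bdr}_{K_{\tau_{k,1}}} \le h \cdot 2|E|\nu_{K_{\tau_{k,1}}} + o_N(1)$. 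Under the hypothesis $|K_{\tau_{k,1}}|_V \le c\delta'^{1/2}|V|^{2/3}$ combined with the upper bound in Proposition~\ref{lem:bnd_lambda}, this is at most $CA\delta'^{1/2}|V|^{1/3}$. The bulk term $M^{\rm blk}_{K_{\tau_{k,1}}}$ has variance controlled by Lemma~\ref{lem:var_ubd} and the upper bound on $\nu_K$ from Proposition~\ref{lem:bnd_lambda}, giving $\Var \le C\delta'^{3/2}$, uniform in $k$. A union bound over the at most $|V|$ values of $k$ together with a Gaussian tail estimate at threshold $\delta'|V|^{1/3}$ then yields the $C/|V|$ bound, using our assumption $|V| \ge (1+A)^8/\delta^3$ to guarantee that the relevant Gaussian exponent exceeds $C\log|V|$.

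For the estimate $\P[(\Cr{Flbc:2}(\delta', \delta, C, c))^c] \le C(1+A)^{1/2}\delta^{1/5} + \ind_{\delta \ge c'(1+A)^{-2}}$, I apply Theorem~\ref{thm:time-change} to represent the time-changed martingale $\tilde M_{T_r}$ as a standard Brownian motion started near $0$ and run up to time $\langle \tilde M\rangle_{\infty} \ge c|V|$ (using $V \setminus \{v\} \subset K_{\tau_{n,2}}$ and the lower bound in Proposition~\ref{lem:bnd_lambda}). Again by Proposition~\ref{lem:bnd_lambda}, the constraint $|K_{t_2}|_V \le c\delta'^{1/2}|V|^{2/3}$ follows from $\langle \tilde M\rangle_{t_2} \le Cc\delta'^{1/2}|V|^{2/3}$. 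The event $(\Cr{Flbc:2})^c$ thus reduces to: the BM does not possess an excursion of duration $\ge C\delta|V|^{2/3}$ above the level $L = (\delta' + CA\delta'^{1/2})|V|^{1/3}$ during its first $Cc\delta'^{1/2}|V|^{2/3}$ units of time. Rescaling $s \mapsto s \cdot \delta'^{1/2}|V|^{2/3}$ and $B \mapsto B/\delta'^{1/4}|V|^{1/3}$ reduces the problem to a standard Brownian motion on $[0,1]$ avoiding an excursion of suitable rescaled length and height. With the tuned choice $\delta' \sim (1+A)^{-1}\delta^{6/5}$, standard excursion-length estimates (via It\^o's excursion theory / Lévy's identity for the local time of BM at a level) give the claimed polynomial bound, while the indicator term absorbs the degenerate regime $\delta \gtrsim (1+A)^{-2}$ in which the required excursion no longer fits into the allowed window and the estimate is trivial.

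The main obstacle will be the precise Brownian excursion calculation for the second event, where the rate $(1+A)^{1/2}\delta^{1/5}$ has to arise from the three-way interplay between the level $L \sim A\delta'^{1/2}|V|^{1/3}$, the required excursion length $\delta|V|^{2/3}$, and the allowed time window $\delta'^{1/2}|V|^{2/3}$; obtaining the exponent $1/5$ forces the specific scaling of $\delta'$ in $\delta$, and extracting the $(1+A)^{1/2}$ prefactor demands careful bookkeeping of how the offset $CA\delta'^{1/2}$ in the threshold depresses the local time of the BM at the level $L$. A secondary subtlety is uniformity in $N$: at each step the bounds must be insensitive to $N$ so that $o_N(1)$ error terms (from hitting probabilities on $K_0$ and from $\tilde M_0$) may be absorbed, which relies on Lemma~\ref{lem:rel_component} together with the continuity estimates of Section~\ref{sec:regcond_mg}.
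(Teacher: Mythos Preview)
Your overall decomposition and ingredients are the same as in the paper: the paper proves Lemma~\ref{lem:F1F2bnd} via two sub-lemmas (\ref{lem:brown_prob_cri} for $\Cr{Flbc:2}$ and \ref{lem:F1_prob_cri} for $\Cr{Flbc:1}$), using the bulk--boundary split \eqref{eq:M_K_decomp}, property~\ref{Kt3-2}, Lemma~\ref{lem:lip_denergy}, and the time-change of Theorem~\ref{thm:time-change}. However, two points in your sketch would not go through as stated.

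\textbf{The bulk term is not Gaussian.} You write that $M^{\rm blk}_{K_{\tau_{k,1}}}$ has $\Var \le C\delta'^{3/2}$ and then take a Gaussian tail plus a union bound over the at most $|V|$ values of $k$. But $K_{\tau_{k,1}}$ is a \emph{random} set depending on $\varphi$, so $M^{\rm blk}_{K_{\tau_{k,1}}} = \nu_{K_{\tau_{k,1}}}\sum_{x \in K_{\tau_{k,1}} \cap V} d_x \varphi(x)$ is not Gaussian and has no reason to satisfy a Gaussian tail; moreover it is $\cF_{K_{\tau_{k,1}}}$-measurable, so conditioning does not help either. The paper's fix (Lemma~\ref{lem:bulk_cri}) is to union-bound over the \emph{deterministic} family of all connected $K \ni v$ with $|K| \le c'\delta'^{1/2}|V|^{2/3}$. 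For each fixed such $K$ one has $\Var[\sum_{x \in K \setminus \{v\}} d_x\varphi(x)] \le C|K|$ by Lemma~\ref{lem:var_ubd}, yielding a tail $\exp(-c\delta'^2|V|^{8/3}/|K|^3)$; the number of such sets is $\le e^{C|K|}$, and choosing $c'$ small enough makes the tail dominate. This is the key step you are missing.

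\textbf{The choice of $\delta'$.} You propose $\delta' \sim (1+A)^{-1}\delta^{6/5}$, but the paper's choice is $\delta' = (1+4\beta A)^{-2}\delta^{4/5}$ (Lemma~\ref{lem:brown_prob_cri}), and this is what produces the exponent $1/5$. With that choice the paper does \emph{not} invoke excursion theory at all: it introduces the stopping time $\tau_{\delta_1} = \inf\{r: B_r \ge \delta_1|V|^{1/3}\}$ with $\delta_1 \asymp (1+A)\delta'^{1/2} \asymp \delta^{2/5}$, bounds $\PB_0[\tau_{\delta_1} > \delta_1'|V|^{2/3}]$ by the reflection principle (giving $\delta_1/\sqrt{\delta_1'} \asymp (1+A)^{1/2}\delta^{1/5}$), and then bounds the probability that $B$ dips below the level $(\delta' + \beta A\delta'^{1/2})|V|^{1/3}$ within time $\beta\delta|V|^{2/3}$ after $\tau_{\delta_1}$ (this is $e^{-c\delta^{-1/5}}$). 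The indicator $1_{\delta \ge c'(1+A)^{-2}}$ absorbs the case where the window $\delta_1'|V|^{2/3}$ is too short to accommodate the required excursion length $\beta\delta|V|^{2/3}$. Your scaling $\delta^{6/5}$ would give a reach-the-level bound of order $(1+A)^{3/4}\delta^{3/10}$ instead.

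A minor point: the contribution to $M^{\rm bdr}_{K_{\tau_{k,1}}}$ from $K_0$ is not $o_N(1)$ but $O_{\P}(1)$ uniformly in $N$ (it involves $\sum_{x \in K_0}|\varphi(x)|$, which is bounded by Gaussians of bounded variance); the paper absorbs this together with $\tilde M_0$ into an additive $C\sqrt{\log|V|}$ term (Lemma~\ref{lem:smallMtau}), which is then swallowed by $\delta'|V|^{1/3}$ under the size hypothesis on $|V|$.
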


\begin{proof}[Proof of Theorem~\ref{prop:lbd_cri_vol}, \eqref{eq:lbd_cri_vol}]
\eqref{eq:lbd_cri_vol} is an immediate consequence of Lemmas~\ref{lem:inclusion_lbd} and \ref{lem:F1F2bnd}.
\end{proof}
 
%The remainder of the section is devoted 
Proceeding to the proof of Lemma~\ref{lem:F1F2bnd}, let us start with the probability 
bound for the event $\Cr{Flbc:2}(\ldots)$.
\begin{lemma}\label{lem:brown_prob_cri}
Let $\delta \in (0, 1)$, $\beta, b > 0$ and $\delta' = (1 + 4\beta A)^{-2} \delta^{4/5}$. Then there exist %$\delta' = \delta'(\delta, A, \beta, b) \in (0, 1)$, 
$C = C(b, \beta)$ and $c = c(b, \beta) \in (0, \infty)$ such that,
\begin{equation}\label{eq:brown_prob_crit}
\P[(\Cr{Flbc:2}(\delta', \delta, \beta, b))^c] \le C(1 + A)^{1/2}\delta^{1/5} + 1_{\delta \ge c(1 + A)^{-2}},
\end{equation}
whenever %$|V| \ge (1 / b^2\delta')^{3/4}$.
$|V| \ge (1 / \delta)^{3/2}$.
\end{lemma}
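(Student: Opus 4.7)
The plan is to apply the Dubins--Schwarz theorem (Theorem~\ref{thm:time-change}) to translate $\Cr{Flbc:2}(\delta',\delta,\beta,b)^c$ into the event that a standard Brownian motion has no sufficiently long excursion above a threshold, and then bound this via the reflection principle combined with It\^o excursion theory.

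First I would fix $L \stackrel{\rm def.}{=} (\delta' + \beta A{\delta'}^{1/2})|V|^{1/3}$ and $\ell \stackrel{\rm def.}{=} \beta\delta|V|^{2/3}$, and introduce the stopping time $T^{\ast} \stackrel{\rm def.}{=} \inf\{t \ge 0 : |K_t|_V > b{\delta'}^{1/2}|V|^{2/3}\}$. By property~\ref{Kt3-1} of Proposition~\ref{prop:Kt} this is finite as soon as $b{\delta'}^{1/2}|V|^{2/3} < |V|-1$, which holds under $|V| \ge (1/\delta)^{3/2}$ for $\delta$ small enough; the opposite range is absorbed into the indicator $1_{\delta \ge c(1+A)^{-2}}$ by taking $c$ small. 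Combining the linear isoperimetric lower bound on $\nu_K$ from Proposition~\ref{lem:bnd_lambda} with the trivial estimate $2|E|\nu_{K_0} \le Cd$ (noting $|K_0|_V = 0$ and $b_0(K_0)\le d$ for $K_0 = \partial B(v,\tfrac1N)$) then yields, for $|V|$ large enough,
\begin{equation*}
\langle \tilde M\rangle_{T^{\ast}} = 2|E|(\nu_{K_{T^{\ast}}} - \nu_{K_0}) \ge c|K_{T^{\ast}}|_V - Cd \ge \tfrac{c}{2}b{\delta'}^{1/2}|V|^{2/3} =: T_0.
\end{equation*}

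Next I would observe that on $\Cr{Flbc:2}(\delta',\delta,\beta,b)^c$ no interval $[t_1,t_2]$ with $t_2 \le T^{\ast}$ and $\langle \tilde M\rangle_{t_2} - \langle \tilde M\rangle_{t_1} \ge \ell$ can keep $\tilde M_t > L$ throughout. Writing $\tilde M_{T_s} = B_{r_0+s}$ via Dubins--Schwarz, with $r_0 = 2|E|\nu_{K_0} = O(1)$, the complement event is therefore contained (almost surely) in
\begin{equation*}
\cE_{\mathrm{no}} \stackrel{\rm def.}{=} \{\, B \text{ has no excursion above } L \text{ of length } \ge \ell \text{ within } [r_0, r_0+T_0]\,\}.
\end{equation*}
To bound $\P[\cE_{\mathrm{no}}]$ I would split at the first hitting time $\tau_L := \inf\{s \ge r_0 : B_s \ge L\}$. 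The reflection principle (after conditioning on the negligible starting shift $B_{r_0} \sim N(0,r_0)$) gives $\P[\tau_L > r_0+T_0/2] \le CL/\sqrt{T_0}$. On $\{\tau_L \le r_0+T_0/2\}$, the strong Markov property restarts $B$ at level $L$; L\'evy's identity identifies its local time at $L$ over the remaining time $\ge T_0/2$ with $|Z|\sqrt{T_0/2}$ for $Z\sim N(0,1)$, and It\^o's excursion theory expresses the conditional number of excursions above $L$ of length $\ge \ell$ as Poisson of intensity proportional to $|Z|\sqrt{T_0/\ell}$, which gives $\E[e^{-c|Z|\sqrt{T_0/\ell}}] \le C\sqrt{\ell/T_0}$. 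Summing, $\P[\cE_{\mathrm{no}}] \le C(L/\sqrt{T_0} + \sqrt{\ell/T_0})$.

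The choice $\delta' = (1+4\beta A)^{-2}\delta^{4/5}$ is engineered to balance these two contributions: using $\beta A/(1+4\beta A)\le 1/4$ one has $L \le C\delta^{2/5}|V|^{1/3}$, while $T_0 = (cb/2)\delta^{2/5}|V|^{2/3}/(1+4\beta A)$, so a direct computation gives $L/\sqrt{T_0} \le C(b,\beta)(1+A)^{1/2}\delta^{1/5}$ and $\sqrt{\ell/T_0} \le C(b,\beta)(1+A)^{1/2}\delta^{3/10}$, both of which are $\le C(b,\beta)(1+A)^{1/2}\delta^{1/5}$ for $\delta \le 1$. The indicator $1_{\delta\ge c(1+A)^{-2}}$ serves as a trivial upper bound over the range of parameters in which the deterministic reductions above (finiteness of $T^{\ast}$ with enough room, the lower bound on $\langle \tilde M\rangle_{T^{\ast}}$, and the smallness of $L/\sqrt{T_0}$) fail. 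I expect the main technical obstacle to be the It\^o-excursion step, specifically the book-keeping of the Poisson intensity together with the random shift $B_{r_0}$ and the L\'evy/local-time identification that produces the correct dependence on $A$ and $\beta$.
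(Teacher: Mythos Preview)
Your reduction to a Brownian-motion event is correct and essentially matches the paper's: both use Theorem~\ref{thm:time-change} together with the linear lower bound $2|E|\nu_K \ge c|K|_V$ from Proposition~\ref{lem:bnd_lambda} to show that $(\Cr{Flbc:2})^c$ is contained in an event of the form ``$B$ has no sub-interval of length $\ge \beta\delta|V|^{2/3}$ above level $L$ inside a window of length $\asymp b\,{\delta'}^{1/2}|V|^{2/3}$.'' The paper phrases this containment directly as $\Cr{Flbc:2} \supset \Cr{Flbc:2}{}_{,0}$ rather than via your stopping time $T^{\ast}$; this sidesteps a minor wrinkle in your framing, namely that $\langle\tilde M\rangle$ is only non-decreasing, so $r_2 \le \langle\tilde M\rangle_{T^{\ast}}$ does not by itself give the volume condition on $K_{t_2}$.

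The genuine difference is in the Brownian estimate. You hit the level $L$ itself and then appeal to It\^o excursion theory to produce a long excursion above $L$. The paper instead overshoots: it waits for $B$ to reach a \emph{higher} level $\delta_1|V|^{1/3}$ with $\delta_1 = (1+4\beta A)\,{\delta'}^{1/2} = \delta^{2/5}$ (this is exactly where the choice of $\delta'$ enters), and then bounds by a single reflection-principle estimate the probability that $B$ dips below $L$ within the next $\beta\delta|V|^{2/3}$ units of time. The buffer $\delta_1 - (\delta' + \beta A{\delta'}^{1/2}) \ge c\,\delta^{2/5}$ makes this last probability $\le \exp(-c\,\delta^{-1/5})$, dominated by the hitting-time term $\delta_1/\sqrt{\delta'_1} \asymp (1+A)^{1/2}\delta^{1/5}$. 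So the paper needs only three elementary Gaussian bounds and no excursion theory.

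Your excursion step, as written, has a real gap: the local time $\mathcal L^L_{T_0/2}$ and the count of positive excursions of length $\ge \ell$ indexed by that local time are \emph{not} independent (each long excursion consumes real time and thereby suppresses the local time accumulated by $T_0/2$), so the ``conditionally Poisson with mean $c|Z|\sqrt{T_0/\ell}$'' claim is not literally correct. One can rescue the bound $C\sqrt{\ell/T_0}$ by splitting the excursion point process into the independent long-positive and remaining pieces and controlling the inverse local time of the latter as a subordinator, but this is substantially more work than the paper's overshoot trick, which delivers the same final estimate with much less machinery.
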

For the other part of \eqref{eq:F1F2bnd}, we have the following result.
\begin{lemma}\label{lem:F1_prob_cri}
There exist $C, c \in (0, \infty)$ such that for any $\delta' \in (0, 1)$ and $|V| \ge (1 / \delta')^{15/4}$, 
\begin{equation}\label{eq:brown_prob_crit}
\P[(\Cr{Flbc:1}(\delta', C, c))^c] \le \frac C{|V|}.
\end{equation}
\end{lemma}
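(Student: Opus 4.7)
My plan is to decompose $\tilde{M}_t = M^{\mathrm{blk}}_{K_t} + M^{\mathrm{bdr}}_{K_t}$ via Lemma~\ref{lem:mgexp} and bound each summand separately on the times $t \in [\tau_{k,1}, \tau_{k,2}]$ at which the volume constraint $|K_{\tau_{k,1}}|_V \le c\delta'^{1/2}|V|^{2/3}$ is active, with $c$ taken small and $C$ large at the end.

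For the boundary term the crucial input is property~\ref{Kt3-2} from Proposition~\ref{prop:Kt}: $\partial K_{\tau_{k,1}} \setminus K_0 \subset \widetilde{\mathcal{G}}^{\le h}$ and $\bP_x[\widetilde{X}_{H_{K_{\tau_{k,1}}}} \in K_0 \setminus \widetilde{\mathcal{G}}^{\le h}] = 0$ for all $x \in V \setminus \{v\}$. Splitting the sum $M^{\mathrm{bdr}}_K = \nu_K \sum_{x \in \partial K} \varphi(x) \tilde{w}_x$ along the two pieces of $\partial K$, using $\varphi \le h$ on the first piece together with $\tilde{w}_x \ge 0$ and $\sum_x \tilde{w}_x \le 2|E|$, gives
\[
M^{\mathrm{bdr}}_{K_{\tau_{k,1}}} \le 2|E|h\,\nu_{K_{\tau_{k,1}}} + C\nu_{K_{\tau_{k,1}}}\max_{x \in K_0} |\varphi(x)|.
\]
By Proposition~\ref{lem:bnd_lambda} and the volume constraint, $2|E|h\,\nu_{K_{\tau_{k,1}}} \le CcA\delta'^{1/2}|V|^{1/3}$, matching the $\beta A\delta'^{1/2}|V|^{1/3}$ piece of the target with $\beta = Cc$; the residual $K_0$-term tends (as $N\to\infty$) to a bounded multiple of $|\varphi(v)| = O(\sqrt{\log|V|})$ except on an event of probability $\le C/|V|$, hence absorbed. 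For $t \in (\tau_{k,1},\tau_{k,2}]$ the boundary is unchanged except at the moving tip of the bridge interval $I_t$, whose field value is controlled through the Brownian-bridge description of $\varphi$ along the incident edge.

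For the bulk $M^{\mathrm{blk}}_{K_t} = \nu_{K_t} W_{K_t}$ with $W_K = \sum_{x \in V \cap K} d_x\varphi(x)$, Lemma~\ref{lem:var_ubd} gives $\mathrm{Var}(W_K) \le C|K|_V$ for each deterministic $K$, so Gaussian concentration combined with $\nu_K \le Cc\delta'^{1/2}|V|^{-1/3}$ yields, for every fixed $K$ with $|K|_V \le c\delta'^{1/2}|V|^{2/3}$,
\[
\P\bigl[|M^{\mathrm{blk}}_K| \ge \delta'|V|^{1/3}\bigr] \le 2\exp\!\bigl(-c_1\delta'^{-2}/c^3\bigr).
\]
Under the hypothesis $|V| \ge \delta'^{-15/4}$, the exponent $\delta'^{-2}/c^3$ dominates $\log|V| \le (15/4)\log(1/\delta')$, so for $c$ small enough this is $\le |V|^{-K_0}$ with $K_0$ arbitrarily large.

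The principal obstacle will be to lift this fixed-$K$ estimate uniformly to the random snapshot sets $K_{\tau_{k,j}}$ produced by the exploration, for $k \le n \le |V|$ and $j \in \{1,2\}$. My approach is to exploit the structural restriction that $V \cap K_{\tau_{k,j}}$ is a union of at most $k$ level-set clusters of $\widetilde{\mathcal{G}}^{\ge h}$ together with $O(k)$ bridge endpoints, whose metric entropy in the Gaussian pseudo-metric $d(K,K')^2 = \mathrm{Var}(W_K - W_{K'}) \le C|K\triangle K'|_V$ is substantially smaller than the naive $k\log|V|$, and combine this with the super-polynomial single-$K$ tail above via a chaining (or carefully structured union-bound) argument. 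Finally, Doob's maximal inequality applied to the continuous increments $\tilde{M}_t - \tilde{M}_{\tau_{k,j}}$ on each bridge phase, whose quadratic variation grows by $O(1)$ by Lemma~\ref{lem:lip_denergy}, reduces the continuous supremum to the discrete snapshot bound and yields the desired $\P[(\Cr{Flbc:1}(\delta',C,c))^c] \le C/|V|$.
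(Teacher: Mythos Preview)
Your overall architecture is right and matches the paper: split $\tilde M_t$ into bulk and boundary via Lemma~\ref{lem:mgexp}, control the boundary at the times $\tau_{k,1}$ using property~\ref{Kt3-2} (this is the paper's Lemma~\ref{lem:smallMtau}), and control the increment on each bridge phase $[\tau_{k,1},\tau_{k,2}]$ via the martingale with $O(1)$ quadratic variation (this is the paper's Lemma~\ref{lem:smallM}; note that Doob's $L^2$ inequality alone does not give the $C/|V|$ tail at threshold $\sqrt{\log|V|}$ --- one needs Gaussian tails, obtained through the Dubins--Schwarz representation as in Theorem~\ref{thm:time-change}).

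The genuine gap is in the bulk part. Your single-$K$ tail computation yields an exponent of order $\delta'^{1/2}|V|^{2/3}/c^3$ (not $\delta'^{-2}/c^3$; you have silently substituted the hypothesis $|V|\ge \delta'^{-15/4}$, which throws away exactly the $|V|$-dependence needed later). More importantly, the sets $K_{\tau_{k,j}}$ are \emph{random} and $\varphi$-measurable, so one cannot union-bound over them as if they were $O(|V|)$ deterministic sets, and your proposed chaining over ``level-set clusters plus bridge endpoints'' is not a deterministic index set either. The paper sidesteps all of this with a single clean observation you are missing: by property~\ref{Kt2} in Proposition~\ref{prop:Kt}, every component of $K_t$ meets $K_0=\partial B(v,\tfrac1N)$, hence $(K_t\cup\{v\})\cap V$ is a \emph{connected} subset of $V$ containing $v$. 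The family of such sets of size at most $m=c\,\delta'^{1/2}|V|^{2/3}$ has cardinality at most $e^{C m}$ (bounded-degree lattice-animal count), and the single-$K$ exponent $c'\delta'^{1/2}|V|^{2/3}$ beats this entropy once $c$ is small; see the proof of Lemma~\ref{lem:bulk_cri}. Replacing your chaining sketch by this direct union bound over connected sets closes the gap immediately.
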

\begin{proof}[Proof of Lemma~\ref{lem:F1F2bnd}]
Lemma~\ref{lem:F1F2bnd} follows directly from Lemmas~\ref{lem:brown_prob_cri} and 
\ref{lem:F1_prob_cri}.
\end{proof}
We now proceed to proving the last two lemmas starting with 
Lemma~\ref{lem:brown_prob_cri}.
\begin{proof}[Proof of Lemma~\ref{lem:brown_prob_cri}]
Let us recall the stopping times $T_t$ from Theorem~\ref{thm:time-change} and the 
expression for the (continuous) quadratic variation $\langle\tilde M\rangle_s$ from 
Lemma~\ref{lem:Mt_mart}. Also recall that by Theorem~\ref{thm:time-change}, the 
process $\tilde M_s$ remains constant on each interval $[T_{t-}, T_t]$. In view of the 
definition \eqref{def:F2}, we can therefore write
\begin{equation*}
\Cr{Flbc:2}(\delta', \delta, \beta, b) = \left\{\begin{array}{c}\mbox{$\exists \, 0 \le 
r_1 \le r_2$ with $r_2 - r_1 \ge \beta \delta |V|^{2/3}$ s.t. $|K_{T_{r_2}}|_V \le b 
{\delta'}^{1/2}|V|^{2/3}$, $\langle \tilde M \rangle_{T_{r_2}} =$}\\
\mbox{$2|E|(\nu_{K_{T_{r_2}}} - \nu_{K_0}) = r_2$ and $\tilde M_{T_r} > ({\delta'}  + \beta A {\delta'}^{1/2}) |V|^{1/3}$ for all $r_1 \le r \le r_2$}
\end{array}
\right\}.
\end{equation*}
Further from Proposition~\ref{lem:bnd_lambda} we have that $2|E|\nu_{K_{T_{r_2}}} \ge 
c |K_{T_{r_2}}|_V$. Hence, letting $r_0 = 2|E|\nu_{K_0}$, we obtain the inclusion \begin{equation}\label{eq:tildeF2inclusion}
\Cr{Flbc:2}(\delta', \delta, \beta, b) \supset \left\{\begin{array}{c}\mbox{$\exists \, 0 \le r_1 \le r_2 \le c b {\delta'}^{1/2}|V|^{2/3} - r_0$ with $r_2 - r_1 \ge \beta \delta |V|^{2/3}$ s.t.}\\
\mbox{$\langle \tilde M\rangle_\infty \ge r_2$ and $\tilde M_{T_r} > ({\delta'}  + \beta A {\delta'}^{1/2}) |V|^{1/3}$ for all $r_1 \le r \le r_2$}
\end{array}
\right\}.
\end{equation}
Next let us note, by property~\ref{Kt3-1} of $(K_t)_{t \ge 0}$, that $|K_{\tau_{n, 2}}|_V = |V| - 1$ $\P$-a.s. and thus $$\langle \tilde M\rangle_\infty \ge \langle \tilde M\rangle_{\tau_{n, 2}} = 2|E|\nu_{K_{\tau_{n, 2}}} - r_0 \ge c |V| - r_0$$
in view of Proposition~\ref{lem:bnd_lambda}. Consequently, the lower bound on $\langle 
\tilde M\rangle_\infty$ on the right-hand side in \eqref{eq:tildeF2inclusion} can be 
dropped as soon as $|V| \ge C b^3$. Since the process $(\tilde M_{T_r})_{r \ge 0}$ is 
distributed as the stopped (standard) Brownian motion $(B_{r_0 + r \wedge \langle \tilde M 
\rangle_{\infty}})_{r \ge 0}$ by Theorem~\ref{thm:time-change}, the above observation 
leads to the following restatement of \eqref{eq:tildeF2inclusion} in terms of the 
corresponding probabilities when $|V| \ge Cb^3$. For 
\begin{equation*}
\Cr{Flbc:2}{_{, 0}}(\delta', \delta, \beta, b) \stackrel{{\rm def.}}{=} \left\{\begin{array}{c}\mbox{$\exists \, r_0 \le r_1 \le r_2 \le \Cl[c]{c:r2} b {\delta'}^{1/2}|V|^{2/3}$ with $r_2 - r_1 \ge \beta \delta |V|^{2/3}$ s.t.}\\
\mbox{$B_r > ({\delta'}  + \beta A {\delta'}^{1/2}) |V|^{1/3}$ for all $r_1 \le r \le r_2$}\end{array}\right\},
\end{equation*}
where $\Cr{c:r2} \in (0, \infty)$, we have
\begin{equation}\label{eq:F2F20}
\PB_0[(\Cr{Flbc:2}{_{, 0}}(\delta', \delta, \beta, b))^c] \ge \P[ (\Cr{Flbc:2}(\delta', \delta, \beta, b))^c], 
\end{equation}
where $\PB_0$ is the law of the standard Brownian motion $(B_r)_{r \ge 0}$. So it 
suffices to prove the bound \eqref{eq:brown_prob_crit} with $\Cr{Flbc:2}{_{, 0}}(\delta', \delta, \beta, b)$ instead of $\Cr{Flbc:2}(\delta', \delta, \beta, b)$. %\em{provided} $|V| \ge C(1/b^2\delta')^{\frac34}$, i.e., $b \delta'^{1/2}|V|^{2/3} \ge C$. 
To this end, 
\begin{equation}\label{eq:constants}
    %\begin{split}
    % \delta' = \frac{1}{(1+ 4\Cr{C:bdr}A)^2}\delta^{4/5},\, \delta'_1 = \frac{\Cr{c:beta}}{2}{\delta'}^{1/2} \text{ and } \delta_1 = (1+ 4\Cr{C:bdr}A) \delta'_1.  
    \text{let } \delta'_1 = %\frac{\Cr{c:beta}}{2}
    \frac{\Cr{c:r2}}{2} b {\delta'}^{1/2} \text{ and } \delta_1 = \frac{2(1+ 4\beta A)}{\Cr{c:r2} b} \delta'_1 \text{ with }\delta' = \frac{1}{(1+ 4\beta A)^2}\delta^{4/5}%\end{split}
\end{equation}
(recall the statement of Lemma~\ref{lem:brown_prob_cri}). Now consider the stopping 
time 
\begin{equation}\label{def:taudelta1}
        \tau_{\delta_1} \stackrel{\rm def.}{=} \inf\{r \ge 0: B_r \ge \delta_1 |V|^{1/3}\},
\end{equation}
and the event 
\begin{equation}\label{def:Ftilde}
    %\begin{split}
        \Cr{Flbc:2}{_{, 1}}(\delta', \delta, \beta, b) \stackrel{\rm def.}{=} \Big\{ r_0 \le \tau_{\delta_1} \le \delta'_1 |V|^{2/3}, \inf_{\tau_{\delta_1} \le r \le \tau_{\delta_1} + %\widetilde 
        \beta \delta |V|^{2/3} } B_r > (\delta' + %3\Cr{C:bdr} 
        \beta A {\delta'}^{1/2}) |V|^{1/3} \Big\}.
    %\end{split}
\end{equation}
Clearly, for $\delta'_1 + \beta \delta \le \Cr{c:r2} b {\delta'}^{1/2}$ which holds 
when $\delta \le c(b, \beta) (1 + A)^{-2}$ in view of our choices in \eqref{eq:constants}, we 
have the inclusion
\begin{equation}\label{eq:F21F20}
\Cr{Flbc:2}{_{, 1}}(\delta', \delta, \beta, b) \subset \Cr{Flbc:2}{_{, 0}}(\delta', \delta, \beta, b).
\end{equation}
Shifting the focus to $\Cr{Flbc:2}{_{, 1}}(\delta', \delta, \beta, b)$, note that (see 
\eqref{def:Ftilde}),
\begin{equation}\label{eq:brow_prob_cri1}
        \begin{split}
            % (\widetilde F_{\delta', \widetilde \beta})^c &\subset \{ \tau_{\delta_1} > \delta'_1 |V|^{2/3}\} \cup \{ \inf_{ t  \in [\tau_{\delta_1}, \tau_{\delta_1} + \widetilde \beta \delta |V|^{2/3}]} \tilde M_t \le \delta' |V|^{1/3} + 3\Cr{C:bdr} A {\delta'}^{1/2} |V|^{1/3} \} \\
            % & \subset \Big\{ \sup_{ t \in [0,\delta'_1 |V|^{2/3}]} \tilde M_t \le \delta_1 |V|^{1/3} \Big\} \cup  \Big\{\inf_{ t  \in [\tau_{\delta_1}, \tau_{\delta_1} + \widetilde \beta \delta |V|^{2/3}]} \tilde M_t \le \delta' |V|^{1/3} + 3\Cr{C:bdr} A {\delta'}^{1/2} |V|^{1/3} \Big\}.
            &(\Cr{Flbc:2}{_{, 1}}(\delta', \delta, \beta, b))^c \\
            \subset& \{ \tau_{\delta_1} < r_0 \} \cup \big\{ \tau_{\delta_1} > \delta'_1 |V|^{2/3}\big\} \cup \big\{\inf_{\tau_{\delta_1} \le r \le \tau_{\delta_1} + \beta \delta |V|^{2/3} } B_r \le (\delta' + \beta A {\delta'}^{1/2}) |V|^{1/3}\big\}.
        \end{split}
    \end{equation}
Since $r_0 = 2|E|\nu_{K_0} \le C$ (recall Lemma~\ref{lem:characterisation_lambda} and also 
that $K_0 = \partial B(v, \frac 1N)$ with $N \ge 2$) and the maximum of a standard 
Brownian motion up to a time $t$ is distributed as the absolute value of a Gaussian 
variable with variance $t$, we get the following bounds:
\begin{equation*}%\label{eq:taudel1}
\PB_0[\tau_{\delta_1} < r_0] \le e^{-c \delta_1^2|V|^{\frac23}} \stackrel{\eqref{eq:constants}}{\le} e^{-c \delta^{\frac45}|V|^{\frac23}}  \mbox{ and } \PB_0[ \tau_{\delta_1} > \delta'_1 |V|^{\frac23} ] \le \tfrac{\delta_1}{\sqrt{\delta_1'}} \stackrel{\eqref{eq:constants}}{\le} C(b, \beta) (1 + A)^{\frac12}\delta^{\frac15}.
\end{equation*}
Using similar estimates along with the strong Markov property of Brownian motion, %and the (Gaussian) tail estimates for the minimum of a Brownian motion, 
we obtain
\begin{equation*}%\label{eq:minBt}
\PB_0 \big[ \inf_{\tau_{\delta_1} \le r \le \tau_{\delta_1} + \beta \delta |V|^{\frac23} } B_r \le (\delta' + \beta A {\delta'}^{\frac12}) |V|^{\frac13} \big] \le \exp( - \tfrac{ (\delta_1 - \delta' -  \beta A {\delta'}^{\frac12})^2}{2 \beta \delta}) \stackrel{\eqref{eq:constants}}{\le} \exp(-c(\beta) (1 / \delta)^{\frac15})
\end{equation*}
%{\em provided} $\delta_1 > \delta' + \beta A {\delta'}^{1/2}$ which holds 
for any $\delta \in (0, 1)$. %in view of \eqref{eq:constants}. 
%Now by the choices in \eqref{eq:constants}, observe that 
 % \begin{equation}\label{eq:brow_prob_cri4}
 %         \tfrac{ (\delta_1 - \delta' -  \beta A {\delta'}^{1/2})^2}{2 \beta \delta} \ge \tfrac{c}{\beta} (1/\delta)^{\frac 15}
 % \end{equation}
% In view of \eqref{eq:brow_prob_cri4} we can choose $\widetilde \beta = \frac{(1 + 4\Cr{C:bdr} A)^{1/2}}{4\Cr{C:bdr} A - 1} > 0$ such that 
% \begin{equation}\label{eq:brow_prob_cri5}
%      \exp( - \frac{ ( \delta_1 - \delta' - 3\Cr{C:bdr} A {\delta'}^{1/2})^2}{\widetilde \beta \delta}) \le  (1 + 4 \Cr{C:bdr} A)^{1/2} \delta^{1/3} < (1 + 4 \Cr{C:bdr} A)^{1/2} \delta^{1/6}.
% \end{equation}
% Again from \eqref{eq:constants} we see that
% \begin{equation}\label{eq:brow_prob_cri6}
%     \begin{split}
%         \frac{\delta_1}{\sqrt{\delta'_1}} \le (1 + 4 \Cr{C:bdr} A) {\delta'}^{1/2} = (1 + 4 \Cr{C:bdr} A)^{1/2} \delta^{1/6}.
%     \end{split}
% \end{equation}
Putting together these estimates we get from the inclusion \eqref{eq:brow_prob_cri1} that
\begin{equation*}
\PB_0[(\Cr{Flbc:2}{_{, 1}}(\delta', \delta, \beta, b))^c] \le C(b, \beta) (1 + A)^{1/2} \delta^{1/5}
\end{equation*}
for all $\delta \in (0, 1)$ and $|V| \ge \tfrac{1}{\delta^{3/2}}$. This %immediately 
yields the bound \eqref{eq:brown_prob_crit} in view of \eqref{eq:F21F20} and 
\eqref{eq:F2F20}.
\end{proof}

% \begin{lemma}\label{lem:brow_prob_cri}
%     There exists $\Cl{C:cluster} >0$ such that for any $\delta \in (0,1)$ and $h = A |V|^{-1/3}$ with $A \ge 0$, we have 
%     \begin{equation*}
%         \begin{split}
%             \P[(\widetilde F_{\delta', \Cr{C:cluster}})^c] \le 2(1 + 4\Cr{C:bdr}A)^{1/2} \delta^{1/6}.  
%         \end{split}
%     \end{equation*}
% \end{lemma}
% From now on we will refer to the event $\widetilde{F}_{\delta', \Cr{C:cluster}}$ as 
% $\widetilde{F}_{\delta'}$.

Proof of Lemma~\ref{lem:F1_prob_cri} goes through a number of intermediate events. To 
this end, for any $\delta' \in (0,1)$ and $\beta > 0$, consider the event
 \begin{equation}\label{def:Edelta}
         \Cl[tilF]{Flbc:blk}(\delta', b) %= \Cr{Flbc:blk}(\delta', \beta, N)  
         \stackrel{\rm def.}{=} \big\{\tilde M^{\rm blk}_{t} \le \delta' |V|^{1/3} \text{ for any $t \ge 0$ such that } |K_t|_{V} \le  b\,{\delta'}^{1/2} |V|^{2/3}\big\},
 \end{equation}
where $\tilde M^{\rm blk}_{t} = M^{\rm blk}_{K_t}$ (see \eqref{eq:M_K_decomp}).

\begin{lemma}\label{lem:bulk_cri}
There exists $\Cl[c]{c:beta} > 0$ such that for any $\delta' \in (0, 1)$,
%$\Cr{C:bulk}, \Cr{c:bulk} >0$ 
    \begin{equation}\label{eq:bulk_cri}
    \P\big[(\Cr{Flbc:blk}(\delta', \Cr{c:beta}))^c\big] \le C e^{-c {\delta'}^{1/2}|V|^{2/3}}.
    \end{equation}%checked that it works for all bounded degree, irregular graphs
\end{lemma}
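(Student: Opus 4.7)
The strategy is to bound the bulk martingale
$\tilde M^{\mathrm{blk}}_t = \nu_{K_t}\sum_{x \in V \cap K_t} d_x \varphi(x)$
by combining two ingredients. The upper bound $2|E|\nu_K \le C(|K|_V + b_0(K))$ from Proposition~\ref{lem:bnd_lambda} applies since $|K_t|_V \le m := \Cr{c:beta}\,{\delta'}^{1/2}|V|^{2/3} \ll |V|$ and $b_0(K_t) \le d$ by property~\ref{Kt2} of Proposition~\ref{prop:Kt}; and the linear variance estimate $\var[\sum_{x \in V \cap K} d_x \varphi(x)] \le C|K|_V$ from Lemma~\ref{lem:var_ubd} quantifies the Gaussian fluctuations of the sum. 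For any deterministic $K \in \cK_{<\infty}$ with $|K|_V = k \le m$, these give $\var(M_K^{\mathrm{blk}}) \le Ck^3/|V|^2$, whence a standard Gaussian tail bound yields $\P[M_K^{\mathrm{blk}} > \delta'|V|^{1/3}] \le 2\exp(-c{\delta'}^{1/2}|V|^{2/3}/\Cr{c:beta}^3)$.

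To upgrade this pointwise estimate to a uniform bound over the random family $(K_t)_{t \ge 0}$, I discretise via the stopping times $T_k^{\ast} := \inf\{t \ge 0 : |K_t|_V \ge k\}$. On each interval $[T_k^{\ast}, T_{k+1}^{\ast})$ the sum $Z_{K_t} := \sum_{x \in V \cap K_t} d_x \varphi(x)$ is frozen at $Z_{K_{T_k^{\ast}}}$, while $\nu_{K_t}$ is continuous in $t$ (Lemma~\ref{lem:lambda_cont}) and monotone nondecreasing, with supremum $\nu_{K_{T_{k+1}^{\ast}}}$ on this interval. Therefore
\[
\sup_{t: |K_t|_V \le m}\tilde M^{\mathrm{blk}}_t
\le \max_{0 \le k \le \lfloor m \rfloor} \nu_{K_{T_{k+1}^{\ast}}}\, \bigl(Z_{K_{T_k^{\ast}}}\bigr)_+,
\]
and using $\nu_{K_{T_{k+1}^{\ast}}} \le C(k+1+d)/|V|$ the task reduces to controlling $\bigl(Z_{K_{T_k^{\ast}}}\bigr)_+$ for each $k$.

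The key combinatorial observation is that $V \cap K_t$ is \emph{not} an arbitrary subset of $V$: by property~\ref{Kt2}, every component of $K_t$ meets $K_0 = \partial B(v, 1/N)$, so $V \cap K_t$ decomposes as a union of at most $d$ subsets of $V$, each connected in $\cG$ and rooted at a neighbour of $v$. The number of such configurations of size $k$ is at most $C(d)^k$ by the standard lattice-animal bound in bounded-degree graphs, a dramatic reduction from the naive $\binom{|V|}{k}$. A union bound over feasible $W = V \cap K_{T_k^{\ast}}$ then gives $2\,C(d)^k \exp(-c{\delta'}^{1/2}|V|^{2/3}/\Cr{c:beta}^3)$, and choosing $\Cr{c:beta}$ small enough that $\Cr{c:beta}^4 \log C(d) \le c$ ensures the Gaussian exponent dominates $k \log C(d)$ for all $k \le m$. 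A final sum over $k \le \lfloor m\rfloor$ contributes only a polynomial prefactor absorbed into $C$.

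The main obstacle I foresee is the lattice-animal enumeration step: one must carefully relate the metric-graph components of $K_t$ to connected subsets of $\cG$, using the precise structure of the exploration in properties~\ref{Kt3-1}--\ref{Kt3-3}, and verify that the exponential count $C(d)^k$ is sharp enough to be absorbed into the Gaussian tail. Once that is in place, the remainder of the argument follows routinely from Gaussian concentration.
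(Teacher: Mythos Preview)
Your proposal is correct and follows essentially the same approach as the paper: bound $\nu_{K_t}$ via Proposition~\ref{lem:bnd_lambda}, use the linear variance estimate from Lemma~\ref{lem:var_ubd}, apply a Gaussian tail bound, and take a union bound over lattice animals whose exponential count $C(d)^k$ is absorbed by choosing $\Cr{c:beta}$ small. The paper streamlines your connectivity observation slightly---rather than decomposing $V\cap K_t$ into at most $d$ connected pieces rooted at neighbours of $v$, it notes that $(K_t\cup\{v\})\cap V$ is itself connected in $\cG$, giving a single rooted lattice-animal count---and it works directly with the family $\cK(c')$ of connected vertex sets rather than discretising via the stopping times $T_k^\ast$, but these are cosmetic differences.
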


\begin{proof}
%The proof is very similar to that of Lemma~\ref{lem:conc_M_K_bulk} and we omit the details. 
Note that $K_t \cup \{v\}$ is connected by property~\ref{Kt2} of the family $(K_t)_{t \ge 0}$. Consequently, the set $(K_t \cup \{v\}) \cap V$ is connected {\em in the} graph $\cG$. Therefore, it suffices to show in view of the definition of $M^{\rm blk}_K$ from \eqref{eq:M_K_decomp} and the upper bound on $\nu_K$ from 
Proposition~\ref{lem:bnd_lambda} that
\begin{equation}\label{eq:bulk_cri2}
\P\Big[ \bigcup_{K \in \cK(c')} \big\{\textstyle{\sum_{x \in K \setminus \{v\}}} d_x \varphi(x) \ge c \delta' |V|^{1/3}\frac{|V|}{|K|}\big\}\Big] \le C e^{-c \delta'^{1/2}|V|^{2/3}}
\end{equation}
for some $c' \in (0, \infty)$ where 
\begin{equation*}
    \cK(c') \stackrel{\rm def.}{=} \{ K \subset V : v \in K, K \text{ is connected in $\cG$ and } |K| \le c' {\delta'}^{1/2} |V|^{2/3} \}.
\end{equation*}
Now, from Lemma~\ref{lem:var_ubd} we know that $\sum_{x \in K \setminus \{v\}} d_x\varphi(x)$ is a centered Gaussian variable with variance bounded by $C|K|$ for all 
$K \in \cK(c')$. Using Gaussian tail bounds, we thus obtain
\begin{equation*}
\P\big[\, \textstyle{\sum_{x \in K \setminus \{v\}}} d_x \varphi(x) \ge c \delta' |V|^{1/3}\frac{|V|}{|K|} \,\big] \le e^{-c {\delta'}^2 \frac{|V|^{2 + 2/3}}{|K|^3}}.
\end{equation*}
Combined with the standard upper bound $|\cK(c')| \le e^{C c' 
{\delta'}^{1/2}|V|^{2/3}}$ (recall that the maximum degree of any vertex in $\cG$ is 
$d$), this yields \eqref{eq:bulk_cri2} for a suitable choice of $c'$ via a union bound.
\end{proof}
%In the remainder of this section, we will refer to the event $\Cr{Flbc:blk}(\delta', \Cr{c:beta})$ simply as $\Cr{Flbc:blk}(\delta')$. 
Now for any $\delta' \in (0, 1)$ and $\beta > 0$, %consider the event
let
\begin{equation}\label{def:Fprime}
\Cl[tilF]{Flbc:k1}(\delta', \beta) %= \Cr{Flbc:k1}(\delta', \beta, N) \\
\stackrel{\rm def.}{=} \left\{\begin{array}{c}\mbox{$\tilde M_{\tau_{k, 1}} \le (\delta' + \beta A {\delta'}^{1/2})|V|^{1/3} + \beta \sqrt{\log 
|V|}$ for any}\\ \mbox{$0 \le k \le n$ such that $|K_{\tau_{k, 1}}|_V \le \Cr{c:beta} {\delta'}^{1/2} |V|^{2/3}$} \end{array}
\right\}.
\end{equation}

\begin{lemma}\label{lem:smallMtau}
There exists $\Cl{C:bdr} > 0$ such that for any $\delta' \in (0, 1)$ and $|V| \ge (1 / \delta')^{3/4}$,
    \begin{equation}\label{eq:smallMtau}
        \P[\Cr{Flbc:blk}(\delta', \Cr{c:beta}) \setminus \Cr{Flbc:k1}(\delta', \Cr{C:bdr})] \le \frac{C}{|V|}.
    \end{equation}
\end{lemma}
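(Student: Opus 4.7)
The plan is to show that on the event $\Cr{Flbc:blk}(\delta', \Cr{c:beta})$ intersected with a further good event $\Omega_{\rm good}$ of $\P$-probability at least $1 - C/|V|$, one has
\[
\tilde M_{\tau_{k,1}} \;=\; \tilde M^{\rm blk}_{\tau_{k,1}} + \tilde M^{\rm bdr}_{\tau_{k,1}} \;\le\; (\delta' + \Cr{C:bdr}\, A\, {\delta'}^{1/2})|V|^{1/3} + \Cr{C:bdr}\sqrt{\log|V|}
\]
simultaneously for every $0 \le k \le n$ satisfying $|K_{\tau_{k,1}}|_V \le \Cr{c:beta}{\delta'}^{1/2}|V|^{2/3}$. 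The bulk term is immediately bounded by $\delta'|V|^{1/3}$ from the definition~\eqref{def:Edelta} of $\Cr{Flbc:blk}(\delta', \Cr{c:beta})$, so the task reduces to controlling $\tilde M^{\rm bdr}_{\tau_{k,1}}$ uniformly over the relevant $k$.

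For this, I would start from the rewriting
\[
\tilde M^{\rm bdr}_{\tau_{k,1}} \;=\; \nu_{K_{\tau_{k,1}}} \sum_{y \in V \setminus K_{\tau_{k,1}}} d_y\, \bE_y\bigl[\varphi(\widetilde X_{H_{K_{\tau_{k,1}}}})\bigr]
\]
(a direct reorganisation of \eqref{eq:M_K_decomp}) and split the sum into the $y = v$ term (note that $v \in V \setminus K_{\tau_{k,1}}$ always, since $K_t \subset \widetilde\cG \setminus B(v, \tfrac1N)$) and the rest. For each $y \in V \setminus (K_{\tau_{k,1}} \cup \{v\})$, property~\ref{Kt3-2} of Proposition~\ref{prop:Kt} yields both $\bP_y[\widetilde X_{H_{K_{\tau_{k,1}}}} \in K_0 \cap \widetilde\cG^{> h}] = 0$ and the inclusion $\partial K_{\tau_{k,1}} \setminus K_0 \subset \widetilde\cG^{\le h}$; together, these force $\widetilde X_{H_{K_{\tau_{k,1}}}} \in \widetilde\cG^{\le h}$ $\bP_y$-almost surely, and hence $\bE_y[\varphi(\widetilde X_{H_{K_{\tau_{k,1}}}})] \le h$. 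Summing this over $y$, and invoking Proposition~\ref{lem:bnd_lambda} together with $b_0(K_{\tau_{k,1}}) \le |K_0| \le d$ (from property~\ref{Kt2} plus $K_0 = \partial B(v, \tfrac1N)$), gives $2|E|\nu_{K_{\tau_{k,1}}} \le C\Cr{c:beta}{\delta'}^{1/2}|V|^{2/3}$, so the $y \neq v$ contribution is at most $2|E|\nu_{K_{\tau_{k,1}}}\, h \le C\Cr{c:beta}\, A\, {\delta'}^{1/2}|V|^{1/3}$.

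For the $y = v$ term, the walk started at $v$ must first exit $B(v, \tfrac1N)$ at a point of $K_0 \subset K_{\tau_{k,1}}$, so $\bE_v[\varphi(\widetilde X_{H_{K_{\tau_{k,1}}}})] \le \max_{x \in K_0}\varphi(x)$ and thus the $v$-contribution to $\tilde M^{\rm bdr}_{\tau_{k,1}}$ is at most $d\, \nu_{K_{\tau_{k,1}}}\max_{x \in K_0}\varphi(x) \le C{\delta'}^{1/2}|V|^{-1/3}\max_{x \in K_0}|\varphi(x)|$. Each $\varphi(x)$ with $x \in K_0$ is Gaussian of $O(1)$ variance --- being an affine interpolation of $\varphi(v)$ and a neighbour $\varphi(y)$ along an edge $\{v, y\}$ plus an independent centred Gaussian contribution of variance at most $1/4$, and $\var(\varphi(v)), \var(\varphi(y))$ are $O(1)$ via Lemma~\ref{lem:var_ubd}. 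A Gaussian tail estimate combined with a union bound over the at most $d$ points of $K_0$ then yields $\P[\max_{x \in K_0}|\varphi(x)| > C\sqrt{\log|V|}] \le C/|V|$ for $C$ sufficiently large; on the complementary event $\Omega_{\rm good}$, the $v$-contribution is bounded by $C{\delta'}^{1/2}|V|^{-1/3}\sqrt{\log|V|}$, which is $\le \sqrt{\log|V|}$ since $|V| \ge 1$.

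The main obstacle to watch out for is cleanly isolating the $y = v$ contribution: the clean upper bound $\varphi \le h$ on $\partial K_{\tau_{k,1}}$ that trivialises the $y \neq v$ piece is \emph{not} available on $K_0$, where the field can be as large as $\sqrt{\log|V|}$. Property~\ref{Kt3-2} of Proposition~\ref{prop:Kt} is precisely the tool that localises this defect to $K_0$ (a set of cardinality at most $d$), thereby reducing the problem to plain Gaussian concentration for the $O(1)$-variance random variables $(\varphi(x))_{x \in K_0}$ and producing the extra $\sqrt{\log|V|}$ summand in the target bound.
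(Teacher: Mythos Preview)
Your approach is essentially the paper's: decompose $\tilde M_{\tau_{k,1}}$ into bulk and boundary, bound the bulk on $\Cr{Flbc:blk}(\delta',\Cr{c:beta})$, and control the boundary via property~\ref{Kt3-2}. The paper splits the boundary sum over $x\in\partial K_{\tau_{k,1}}$ (according to whether $\varphi(x)\le h$ or not), whereas you split over $y\in V\setminus K_{\tau_{k,1}}$ (according to whether $y=v$ or not); these are equivalent reorganizations, both hinging on property~\ref{Kt3-2} to localize the ``hot'' boundary contribution to $K_0$ and then on Gaussian tails for the at most $d$ variables $(\varphi(x))_{x\in K_0}$.

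One small gap to close: property~\ref{Kt3-2} in Proposition~\ref{prop:Kt} is stated for the timepoints $\tau_{1,1},\dots,\tau_{n,2}$ only; the augmentation $\tau_{0,1}=\tau_{0,2}=0$ is added afterwards in \S\ref{subsec:lbd_critical}, and for $k=0$ the claim $\bP_y[\widetilde X_{H_{K_0}}\in K_0\cap\widetilde\cG^{>h}]=0$ for $y\in V\setminus\{v\}$ generally fails (a walk from a neighbor of $v$ can hit a point of $K_0$ where $\varphi>h$). The paper handles $k=0$ separately via the event $\{\tilde M_0\ge\alpha\}$ and a Gaussian tail bound on $\tilde M_0$, which has $O(1)$ variance by \eqref{eq:varMK} and Proposition~\ref{lem:bnd_lambda}. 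Alternatively, your own event $\Omega_{\rm good}$ already does the job: since $2|E|\nu_{K_0}\le C$, one has $|\tilde M_0|\le 2|E|\nu_{K_0}\max_{x\in K_0}|\varphi(x)|\le C\sqrt{\log|V|}$ on $\Omega_{\rm good}$, which covers the $k=0$ requirement in $\Cr{Flbc:k1}$.
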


\begin{proof}
In view of Lemma~\ref{lem:mgexp}, we can write
\begin{equation*}
\tilde M_{t} = \tilde M_{t}^{{\rm blk}} + \tilde M_{t}^{{\rm bdr}},
\end{equation*}
%for any $0\le k \le n$ 
where $\tilde M_t^{{\rm blk}} = M_{K_t}^{{\rm blk}}$ and $\tilde 
M_t^{{\rm bdr}} = M_{K_t}^{{\rm bdr}}$ for all $t \ge 0$. By 
%Lemma~\ref{lem:bulk_cri} (see also 
\eqref{def:Edelta} we know that, on the event $\Cr{Flbc:blk}(\delta')$,
    \begin{equation*}%\label{eq:smallMtau1}
        \tilde M^{\rm blk}_{t} \le %\frac{\delta'}{2} 
        \delta' |V|^{1/3} \text{ whenever }|K_t|_V \le \Cr{c:beta} {\delta'}^{1/2}|V|^{2/3}.
    \end{equation*}
    As to the term $\tilde M_t^{{\rm bdr}}$, we have the following expression from \eqref{eq:M_K_decomp}.
    \begin{equation*}%\label{eq:smallMtau2}
        \tilde M^{\rm bdr}_t = \nu_{K_t} \sum_{x \in \partial K_t} \varphi(x) \big(\sum_{y \in V \setminus K_t} d_y  \bP_y[ \widetilde X_{H_{K_t}} = x]\big) = \nu_{K_t}\big(\textstyle{\sum_{\partial K_t \cap \widetilde \cG^{\le h}}} + \textstyle{\sum_{\partial K_t \setminus \widetilde \cG^{\le h}}}\big),
    \end{equation*}
    where the first and the second summations are over $x \in \partial K_t \cap \widetilde 
    \cG^{\le h}$ and $x \in \partial K_t \setminus \widetilde \cG^{\le h}$ respectively. 
    We can bound the first summation as follows (recall that $h = A |V|^{-1/3} \ge 0$):
    \begin{equation*}
    \begin{split}
     \mbox{$\sum_{\partial K_t \cap \widetilde\cG^{\le h}}$} &\le h \sum_{x \in \partial K_t \setminus \partial B(v, \frac 1 N)} \, \sum_{y \in V \setminus K_t} d_y  \bP_y[ \widetilde X_{H_{K_t}} = x] \le h \sum_{y \in V \setminus K_t} d_y  \sum_{x \in \partial K_t} \bP_y[ \widetilde X_{H_{K_t}} = x] \\
     &= h \sum_{y \in V \setminus K_t} d_y \le 2|E|\,  A |V|^{-1/3}.
     \end{split}
    \end{equation*}
    In order to bound the second term, we will use property~\ref{Kt3-2} of $(K_t)_{t \ge 
    0}$ from Proposition~\ref{prop:Kt} which tells us that $\partial K_t \setminus 
    \widetilde \cG^{\le h} \subset \partial B(v, \frac 1N)$ and %$\bP_y [ \widetilde X_{H_{K_t}} \in \partial K_t \setminus \widetilde \cG^{\le h}] = 0$ 
    $\bP_y [ \widetilde X_{H_{K_t}} \in \partial B(v, \frac 1N) \setminus \widetilde \cG^{\le h}] = 0$ for all $y \in V \setminus \{v\}$ when $t = \tau_{k, 1}$ ($k \ge 1$). Hence in this case,
    %When $\ell =2$ property \eqref{Kt3-2} again ensures us, there cannot exist $x \in \partial K_{\tau_{k,2}}$ such that $\varphi(x) \ge h$. 
    \begin{equation*}
    \begin{split}
     \mbox{$\sum_{\partial K_t \setminus \widetilde \cG^{\le h}}$} &\le \frac1{d_x} \sum_{x \in \partial B(v, \frac 1N)}|\varphi(x)|.
     \end{split}
    \end{equation*}
    Combining all the previous displays we obtain the inclusion,
    \begin{equation}\label{eq:F20include}
    %\begin{split}
    %&
    \Cr{Flbc:blk}(\delta', \Cr{c:beta}) \setminus \Cr{Flbc:k1}{_{, 0}}(\delta', \alpha) %\\
    \subset %& \,\, 
    \big\{\mbox{$\sum_{x \in \partial B(v, \frac 1N)}|\varphi(x)| \ge \alpha d$} \big\} \cup \{\tilde M_0 \ge \alpha\}
    %\end{split}
    \end{equation}
for any $\alpha \in (0, \infty)$ where 
$$\Cr{Flbc:k1}{_{, 0}}(\delta', \alpha) \stackrel{{\rm def.}}{=} \left\{\begin{array}{c}\mbox{$\tilde M_{\tau_{k, 1}} \le (\delta' + 2|E|\nu_{K_{\tau_{k, 1}}}A|V|^{-2/3})|V|^{1/3} + \alpha$ for any}\\ \mbox{$0 \le k \le n$ such that $|K_{\tau_{k, 1}}|_V \le \Cr{c:beta} {\delta'}^{1/2} |V|^{2/3}$}\end{array}\right\}$$
(cf.~\eqref{def:Fprime}). Now from Proposition~\ref{lem:bnd_lambda} we have $2|E|\nu_K \le C (|K|_V + b_0(K))$ for any $K \in 
\cK_{<\infty}$ such that $|K|_V + b_0(K) \le c|V|$. Since each component of $K_t$ intersects $K_0 = \partial B(v, \frac 1 N)$ %(see above Proposition~\ref{prop:Kt} and recall that $S = \{v\}$ in our case) 
by property~\ref{Kt2} of $(K_t)_{\ge 0}$, i.e., $b_0(K_t) \le d$, %and $d + \Cr{c:beta} {\delta'}^{1/2} |V|^{2/3}$, 
we get the upper bound 
\begin{equation}\label{eq:nuKtbnd}
2|E|\nu_{K_t} \le C\delta'^{1/2}|V|^{2/3} \,\, \forall \mbox{ $t \ge 0$ s.t. $|K_t|_V \le \Cr{c:beta} {\delta'}^{1/2} |V|^{2/3}$ when $|V| \ge C \vee (1 / \delta')^{3/4}$}
\end{equation}
%Here we used $|V| \ge C \vee (1 / \delta')^{3/4}$ so that $\Cr{c:beta} {\delta'}^{1/2} |V|^{2/3} + d \le c|V|$ and $\delta'^{1/2}|V|^{2/3} \ge 1$ 
(recall our standing assumptions on $\delta$ and $|V|$). %, and we assume $|V| \ge C$ without any loss of generality). 
Plugging \eqref{eq:nuKtbnd} into the definition of $\Cr{Flbc:k1}{_{, 0}}(\delta', 
\alpha)$, we obtain from \eqref{eq:F20include} that
\begin{equation*}
    \begin{split}
    &\Cr{Flbc:k1}(\delta') \setminus \big\{ \tilde M_{\tau_{k, 1}} \le (\delta' + CA\delta'^{1/2})|V|^{1/3} + \alpha \text{ for any } 0 \le k \le n \text{ s.t. } |K_{\tau_{k, 1}}|_V \le \Cr{c:beta} {\delta'}^{1/2} |V|^{2/3} \big\}\\
    \subset & \,\, \big\{\mbox{$\sum_{x \in \partial B(v, \frac 1N)}|\varphi(x)| \ge \alpha d$} \big\} \cup \{\tilde M_0 \ge \alpha\}.
    \end{split}
    \end{equation*}
From this we can now conclude the proof of \eqref{eq:smallMtau} by setting $\alpha = 
C \sqrt{\log |V|}$ and using Gaussian tail bounds for $\varphi(x); x \in \partial 
B(v, \frac 1N)$ and $\tilde M_0$, each of which is a centered Gaussian variable with 
variance bounded by some $C > 0$ (follows from \eqref{eq:mgexp}, \eqref{eq:varMK} and \eqref{eq:bnd_lambda}).
\end{proof}

%In the sequel, we will refer to the event $\Cr{Flbc:k1}(\delta', \Cr{C:bdr})$ as $\Cr{Flbc:k1}(\delta')$. 
Next for any $\delta' \in (0, 1)$ and $\beta > 0$, let us introduce the event
\begin{equation}\label{def:Fstar}
\begin{split}
    &\Cl[tilF]{Flbc:k12}(\delta', \beta) = \Cr{Flbc:k12}(\delta', \beta, N)\\ 
    \stackrel{\rm def.}{=}& \big\{ \tilde M_t \le \tilde M_{\tau_{k, 1}} + \beta \sqrt{\log |V|} \text{ for } \tau_{k,1} \le t \le \tau_{k,2}, 0 \le k \le n \text{ s.t. } |K_{\tau_{k, 1}}|_V \le \Cr{c:beta} {\delta'}^{\frac12} |V|^{\frac23}\big\}.
    \end{split}
\end{equation}

\begin{lemma}\label{lem:smallM}
There exists $\Cl{C:bm_log} > 0$ such that for any $\delta' \in (0, 1)$,
\begin{equation}\label{eq:smallM}
\P[(\Cr{Flbc:k12}(\delta', \Cr{C:bm_log}))^c] \le \frac{C}{|V|}.
\end{equation}
\end{lemma}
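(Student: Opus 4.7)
The plan is to show that on each individual interval $[\tau_{k,1}, \tau_{k,2}]$ the quadratic variation increment of the continuous martingale $(\tilde M_t)_{t \ge 0}$ is at most a universal constant, so that the maximum fluctuation of $\tilde M_t - \tilde M_{\tau_{k,1}}$ on such an interval has Gaussian tails at scale $O(1)$. A union bound over the at most $|V|$ values of $k$ will then deliver the required $C/|V|$ bound once $\Cr{C:bm_log}$ is chosen large. We may assume $|V| \ge C$, since otherwise \eqref{eq:smallM} is trivial.

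First I would combine property~\ref{Kt3-2} of Proposition~\ref{prop:Kt} with the coarse Lipschitz property from Lemma~\ref{lem:lip_denergy}. By~\ref{Kt3-2}, for $t \in [\tau_{k,1}, \tau_{k,2}]$ one has $K_t = K_{\tau_{k,1}} \cup I_t$ with $I_t$ an interval, so $|I_t|_V + b_0(I_t) \le 3$; and by~\ref{Kt2} each component of $K_{\tau_{k,1}}$ meets $K_0 = \partial B(v, \tfrac1N)$, whence $b_0(K_{\tau_{k,1}}) \le d$. Thus on the event $|K_{\tau_{k,1}}|_V \le \Cr{c:beta}{\delta'}^{1/2}|V|^{2/3}$, the size hypothesis of Lemma~\ref{lem:lip_denergy} is met (using $\delta' < 1$ and $|V|$ large), and the lemma yields
\[
2|E|(\nu_{K_t} - \nu_{K_{\tau_{k,1}}}) \le C' \quad \text{for all } t \in [\tau_{k,1}, \tau_{k,2}],
\]
which in view of \eqref{eq:qua_var_M_t} bounds $\langle \tilde M \rangle_{\tau_{k,2}} - \langle \tilde M \rangle_{\tau_{k,1}}$ by the same constant $C'$.

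Next I would invoke Theorem~\ref{thm:time-change} to pass to the Dubins–Schwarz Brownian motion $B$. Under the time-change, the supremum $\sup_{\tau_{k,1} \le t \le \tau_{k,2}}(\tilde M_t - \tilde M_{\tau_{k,1}})$ becomes $\sup_{\sigma_{k,1} \le s \le \sigma_{k,2}}(B_s - B_{\sigma_{k,1}})$, with $\sigma_{k,i} = 2|E|\nu_{K_{\tau_{k,i}}}$ and $\sigma_{k,2} - \sigma_{k,1} \le C'$ on the event under consideration. The strong Markov property of $B$ at the stopping time $\sigma_{k,1}$ shows this supremum is stochastically dominated by $\sup_{0 \le s \le C'} B'_s$ for a fresh standard Brownian motion $B'$, and a standard Gaussian tail bound gives
\[
\P\bigl[\sup_{\tau_{k,1} \le t \le \tau_{k,2}}(\tilde M_t - \tilde M_{\tau_{k,1}}) \ge \beta\sqrt{\log|V|};\, |K_{\tau_{k,1}}|_V \le \Cr{c:beta}{\delta'}^{1/2}|V|^{2/3}\bigr] \le C_1|V|^{-c_1\beta^2}.
\]
Finally, property~\ref{Kt3-3} ensures $n \le |V|$ (each stage $k \ge 1$ brings a fresh vertex $x_{k-1} \in V$ into the explored set), so a union bound over $0 \le k \le n$ produces a bound of order $|V|^{1 - c_1\beta^2}$; taking $\Cr{C:bm_log}$ with $c_1\Cr{C:bm_log}^2 \ge 2$ yields \eqref{eq:smallM}.

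The main technical obstacle will be making the Dubins–Schwarz / strong Markov step fully rigorous when the endpoints $\tau_{k,1}, \tau_{k,2}$ are themselves random: one needs that $\sigma_{k,1}, \sigma_{k,2}$ are stopping times in the Brownian filtration (which follows from the $(\mathcal F_{K_t})$-martingale property of $\tilde M$ together with the continuity of $\langle\tilde M\rangle$ established in Lemma~\ref{lem:Mt_mart}) and that the size event $\{|K_{\tau_{k,1}}|_V \le \Cr{c:beta}{\delta'}^{1/2}|V|^{2/3}\}$ lies in $\mathcal F_{K_{\tau_{k,1}}}$, so that the Gaussian tail estimate can be applied uniformly in $k$ before the union bound. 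Once this measurability bookkeeping is in place, the remainder of the argument is routine.
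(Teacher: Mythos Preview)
Your approach is essentially the same as the paper's: bound the quadratic variation increment on each $[\tau_{k,1},\tau_{k,2}]$ via property~\ref{Kt3-2} and Lemma~\ref{lem:lip_denergy}, then pass to Brownian motion by Dubins--Schwarz and use Gaussian tails with a union bound.

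The one noteworthy difference lies precisely in the step you flag as the ``main technical obstacle.'' Rather than applying the strong Markov property at the random times $\sigma_{k,1}$ (which requires checking they are stopping times for the Brownian filtration and that the size event is appropriately measurable), the paper sidesteps this entirely with a deterministic inclusion. Since $q_{k,1} = \langle\tilde M\rangle_{\tau_{k,1}}$ lies in $[0,|V|]$ (by \eqref{eq:nuKtbnd}, or even the trivial bound) and $q_{k,2}-q_{k,1}\le C'$ on the relevant event, the pathwise identity from Theorem~\ref{thm:time-change} gives
\[
(\Cr{Flbc:k12}(\delta',\beta))^c \subset \Bigl\{\sup_{0\le r\le |V|,\ 0\le s\le C'}(B_{r+s}-B_r) > \beta\sqrt{\log|V|}\Bigr\},
\]
an event involving \emph{only} the Brownian path. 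One then covers $[0,|V|]$ by $O(|V|)$ overlapping intervals of length $2C'$ with integer endpoints and union-bounds over these \emph{fixed} intervals. This buys you exactly the same $|V|^{1-c\beta^2}$ estimate without any stopping-time or measurability bookkeeping, so the obstacle you anticipate simply does not arise.
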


\begin{proof}
  Recall the definition of $T_t$ from Theorem~\ref{thm:time-change} and since $\tilde M_s$ is constant on each interval $[T_{t-}, T_t]$ by the same result, we have the inclusion
  \begin{equation}\label{eq:smallM1}
      \begin{split}
          & \Cr{Flbc:k12}(\delta', \beta) \\  %&\big\{ \tilde M_t > \beta \sqrt{\log |V|} \text{ for } \tau_{k,1} < t < \tau_{k,2}, 0 \le k \le n \text{ s.t. } |K_t| \le \Cr{c:beta} {\delta'}^{1/2} |V|^{2/3}\big\} \\
          \supset& \, 
          \big\{\tilde M_{T_r} - \tilde M_{T_{q_{k, 1}}} \le \beta \sqrt{\log |V|} %\text{ for any }  
          \,\, \forall \,\, q_{k, 1} \le r  \le q_{k, 2}, 0 \le k \le n \text{ s.t. } |K_{\tau_{k, 1}}|_V \le \Cr{c:beta} {\delta'}^{\frac12} |V|^{\frac23}\big\}.
      \end{split}
  \end{equation}
where $q_{k, i} = \langle \tilde M \rangle_{\tau_{k,i}}$. From \eqref{eq:qua_var_M_t} in Lemma~\ref{lem:Mt_mart} we know that $q_{k, i} = 2|E|(\nu_{K_{\tau_{k, i}}} - 
\nu_{K_0})$ ($i = 1, 2$). Also from property~\ref{Kt3-2} of 
Proposition~\ref{prop:Kt} we know that $K_{\tau_{k,2}} = K_{\tau_{k,1}} \cup I$ for some interval $I$. Thus in view of Lemma~\ref{lem:lip_denergy}, there exists $C > 0$ such that $q_{k, 2} - q_{k, 1} \le C b_0(K_{\tau_{k, 1}})$ provided 
$|K_{\tau_{k, 1}}|_V + b_0(K_{\tau_{k, 1}}) + C \le c|V|$. Since $b_0(K_{\tau_{k, 
1}}) \le d$ by property~\ref{Kt2} of Proposition~\ref{prop:Kt}, this gives us
\begin{equation}\label{eq:smallM2}
    %\langle \tilde M \rangle_{\tau_{k,2}} = 2|E| \nu_{K_{\tau_{k,2}}} \le  2|E| \nu_{K_{\tau_{k,1}}} + \Cr{C:bm_conv} = \langle \tilde M \rangle_{\tau_{k,1}} + \Cr{C:bm_conv}.
    q_{k, 2} - q_{k, 1} \le C \mbox{ for $|V| \ge C$.}
\end{equation}
 Further, we know from Theorem~\ref{thm:time-change} that the process $(\tilde 
 M_{T_r})_{r \ge 0}$ has the same law as the stopped Brownian motion $(B_{r_0 + r 
 \wedge \langle \tilde M \rangle_{\infty}})_{r \ge 0}$ with $r_0 = 2|E|\nu_{K_0}$. 
 Combining this with \eqref{eq:smallM2}, \eqref{eq:smallM1} and \eqref{eq:nuKtbnd} 
 we obtain, for $|V| \ge C$,
\begin{equation*}%\label{eq:smallM3}
    \P[(\Cr{Flbc:k12}(\delta', \beta)^c] \le \PB_0\big[\sup_{0 \le r \le |V|,\, 0 \le s \le 1} (B_{r + s} - B_r) > \beta \sqrt{\log |V|}\big].
\end{equation*}
Covering the interval $[0, |V|]$ with intervals of length $2$ having both endpoints 
in $\Z$ and using (Gaussian) tail estimates for the maximum and minimum of a 
Brownian motion, we get via a union bound,
\begin{equation*}%\label{eq:smallM3}
\P[(\Cr{Flbc:k12}(\delta', C)^c] = \PB_0\big[\sup_{0 \le r \le |V|,\, 0 \le s \le 1} (B_{r + s} - B_r) > C \sqrt{\log |V|}\,\big] \le \frac C{|V|}
\end{equation*}
for some $C > 0$.
\end{proof}
%In line with our convention, we will drop $\Cr{C:bm_log}$ from the notation $\Cr{Flbc:k12}(\delta', \Cr{C:bm_log})$ in all future references.
We are now ready to prove Lemma~\ref{lem:F1_prob_cri}.
\begin{proof}[Proof of Lemma~\ref{lem:F1_prob_cri}]
It follows from the definitions of the events $\Cr{Flbc:blk}(\delta', \Cr{c:beta})$, 
$\Cr{Flbc:k1}(\delta', \Cr{C:bdr})$ and $\Cr{Flbc:k12}(\delta', \Cr{C:bm_log})$ in 
\eqref{def:Edelta}, \eqref{def:Fprime} and \eqref{def:Fstar} respectively that
\begin{equation*}
\Cr{Flbc:blk}\big(\tfrac{\delta'}2, \tfrac{\Cr{c:beta}}{\sqrt{2}}\big) \cap \Cr{Flbc:k1}\big(\tfrac{\delta'}2, \Cr{C:bdr}\big) \cap \Cr{Flbc:k12}(\delta', \Cr{C:bm_log}) \subset \Cr{Flbc:1}\big(\delta', \Cr{C:bdr}, \tfrac{\Cr{c:beta}}{\sqrt{2}}\big)
\end{equation*}
whenever $|V| \ge C$. \eqref{eq:brown_prob_crit} now follows from this inclusion via a 
union bound with the corresponding probability bounds provided by 
Lemmas~\ref{lem:bulk_cri}, \ref{lem:smallMtau} and \ref{lem:smallM} respectively.
\end{proof}

\bigskip

\noindent \textbf{Acknowledgments.} SG's research was partially supported by a grant 
from the Department of Atomic Energy, Government of India, under project 
12R\&DTFR5.010500. We thank Pierre-Fran\c{c}ois Rodriguez, Guillaume Conchon-Kerjan, 
Soumendu Sundar Mukherjee and Debapratim Banerjee for insightful discussions during 
the early stages of this project. We are also grateful to Remco van der Hofstad and 
Rajat Subhra Hazra for their valuable comments. SG acknowledges the hospitality of the 
Statistics and Mathematics Unit, Indian Statistical Institute (ISI), Kolkata on 
several occasions. DP thanks the School of Mathematics, Tata Institute of 
Fundamental Research (TIFR), Mumbai for their generous hospitality.

\end{document}